\def\ud{\mathrm{d}}
\numberwithin{equation}{section}
\newtheorem{thm}{ Theorem}[section]
\newtheorem{lemma}{ Lemma}[section]
\newtheorem{prop}{ Proposition}[section]
\newtheorem{remark}{ Remark}[section]
\newcommand{\RNum}[1]{\uppercase\expandafter{\romannumeral #1\relax}}
\newcommand{\enabstractname}{Abstract}
\newcommand{\cnabstractname}{摘要}
\newcommand{\normmm}[1]{{\left\vert\kern-0.25ex\left\vert\kern-0.25ex\left\vert #1
		\right\vert\kern-0.25ex\right\vert\kern-0.25ex\right\vert}}
\newenvironment{enabstract}{%
	\par\small
	\noindent\mbox{}\hfill{\bfseries \enabstractname}\hfill\mbox{}\par
	\vskip 2.5ex}{\par\vskip 2.5ex}
\title{Decay Rates for Viscous Surface Waves of Isotropic Micropolar Fluids With or Without Surface Tension}
\author{Yuchong Wei\thanks{Corresponding author.  
\newline\indent  \indent \emph{E-mail addresses}: weiych9@mail2.sysu.edu.cn(Y.c. Wei), mcsyao@mail.sysu.edu.cn (Z.-a. Yao).}\ ,\ Zheng-an Yao}
\affil{School of Mathematics, Sun Yat-Sen University, 510275, Guangzhou, PR China}
\date{}
\begin{document}
\maketitle
\begin{enabstract}\normalsize
In this paper, we consider a layer of viscous incompressible isotropic micropolar fluid in a uniform gravitational field of finite depth, lying above a flat rigid
bottom and below the
atmosphere in a three-dimensional horizontally periodic setting. The fluid dynamics are governed by gravity-driven incompressible micropolar
equations. We investigate the global well-posedness for both the cases with and without surface tension. On one hand, in the case with surface tension (i.e.\ $\sigma>0$), we show that the global solution decays to the equilibrium exponentially. On the other hand, in the case without surface tension (i.e.\ $\sigma=0$), the solution decays to the equilibrium at an almost exponential rate. Comparing the two different cases for $\sigma>0$ and $\sigma=0$ reveals that the surface tension can enhance the decay rate.

\ 

\noindent{\emph{Keywords}:\ Isotropic micropolar equations; Viscous surface waves;
Global well-posedness; Decay rates; With and without surface tension.}
\end{enabstract}
\section{Introduction}
\subsection{Formulation in Eulerian Coordinates}
We consider in this paper the global well-posedness of time-dependent flows of a viscous
incompressible 3D micropolar fluid
\begin{equation}\label{1.1}
\begin{cases}
\ \partial_tu+u\cdot \nabla u-(\mu+\kappa)\Delta u+\nabla q-2\kappa\nabla\times\omega=-ge_3  &\mbox{in}\ \Omega(t),\\
\ \nabla\cdot u=0 &\mbox{in}\ \Omega(t),\\
\ \partial_t\omega+u\cdot\nabla \omega-\gamma\Delta\omega+4\kappa\omega-\mu\nabla\nabla\cdot\omega-2\kappa\nabla\times u=0 &\mbox{in}\ \Omega(t),\\
\ (q\mathbb{I}-(\mu+\kappa)\mathbb{D}u)n(t)=q_{\rm atm}n(t)-\sigma H n(t) &\mbox{on}\ \Sigma_F(t),\\
\ \mathcal{V}(\Sigma_F(t))=u\cdot n(t)  &\mbox{on}\ \Sigma_F(t),\\
\ u|_{\Sigma_B}=0, \ \omega|_{\partial\Omega}=0,
\end{cases}
\end{equation}
in a moving domain
$$\Omega(t)=\left\{y\in \Sigma\times\mathbb{R}|-1<y_3<\eta(y_1,y_2,t)\right\}$$
with an upper free surface $\Sigma_F(t)$ and a
fixed bottom $\Sigma_B(t)$,
where $\Sigma_F(t):=\{y_3=\eta(y_1,y_2,t)\}$, $\Sigma_B(t):=\{y_3=-1\}$. Here we assume that $\Omega(t)$ is horizontally periodic by setting $\mathbb{R}^2=(L_1\mathbb{T})\times(L_2\mathbb{T})$ for
$\mathbb{T}=\mathbb{R}/\mathbb{Z}$ the usual 1-torus and $L_1,L_2>0$ the periodicty lengths.
We denote $n(t)$ the outward-pointing unit normal on $\{y_3=\eta(y_1,y_2,t)\}$, $\mathbb{I}$ the 3$\times$3 identity matrix, $\mathbb{D}(u)_{ij}=\partial_ju_i+\partial_iu_j$ the symmetric gradient of the velocity $u$, $g>0$ the strength of gravity. Moreover, the three positive constants $\mu,\kappa,\gamma$ represent the kinematic viscosity, the micro-rotation viscosity and the angular viscosity, respectively.
 Equation $(1.1)_1$ is the conservation of momentum, where
gravity is the only external force, which points in the negative $x_3$ direction (as the vertical direction); the second equation in (1.1) means the fluid is incompressible;  Eq. $(1.1)_3$ means the law of conservation of angular momentum for micropolar fluids. Eq. $(1.1)_4$ means the fluid satisfies the kinetic boundary condition on the free boundary $\Sigma_F(t)$, where $q_{\rm atm}$ stands for the atmospheric pressure, assumed to be constant,
 and $H(\eta)$ stands for the mean curvature of the free surface, given by $H(\eta)=D\cdot\left(\frac{D \eta}{\sqrt{1+|D \eta|^2}}\right)$.
The kinematic boundary condition $(1.1)_5$ states that the free boundary $\Sigma_F(t)$ is moving with speed equal to the normal component of the fluid velocity; $(1.1)_6$ implies that the fluid is no-slip, no-penetrated on the fixed bottom boundary and no-rotation on the boundary. 

For convenience, it is natural to subtract the hydrostatic pressure from $q$ in the usual way by adjusting the actual pressure $q$ according to $\tilde{q}=q+gx_3-q_{\rm atm}$, and still denote the new pressure $\tilde{q}$ by $q$ for simplicity,
so that after substitution, the gravity term in $(1.1)_1$ and the atmospheric pressure term in $(1.1)_4$ are eliminated. A gravity term appears in $(1.1)_4$.
Without loss of generality, we also assume the gravitation constant $g = 1$. On the other hand,  since we denote $\eta:\mathbb{R}^+\times\mathbb{R}^2\rightarrow\mathbb{R}$ the
unknow free surface function, that is, the top boundary $\Sigma_F(t)=\{y_3=\eta(t,y_1,y_2)\}$ with $(y_1,y_2)\in\mathbb{R}^2$, then the kinematic boundary condition $(1.1)_5$ reads the following transport equation
\begin{equation}\label{1.2}
  \partial_t \eta+u_1\partial_{y_1}\eta+u_2\partial_{y_2}\eta=u_3.
\end{equation}
 Therefore, the problem \eqref{1.1} can be equivalently stated as follows.
\begin{equation}\label{1.3}
\begin{cases}
\ \partial_tu+u\cdot \nabla u-(\mu+\kappa)\Delta u+\nabla q-2\kappa\nabla\times\omega=0,  &\mbox{in}\ \Omega(t),\\
\ \nabla\cdot u=0,&\mbox{in}\ \Omega(t),\\
\ \partial_t\omega+u\cdot\nabla \omega-\gamma\Delta\omega+4\kappa\omega-\mu\nabla\nabla\cdot\omega-2\kappa\nabla\times u=0,&\mbox{in}\ \Omega(t),\\
\ (q\mathbb{I}-(\mu+\kappa)\mathbb{D}u)n(t)=\eta n(t)-\sigma H n(t),&\mbox{on}\ \Sigma_F(t),\\
\ \partial_t \eta=-u_1\partial_{y_1}\eta-u_2\partial_{y_2}\eta+u_3, &\mbox{on}\ \Sigma_F(t),\\
\ u|_{\Sigma_B}=0, \ \omega|_{\partial\Omega}=0.
\end{cases}
\end{equation}

We assume that the initial surface function satisfies the “zero average” condition
\begin{equation}\label{eee1.1}
\frac{1}{L_1L_2}\int_{\Gamma_F}\eta_0=0.
\end{equation}
Note that for sufficiently regular solutions to the periodic problem, the
condition \eqref{eee1.1} persists in time since $\eqref{1.3}_5$:
\begin{equation}\label{eee1.2}
  \frac{\ud}{\ud t}\int_{\Gamma_F}\eta=\int_{\Gamma_F}\partial_t\eta=\int_{\Gamma_F}u\cdot n=\int_{\Omega(t)}\mbox{div}\ u=0.
\end{equation}
The zero average of $\eta(t)$ for $t\geqslant0$ is analytically useful in that it allows us to apply
the Poincar\'e inequality on $\Gamma_F$ for all $t\geqslant0$. Moreover, we are interested in the decay
$\eta(t)\rightarrow0$ as $t\rightarrow\infty$, in say $L^2(\Gamma_F)$ or $L^{\infty}(\Gamma_F)$. Due to the conservation of $\eta_0$, we
cannot expect this decay unless $\eta_0=0$.

The problem \eqref{1.3} possesses a natural physical energy. For sufficiently regular
solutions, we have an energy evolution equation that expresses how the change in
physical energy is related to the dissipation:
\begin{align}\label{eee1.3}
  \frac{\ud}{\ud t}\left(\|(u,\omega)\|_{L^2(\Omega(t))}^2+\|\eta\|^2_{L^2(\Gamma_F)}+\sigma\|\nabla_h\eta\|^2_{L^2(\Gamma_F)}\right)+\|\mathbb{D}u\|^2_{L^2(\Omega(t))}+\|\omega\|^2_{H^1(\Omega(t))}=0.
\end{align}
The first three integrals constitute the kinetic and potential energies, while the third and forth
constitute the dissipation. The structure of this energy evolution equation is the
basis of the energy method we will use to analyze \eqref{1.3}.

\subsection{ Geometric Form of the Equations}
To solve \eqref{1.3}, a natural idea is to fix the domain by using a specific coordinate transformation. Inspired by Beale \cite{Re25}, in this paper, we will adopt the flattening transformation.
To this end, we consider the fixed equilibrium domain
\begin{equation}\label{1.4}
  \Omega:=\{x\in\Sigma\times\mathbb{R}|-1\leqslant x_3\leqslant0\}
\end{equation}
for which we will write the coordinates as $x\in\Omega$. Moreover, $\Sigma_f:=\{x_3=0\}$ is the upper boundary of $\Omega$, and  $\Sigma_b:=\{x_3=-1\}$ is the lower boundary.
Define
\begin{equation}\label{1.5}
  \bar{\eta}:=\mathcal{P}\eta=\mbox{harmonic extension of $\eta$ into the lower half space},
\end{equation}
where $\mathcal{P}\eta$ is defined by \eqref{A.5}. With harmonic extension $\bar{\eta}$ in hand, we can flatten the
coordinate domain via the mapping
\begin{align}\label{1.6}
\Phi(x,t):=(x_1,x_2,x_3+(1+x_3)\bar{\eta})=(y_1,y_2,y_3)\in\Omega(t).
\end{align}
Then we have
\begin{equation}\label{1.7}
  \nabla\Phi=\begin{pmatrix}
               1 & 0 & 0 \\
               0 & 1 & 0 \\
               A & B & J
             \end{pmatrix}
             \mbox{and}\ \mathcal{A}:=(\nabla\Phi)^{-T}=\begin{pmatrix}
                                                          1 & 0 & -AK \\
                                                          0 & 1 & -BK \\
                                                          0 & 0 & K
                                                        \end{pmatrix},
\end{equation}
where
\begin{equation}\label{1.8}
  b:=1+x_3, A:=b\partial_1\bar{\eta}, B:=b\partial_2\bar{\eta}, J=1+\bar{\eta}+b\partial_3\bar{\eta}, K:=\frac{1}{J},
\end{equation}
$J=\det(\nabla\Phi)$ is the Jacobian of the coordinate transformation.

We now record some useful properties of the matrix $\mathcal{A}$, the proofs can be founded in \cite{Re27}.
\begin{lemma}\label{ll1.1}
  Let $\mathcal{A}$ be defined by \eqref{1.7}.
  \begin{enumerate}[(1)]
    \item For each $j=1,2,3$, we have the following Piola identity:
    $$\partial_k(J\mathcal{A}_{jk})=0, \forall j=1,2,3;$$
    \item $\mathcal{A}_{ij}=\delta_{ij}+\delta_{j3}Z_i$ for $\delta_{ij}$, the Kronecker delta, and $Z=-AKe_1-BKe_2+(K-1)e_3$.
    \item On $\Gamma_f$, we have $J\mathcal{A}e_3=\mathcal{N}$, while on $\Gamma_b$, we have that $J\mathcal{A}e_3=e_3$.
  Here $\mathcal{N}$ is the outward-pointing normal on $\Gamma_f$.
  \end{enumerate}
\end{lemma}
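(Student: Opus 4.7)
My plan is to verify the three assertions in the order (2), (3), (1), since (2) essentially unpacks the definition, (3) follows quickly from (2), and the Piola identity requires a short but careful calculation.

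For part (2), I simply read off the entries of $\mathcal{A}$ from the formula \eqref{1.7}. The matrix has $1$'s on the diagonal in the first two rows, a $K$ in the $(3,3)$ position, and nonzero off-diagonal entries only in the third column, where $\mathcal{A}_{13}=-AK$ and $\mathcal{A}_{23}=-BK$. All other entries vanish, so $\mathcal{A}_{ij}-\delta_{ij}$ is supported in the column $j=3$, which is exactly the content of $\mathcal{A}_{ij}=\delta_{ij}+\delta_{j3}Z_i$ with $Z_1=-AK$, $Z_2=-BK$, $Z_3=K-1$.

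For part (3), I compute $J\mathcal{A}e_3$ directly. Multiplying through yields $J\mathcal{A}e_3=(-JAK,-JBK,JK)^T=(-A,-B,1)^T$ since $JK=1$. On $\Gamma_f$ we have $x_3=0$, hence $b=1$, so $A=\partial_1\bar\eta|_{x_3=0}=\partial_1\eta$ and similarly $B=\partial_2\eta$; this is precisely the outward normal vector $\mathcal{N}=(-\partial_1\eta,-\partial_2\eta,1)^T$ on $\Gamma_f$. On $\Gamma_b$ we have $x_3=-1$, hence $b=0$, which forces $A=B=0$ and gives $J\mathcal{A}e_3=(0,0,1)^T=e_3$.

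For part (1), the cleanest route is to first write out
\[
J\mathcal{A}=\begin{pmatrix} J & 0 & -A \\ 0 & J & -B \\ 0 & 0 & 1 \end{pmatrix},
\]
using $JK=1$, and then check $\partial_k(J\mathcal{A}_{jk})=0$ row by row. For $j=3$ the row is constant in $x$, so the identity is trivial. For $j=1$ I must verify $\partial_1 J-\partial_3 A=0$; using $J=1+\bar\eta+b\partial_3\bar\eta$ and $A=b\partial_1\bar\eta$ together with $\partial_3 b=1$, both derivatives equal $\partial_1\bar\eta+b\,\partial_1\partial_3\bar\eta$ and cancel. The case $j=2$ is identical with indices $1$ and $2$ swapped. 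The only subtle point is keeping track of the factor $\partial_3 b=1$ that produces the cancellation; alternatively one could invoke the general fact that the rows of the cofactor matrix of any smooth Jacobian $\nabla\Phi$ are divergence-free, but given the explicit form of $J\mathcal{A}$ above the direct check is shorter and more transparent. I do not anticipate any real obstacle here; the computation is elementary once the explicit matrix is written down.
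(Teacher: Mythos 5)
Your proof is correct. Note that the paper does not actually prove Lemma \ref{ll1.1} at all: it simply cites \cite{Re27} for the proofs, so there is no in-paper argument to compare against. Your direct verification is the standard one and supplies exactly what the citation points to: part (2) is a straight reading of \eqref{1.7}; part (3) uses $JK=1$ to get $J\mathcal{A}e_3=(-A,-B,1)^T$ and then evaluates $b$ at $x_3=0$ and $x_3=-1$ (your identification $\mathcal{N}=(-\partial_1\eta,-\partial_2\eta,1)^T$ is consistent with the paper's use of the non-normalized normal, e.g.\ in $\partial_t\eta=v\cdot\mathcal{N}$); and part (1) reduces, after writing out $J\mathcal{A}$ explicitly, to the cancellation $\partial_1J-\partial_3A=\bigl(\partial_1\bar\eta+b\,\partial_1\partial_3\bar\eta\bigr)-\bigl(\partial_3b\,\partial_1\bar\eta+b\,\partial_3\partial_1\bar\eta\bigr)=0$, which you track correctly via $\partial_3b=1$. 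No gaps.
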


Under the flattening transformation, we may introduce the differential operators with their actions given by $\left(\nabla_{\mathcal{A}}f\right)_i=\left(\mathcal{A}\nabla f\right)_i=\mathcal{A}_{ij}\partial_jf$,
$\mathbb{D}_{\mathcal{A}}f=\nabla_{\mathcal{A}}f+\left(\nabla_{\mathcal{A}}f\right)^T$, $\Delta_{\mathcal{A}}f=\nabla_{\mathcal{A}}\cdot\nabla_{\mathcal{A}}f$. If $\eta$ is sufficiently small (in an appropriate Sobolev space), then the mapping $\Phi$
is a diffeomorphism. This allows us to transform the problem to one on the fixed
spatial domain $\Omega$ for $t\geqslant0$. In the new coordinates, let
$$(v,w,p)(x,t):=(u,\omega,q)(y,t),$$
then \eqref{1.3} becomes
\begin{equation}\label{1.9}
\begin{cases}
\ \partial_tv-bK\partial_t\bar{\eta}\partial_3 v+v\cdot \nabla_{\mathcal{A}} v-(\mu+\kappa)\Delta_{\mathcal{A}} v+\nabla_{\mathcal{A}} p-2\kappa\nabla_{\mathcal{A}}\times w=0  &\mbox{in}\ \Omega,\\
\ \nabla_{\mathcal{A}}\cdot v=0 &\mbox{in}\ \Omega,\\
\ \partial_tw-bK\partial_t\bar{\eta}\partial_3 w+v\cdot\nabla_{\mathcal{A}}w-\gamma\Delta_{\mathcal{A}}w+4\kappa w-\mu\nabla_{\mathcal{A}}\nabla_{\mathcal{A}}\cdot w-2\kappa\nabla_{\mathcal{A}}\times v=0 &\mbox{in}\ \Omega,\\
\ (p\mathbb{I}-(\mu+\kappa)\mathbb{D}_{\mathcal{A}}v){\mathcal{N}}(t)=(\eta-\sigma H) {\mathcal{N}}(t) &\mbox{on}\ \Sigma_f,\\
\ \partial_t \eta=-v_1\partial_{x_1}\eta-v_2\partial_{x_2}\eta+v_3 &\mbox{on}\ \Sigma_f,\\
\ v|_{\Sigma_b}=0, \ w|_{\partial\Omega}=0.
\end{cases}
\end{equation}

By \eqref{1.7}, it can be seen that $\mathcal{A}$ is determined by \( \eta \), which implies that all differential operators in \eqref{1.9} are associated with \( \eta \). And \( \eta \) represents the geometric structure of the fluid boundary movement. This geometric structure plays a crucial role in the subsequent energy estimates for temporal derivatives.

\subsection{Previous Results and Motivation}
To investigate a class of fluids which exhibit certain microscopic effects arising from the local structure and micro-motions of the fluid elements, Eringen \cite{Re1} first introduce the micropolar equations. Compared to the classical fluid equations, micropolar equations have two notable features: microstructure and non-symmetric stress tensors. This model provides a rich theoretical framework for describing the behavior of aerosols and colloidal suspensions. For example, blood contains not only red blood cells, white blood cells, and platelets but also a significant amount of proteins and other molecules. The movement and deformation of these microscopic components affect the flow characteristics of blood. Micropolar fluid models can be used to describe the non-Newtonian properties of blood. Other typical examples include biological fluids ,lubrication, liquid crystals, and ferromagnetic fluids. We refer to \cite{Re37,Re41,Re36,Re38,Re40} for more details.

Currently, there is a large body of literature on the analysis of micropolar fluids.   We will only mention the mathematical results that are closely related to this paper. Let us first turn to the two-dimensional case. Łukaszewicz \cite{Re7} first established the global well-posedness. Moreover, the Hausdorff and frectal dimensions of global attractors were estimated. Subsequently,  based on the spectral decomposition of linearized micropolar fluid flows, the sharp algebraic time decay estimates were obtained by Dong and Chen \cite{Re42}. Recently, Hu, Xu and Yuan \cite{Re33} first introduced the viscous micropolar surface wave problem. Under the condition of the horizontal periodic domain, they obtained the global well-posedness for 2D micropolar equations. In three dimensions, Galdi and Rionero \cite{Re5} firstly discussed the existence and uniqueness of weak solutions
for the incompressible micropolar fluid equations. Then Łukaszewicz \cite{Re6} considered a class of micropolar fluids and proved the local well-posedness of the solution under vacuum conditions. In the case considering thermal convection, Kagei and Skowron \cite{Re43} obtained the global existence and uniqueness of strong solutions. By use of the $L^p–L^q$ estimates for the semigroup, Yamaguchi \cite{Re10} proved the existence theorem of global in time solution within the \( L^p \) framework. Recently, Chen and Miao \cite{Re44} proved the global well-posedness for the 3D micropolar fluid system in the critical Besov spaces by making a suitable
transformation of the solutions and using the Fourier localization method. More recently, a
class of anisotropic micropolar fluids with microtorques was studied by Remond-Tiedrez and
Tice \cite{Re16,Re18}, and Stevenson-Tice \cite{Re17}.
Additionally, much work has been devoted to investigating other aspects of micropolar fluids. we refer to \cite{Re45,Re46} for more details.

The majority of mathematical theories focus on the Cauchy problem and initial-boundary value problems for micropolar fluids, with limited research available on the surface wave problem. To our best knowledge, only the result in \cite{Re33} addresses this issue. On the other hand, when $\omega =0, \kappa=0$ in \eqref{1.9}, the micropolar fluid equations reduce to the classical fluid equations, i.e. incompressible Navier-Stokes equations. In a sense, the micropolar fluid equations are the generalization of the classical fluid equations.

There is already a substantial amount of research on Navier-Stokes equations. We focus on the surface wave problem. In the case without surface tension, Beale \cite{Re20} was the first to consider the surface wave problem for the Navier-Stokes (NS) equations without surface tension. Under general initial conditions, a time \( T \) dependent on the initial data can be found, ensuring the local existence of strong solutions on \([0, T]\). On the other hand, when the initial data is assumed to be a perturbation near the equilibrium, the solution exists for any time. It is worth mentioning that this paper provides a counterexample demonstrating the global ill-posedness of the solutions.  Abels \cite{Re24} extended the local well-posedness results to \( L^q \) spaces. Based on Beale-Solonnikov functional framework, Sylvester aslo studied the aforementioned problem, where higher regularity and more compatibility conditions were required. As we know, the counterexample in \cite{Re20} poses challenges to the study of global well-posedness for surface wave problem. Encouragingly, based on the flattening transformation and the geometric structure of the equations, Guo-Tice \cite{Re28,Re27,Re26} achieved global well-posedness by introducing the two-layer energy method in both the horizontal periodic domain and the horizontal infinite layer. Later on, Wu \cite{Re29} provided a new analytic tool to prove global
well-posedness of the surface wave problem of the NS equations with small initial data in the
horizontally infinite domain. Moreover, by using Alinhac good unknowns, Wang \cite{Re30} successfully extended the results of \cite{Re27}. The low-frequency condition on the initial data is no longer required, and the three-dimensional results were extended to two dimensions. Recently, in lagrangian framework, by analyzing suitable good unknowns associated with the problem, Gui \cite{Re32} developed a mathematical approach to establish global well-posedness, where nonlinear compatibility conditions can be removed. Furthermore, Ren, Xiang and Zhang \cite{Re31} proved the local well-posedness of the viscous surface wave equation in low regularity Sobolev spaces.

Now we turn to the case with surface tension. To get the global well-posedness, Beale \cite{Re25} first introduced the flattening transformation. Under certain compatibility conditions and small perturbation initial data, global well-posedness of the solution was obtained. Subsequently, Bae \cite{Re47} showed the global solvability in Sobolev spaces via energy methods. Beale and Nishida \cite{Re48} studied an asymptotic decay rate for the solution obtained in \cite{Re25}. Nishida, Teramoto and Yoshihara \cite{Re52}
showed the global existence of periodic solutions with an exponential decay rate in the
case of a domain with a flat fixed lower boundary. Under the Beale-Solonnikov functional framework, Tani \cite{Re50} and Tani et al \cite{Re51} investigated the solvability of the problem with or without surface tension. Recently, Tan and Wang \cite{Re53} considered zero surface tension limit of viscous surface waves. On one hand, for the “semi-small” initial data, the zero surface tension limit of
the problem within a local time interval was proved. On the other hand, for the small initial data, they investigated the global-in-time zero surface tension limit of the problem. Additionally, a class of viscous surface-internal wave problem has also garnered attention. We  refer to \cite{Re54,Re55,Re49} for more details.

As previously mentioned, the current mathematical research on micropolar fluids primarily focuses on the Cauchy problem or initial-boundary value problems. There is little research on surface wave problem. The purpose of this paper is to investigate in the energy spaces the global well-posedness and decay of solutions to the viscous surface
wave problem for 3D micropolar fluids under the horizontally periodic assumption, both with and without
surface tension, and without any constraints on the viscosities.

\subsection{Our Results}
\subsubsection{Definitions and Terminology}

Before stating the main results of this paper, we specify some notations and conventions that
will be used repeatedly throughout this paper as follows.

 We will employ the Einstein convention of summing over repeated indices for vector and tensor operations. Throughout the paper, $C>0$ represents a constant that depends only on the domain $\Omega$ and may change line by
line. The notation $C(\varepsilon)$ is used to denote a constant depending on the parameter $\varepsilon$. And the
notation $a\lesssim b$ is adopted to express the fact that $a\leqslant Cb$, for a constant $C>0$.

 The notations $H^k(\Omega)$ for $k\geqslant 0$ and $H^s(\Sigma_f)$ for $s\geqslant0$ are adopted to denote the usual
Sobolev space. Denote $\|\cdot\|_k$ as the $H^k$ norm in $\Omega$ and $|\cdot|_s$ as the $H^s$ norm in $\Sigma_f$. We denote the standard commutator
  $$[\partial^{\alpha},f]g=\partial^{\alpha}(fg)-f\partial^{\alpha}g,\quad [\partial^{\alpha},f,g]=\partial^{\alpha}(fg)-f\partial^{\alpha}g-\partial^{\alpha}fg.$$
  Moreover, $\|(f,g)\|:=\|f\|+\|g\|$.

 We denote the operator $\mathcal{P}(\partial_h)$ the horizontal derivatives of some functions, here
$\mathcal{P}(\partial_h)$ is a pseudo-diferential operator with the symbol $\mathcal{P}(\xi_h)$ depending only on the
horizontal frequencies $\xi_h=(\xi_1,\xi_2)^T$, that is,
\begin{equation}\label{1.5.7}
  \mathcal{P}(\partial_h)f(x_h,x_3):=\mathcal{F}^{-1}_{\xi_h\rightarrow x_h}\left(\mathcal{P}(\xi_h)\mathcal{F}_{x_h\rightarrow\xi_h}(f(x_h,x_3)\right),
\end{equation}
where $\mathcal{F}(f)$(or $\mathcal{F}^{-1}(f)$) is the Fourier (or inverse Fourier) transform of a function $f$.
In particular, we denote $\dot{\Lambda}_h^s$(or $\Lambda^s_h$) the homogeneous (or nonhomogeneous) horizontal diferential operator with the symbol $|\xi_h|^s$(or $\langle\xi_h\rangle^s$), respectively, where $s\in\mathbb{R}$, $\langle\xi_h\rangle:=(1+|\xi_h|^2)^{\frac{1}{2}}$.

We write $\mathbb{N}=\{0,1,2,\cdots\}$ for the collection of non-negative integers. When using space-time differential multi-indices, we will write $\mathbb{N}^{1+m}=\{\alpha=(\alpha_0,\alpha_1,\cdots, \alpha_m)\}$ to emphasize that the $0$-index term is related to temporal derivatives. For just spatial derivatives, we write $\mathbb{N}^m$. For $\alpha\in\mathbb{N}^{1+m}$, we write $\partial^{\alpha}=\partial_t^{\alpha}\partial_1^{\alpha_1}\cdots\partial_m^{\alpha_m}$. We define the parabolic counting of such multi-indices by writing $|\alpha|=2\alpha_0+\alpha_1+\alpha_2+\cdots+\alpha_m$. We will write $Df$ for the “horizontal” gradient of $f$, that is, $Df=\partial_1fe_1+\partial_2fe_2$, $D\cdot f$ for the “horizontal” divergence of $f$, $\Delta_h f:=D\cdot Df$ for the “horizontal” Laplace operator, while $\nabla f$ will denote the usual full gradient.

    For a given norm $\|\cdot\|$ and integers $k,m\geqslant0$, we introduce the following notation for sums of spatial derivatives:
    \begin{align}
    \left\|D^k_mf\right\|^2:=\sum_{a\in\mathbb{N}^2, m\leqslant|\alpha|\leqslant k}\left\|\partial^{\alpha}f\right\|^2,\quad \left\|\nabla^k_mf\right\|^2:=\sum_{\alpha\in\mathbb{N}^3, m\leqslant|\alpha|\leqslant k}\left\|\partial^{\alpha}f\right\|^2.
    \end{align}
  The convention we adopt in this notation is that $D$ refers to only “horizontal” sparial derivatives, while $\nabla$ refers to full spatial derivatives. For space-time derivatives we add bar to our notation:
  \begin{align}
  \left\|\bar{D}^k_mf\right\|^2:=\sum_{\alpha\in\mathbb{N}^{1+2}, m\leqslant|\alpha|\leqslant k}\|\partial^{\alpha}f\|^2, \quad \|\bar{\nabla}^k_mf\|^2:=\sum_{\alpha\in\mathbb{N}^{1+3}, m\leqslant|\alpha|\leqslant k}\|\partial^{\alpha}f\|^2.
\end{align}

To state our result, we must first define our energies and dissipations. We write the energies as
\begin{align}\label{1.5.3}
  &\mathcal{E}_{n}(t):=\sum_{j=0}^{n}\left(\|\partial_t^j(v,w)\|_{2n-2j}^2+|\partial_t^j\eta|_{2n-2j}\right)+\sum_{j=0}^{n-1}\|\partial_t^jp\|_{2n-2j-1}^2\\
  &\mathcal{E}(t):=\|(v, w)\|^2_2+\|(\partial_tv, \partial_t w)\|^2_0+\|p\|^2_1+|\eta|^2_3+|\partial_t\eta|^2_{\frac{3}{2}}+|\partial^2_t\eta|^2_{-\frac{1}{2}},
\end{align}
and the dissipations as
\begin{align}\label{1.5.4}
  &\mathcal{D}_{n}(t):=\sum_{j=0}^{n}\|\partial_t^j(v,w)\|_{2n-2j+1}^2+\sum_{j=0}^{n-1}\|\partial_t^jp\|_{2n-2j}^2+|\eta|_{2n-\frac{1}{2}}^2+|\partial_t\eta|^2_{2n-\frac{1}{2}}
\nonumber\\&\qquad \qquad+\sum_{j=2}^{n+1}|\partial_t^j\eta|_{2n-2j+\frac{5}{2}},\\
&\mathcal{D}(t):=\|(v, w)\|^2_3+\|(\partial_tv,\partial_tw)\|^2_1+\|p\|^2_2+|\eta|^2_{\frac{7}{2}}+|\partial_t\eta|^2_{\frac{5}{2}}+|\partial^2_t\eta|^2_{\frac{1}{2}}.
\end{align}
For any integer $N\geqslant 3$, we write the high-order spatial derivatives of $\eta$ as
\begin{equation}\label{1.5.5}
  \mathfrak{F}(t):=|\eta|^2_{4N+\frac{1}{2}}.
\end{equation}
Finally, we define total energy
\begin{align}\label{1.5.6}
  \mathfrak{E}(t):=&\sup_{0\leqslant r\leqslant t}\left(\mathcal{E}_{2N}(r)+(1+r)^{4N-8}\mathcal{E}_{N+2}(r)\right)+\sup_{0\leqslant r\leqslant t}\frac{\mathfrak{F}(r)}{(1+r)^{4N-9}}\nonumber\\
  &+\int_{0}^{t}\left(\mathcal{D}_{2N}(r)+\frac{\mathfrak{F}(r)}{(1+r)^{4N-8}}\right)\ud r.
\end{align}

\subsubsection{Local Well-Posedness}
To investigate the global decay rates of the solutions, we first need to establish the local well-posedness theory for \eqref{1.9} in our framework.
Inspired by the works in \cite{Re25,Re26,Re33}, we may get the following local well-posedness results of system \eqref{1.9} both the cases with and without surface tension. The proofs are similar to \cite{Re25,Re26,Re33}, we omit it.
In the following, we write $_{0}H^1(\Omega):=\left\{u\in H^1(\Omega), u|_{\Sigma_b}=0\right\}$, $H^1_0(\Omega):=\left\{u\in H^1(\Omega),u|_{\partial\Omega}=0\right\}$.
\begin{thm}[Local well-posedness with surface tension]\label{tt1.1}
  Let $v_0\in H^2(\Omega), w_0\in H^2(\Omega)$, $\eta_0\in H^3(\Sigma_f)$ and $T>0$. Assume that $\eta_0$ satisfy the zero-average condition. Further assume that the initial data satisfy some appropriate compatibility conditions. Then there exists a universal constant $\delta>0$ such that if
  \begin{equation}\label{ee1.1}
  \|(v_0,w_0)\|^2_{H^2(\Omega)}+|\eta_0|^2_{H^3(\Sigma_f)}\leqslant\delta,
  \end{equation}
  then there exists a unique (strong) solution $(v,w,p,\eta)$ to \eqref{1.9} on the temporal interval $[0,T]$ satisfying the estimate
  \begin{equation}\label{ee1.2}
  \sup_{0\leqslant t\leqslant T}\mathcal{E}(t)+\int_{0}^{T}\mathcal{D}(t)\ \ud t+\|\partial_t^2v\|_{L^2_t(_{0}H^1)^*}+\|\partial_t^2w\|_{L^2_t(H^1_0)^*}\lesssim\mathcal{E}(0).
  \end{equation}
Moreover, $\eta$ is a $C^1$ diffeomorphism for each $t\in[0,T]$.
\end{thm}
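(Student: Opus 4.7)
The plan is to work on the fixed reference domain $\Omega$ using the flattening transformation $\Phi$ already introduced in \eqref{1.6}, and to construct the solution by a contraction/iteration scheme. The first step is to set up a linearization of \eqref{1.9} around the equilibrium $(v,w,p,\eta)=0$, in which the geometric coefficients $\mathcal{A}, J$ are replaced by the identity and the nonlinear terms (convection $v\cdot\nabla_{\mathcal{A}} v$, geometric corrections of $\Delta_{\mathcal{A}}$, $\nabla_{\mathcal{A}}\cdot$, $\mathbb{D}_{\mathcal{A}}$, the transport terms on $\Sigma_f$, and the mean-curvature nonlinearity $H(\eta)=D\cdot(D\eta/\sqrt{1+|D\eta|^2})$) are moved to the right-hand side as forcing terms built out of a previous iterate $(\tilde v,\tilde w,\tilde p,\tilde\eta)$.

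Next, I would solve the linear problem in three coupled pieces. (i) A Stokes-type system for $(v,p)$ on $\Omega$ with no-slip on $\Sigma_b$ and the capillary boundary condition $(p\mathbb{I}-(\mu+\kappa)\mathbb{D}v)e_3=(\eta-\sigma\Delta_h\eta)e_3+F$ on $\Sigma_f$: existence and the $H^2\times H^1$ regularity estimate follow by Galerkin approximation in the divergence-free test-function space, together with the Stokes elliptic regularity estimates (the presence of $\sigma>0$ yields an extra $+\sigma|\eta|_{7/2}^2$ gain in the boundary term that is absorbed into the parabolic-capillary coupling). (ii) A parabolic system for $w$ with the zero Dirichlet data on $\partial\Omega$ and the source $2\kappa\nabla\times v+\mbox{lower order}$; since the linearized operator $-\gamma\Delta-\mu\nabla\nabla\cdot+4\kappa$ is uniformly elliptic, standard Galerkin plus parabolic regularity in $H^2$ gives the required bounds. (iii) The kinematic boundary condition together with the capillary term gives, after substituting the linearized $v_3|_{\Sigma_f}$, a parabolic equation of the form $\partial_t\eta+\mathcal{L}_\sigma\eta=G$ on $\Sigma_f$, whose Dirichlet-to-Neumann-type operator $\mathcal{L}_\sigma$ is of order $3$ when $\sigma>0$; this yields the capillary smoothing that produces the $|\partial_t\eta|_{5/2}^2+|\partial_t^2\eta|_{1/2}^2$ contributions inside $\mathcal{D}(t)$.

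With the linear theory in hand, I set up the iteration map $(\tilde v,\tilde w,\tilde p,\tilde\eta)\mapsto(v,w,p,\eta)$ on the space of functions bounded in the norm $\sup_{[0,T]}\mathcal{E}(t)+\int_0^T\mathcal{D}(t)\,\ud t$ and derive the energy/dissipation estimate \eqref{ee1.2} by: differentiating the linearized system in time once to control $\partial_tv,\partial_tw,\partial_t\eta,\partial_t^2\eta$, recovering the pressure from the elliptic problem it satisfies, and using the energy identity modelled on \eqref{eee1.3}; the initial values of $\partial_tv,\partial_tw,\partial_t\eta,\partial_t^2\eta$ are read off from the equations at $t=0$, which is precisely where the unspecified compatibility conditions enter. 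All nonlinear terms are estimated with Sobolev embeddings and the harmonic extension estimates for $\bar\eta$ recalled via \eqref{1.5}--\eqref{1.8}; the smallness hypothesis \eqref{ee1.1} makes these contributions small multiples of the iteration norm, allowing closure and contraction for any fixed $T>0$. Finally, the diffeomorphism property of $\Phi$ follows from $|\eta|_{5/2}\lesssim\mathcal{E}(t)^{1/2}\ll 1$, which controls $\|\nabla\bar\eta\|_{L^\infty}$ by the harmonic extension estimates.

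The main obstacle, compared to the classical Navier--Stokes case in \cite{Re25,Re26}, is the handling of the micropolar coupling: the $-2\kappa\nabla_{\mathcal{A}}\times w$ term in $(1.9)_1$ and the symmetric $-2\kappa\nabla_{\mathcal{A}}\times v$ term in $(1.9)_3$ force $(v,w)$ to be advanced simultaneously rather than solving the $w$-equation as a purely passive parabolic problem; I expect to treat this either by placing both equations in a single Galerkin scheme (testing the momentum equation with divergence-free fields and the angular momentum equation with $H^1_0$ fields simultaneously so that the cross terms $2\kappa\int\nabla\times w\cdot v-2\kappa\int\nabla\times v\cdot w$ cancel by integration by parts) or by a two-step inner iteration inside the $(\tilde v,\tilde w)\mapsto(v,w)$ map, using smallness of $\kappa$ relative to the diffusive terms is \emph{not} available, so the cancellation structure is essential. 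The remaining delicate point is tracking the highest-order regularity of $\eta$ in the capillary boundary condition so that $\sigma H(\eta)$ produces the claimed $|\eta|_{7/2}^2$ dissipation without extra loss, which requires commuting horizontal derivatives with $H$ and estimating the resulting commutators in $H^{-1/2}(\Sigma_f)$ using the paraproduct/commutator estimates standard in the Beale--Tice framework.
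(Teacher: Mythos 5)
The paper does not actually prove Theorem \ref{tt1.1}: it states that the proof is similar to those in \cite{Re25,Re26,Re33} and omits it. Your proposal is, in substance, an outline of exactly the strategy of those references (flattening to the fixed slab, linearizing about the equilibrium, solving the linear Stokes-type and parabolic subproblems by Galerkin plus elliptic regularity, reading the compatibility conditions off the equations at $t=0$, and closing a contraction in the norm $\sup_t\mathcal{E}+\int_0^T\mathcal{D}$), so it is consistent with the route the paper delegates to the literature, and its identification of the capillary smoothing of order $3$ for $\eta$ and of the need to advance $(v,w)$ simultaneously is correct.

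Two points deserve care. First, the cross terms do not cancel with the signs they carry in \eqref{1.9}: both coupling terms enter as $-2\kappa\nabla\times(\cdot)$, so testing the two equations with $v$ and $w$ and integrating by parts (using $w|_{\partial\Omega}=0$) produces $-4\kappa\int_\Omega w\cdot\nabla\times v$, not zero. The usable ``antisymmetric structure'' is that this term is controlled by Cauchy--Schwarz against the damping $4\kappa\|w\|_0^2$ and the augmented viscosity $(\mu+\kappa)$ (together with Korn's inequality \eqref{eA.5} in the stress-tensor formulation); your Galerkin scheme still closes, but you should not assert exact cancellation. Second, a bare contraction argument on $[0,T]$ typically yields a constant in \eqref{ee1.2} that grows with $T$, whereas the global argument in Section 5 iterates Theorem \ref{tt1.1} on successive intervals and therefore needs the implicit constant to be universal; to obtain this you must exploit the dissipative/decay structure of the linearized capillary problem (as in \cite{Re25}), not merely smallness of the data, so this step should be made explicit rather than absorbed into ``closure for any fixed $T>0$.''
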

\begin{thm}[Local well-posedness without surface tension]\label{tt1.2}
  Let $N\geqslant3$ be an integer. Assume that the initial data $v_0, w_0$ and $\eta_0$ satisfy
  \begin{equation}\label{ee1.3}
  \|(v_0, w_0)\|^2_{4N}+|\eta_0|^2_{4N+1/2}<\infty,
  \end{equation}
  and some 2N-th order compatibility conditions. There exist $0<\delta_0, T_0<1$ such that if
  \begin{equation}\label{ee1.4}
  0<T\leqslant T_0\min\left\{1,\frac{1}{|\eta_0|^2_{4N+1/2}}\right\},
  \end{equation}
  and $\|(v_0,w_0)\|^2_{4N}+|\eta_0|^2_{4N}\leqslant\delta_0$, then there exists a unique solution $(v,w,p,\eta)$ to \eqref{1.9} on the interval $[0,T]$ that achieves the initial data. The solution obeys the estimates
  \begin{align}\label{ee1.5}
  &\sup_{0\leqslant t\leqslant T}\mathcal{E}_{2N}(t)+\int_{0}^{T}\mathcal{D}_{2N}(t)\ \ud t+\|\partial_t^{2N+1}v\|_{L^2_t(_{0}H^1)^*}+\|\partial_t^{2N+1}w\|_{L^2_t(H^1_0)^*}\nonumber\\
  \lesssim& \|(v_0,w_0)\|^2_{4N}+|\eta_0|^2_{4N}+T|\eta_0|^2_{4N+1/2},
  \end{align}
  and
  \begin{equation}\label{ee1.6}
    \sup_{0\leqslant t\leqslant T}|\eta|^2_{4N+1/2}\lesssim |v_0|^2_{4N}+(1+T)|\eta_0|^2_{4N+1/2}.
  \end{equation}
The solution is unique among the set
\begin{equation}\label{ee1.7}
  \left\{(v,w,p)|\sup_{0\leqslant t\leqslant T}\mathcal{E}_{2N}<\infty\right\}.
\end{equation}
Moreover, $\eta$ is a $C^{4N-2}$ diffeomorphism for each $t\in[0,T]$.

\end{thm}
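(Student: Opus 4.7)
The plan is to prove local well-posedness by a linearization--Picard iteration scheme in the spirit of \cite{Re20,Re26}, combined with parabolic energy estimates tailored to the two-level functional framework used later for the global theory. The main difficulty relative to the Navier--Stokes case of \cite{Re26} is the presence of the additional micro-rotation field $w$, which is governed by a fully coupled (through the $\kappa$-curl terms in $(1.9)_1$--$(1.9)_3$) but structurally parabolic system with a full Dirichlet condition on $\partial\Omega$; the main difficulty relative to the surface-tension case of Theorem \ref{tt1.1} is that, without surface tension, $\eta$ enjoys no parabolic smoothing and is transported only at the top spatial order, which forces $T$ to depend quantitatively on $|\eta_0|_{4N+1/2}$ as in \eqref{ee1.4}.

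Concretely, I would first freeze a candidate free surface $\tilde\eta$ with $\sup_{[0,T]}|\tilde\eta|_{4N+1/2}$ controlled and $\sup_{[0,T]}|\tilde\eta|_{4N}$ small, and define $\bar{\tilde\eta}=\mathcal P\tilde\eta$ and $\mathcal A, J, \mathcal N$ via \eqref{1.7}--\eqref{1.8}. With these frozen coefficients the equations for $(v,p)$ and for $w$ become two linear parabolic problems with variable coefficients: the first is a Stokes-type system solvable by Galerkin approximation in weakly divergence-free subspaces as in \cite{Re26}, and the second a Dirichlet parabolic system for $w$ handled by standard theory, with the quasilinear remainders placed on the right-hand side. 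Updating $\eta$ via the transport equation $\partial_t\eta=v_3-v_1\partial_1\eta-v_2\partial_2\eta$ then yields a map $\tilde\eta\mapsto\eta$ whose fixed point is the desired solution. The a priori estimates at the $2N$-th level would be obtained by differentiating \eqref{1.9} in $\partial_t^j$ for $j=0,\ldots,2N$, testing with $\partial_t^j v$ and $\partial_t^j w$, and using the $\mathcal A$-elliptic regularity of the stationary Stokes system together with the equations themselves to trade temporal for spatial derivatives; the pressure is controlled from the elliptic problem obtained by taking $\nabla_{\mathcal A}\cdot (1.9)_1$ together with the boundary condition $(1.9)_4$.

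The most delicate step, which produces the bound \eqref{ee1.6} and the restriction \eqref{ee1.4}, is the top-order estimate for $|\eta|_{4N+1/2}$. Applying $\dot\Lambda_h^{4N+1/2}$ to the kinematic condition $(1.9)_5$, pairing with $\dot\Lambda_h^{4N+1/2}\eta$ on $\Sigma_f$, and using product and commutator estimates in $H^{4N+1/2}(\Sigma_f)$ together with the trace inequality, one obtains a bound of the schematic form
\begin{equation*}
\frac{\ud}{\ud t}|\eta|^2_{4N+1/2}\lesssim \mathcal D_{2N}(t)^{1/2}\,|\eta|_{4N+1/2}+\mathcal E_{2N}(t)^{1/2}\,|\eta|^2_{4N+1/2}.
\end{equation*}
Since $\mathcal D_{2N}^{1/2}$ is only $L^2_t$-controlled, Gr\"onwall yields $\sup_{[0,T]}|\eta|^2_{4N+1/2}\lesssim|\eta_0|^2_{4N+1/2}+T|\eta_0|^2_{4N+1/2}+\int_0^T\mathcal D_{2N}$, which together with the energy estimate closes the loop once $T$ is as small as in \eqref{ee1.4}. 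The same smallness keeps the flattening map non-degenerate throughout the iteration and makes $\mathcal A-\mathbb I$ uniformly small, which is what guarantees the coercivity of the linearized Stokes form and the control of commutators with $\mathcal A$.

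Finally, uniqueness in the class \eqref{ee1.7} follows by a basic-level energy estimate on the difference of two putative solutions, admissible because of the higher regularity furnished by \eqref{ee1.5}; and the $C^{4N-2}$ diffeomorphism property of $\Phi$ follows from the Sobolev embedding $H^{4N}(\Omega)\hookrightarrow C^{4N-2}(\Omega)$ applied to $\bar\eta$, combined with the smallness of $|\eta|_{4N}$ that keeps $J=1+\bar\eta+b\partial_3\bar\eta$ bounded away from zero.
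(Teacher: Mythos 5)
The paper does not actually prove Theorem \ref{tt1.2}; it omits the proof, asserting it is ``similar to \cite{Re25,Re26,Re33}.'' Your sketch --- Picard iteration on a frozen surface with Galerkin-solved linear Stokes/parabolic subproblems, $2N$-level temporal energy estimates closed by $\mathcal A$-elliptic regularity, a top-order transport--Gr\"onwall estimate for $|\eta|_{4N+1/2}$ explaining the time restriction \eqref{ee1.4}, and uniqueness by a basic-level difference estimate --- is precisely the Guo--Tice scheme of \cite{Re26} adapted to the coupled micropolar field, i.e.\ exactly the route the paper points to, and I see no gap in it.
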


\begin{remark}
  All of the computations involved in the a priori estimates that we develop in this paper are justified by Theorem \ref{tt1.1} and \ref{tt1.2}. In this sense, Theorem \ref{tt1.1} and \ref{tt1.2} are the indispensable parts of the global analysis of \eqref{1.9}.
\end{remark}

\subsubsection{Global Well-Posedness}

With Theorem \ref{tt1.1} and \ref{tt1.2} in hand, we may have the following global well-posedness results.

\begin{thm}[Global well-posedness with surface tension]\label{t1.1}
  For $\sigma>0$, we assume that the initial data $u_0\in H^2(\Omega), w_0\in H^2(\Omega), \eta_0\in H^3(\Gamma_f)$ and satisfy some appropriate compatibility conditions as well as the zero-average condition. Then there exists a universal constant $\varepsilon>0$ such that, if
  \begin{align}\label{e1.1}
  \mathcal{E}(0)\leqslant\varepsilon,
  \end{align}
  then for all $t>0$, there exists a unique strong solution $(u,w,p,\eta)$ to system \eqref{1.9} satisfy the estimate
  \begin{equation}\label{e1.2}
    \sup_{t\geqslant0}e^{\lambda t}\mathcal{E}(t)+\int_{0}^{t}\mathcal{D}(\tau)\ud \tau\lesssim\mathcal{E}(0),
  \end{equation}
where $\lambda>0$ is a universal constant.
\end{thm}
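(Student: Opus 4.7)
\medskip
\noindent\textbf{Proof proposal.} The plan is a standard continuation-in-time argument built on top of the local well-posedness Theorem \ref{tt1.1}, driven by a single closed nonlinear differential inequality of the form
\begin{equation}
\frac{\ud}{\ud t}\mathcal{E}(t)+\mathcal{D}(t)\lesssim \sqrt{\mathcal{E}(t)}\,\mathcal{D}(t),
\end{equation}
combined with a coercivity estimate $\mathcal{E}(t)\lesssim\mathcal{D}(t)$ that is valid only when $\sigma>0$. Once these two estimates are in place, smallness of $\mathcal{E}(0)$ absorbs the right-hand side into the dissipation and yields $\frac{\ud}{\ud t}\mathcal{E}+\lambda\mathcal{E}\leqslant 0$, hence exponential decay and an a priori bound that forbids blow-up of $\mathcal{E}(t)$; the usual bootstrap then extends the local solution from Theorem \ref{tt1.1} to all of $[0,\infty)$.

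First, I would run the basic geometric energy identity obtained by testing $\eqref{1.9}_1$ with $Jv$ and $\eqref{1.9}_3$ with $Jw$, integrating by parts with the help of Lemma \ref{ll1.1} (the Piola identity handles the $\partial_t\bar\eta$ correction) and exploiting the boundary condition $\eqref{1.9}_4$ to produce precisely the terms $|\eta|_0^2+\sigma|D\eta|_0^2$ on the energy side. This gives control of $\|(v,w)\|_0^2+|\eta|_0^2+\sigma|\nabla_h\eta|_0^2$ in the energy and $\|\mathbb{D}v\|_0^2+\|w\|_1^2$ in the dissipation, matching the formal identity \eqref{eee1.3}. The cross coupling term $-2\kappa\nabla\times\omega$ in momentum cancels against $-2\kappa\nabla\times u$ in the angular equation, so the micropolar structure contributes positively. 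I would then apply $\partial_t$ to the system to obtain analogous estimates for $(\partial_tv,\partial_tw,\partial_t\eta,\partial_tp)$, commuting $\partial_t$ past the $\mathcal{A}$-operators and controlling the commutators by $\sqrt{\mathcal{E}}\,\mathcal{D}$.

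Next I would close the horizontal and vertical regularity. For horizontal derivatives up to order $2$ I would apply $\partial^\alpha$ with $\alpha\in\mathbb{N}^2$, $|\alpha|\leqslant 2$, to the system and run the same energy identity; the surface tension term on the free boundary generates $\sigma|D^{\alpha+1}\eta|_{1/2}^2$-type contributions, which is the mechanism that allows the dissipation to recover up to $|\eta|_{7/2}^2$ appearing in $\mathcal{D}$. Full spatial regularity of $(v,w,p)$ is then recovered not from energy estimates but by viewing the equations as a time-dependent Stokes and Lamé-type problem for $(v,p)$ and $w$ respectively, with data involving already-estimated temporal derivatives and the nonlinearities; elliptic regularity on the fixed domain $\Omega$ (applied to $\mathcal{A}$-dependent operators that are perturbations of the flat ones because $\bar\eta$ is small) then gives $\|v\|_3^2+\|p\|_2^2+\|w\|_3^2$ in dissipation and similarly the $H^2$ bounds in $\mathcal{E}$. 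The surface tension term also provides elliptic regularity for $\eta$ via the boundary condition $\eqref{1.9}_4$, yielding $|\eta|_{7/2}^2\lesssim|p|_{3/2}^2+\|v\|_3^2+\text{nonlinear}$.

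Then I would prove the coercivity $\mathcal{E}(t)\lesssim\mathcal{D}(t)$. This is where surface tension is essential: the boundary equation $\eqref{1.9}_4$ with $\sigma>0$ behaves like a fourth-order elliptic operator in $\eta$, so $|\eta|_3^2$ in $\mathcal{E}$ is dominated by $|\eta|_{7/2}^2$ in $\mathcal{D}$; Korn's inequality controls $\|v\|_1^2$ by $\|\mathbb{D}v\|_0^2$; the kinematic equation $\eqref{1.9}_5$ together with trace theory allows me to trade $|\partial_t\eta|_{3/2}^2$ for $\|v\|_2^2$; and $|\partial_t^2\eta|_{-1/2}^2$ follows similarly from $\partial_t\eqref{1.9}_5$ and the $\|\partial_tv\|_0^2$ dissipation. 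Together with standard interpolation these give $\mathcal{E}\lesssim\mathcal{D}$. Combined with the differential inequality and smallness, we obtain $\frac{\ud}{\ud t}\mathcal{E}+\lambda\mathcal{E}\leqslant 0$ for some universal $\lambda>0$, hence $\mathcal{E}(t)\leqslant e^{-\lambda t}\mathcal{E}(0)$ and $\int_0^t\mathcal{D}\leqslant C\mathcal{E}(0)$.

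The main obstacle, in my estimation, is not the algebra of the closed inequality but the careful bookkeeping of the $\mathcal{A}$- and $J$-commutators together with the boundary nonlinearities coming from $H(\eta)$ and $\mathcal{N}(t)$: each factor of $\bar\eta$ or its derivatives must be paid for either by $\sqrt{\mathcal{E}}$ (to obtain the small factor that beats $\mathcal{D}$ on the right) or by dissipative quantities already in $\mathcal{D}$, and one has to check this systematically for every commutator produced by differentiating the geometric system. The mean-curvature term $H(\eta)$, being quasilinear and fully nonlinear in $D\eta$, requires particular care when differentiated twice in time on the boundary, but the Sobolev embedding in $\mathbb{T}^2$ combined with the smallness assumption and the $|\eta|_{7/2}^2$ control in $\mathcal{D}$ makes the highest-order contributions manageable. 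Finally, standard continuation combined with Theorem \ref{tt1.1} upgrades the a priori estimate to the claimed global strong solution satisfying \eqref{e1.2}.
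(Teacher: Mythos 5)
Your proposal follows essentially the same route as the paper: energy identities for temporal derivatives in the geometric form and for horizontal spatial derivatives (the paper uses the perturbed linear form \eqref{e3.1} for the latter, but the effect is the same), elliptic Stokes-type estimates to recover full spatial regularity of $(v,w,p)$ and the $|\eta|_{7/2}$ control from the surface-tension boundary condition, the coercivity of the dissipation over the energy, and a Gronwall-plus-continuation argument on top of Theorem \ref{tt1.1}. One point your outline glosses over: when you apply $\partial_t$ and test against $J\partial_t v$, the perturbed divergence constraint produces the term $\int_{\Omega}\partial_tp\,F^{2,1}J$, and $\partial_tp$ is \emph{not} controlled by $\mathcal{D}$ (the dissipation contains only $\|p\|_2^2$, with no temporal derivative of the pressure), so this particular commutator term cannot be bounded by $\sqrt{\mathcal{E}}\,\mathcal{D}$ as you claim; the paper's device is to write it as $\frac{\ud}{\ud t}\int_{\Omega}pF^{2,1}J-\int_{\Omega}p\,\partial_t(F^{2,1}J)$ and absorb the first piece into the energy functional, which is why the Gronwall argument in Theorem \ref{t5.1} is run on $\mathcal{E}_h-\int_{\Omega}JpF^{2,1}$ rather than on $\mathcal{E}_h$ itself. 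With that correction incorporated, your argument is the paper's.
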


\begin{remark}
  Theorem \ref{t1.1} can be interpreted as an asymptotic stability result in the following way: the equilibria $(v,p,\eta)=(0,0,0)$ are asymptotically stable,
and the solutions return to the equilibrium exponentially fast.
\end{remark}

\begin{thm}[Global well-posedness without surface tension]\label{t1.2}
  Let $N\geqslant3$ be an integer. For $\sigma=0$, there exists constant $\varepsilon_0\ll1$ such that if the initial data $v_0, w_0$ and $\eta_0$ satisfy
  \begin{equation}\label{1.5.1}
    \|v_0\|^2_{4N}+\|w_0\|^2_{4N}+|\eta_0|_{4N+\frac{1}{2}}^2\leqslant\varepsilon_0,\quad \frac{1}{L_1L_2}\int_{\Gamma_f}\eta_0=0,
  \end{equation}
 and 2N-th order compatibility conditions (4.1),  then the system \eqref{1.9} admits a unique solution
$(v,w,p,\eta)$ on the interval $[0,\infty)$ and satisfying
  \begin{equation}\label{1.5.2}
  \mathfrak{E}(\infty)\lesssim\|v_0\|_{4N}^2+\|w_0\|_{4N}^2+|\eta_0|^2_{4N+\frac{1}{2}}.
  \end{equation}
\end{thm}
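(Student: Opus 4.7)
The plan is to extend the local solution of Theorem \ref{tt1.2} to $[0,\infty)$ by a standard continuation argument, once we establish an a priori bound of the form
\begin{equation*}
\mathfrak{E}(t)\le C\mathfrak{E}(0)+C\,\mathfrak{E}(t)^{3/2}
\end{equation*}
on the interval of existence. By \eqref{1.5.1} one has $\mathfrak{E}(0)\lesssim\varepsilon_0\ll1$, so continuity closes this polynomial inequality to give $\mathfrak{E}(t)\le 2C\mathfrak{E}(0)$ uniformly in $t$; combined with Theorem \ref{tt1.2} this produces the global solution and \eqref{1.5.2}. The a priori part follows the two-tier energy scheme of Guo--Tice \cite{Re27,Re26}, adapted to the micropolar coupling.

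The first ingredient is the \emph{high-tier} energy identity at level $2N$. Apply $\partial^\alpha$ with $|\alpha|\le 4N$ in the parabolic counting (so $\alpha_0\le 2N$) to the equations in \eqref{1.9}, test against $\partial^\alpha v$ and $\partial^\alpha w$, and integrate by parts using the boundary conditions $(1.9)_4$--$(1.9)_6$ together with Lemma \ref{ll1.1}. The outcome is a differential inequality
\begin{equation*}
\frac{\ud}{\ud t}\mathcal{E}_{2N}(t)+\mathcal{D}_{2N}(t)\lesssim\sqrt{\mathfrak{E}(t)}\,\mathcal{D}_{2N}(t)+\sqrt{\mathfrak{E}(t)}\,\frac{\mathfrak{F}(t)}{(1+t)^{4N-8}}.
\end{equation*}
Two micropolar-specific features are used here: the cross terms $-2\kappa\nabla_{\mathcal{A}}\times w$ and $-2\kappa\nabla_{\mathcal{A}}\times v$ cancel upon summation of the $v$- and $w$-equations, as in \eqref{eee1.3}, and the zeroth-order dissipation $4\kappa\|w\|_0^2$, together with Korn's inequality for $v$ and the Piola identity of Lemma \ref{ll1.1}, absorbs the lower-order contributions from $\mu\nabla_{\mathcal{A}}\nabla_{\mathcal{A}}\cdot w$. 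Normal spatial derivatives missing from the tangential estimate are recovered by interpreting $(v,w,p)$ as a solution of a generalized Stokes--Lam\'e system with forcing and applying elliptic regularity up to the free boundary, as in \cite{Re27}.

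The second ingredient is the \emph{low-tier} decay estimate. Repeating the argument at level $N+2$ yields $\tfrac{\ud}{\ud t}\mathcal{E}_{N+2}+\mathcal{D}_{N+2}\lesssim\sqrt{\mathfrak{E}}\,\mathcal{D}_{N+2}$; because $\sigma=0$ the surface norm $|\eta|^2$ is not dominated by $\mathcal{D}_{N+2}$, so a direct absorption $\mathcal{E}_{N+2}\lesssim\mathcal{D}_{N+2}$ fails. We close via the fractional interpolation $\mathcal{E}_{N+2}\le C\,\mathcal{D}_{N+2}^{\theta}\mathcal{E}_{2N}^{1-\theta}$ for a $\theta\in(0,1)$ chosen from the regularity gap between $\mathcal{E}_{N+2}$ and $\mathcal{D}_{N+2}$; substituting into the differential inequality gives an ODE of the form $\dot y+cy^{1/\theta}\le 0$, hence $\mathcal{E}_{N+2}(t)\lesssim(1+t)^{-(4N-8)}\mathfrak{E}(0)$. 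The top-order surface quantity $\mathfrak{F}(t)=|\eta|^2_{4N+1/2}$ is controlled separately: applying $\Lambda_h^{4N+1/2}$ to the kinematic transport equation $(1.9)_5$ yields $\tfrac{\ud}{\ud t}\mathfrak{F}\lesssim\sqrt{\mathfrak{E}}\,\mathfrak{F}+\mathcal{D}_{2N}$, and a Gr\"onwall step then gives the admissible growth $\mathfrak{F}(t)/(1+t)^{4N-9}\lesssim\mathfrak{E}(0)$ recorded in the definition of $\mathfrak{E}$.

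The main difficulty is the simultaneous presence of the loss of surface regularity (forced by $\sigma=0$, which removes the elliptic smoothing of $\eta$ carried by $(1.9)_4$ when $\sigma>0$) and the geometric nonlinearities: every differential operator in \eqref{1.9} is built from $\mathcal{A}(\bar\eta)$ and $J(\bar\eta)$, so at top order the commutators $[\partial^\alpha,\mathcal{A}]$ and $[\partial^\alpha,J^{-1}]$ must be arranged so that the highest $\eta$-norm $\mathfrak{F}$ enters linearly and is paired with a factor that can be forced to decay by the low-tier bound. The Poincar\'e inequality on $\Sigma_f$, available for all $t$ by the zero-average conservation \eqref{eee1.2}, is essential in this balancing. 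The micropolar modifications beyond \cite{Re27} are then bookkeeping-level, once the curl-cancellation between the $v$- and $w$-equations is exploited to neutralize the mixed terms of order comparable to the leading dissipation.
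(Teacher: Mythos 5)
Your proposal follows essentially the same two-tier scheme as the paper: a bounded high-tier estimate at level $2N$ in which the top-order surface term is balanced against the time weight, algebraic decay of $\mathcal{E}_{N+2}$ via the interpolation $\mathcal{E}_{N+2}\lesssim\mathcal{D}_{N+2}^{\theta}\mathcal{E}_{2N}^{1-\theta}$ and the resulting ODE, controlled polynomial growth of $\mathfrak{F}$ from the transport equation, and closure of $\mathfrak{E}\lesssim\mathfrak{E}(0)+\mathfrak{E}^{\text{power}>1}$ followed by a continuity argument. The paper organizes the energy evolution through horizontal energies (geometric form for temporal derivatives, perturbed linear form for tangential ones) plus Stokes-type comparison estimates, but at the level of detail you give this is the same argument.

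One step, as literally written, would fail: from $\frac{\ud}{\ud t}\mathfrak{F}\lesssim\sqrt{\mathfrak{E}}\,\mathfrak{F}+\mathcal{D}_{2N}$ with the \emph{constant} coefficient $\sqrt{\mathfrak{E}}$, Gr\"onwall yields $\mathfrak{F}(t)\lesssim e^{c\sqrt{\mathfrak{E}}\,t}\bigl(\mathfrak{F}(0)+\int_0^t\mathcal{D}_{2N}\bigr)$, which outgrows $(1+t)^{4N-9}$ no matter how small $\varepsilon_0$ is. The correct inequality is $\frac{\ud}{\ud t}\mathfrak{F}\lesssim\sqrt{\mathcal{M}}\,\mathfrak{F}+\sqrt{\mathcal{D}_{2N}\mathfrak{F}}$, and the point of the two-tier coupling is precisely that $\sqrt{\mathcal{M}}$ decays (by the $N+2$ level bound, $(1+t)\sqrt{\mathcal{M}}\lesssim\sqrt{\mathfrak{E}}$), so that after multiplying by $(1+t)^{9-4N}$ the transport term is absorbed into the coefficient $(4N-9)\mathfrak{F}(1+t)^{8-4N}$ generated by differentiating the weight; the admissible growth then comes from the weight, not from Gr\"onwall. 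You clearly have this mechanism in mind (your closing paragraph says the factor multiplying $\mathfrak{F}$ "can be forced to decay by the low-tier bound"), but the displayed inequality must carry the decaying coefficient for the argument to close.
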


\begin{remark}
The decay of $\mathcal{E}_{N+2}$ implies that
\begin{equation}\label{ee1.5.3}
\sup_{t\geqslant0}(1+t)^{4N-8}\left(\|(v,w)\|^2_{2N+4}+|\eta|^2_{2N+4}\right)\leqslant C.
\end{equation}
Since $N$ may be taken to be arbitrarily large, this decay result can be regarded as an “almost exponential” decay rate. By comparing the results of Theorem \ref{t1.1} and \ref{t1.2}, we can observe that surface tension indeed enhances the decay rate of the solution.
\end{remark}

\begin{remark}
  Compared to Theorem 2.2 in \cite{Re33}, thanks to the antisymmetric structure of the three-dimensional micropolar equations, we no longer need to impose any additional restrictions on the viscosity coefficients.
\end{remark}

\begin{remark}
   The surface function $\eta$ is sufficiently small to guarantee that the
mapping $\Phi(x,t)$, defined in \eqref{1.6}, is a diffeomorphism for each $t\geq0$. As such, we
may change coordinates to $y\in\Omega(t)$ to produce a global-in-time, decaying solution to \eqref{1.3} for both the cases with surface tension and without surface tension.
\end{remark}

\subsection{Summary of Methods}

Since surface tension has a significant impact on energy estimates, we will apply different methods to investigate the cases with and without surface tension, i.e. $\sigma>0$ and $\sigma=0$.
Firstly, we note that surface tension has an enhancing effect on both energy and dissipation. Motivated by Kim and Tice \cite{Re34}, our analysis will employ a nonlinear energy method based on a higher-regularity modification of the basic energy-
dissipation equation \eqref{eee1.3} for solutions to investigate \eqref{1.9} in the case with surface tension. Secondly, the absence of surface tension reduces the regularity of $\eta$, making it impossible to achieve closed energy estimates.
Hence we do not use directly the nonlinear energy method to get the global well-posedness. Motivated by Guo and Tice \cite{Re28}, we will use two-tier energy method to overcome this difficulty.
Below we will summarize the steps needed to implement the methods.

Strictly speaking, the validity of equation \eqref{eee1.3} is closely related to the boundary conditions. Furthermore, the choice of the domain can ensure that horizontal derivatives (i.e.temporal derivatives and spatial tangential derivatives) do not affect the boundary conditions. Therefore, we first establish the energy estimate for the horizontal derivatives. In the case with surface tension, we will get estimates for one temporal and up to two spatial horizontal derivatives. This choice comes from the parabolic scaling of the Navier–Stokes equations. For the case without surface tension, we will conduct more complex energy estimates.
We may divide the energy estimates into two categories based on the order of derivatives: low-order energy estimates and high-order energy estimates. Subsequently, low-order and high-order energy estimates for horizontal derivatives can be conducted, separately.

Since $\nabla_{\mathcal{A}}$ cannot commute with the horizontal derivatives, we do not directly apply \eqref{eee1.3}. Therefore, we need to provide the linearized form of the equation.  At the same time, additional nonlinear terms will emerge. Roughly speaking, this leads us to obtain the energy equation
\begin{align}\label{e1.5.5}
 &\frac{\ud}{\ud t}E_h+D_h=\mathcal{I},
\end{align}
where $E_h$ and $D_h$ are the “horizontal” energy and dissipation, respectively.  $\mathcal{I}$ denotes the nonlinear interaction term.  It is worth mentioning that when constructing the energy estimates for the temporal derivatives and spatial tangential derivatives, we will adopt different strategies. To deal with temporal derivative estimates, we will utilize the geometric structure of the equation, which plays a crucial role in closing the energy estimates for the temporal derivatives.  On the other hand, it is more convenient to use linear structure of \eqref{1.9} to investigate spatial derivatives estimates.

Next, in order to ensure the energy estimate is closed, we must make $\mathcal{I}$ manageable within our functional framework. To this end, we hope  $\mathcal{I}$ can be absorbed into the “horizontal” dissipation on the left-hand
side of \eqref{e1.5.5}.
When focusing on the estimates of the nonlinear terms, we find that $\mathcal{I}$  indeed cannot be controlled solely by the “horizontal” energy and dissipation. As a compromise, roughly speaking, in the case with surface tension, we seek to prove that 
\begin{equation}\label{e1.5.6}
  |\mathcal{I}|\lesssim\sqrt{E}D,
\end{equation}
and in the case without surface tension, we hope to derive that
\begin{equation}\label{ee1.5.6}
  |\mathcal{I}|\lesssim\sqrt{E}D+\sqrt{\mathfrak{F}\mathcal{M}D},
\end{equation}
where $E$ and $D$ are the full energy and dissipation, the definitions of $\mathfrak{F}$ and $\mathcal{M}$  are given by \eqref{1.5.5} and \eqref{e.1}, respectively.

 The next step is to show that,  at least within the small energy framework, there is a mutual control relationship between the horizontal energy and the full energy, as well as between the horizontal dissipation and the full dissipation. More precisely,  we will show that
\begin{equation}\label{e1.5.7}
E_h\lesssim E\lesssim E_h,\ D\lesssim D\lesssim D_h\ \mbox{in the case with surface tension},
\end{equation}
and
\begin{equation}\label{ee1.5.7}
E_h\lesssim E\lesssim E_h,\ D_h\lesssim D\lesssim D_h+\mathfrak{F}\mathcal{M}\ \mbox{in the case without surface tension}.
\end{equation}
To this end, we focus on the elliptic structure of the equations.  Applying Stokes estimates and auxiliary estimates, we can obtain the desired results. 
It is worth mentioning that temporal derivatives and spatial tangential derivatives play different roles. Specifically, the spatial tangential derivatives combined with Dirichlet boundary conditions are key to decoupling certain bulk estimates for $v, w$ and $p$ from estimates for $\eta$.

Finally, we will establish the global well-posedness and decay of the solution. The core of the proof lies in constructing appropriate a priori estimates. To handle the cases with and without surface tension, we will adopt different strategies. In the case with surface tension, by combining the results obtained from the above steps, we can derive the a priori estimates. More specifically, by \eqref{e1.5.5}, \eqref{e1.5.6} and \eqref{e1.5.7}, one has
\begin{equation}\label{e1.5.8}
\frac{\ud}{\ud t}E_h+\lambda E_h\leqslant0\quad \mbox{and} \int_{0}^{T}D(t)\ \ud t\lesssim E(0).
\end{equation}
Based on the equivalence of $E_h$ and $E$, \eqref{e1.5.8} tells us that the energy decays exponentially, while the dissipation is integrable. Then we turn to the case without surface tension. To achieve energy closure, as shown by \eqref{ee1.5.6} and \eqref{ee1.5.7}, we must use the decay of $\mathcal{M}$ to balance the growth of $\mathfrak{F}$. For this, we employ the two-tier energy method.  
Thus, we can obtain 
$$\mathfrak{E}(t)\leqslant C,$$
which implies that the higher-order energy is bounded, while the lower-order energy decays polynomially at an arbitrary rate.

\section{Energy Estimates for Geometric Form}
\subsection{Natural Energy Estimate}
We apply $\partial_t^{\alpha}$ into \eqref{1.9} with $\alpha=0,\cdots,N$. Let $S_{\mathcal{A}}(p,v)=p\mathbb{I}-\mathbb{D}_{\mathcal{A}}(v)$ for the stress tensor, we can get the equations satisfied by $(\partial_t^{\alpha}v, \partial_t^{\alpha}w, \partial_t^{\alpha}\eta)$ as follows:
\begin{equation}\label{2.1}
  \begin{cases}
    \partial_t^{\alpha+1}v-bK\partial_t\bar{\eta}\partial_3\partial_t^{\alpha}v+v\cdot\nabla_{\mathcal{A}}\partial^{\alpha}_tv+\mbox{div}_{\mathcal{A}}S_{\mathcal{A}}(\partial_t^{\alpha}p, \partial_t^{\alpha}v)-2\kappa\nabla_{\mathcal{A}}\times \partial^{\alpha}_tw=F^{1,\alpha}, & \mbox{in}\ \Omega,\\
    \mbox{div}_{\mathcal{A}}\partial_t^{\alpha}v=F^{2,\alpha}, & \mbox{in}\ \Omega,\\
    \partial_t^{\alpha+1}w-bK\partial_t\bar{\eta}\partial_3\partial_t^{\alpha}w+v\cdot\nabla_{\mathcal{A}}\partial^{\alpha}_tw-\gamma\Delta_{\mathcal{A}}\partial^{\alpha}_tw+4\kappa\partial^{\alpha}_tw\\
    \qquad\qquad\qquad\qquad\qquad\qquad\qquad-\mu\nabla_{\mathcal{A}}\nabla_{\mathcal{A}}\cdot\partial^{\alpha}_tw-2\kappa\nabla_{\mathcal{A}}\times \partial^{\alpha}_tv=F^{3,\alpha}, & \mbox{in}\ \Omega,\\
    S_{\mathcal{A}}(\partial^{\alpha}_{t}p, \partial^{\alpha}_{t}v)\mathcal{N}=\left(\partial^{\alpha}_{t}\eta-\sigma \Delta_h\partial_t^{\alpha}\eta\right)\mathcal{N}+F^{4,\alpha}, &\mbox{in}\ \Omega,\\
   \partial^{\alpha+1}_{t}\eta=\partial^{\alpha}_{t}v\cdot\mathcal{N}+F^{5,\alpha}, &\mbox{in}\ \Omega,\\
    \partial^{\alpha}_{t}v|_{\Gamma_b}=0,\quad \partial^{\alpha}_{t}w|_{\partial\Omega}=0.
  \end{cases}
\end{equation}
where
\begin{align}\label{2.2}
  F^{1,\alpha}=&[\partial^{\alpha}_t,bK\partial_t\bar{\eta}]\partial_3v-[\partial^{\alpha}_t,v\cdot\nabla_{\mathcal{A}}]v-[\partial^{\alpha}_t,\mbox{div}_{\mathcal{A}}S_{\mathcal{A}}](p,v)+[\partial^{\alpha}_t,2\kappa\nabla_{\mathcal{A}}\times]w,\\
  F^{2,\alpha}=&-[\partial_t^{\alpha},\mbox{div}_{\mathcal{A}}]v,\\
  F^{3,\alpha}=&[\partial^{\alpha}_t,bK\partial_t\bar{\eta}]\partial_3w-[\partial^{\alpha}_t,v\cdot\nabla_{\mathcal{A}}]w+[\partial^{\alpha}_t,\gamma\Delta_{\mathcal{A}}]w+[\partial^{\alpha}_t,2\kappa\nabla_{\mathcal{A}}\times]v\nonumber\\
  &+\mu[\partial_t^{\alpha},\nabla_{\mathcal{A}}\nabla_{\mathcal{A}}\cdot]w,\\
  F^{4,\alpha}=&[\partial^{\alpha}_t,\mathcal{N}]\eta-\partial^{\alpha}_t\left[S_{\mathcal{A}}(p,v)\mathcal{N}\right]+S_{\mathcal{A}}(\partial^{\alpha}_tp,\partial^{\alpha}_tv)\mathcal{N}-\sigma \partial_t^{\alpha}(H\mathcal{N})+\sigma \Delta_h\partial_t^{\alpha}\eta\mathcal{N},\\
  F^{5,\alpha}=&[\partial^{\alpha}_t, \mathcal{N}\cdot]v.
\end{align}

Now we record the natural energy evolution associated to solutions $\partial_t^{\alpha}v,\partial_t^{\alpha}w,\partial_t^{\alpha}p,\partial_t^{\alpha}\eta$ of the geometric form equations \eqref{2.1}. Let $(v^{\alpha},w^{\alpha},\eta^{\alpha},p^{\alpha}):=(\partial_t^{\alpha}v,\partial_t^{\alpha}w,\partial_t^{\alpha}\eta,\partial_t^{\alpha}p)$, then we have the following lemma.
\begin{lemma}\label{l2.1}
  Suppose that $(v,w,p,\eta)$ are given solutions to \eqref{1.9}. Suppose $(v^{\alpha},w^{\alpha},p^{\alpha},\eta^{\alpha})$ solve \eqref{2.1}. Then
  \begin{align}\label{2.3}
     &\frac{1}{2}\frac{\ud}{\ud t}\left(\int_{\Omega}J\left(|v^{\alpha}|^2+|w^{\alpha}|^2\right)+\int_{\Gamma_f}\left(|\eta^{\alpha}|^2+\sigma|\nabla_h\eta^{\alpha}|^2\right)\right)\nonumber\\
     &+\int_{\Omega}J\left(\frac{\mu+\kappa}{2}|\mathbb{D}_{\mathcal{A}}v^{\alpha}|^2+4\kappa|w^{\alpha}|^2+\gamma|\nabla_{\mathcal{A}}w^{\alpha}|^2+\mu|\nabla_{\mathcal{A}}\cdot w^{\alpha}|^2\right) \nonumber\\
    =& \int_{\Omega}J\left(F^{1,\alpha}\cdot v^{\alpha}+F^{2,\alpha}p^{\alpha}+F^{3,\alpha}\cdot w^{\alpha}\right)+\int_{\Gamma_{f}}\left[\left(\eta^{\alpha}-\sigma \Delta_h\eta^{\alpha}\right)F^{5,\alpha}-F^{4,\alpha}\cdot v^{\alpha}\right].
  \end{align}
\end{lemma}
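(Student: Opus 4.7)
The plan is to derive the identity by pairing each equation in \eqref{2.1} against its natural multiplier with respect to the pulled-back measure $J\,\ud x$. Concretely, I would take the $L^2(\Omega;\,J\,\ud x)$ inner product of \eqref{2.1}$_1$ with $v^{\alpha}$, of \eqref{2.1}$_2$ with $p^{\alpha}$, and of \eqref{2.1}$_3$ with $w^{\alpha}$, and sum the three. The Piola identity $\partial_k(J\mathcal{A}_{jk})=0$ from Lemma \ref{ll1.1} is what makes the $\nabla_{\mathcal{A}}$ integration by parts against the weight $J$ clean, and the companion identity $J\mathcal{A}e_3=\mathcal{N}$ on $\Gamma_f$ is what converts interior derivative terms into the surface contributions that must match the free-boundary conditions \eqref{2.1}$_4$ and \eqref{2.1}$_5$.

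The principal computational steps are: \emph{(i)} combine $\partial_t^{\alpha+1}v$ with the transport correction $-bK\partial_t\bar{\eta}\,\partial_3 v^{\alpha}$ and the implicit $\partial_t J$ coming from differentiating the weight $J\,\ud x$, via a Reynolds-type identity, to obtain $\tfrac{1}{2}\tfrac{\ud}{\ud t}\int_{\Omega}J|v^{\alpha}|^2$, and analogously for $w^{\alpha}$; \emph{(ii)} show that the convective terms $\int_{\Omega}J(v\cdot\nabla_{\mathcal{A}})v^{\alpha}\cdot v^{\alpha}$ and $\int_{\Omega}J(v\cdot\nabla_{\mathcal{A}})w^{\alpha}\cdot w^{\alpha}$ vanish after one Piola IBP, using $\nabla_{\mathcal{A}}\cdot v=0$ and the boundary conditions; \emph{(iii)} perform a Stokes-type IBP on $\int_{\Omega}J\,\mbox{div}_{\mathcal{A}}S_{\mathcal{A}}(p^{\alpha},v^{\alpha})\cdot v^{\alpha}$, producing the bulk dissipation $\tfrac{\mu+\kappa}{2}\int_{\Omega}J|\mathbb{D}_{\mathcal{A}}v^{\alpha}|^2$, a pressure--divergence pairing $-\int_{\Omega}Jp^{\alpha}(\nabla_{\mathcal{A}}\cdot v^{\alpha})$ that cancels against $\int_{\Omega}JF^{2,\alpha}p^{\alpha}$ via \eqref{2.1}$_2$, and a surface trace $\int_{\Gamma_f}S_{\mathcal{A}}(p^{\alpha},v^{\alpha})\mathcal{N}\cdot v^{\alpha}$; \emph{(iv)} use \eqref{2.1}$_4$ to rewrite this trace as $\int_{\Gamma_f}(\eta^{\alpha}-\sigma\Delta_h\eta^{\alpha})\mathcal{N}\cdot v^{\alpha}+\int_{\Gamma_f}F^{4,\alpha}\cdot v^{\alpha}$, then apply \eqref{2.1}$_5$ to replace $\mathcal{N}\cdot v^{\alpha}$ by $\partial_t\eta^{\alpha}-F^{5,\alpha}$, and integrate $-\sigma\Delta_h\eta^{\alpha}\partial_t\eta^{\alpha}$ by horizontal parts, so that the surface energy $\tfrac{1}{2}\tfrac{\ud}{\ud t}\int_{\Gamma_f}(|\eta^{\alpha}|^2+\sigma|\nabla_h\eta^{\alpha}|^2)$ assembles on the left and the $F^{4,\alpha}$, $F^{5,\alpha}$ contributions appear on the right; \emph{(v)} integrate by parts $-\gamma\Delta_{\mathcal{A}}w^{\alpha}$ and $-\mu\nabla_{\mathcal{A}}\nabla_{\mathcal{A}}\cdot w^{\alpha}$ against $w^{\alpha}$ to produce $\gamma\int J|\nabla_{\mathcal{A}}w^{\alpha}|^2$ and $\mu\int J|\nabla_{\mathcal{A}}\cdot w^{\alpha}|^2$ (no surface trace, since $w^{\alpha}|_{\partial\Omega}=0$), and collect $4\kappa\int J|w^{\alpha}|^2$ from the zero-order term.

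The main obstacle is the Reynolds-type identity in step (i): arranging $\partial_t J$, the geometric transport $bK\partial_t\bar{\eta}\partial_3$, and the Piola identity so that the time-derivative contributions collapse exactly to $\tfrac{1}{2}\tfrac{\ud}{\ud t}\int_{\Omega}J|v^{\alpha}|^2$ with no residual boundary piece on $\Gamma_f$ is the precise geometric reason the equations are recast as \eqref{1.9}. A second delicate point is the handling of the antisymmetric cross-coupling $-2\kappa\nabla_{\mathcal{A}}\times w^{\alpha}$ in \eqref{2.1}$_1$ and $-2\kappa\nabla_{\mathcal{A}}\times v^{\alpha}$ in \eqref{2.1}$_3$: a Piola IBP together with $w^{\alpha}|_{\partial\Omega}=0$ (which kills the $v^{\alpha}\times w^{\alpha}$ boundary trace) is used to reorganize these contributions consistently with the dissipation structure on the left-hand side, and this is precisely the three-dimensional antisymmetric structure highlighted in the remark following Theorem \ref{t1.2}. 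All remaining discrepancies coming from $\partial_t^{\alpha}$ failing to commute with $\nabla_{\mathcal{A}}$, $\mathcal{N}$, and the transport are, by construction, the commutators collected into $F^{1,\alpha},\ldots,F^{5,\alpha}$, and they appear unchanged on the right-hand side of \eqref{2.3}.
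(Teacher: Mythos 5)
Your outline coincides with the paper's proof in all essentials: the paper likewise pairs $\eqref{2.1}_1$ and $\eqref{2.1}_3$ with $Jv^{\alpha}$ and $Jw^{\alpha}$, uses the Piola identity and $J\mathcal{A}e_3=\mathcal{N}$ to run the Reynolds-type computation that collapses $\partial_tJ$, the transport correction and the convection onto $\tfrac{1}{2}\tfrac{\ud}{\ud t}\int_{\Omega}J|v^{\alpha}|^2$ (and similarly for $w^{\alpha}$), and performs exactly your Stokes-type integration by parts followed by the substitutions from $\eqref{2.1}_2$, $\eqref{2.1}_4$, $\eqref{2.1}_5$. Your extra pairing of $\eqref{2.1}_2$ with $p^{\alpha}$ is just a repackaging of the paper's replacement $\mbox{div}_{\mathcal{A}}v^{\alpha}=F^{2,\alpha}$ inside the pressure term; note that this term is not ``cancelled'' but survives as $\int_{\Omega}JF^{2,\alpha}p^{\alpha}$ on the right of \eqref{2.3}.

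The one step you leave too vague is the cross-coupling. For \eqref{2.3} to hold, the quantity
\begin{equation*}
IV:=\int_{\Omega}\left(-2\kappa\,\nabla_{\mathcal{A}}\times w^{\alpha}\cdot Jv^{\alpha}-2\kappa\,\nabla_{\mathcal{A}}\times v^{\alpha}\cdot Jw^{\alpha}\right)
\end{equation*}
must vanish identically: no cross term and no alteration of the dissipation appears in \eqref{2.3}, so ``reorganizing these contributions consistently with the dissipation structure'' does not establish the identity as stated. The signs here are genuinely delicate: the Piola integration by parts (with the boundary contribution killed by $w^{\alpha}|_{\partial\Omega}=0$) yields $\int_{\Omega}J\,\nabla_{\mathcal{A}}\times w^{\alpha}\cdot v^{\alpha}=\int_{\Omega}J\,\nabla_{\mathcal{A}}\times v^{\alpha}\cdot w^{\alpha}$, i.e.\ the two volume integrals are \emph{equal} rather than opposite, so the asserted cancellation is not automatic from integration by parts alone (the paper disposes of it in one line ``by antisymmetry''). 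You need to write this computation out explicitly and verify the exact cancellation; as it stands, this is the only step of your proposal that does not visibly produce the right-hand side of \eqref{2.3}.
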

\begin{proof}
Let us multiply the first and the third equation of \eqref{2.1} with $Jv^{\alpha}$ and $Jw^{\alpha}$ , respectively, integrate
over $\Omega$ and sum the two resultant equations to get that
$$I+II+III+IV=V,$$
where
\begin{align}\label{2.4}
  I:=&\int_{\Omega}\left( \partial_tv^{\alpha}-bK\partial_t\bar{\eta}\partial_3v^{\alpha}+v\cdot\nabla_{\mathcal{A}}v^{\alpha}\right)\cdot Jv^{\alpha}, \\
  II:=&\int_{\Omega}\left(\partial_tw^{\alpha}-bK\partial_t\bar{\eta}\partial_3w^{\alpha}+v\cdot\nabla_{\mathcal{A}}w^{\alpha}\right)\cdot Jw^{\alpha},  \\
  III:=&\int_{\Omega}\mbox{div}_{\mathcal{A}}S_{\mathcal{A}}(p^{\alpha},v^{\alpha})\cdot Jv^{\alpha}\nonumber\\
  &+\int_{\Omega}\left(-\gamma\Delta_{\mathcal{A}}w^{\alpha}+4\kappa w^{\alpha}-\mu\nabla_{\mathcal{A}}\nabla_{\mathcal{A}}\cdot w^{\alpha}\right)\cdot Jw^{\alpha},\\
  IV:=&\int_{\Omega}\left(-2\kappa\nabla_{\mathcal{A}}\times w^{\alpha}\cdot Jv^{\alpha}-2\kappa\nabla_{\mathcal{A}}\times v^{\alpha}\cdot Jw^{\alpha}\right),\\
   V:=&\int_{\Omega}J\left(F^{1,\alpha}\cdot v^{\alpha}+F^{3,\alpha}\cdot w^{\alpha}\right).
\end{align}
Using Lemma \ref{ll1.1} (i.e. the Piola identity, $J\mathcal{A}e_3=\mathcal{N}$) and integration by parts, we can deduce that
\begin{align} \label{e2.3}
I:=&\frac{1}{2}\frac{\ud}{\ud t}\int_{\Omega}J|v^{\alpha}|^2-\frac{1}{2}\int_{\Omega}|v^{\alpha}|^2\partial_tJ-\int_{\Omega}b\partial_t\bar{\eta}\partial_3v^{\alpha}v^{\alpha}\ud x+\int_{\Omega}(v\cdot \nabla_{\mathcal{A}})v^{\alpha}Jv^{\alpha}\nonumber\\
	=&\frac{1}{2}\frac{\ud}{\ud t}\int_{\Omega}J|v^{\alpha}|^2-\frac{1}{2}\int_{\Omega}|v^{\alpha}|^2\partial_tJ-\int_{\Gamma_f}\frac{1}{2}\partial_t\eta|v^{\alpha}|^2+\frac{1}{2}\int_{\Omega}|v^{\alpha}|^2\partial_3(b\partial_t\bar{\eta})\nonumber\\
	&+\frac{1}{2}\int_{\Gamma_{f}}|v^{\alpha}|^2v\cdot \mathcal{N}-\frac{1}{2}\int_{\Omega}J\mbox{div}_{\mathcal{A}}v|v^{\alpha}|^2.
\end{align}
By
\begin{equation}\label{e2.4}
J=1+\bar{\eta}+b\partial_3\bar{\eta}, \quad \partial_t\eta=v\cdot\mathcal{N},\quad \mbox{div}_{\mathcal{A}}v=0,
\end{equation}
one has
\begin{align}\label{e2.5}
  -\frac{1}{2}\int_{\Omega}\partial_tJ|v^{\alpha}|^2=-\frac{1}{2}\int_{\Omega}|v^{\alpha}|^2(\partial_t\bar{\eta}+b\partial_t\partial_3\bar{\eta})=-\frac{1}{2}\int_{\Omega}|v^{\alpha}|^2\partial_3(b\partial_t\bar{\eta}).
\end{align}
Combining \eqref{e2.3},\eqref{e2.4} with \eqref{e2.5}, we can deduce
\begin{equation}\label{2.5}
I=\frac{1}{2}\frac{\ud}{\ud t}\int_{\Omega}J|v^{\alpha}|^2.
\end{equation}
The same method can be applied to obtain
\begin{equation}\label{2.6}
  II=\frac{1}{2}\frac{\ud}{\ud t}\int_{\Omega}J|w^{\alpha}|^2.
\end{equation}
We now deal with $III$. Let
\begin{align}\label{e2.6}
  \Delta_1=&\int_{\Omega}\mbox{div}_{\mathcal{A}}S_{\mathcal{A}}(p^{\alpha}, v^{\alpha})\cdot Jv^{\alpha}.
\end{align}
Integration by parts yields
\begin{align}\label{e2.7}
 \Delta_1=& \int_{\Gamma_f}S_{\mathcal{A}}(p^{\alpha}, v^{\alpha})\mathcal{N}\cdot v^{\alpha}-\int_{\Omega}JS_{\mathcal{A}}(p^{\alpha}, v^{\alpha}):\nabla_{\mathcal{A}} v^{\alpha}\nonumber\\
 =& \int_{\Gamma_f}S_{\mathcal{A}}(p^{\alpha}, v^{\alpha})\mathcal{N}\cdot v^{\alpha}-\int_{\Omega}Jp^{\alpha}\mbox{div}_{\mathcal{A}}v^{\alpha}+\frac{\mu+\kappa}{2}\int_{\Omega}J|\mathbb{D}_{\mathcal{A}}v^{\alpha}|^2.
 \end{align}
By the second, forth and fifth equations in \eqref{2.1}, we can deduce
\begin{align}\label{e2.8}
 \Delta_1=&\int_{\Gamma_f}\left[\left(\eta^{\alpha}-\sigma\Delta_h \eta^{\alpha}\right)\mathcal{N}+F^{4,\alpha}\right]\cdot v^{\alpha}-\int_{\Omega}JF^{2,\alpha}p^{\alpha}+\frac{\mu+\kappa}{2}\int_{\Omega}J|\mathbb{D}_{\mathcal{A}}v^{\alpha}|^2\nonumber\\
  =&\int_{\Gamma_f}\left(\eta^{\alpha}-\sigma\Delta_h \eta^{\alpha}\right)\left(\partial_t\eta^{\alpha}-F^{5,\alpha}\right)+\int_{\Gamma_f}F^{4,\alpha}\cdot v^{\alpha}-\int_{\Omega}JF^{2,\alpha}p^{\alpha}+\frac{\mu+\kappa}{2}\int_{\Omega}J|\mathbb{D}_{\mathcal{A}}v^{\alpha}|^2\nonumber\\
  =&\frac{1}{2}\frac{\ud}{\ud t}\int_{\Gamma_f}\left(|\eta^{\alpha}|^2+\sigma|\nabla_h\eta^{\alpha}|^2\right)+\frac{\mu+\kappa}{2}\int_{\Omega}J|\mathbb{D}_{\mathcal{A}}v^{\alpha}|^2+\int_{\Gamma_f}F^{4,\alpha}\cdot v^{\alpha}\nonumber\\
  &-\int_{\Gamma_f}F^{5,\alpha}\left(\eta^{\alpha}-\sigma\Delta_h \eta^{\alpha}\right)-\int_{\Omega}JF^{2,\alpha}p^{\alpha}.
\end{align}
Let
\begin{align}\label{r2.6}
  \Delta_2=\int_{\Omega}\left(-\gamma\Delta_{\mathcal{A}}w^{\alpha}+4\kappa w^{\alpha}-\mu\nabla_{\mathcal{A}}\nabla_{\mathcal{A}}\cdot w^{\alpha}\right)\cdot Jw^{\alpha}.
\end{align}
Integration by parts yields
\begin{align}\label{r2.7}
  \Delta_2=&4\kappa\int_{\Omega}J|w^{\alpha}|^2+\gamma\int_{\Omega}J|\nabla_{\mathcal{A}}w^{\alpha}|^2+\mu\int_{\Omega}J|\nabla_{\mathcal{A}}\cdot w^{\alpha}|^2.
\end{align}
Hence, combining \eqref{e2.8} with \eqref{r2.7}, we know that
\begin{align}\label{2.7}
  III=&\frac{1}{2}\frac{\ud}{\ud t}\int_{\Gamma_f}\left(|\eta^{\alpha}|^2+\sigma|\nabla_h\eta^{\alpha}|^2\right)+\frac{\mu+\kappa}{2}\int_{\Omega}J|\mathbb{D}_{\mathcal{A}}v^{\alpha}|^2+\int_{\Gamma_f}F^{4,\alpha}\cdot v^{\alpha}\nonumber\\
  &-\int_{\Gamma_f}F^{5,\alpha}\left(\eta^{\alpha}-\sigma\Delta_h\eta^{\alpha}\right)-\int_{\Omega}JF^{2,\alpha}p^{\alpha}
  +4\kappa\int_{\Omega}J|w^{\alpha}|^2\nonumber\\
  &+\gamma\int_{\Omega}J|\nabla_{\mathcal{A}}w^{\alpha}|^2+\mu\int_{\Omega}J|\nabla_{\mathcal{A}}\cdot w^{\alpha}|^2.
\end{align}
Since $IV$ has antisymmetry, we can obtain
\begin{equation}\label{2.8}
  IV=0.
\end{equation}
Combining \eqref{2.5}, \eqref{2.6}, \eqref{2.7} and \eqref{2.8}, we obtain \eqref{2.3}.
\end{proof}

\subsection{ Estimates of the Perturbations}
We now present estimates for the perturbations $(2.2)-(2.6)$. To this end, we first give an initial lemma.

\begin{lemma}[\cite{Re28}]\label{ll2.2}
There exists a univeral $0<\varepsilon<1$, so that if $|\eta|^2_{5/2}\leqslant\delta$, then
\begin{align}
\|J-1\|^2_{L^{\infty}}+\|A\|^2_{L^{\infty}}+\|B\|^2_{L^{\infty}}\leqslant\frac{1}{2}, \mbox{and}\ \|K\|^2_{L^{\infty}}+\|\mathcal{A}\|^2_{L^{\infty}}\lesssim 1.
\end{align}
\end{lemma}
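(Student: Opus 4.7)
The plan is to reduce the pointwise bounds on $J$, $A$, $B$ to $L^\infty$ bounds on $\bar\eta$ and $\nabla\bar\eta$, and then exploit that $\bar\eta$ is the harmonic extension of $\eta$ to transfer these to Sobolev norms of $\eta$ on $\Sigma_f$. Concretely, I would start from the definitions \eqref{1.8}: since $b=1+x_3$ is bounded by $1$ on $\Omega$, we have
\begin{align*}
\|A\|_{L^\infty}+\|B\|_{L^\infty}&\leqslant\|\partial_1\bar\eta\|_{L^\infty}+\|\partial_2\bar\eta\|_{L^\infty},\\
\|J-1\|_{L^\infty}&\leqslant\|\bar\eta\|_{L^\infty}+\|\partial_3\bar\eta\|_{L^\infty}.
\end{align*}
Thus everything boils down to controlling $\|\bar\eta\|_{L^\infty}+\|\nabla\bar\eta\|_{L^\infty}$.

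Next I would invoke the standard elliptic regularity of the Poisson extension operator $\mathcal{P}$ (see the appendix referenced at \eqref{A.5}), which yields $\|\bar\eta\|_{H^{s+1/2}(\Omega)}\lesssim|\eta|_{H^s(\Sigma_f)}$ for all $s\geqslant 0$ admissible in this setting. Combined with the Sobolev embedding $H^2(\Omega)\hookrightarrow L^\infty(\Omega)$, applied to $\bar\eta$ and to each first derivative $\partial_i\bar\eta$, this gives
\begin{equation*}
\|\bar\eta\|_{L^\infty}+\|\nabla\bar\eta\|_{L^\infty}\lesssim\|\bar\eta\|_{H^3(\Omega)}\lesssim|\eta|_{5/2}.
\end{equation*}
Chaining these estimates, I obtain $\|J-1\|_{L^\infty}^2+\|A\|_{L^\infty}^2+\|B\|_{L^\infty}^2\lesssim|\eta|_{5/2}^2\leqslant C\delta$, so choosing $\delta$ small enough (the universal $\varepsilon$ in the statement) forces this sum to be at most $1/2$.

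With the first inequality in hand, the second is a routine algebraic consequence. Since $\|J-1\|_{L^\infty}\leqslant 1/\sqrt{2}$, we have $J\geqslant 1-1/\sqrt{2}>0$ pointwise, so $K=1/J$ satisfies $\|K\|_{L^\infty}\leqslant(1-1/\sqrt 2)^{-1}\lesssim 1$. Finally, the explicit form of $\mathcal{A}$ in \eqref{1.7} shows that each entry is either $0$, $1$, or a product of $K$ with $A$, $B$, so $\|\mathcal{A}\|_{L^\infty}\lesssim 1+\|K\|_{L^\infty}(1+\|A\|_{L^\infty}+\|B\|_{L^\infty})\lesssim 1$.

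The only non-routine step is the elliptic bound for the harmonic extension, and that is quoted from the appendix and from \cite{Re28}, so in truth there is no real obstacle; the proof is essentially a packaging of harmonic extension regularity plus Sobolev embedding plus a smallness threshold for $\delta$. I would therefore keep the write-up short and refer to the analogous calculations in \cite{Re28} for the harmonic extension estimates themselves.
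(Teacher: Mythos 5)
Your proposal is correct; the paper itself gives no proof of this lemma (it is quoted directly from \cite{Re28}), and your argument — reducing $J-1$, $A$, $B$ to $\|\bar\eta\|_{L^\infty}+\|\nabla\bar\eta\|_{L^\infty}$, controlling these via the Poisson-extension bound and Sobolev embedding (or equivalently Lemmas \ref{lA.3}--\ref{lA.4} of the appendix), and then deducing the bounds on $K$ and $\mathcal{A}$ algebraically from $J\geqslant 1-1/\sqrt{2}$ — is exactly the standard route taken in the cited reference.
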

With Lemma \ref{ll2.2} in hand, we can deduce the following three theorems. First, we consider the case with surface tension.
\begin{thm}\label{tt2.1}
Suppose that $\sigma>0$ and $\mathcal{E}\leqslant1$ is small enough. Let $0\leqslant\alpha\leqslant1$ and $F^{1,\alpha},F^{2,\alpha},F^{3,\alpha},F^{4,\alpha},F^{5,\alpha}$ be defined by (2.2)-(2.6). Then
\begin{equation}\label{e2.2.1}
\|F^{1,\alpha}\|^2_0+\|F^{3,\alpha}\|^2_0+|F^{4,\alpha}|^2_{0}+|F^{5,\alpha}|^2_0\lesssim\mathcal{E}\mathcal{D}.
\end{equation}
Moreover, it holds that
\begin{equation}\label{ee2.10}
  \left|\int_{\Omega}p\partial_t(F^{2,1}J)\right|\lesssim\sqrt{\mathcal{E}}\mathcal{D},\ \left|\int_{\Omega}pF^{2,1}J\right|\lesssim\mathcal{E}^{\frac{3}{2}}.
\end{equation}
\end{thm}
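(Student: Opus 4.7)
The plan is to estimate the commutator expressions (2.2)--(2.6) term by term using: (i) the uniform $L^\infty$ bounds on $J$, $K$, $\mathcal{A}$ from Lemma~\ref{ll2.2}; (ii) the harmonic extension estimate $\|\bar\eta\|_{s+1/2}\lesssim|\eta|_s$; (iii) the three-dimensional Sobolev embeddings $H^2(\Omega)\hookrightarrow L^\infty$ and $H^1(\Omega)\hookrightarrow L^6$ together with their trace counterparts on $\Gamma_f$; and (iv) the Leibniz rule to expand commutators with $\partial_t$. The guiding principle is to place in every product exactly one factor at its highest available regularity to generate a $\sqrt{\mathcal{D}}$ contribution, while bounding the remaining factors by $\sqrt{\mathcal{E}}$.

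For $\alpha = 0$ the four commutators $F^{1,0}, F^{2,0}, F^{3,0}, F^{5,0}$ vanish identically, and $F^{4,0}$ collapses to $\sigma(\Delta_h\eta - H(\eta))\mathcal{N}$, the nonlinear remainder of the mean curvature operator. Expanding $H(\eta)=D\cdot(D\eta/\sqrt{1+|D\eta|^2})$ shows that $H(\eta)-\Delta_h\eta$ is at least quadratic in $D\eta$, and a trace inequality paired with the two-dimensional Sobolev embeddings on $\Gamma_f$ then yields $|F^{4,0}|_0^2 \lesssim |\eta|_{3/2}^4\,|\eta|_{7/2}^2 \lesssim \mathcal{E}^2\mathcal{D} \leq \mathcal{E}\mathcal{D}$ under smallness.

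For $\alpha = 1$ each commutator expands into products of a $\partial_t$-derivative of an $\bar\eta$-dependent coefficient and spatial derivatives of $v$, $w$, or $p$. The key input is $\|\partial_t\mathcal{A}\|_{L^\infty}+\|\partial_t J\|_{L^\infty}\lesssim\|\partial_t\bar\eta\|_2\lesssim|\partial_t\eta|_{3/2}\lesssim\sqrt{\mathcal{E}}$. Combining this with $\|v\|_3,\|w\|_3,\|p\|_2,\|\partial_tv\|_1,\|\partial_tw\|_1\lesssim\sqrt{\mathcal{D}}$ produces the $L^2$ bound $\sqrt{\mathcal{E}\mathcal{D}}$ for every bulk contribution, so that the squared norm is $\lesssim \mathcal{E}\mathcal{D}$. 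The boundary terms $F^{4,1}$ and $F^{5,1}$ are controlled similarly via the trace theorem and product estimates on $\Gamma_f$, with $|\partial_t\mathcal{N}|_{3/2}\lesssim|\partial_t\eta|_{5/2}\lesssim\sqrt{\mathcal{D}}$ at the high-regularity end. The estimate $|\int_\Omega p\,F^{2,1}J|\lesssim\mathcal{E}^{3/2}$ then follows immediately from $\|F^{2,1}\|_0\lesssim\|\partial_t\bar\eta\|_{L^\infty}\|\nabla v\|_0\lesssim\mathcal{E}$ paired with $\|p\|_0\lesssim\sqrt{\mathcal{E}}$ and $\|J\|_{L^\infty}\lesssim 1$.

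The main obstacle is the time-derivative variant $|\int_\Omega p\,\partial_t(F^{2,1}J)|\lesssim\sqrt{\mathcal{E}}\mathcal{D}$. Expanding $\partial_t(F^{2,1}J)$ produces three groups of terms; the two pieces $p\,\partial_t\mathcal{A}\,\nabla\partial_tv\,J$ and $p\,\partial_t\mathcal{A}\,\nabla v\,\partial_tJ$ admit direct $\sqrt{\mathcal{E}}\mathcal{D}$-type bounds via $\|\partial_tv\|_1\lesssim\sqrt{\mathcal{D}}$ and H\"older. The delicate piece is $\int p\,\partial_t^2\mathcal{A}\,\nabla v\,J$, in which $\partial_t^2\bar\eta$ lies only in $L^2(\Omega)$ with an $\sqrt{\mathcal{E}}$-bound (inherited from $|\partial_t^2\eta|_{-1/2}$), so the naive H\"older split costs $\|p\|_{L^\infty}\lesssim\|p\|_2\lesssim\sqrt{\mathcal{D}}$ and only yields $\mathcal{E}\sqrt{\mathcal{D}}$. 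The remedy is to integrate by parts in the spatial index $\partial_j$, transferring one derivative from $\partial_jv_i$ onto $p\,\partial_t^2\bar\eta\,J$: after distributing, the bulk remainders either involve $\nabla\partial_t^2\bar\eta\in L^2$ bounded by $|\partial_t^2\eta|_{1/2}\lesssim\sqrt{\mathcal{D}}$ or involve $\nabla p\in L^2$ bounded by $\sqrt{\mathcal{E}}$, and H\"older plus interpolation yield the required $\sqrt{\mathcal{E}}\mathcal{D}$ bound. The boundary contribution on $\Gamma_f$ is treated by $H^{-1/2}$--$H^{1/2}$ duality, $|\int_{\Gamma_f}\partial_t^2\eta\,(pv\cdot n)|\lesssim|\partial_t^2\eta|_{-1/2}\,|pv\cdot n|_{1/2}$, with $|pv\cdot n|_{1/2}\lesssim\sqrt{\mathcal{E}\mathcal{D}}$ by a standard Sobolev product rule and the trace theorem, so the boundary piece is absorbed into $\sqrt{\mathcal{E}}\,\mathcal{D}$ under smallness of $\mathcal{E}$.
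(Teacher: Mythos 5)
Your overall strategy matches the paper's: exploit the quadratic structure of each $F^{i,\alpha}$, split every product as $XY$ with the low-derivative factor in $L^\infty$ (bounded by $\sqrt{\mathcal{E}}$) and the high-derivative factor in $L^2$ (bounded by $\sqrt{\mathcal{D}}$), and your treatment of $F^{1,1},F^{3,1},F^{4,1},F^{5,1}$ and of $\left|\int_\Omega pF^{2,1}J\right|\lesssim\mathcal{E}^{3/2}$ is essentially the paper's argument. Where you genuinely diverge is the estimate $\left|\int_\Omega p\,\partial_t(F^{2,1}J)\right|\lesssim\sqrt{\mathcal{E}}\mathcal{D}$: you integrate by parts in space and handle a boundary term by $H^{-1/2}$--$H^{1/2}$ duality, whereas the paper bounds the term $p\,\partial_t^2\nabla\bar\eta\,J\,\nabla v$ directly via H\"older as $\|Jp\|_{L^\infty}\|\partial_t^2\nabla\bar\eta\|_0\|\nabla v\|_0\lesssim\|p\|_2\,|\partial_t^2\eta|_{1/2}\,\|v\|_2\lesssim\sqrt{\mathcal{D}}\cdot\sqrt{\mathcal{D}}\cdot\sqrt{\mathcal{E}}$. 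The premise motivating your detour --- that $\partial_t^2\bar\eta$ carries only the $\sqrt{\mathcal{E}}$-level bound inherited from $|\partial_t^2\eta|_{-1/2}$ --- is not right: the dissipation $\mathcal{D}$ contains $|\partial_t^2\eta|_{1/2}^2$, so by the Poisson-extension estimate (Lemma \ref{lA.3}) one has $\|\partial_t^2\bar\eta\|_1\lesssim|\partial_t^2\eta|_{1/2}\lesssim\sqrt{\mathcal{D}}$, and the naive H\"older split already closes. Your roundabout route does eventually invoke exactly this fact for the bulk remainder, so it lands in the right place, but it costs you extra boundary terms and bookkeeping for the full product structure of $\partial_t^2\mathcal{A}$ (cross terms such as $\partial_t A\,\partial_t K$) that the direct estimate avoids. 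Two smaller points: your final absorptions of $\mathcal{E}^{3/2}$ and $\mathcal{E}\sqrt{\mathcal{D}}$ into $\sqrt{\mathcal{E}}\mathcal{D}$ rely on the term-by-term comparison $\mathcal{E}\lesssim\mathcal{D}$, not on smallness of $\mathcal{E}$, and this should be said explicitly; and in the $\alpha=0$ bound for $F^{4,0}$ the $L^\infty$ control of $D\eta$ requires $|\eta|_{5/2}$ (available since $|\eta|_3^2\leqslant\mathcal{E}$) rather than $|\eta|_{3/2}$, since $H^{1/2}(\Gamma_f)\not\hookrightarrow L^\infty$ in two dimensions.
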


\begin{proof}
  Throughout this theorem, we will employ the H\"older inequality, Sobolev embeddings, trace theory. Each term in the sums that define $F^{1,\alpha},F^{2,\alpha},F^{3,\alpha},F^{4,\alpha},F^{5,\alpha}$ is at lest quadratic. It's straightforward to see that each such term can be written in the form $XY$, where $X$ involves fewer temporal derivatives than $Y$.  We may use the definitions of $\mathcal{E}$ and $\mathcal{D}$ to estimate
  \begin{equation}\label{ee2.28}
  	\|X\|^2_{L^{\infty}}\leqslant\mathcal{E},\ \|Y\|^2_{2}\leqslant\mathcal{D}.
  \end{equation}
  Then we have
  \begin{equation}\label{ee2.29}
  	\|XY\|^2_0\leqslant\|X\|^2_{L^{\infty}}\|Y\|^2_0\lesssim\mathcal{E}\mathcal{D}.
  \end{equation}

  The proof in the case of first-order derivatives( i.e. $\alpha=1$) is more difficult than in the case with no derivatives( i.e. $\alpha=0$). Therefore, we will only provide the proof for the first-order derivative case. Firstly, we give the estimates for $F^{1,1}$. Based on the structure of $F^{1,1}$, we may find the highest-order derivative terms:
  $$bK\partial_t^{2}\bar{\eta}\nabla v,\ \mathcal{A}\partial_t\nabla\mathcal{A}\nabla v,\ \mathcal{A}\partial_t\mathcal{A}\nabla^2v,\ \partial_t\mathcal{A}\nabla w.$$
  Combining \eqref{ee2.28} with \eqref{ee2.29} yields
  \begin{align}\label{ee2.2.1}
    &\|bK\partial_t^{2}\bar{\eta}\nabla v+\mathcal{A}\partial_t\nabla\mathcal{A}\nabla v+\mathcal{A}\partial_t\mathcal{A}\nabla^2v+\partial_t\mathcal{A}\nabla w\|_0\nonumber\\
    \lesssim&\|\partial_t^2\bar{\eta}\|_0\|K\nabla v\|_{L^{\infty}}+\|\partial_t\nabla^2\bar{\eta}\|_{0}\|\bar{\eta}\nabla v\|_{L^{\infty}}+\|\mathcal{A}\partial_t\nabla\bar{\eta}\|_{L^{\infty}}\|\nabla^2v\|_0\nonumber\\
    &+\|\nabla w\|_{L^{\infty}}\|\partial_t\mathcal{A}\|_0\nonumber\\
    \lesssim& |\partial^2_t\eta|_{-1/2}|\eta|_{5/2}\|v\|_3+|\eta|_{3/2}|\partial_t\eta|_{3/2}\|v\|_3+|\eta|_{3/2}|\partial_t\eta|_{5/2}\|v\|_2\nonumber\\
    &+\|w\|_3|\partial_t\eta|_{1/2}\nonumber\\
    \lesssim&\mathcal{E}\sqrt{\mathcal{D}}+\sqrt{\mathcal{ED}}\lesssim\sqrt{\mathcal{ED}}.
  \end{align}
  Similar to the estimates of $F^{1,1}$, we can obtain
  \begin{align}\label{ee2.2.3}
  \|F^{3,1}\|_0\lesssim\sqrt{\mathcal{ED}}.
  \end{align}
  We now turn to the estimates of $F^{4,1}$. Let
  \begin{align}\label{ee2.2.29}
 F^{4,1}_1:=&[\partial_t,\mathcal{N}]\eta-\partial_t\left[S_{\mathcal{A}}(p,v)\mathcal{N}\right]+S_{\mathcal{A}}(\partial_tp,\partial_tv)\mathcal{N}\nonumber\\
 =&(\eta-p)\partial_t\mathcal{N}+\mathbb{D}_{\mathcal{A}}v\ \partial_t\mathcal{N}+\mathbb{D}_{\partial_t\mathcal{A}}v\ \mathcal{N},\nonumber\\
 F^{4,1}_{2}:=&-\sigma \partial_t(H\mathcal{N})+\sigma \Delta_h\partial_t\eta\mathcal{N}\nonumber\\
 =&-\sigma\partial_t H\mathcal{N}-\sigma H\partial_t \mathcal{N}+\sigma \Delta_h\partial_t\eta\mathcal{N}.
  \end{align}
From the structure of $F^{4.1}_1$, we can determine that the highest-order derivative terms are
  $$(\eta-p)\nabla\eta\partial_t\eta\ \mbox{and}\ |\nabla\eta|^2\nabla v\partial_t\eta.$$
  Then by \eqref{ee2.28} and \eqref{ee2.29}, one has
  \begin{align}\label{ee2.2.33}
   |F^{4,1}_1|_0\lesssim&|(\eta-p)|_{L^{\infty}}|\nabla\eta|_{L^{\infty}}|\partial_t\nabla\eta|_0+|\nabla\eta|^2_{L^{\infty}}|\nabla v|_{L^{\infty}}|\partial_t\nabla\eta|\nonumber\\
   \lesssim&(|\eta|_{3/2}+\|p\|_2)|\eta|_{5/2}|\partial_t\eta|_1+|\eta|^2_{5/2}\|v\|_3|\partial_t\eta|_1\nonumber\\
   \lesssim&\sqrt{\mathcal{E}\mathcal{D}}.
  \end{align}
To bound $F^{4,1}_2$, we first rewrite $H(\eta)$ as
\begin{equation}\label{ee2.2.23}
H(\eta)=\Delta_h\eta+\Delta_h\eta\left(\frac{1}{\sqrt{1+|D\eta|^2}}-1\right)-\frac{\partial_i\partial_j\eta\partial_i\eta\partial_j\eta}{(1+|D\eta|^2)^{3/2}}.
\end{equation}
Then we have
\begin{align}\label{ee2.2.24}
  &|-\sigma \partial_tH\mathcal{N}+\sigma\Delta_h\partial_t\eta\mathcal{N}|_0\nonumber\\
  \lesssim&\left|\partial_t\left[\Delta_h\eta\left(\frac{1}{\sqrt{1+|D\eta|^2}-1}\right)\right]\right|_0+\left|\partial_t\left(\frac{\partial_i\partial_j\eta\partial_j\eta\partial_i\eta}{\left(1+|D\eta|^2\right)^{3/2}}\right)\right|_0\nonumber \\
\lesssim&|\partial_tD\eta D\eta|_0+|\partial_tD\eta D\eta D^2\eta|_0\nonumber\\
\lesssim&\sqrt{\mathcal{E}\mathcal{D}}.
\end{align}
For the last term in $F^{4,1}_2$, a direct estimate yields
\begin{align}\label{ee2.2.25}
  |\Delta_h\eta\partial_t\mathcal{N}|_0\lesssim&\left|\left(|D^2\eta|+|D^2\eta D\eta|\right)D\eta\partial_tD\eta\right|_0\nonumber \\
  \lesssim&|\partial_t\eta|_{5/2}|\eta|_2\lesssim\sqrt{\mathcal{E}\mathcal{D}}.
\end{align}
 Combining \eqref{ee2.2.33}, \eqref{ee2.2.24} and \eqref{ee2.2.25}, we can get
  \begin{equation}\label{ee2.2.26}
  |F^{4,1}|_0\lesssim \sqrt{\mathcal{E}\mathcal{D}}.
  \end{equation}
  A similar method yields
   \begin{align}\label{ee2.2.27}
    |F^{5,1}|_0\lesssim|vD\eta\partial_tD\eta|_0\lesssim\|v\|_2|\eta|_{5/2}|\partial_t\eta|_1\lesssim\sqrt{\mathcal{E}\mathcal{D}}.
  \end{align}
  By \eqref{ee2.2.1}, \eqref{ee2.2.3}, \eqref{ee2.2.26} and \eqref{ee2.2.27}, we know that \eqref{e2.2.1} holds true.

 Now we turn our attention to the proof of \eqref{ee2.10}. We first control the term $\left|\int_{\Omega}p\partial_t(F^{2,1}J)\right|$ as follows.
  \begin{align}\label{ee2.2.30}
  \left|\int_{\Omega}p\partial_t(F^{2,1}J)\right|\leqslant&\int_{\Omega}\left|p\partial_t(F^{2,1}J)\right|=\int_{\Omega}\left|p\left(\partial_tJ\partial_t\nabla\bar{\eta}\nabla v+\partial_t^2\nabla\bar{\eta}J\nabla v+\partial_t\nabla vJ\partial_t\nabla \bar{\eta}\right)\right|\nonumber\\
  \lesssim&\|p\|_{L^{\infty}}\|\partial_tJ\|_0\|\partial_t\nabla\bar{\eta}\|_{L^{4}}\|\nabla v\|_{L^4}+\|Jp\|_{L^{\infty}}\|\partial_t^2\nabla\bar{\eta}\|_0\|\nabla v\|_0\nonumber\\
  &+\|Jp\|_{L^{\infty}}\|\partial_t\nabla v\|_0\|\partial_t\nabla \bar{\eta}\|_0\nonumber\\
  \lesssim&\|p\|_2\|v\|_2\left(|\partial_t\eta|_{1/2}|\partial_t\eta|_{3/2}+|\eta|_{5/2}|\partial^2_t\eta|_{1/2}\right)+\|p\|_2\|\partial_t v\|_1|\partial_t\eta|_{1/2}|\eta|_{5/2}\nonumber\\
  \lesssim&\sqrt{\mathcal{E}}\mathcal{D}.
  \end{align}
  For the second part in \eqref{ee2.10}, since $\mathcal{E}$ is small enough, H\"older's inequality implies
  \begin{align}\label{ee2.2.4}
  \left|\int_{\Omega}JpF^{2,1}\right|\lesssim\|J\|_{L^{\infty}}\|p\|_{L^6}\|\partial_t\nabla\bar{\eta}\|_0\|\nabla v\|_{L^3}\lesssim\mathcal{E}^{3/2}.
  \end{align}
 Then we complete the proof of this theorem.
\end{proof}

Now we turn our attention to the case without surface tension. Based on the two-tier energy method, we must provide high-order and low-order estimates for the nonlinear terms, separately.
\begin{thm}\label{t2.1}
Suppose that $\sigma=0$ and $\mathcal{E}_{2N}\leqslant1$ is small enough. Let $0\leqslant\alpha\leqslant 2N$ and $F^{1,\alpha},F^{2,\alpha},F^{3,\alpha},F^{4,\alpha},F^{5,\alpha}$ be defined by (2.2)-(2.6). Then
\begin{equation}\label{2.2.1}
\|F^{1,\alpha}\|^2_0+ \|F^{2,\alpha}\|_0^2+\|\partial_t(JF^{2,\alpha})\|^2_0+\|F^{3,\alpha}\|^2_0+|F^{4,\alpha}|^2_{0}+|F^{5,\alpha}|^2_0\lesssim\mathcal{E}_{2N}\mathcal{D}_{2N}.
\end{equation}
Moreover, it holds that
\begin{equation}\label{e2.10}
  \left|\int_{\Omega}\partial_t^{2N-1}pF^{2,2N}J\right|\lesssim\mathcal{E}^{\frac{3}{2}}_{2N}.
\end{equation}
\end{thm}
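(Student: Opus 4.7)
The proof is a direct generalization of Theorem \ref{tt2.1}. The strategy is identical: expand every commutator $[\partial_t^\alpha,f]g$ via the Leibniz rule into a finite sum of products $XY$, then distribute the derivatives between $X$ and $Y$ so that one factor can be placed in $L^\infty$ via the Sobolev embedding $H^2(\Omega)\hookrightarrow L^\infty(\Omega)$ (or the boundary-trace analogue on $\Sigma_f$), while the other is placed in $L^2$. The key bookkeeping rule, read off from \eqref{1.5.3}--\eqref{1.5.4} with $n=2N$, is that a factor carrying $j$ time derivatives brings $4N-2j$ spatial derivatives from $\mathcal{E}_{2N}$ and $4N-2j+1$ from $\mathcal{D}_{2N}$; the corresponding trace norms control $\eta$. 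Whenever $j\leq N$ at least two spatial derivatives can be spared for Sobolev embedding, while for $j>N$ one invokes the symmetric split, pushing the partner factor into $L^\infty$ instead. Applying this procedure termwise to $F^{1,\alpha}$, $F^{3,\alpha}$ (over $\Omega$) and $F^{4,\alpha}$, $F^{5,\alpha}$ (over $\Sigma_f$), and to $F^{2,\alpha}$ and $\partial_t(JF^{2,\alpha})$ after first expanding the outer $\partial_t$-derivative, yields the desired bound $\mathcal{E}_{2N}\mathcal{D}_{2N}$. The absence of surface tension actually simplifies $F^{4,\alpha}$ relative to \eqref{ee2.2.29}: the mean-curvature terms are not present, so the cancellation used in the proof of Theorem \ref{tt2.1} is no longer needed.

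The principal obstacle is the extreme case $\alpha=2N$, where the Leibniz expansion produces terms such as $\partial_t^{2N}\bar\eta\cdot\nabla v$ (and analogues with $\nabla w$ or $\nabla p$), in which $\partial_t^{2N}\bar\eta$ has no spatial regularity to spare for $L^\infty$. Here I would pair the harmonic-extension bound $\|\partial_t^{2N}\bar\eta\|_0\lesssim|\partial_t^{2N}\eta|_0\lesssim\sqrt{\mathcal{E}_{2N}}$ against $\|\nabla v\|_{L^\infty}\lesssim\|v\|_3\lesssim\sqrt{\mathcal{D}_{2N}}$; the mirror orientation uses $\|\partial_t^{2N}\bar\eta\|_{L^\infty}\lesssim|\partial_t^{2N}\eta|_{3/2}\lesssim\sqrt{\mathcal{D}_{2N}}$ (since $|\partial_t^{2N}\eta|_{5/2}^2\leq\mathcal{D}_{2N}$) combined with $\|\nabla v\|_0\lesssim\sqrt{\mathcal{E}_{2N}}$. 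Either way the contribution is $\sqrt{\mathcal{E}_{2N}\mathcal{D}_{2N}}$, as claimed. All other terms in all other $F^{i,\alpha}$ have strictly less stringent derivative counts and are dealt with by the same splitting.

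For the moreover estimate, I would write $F^{2,2N}=-\sum_{k=1}^{2N}\binom{2N}{k}\partial_t^k\mathcal{A}_{ij}\,\partial_t^{2N-k}\partial_j v_i$ and bound the pairing $\int_\Omega\partial_t^{2N-1}p\cdot F^{2,2N}J$ summand by summand via H\"older's inequality with three exponents $1/p_1+1/p_2+1/p_3=1$. The tightest case is $k=2N$, where I would take the triple $\|\partial_t^{2N-1}p\|_{L^6}\cdot\|\partial_t^{2N}\bar\eta\|_{L^2}\cdot\|\nabla v\|_{L^3}$, controlled respectively by $\|\partial_t^{2N-1}p\|_1\lesssim\sqrt{\mathcal{E}_{2N}}$, $|\partial_t^{2N}\eta|_0\lesssim\sqrt{\mathcal{E}_{2N}}$, and $\|v\|_2\lesssim\sqrt{\mathcal{E}_{2N}}$. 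For smaller $k$ the partner factor $\partial_t^{2N-k}\nabla v$ retains enough regularity to be placed in $L^3$ or $L^\infty$, while $\partial_t^k\bar\eta$ can be put into the complementary Lebesgue space via Sobolev embedding and harmonic extension; the analogous triple H\"older argument produces $\mathcal{E}_{2N}^{3/2}$ in each case. Summing over $k$ delivers the desired bound.
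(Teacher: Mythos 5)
Your treatment of the main estimate \eqref{2.2.1} follows essentially the same route as the paper: identify the highest-order factor in each quadratic term, put it in $L^2$ against an $L^\infty$ partner (or vice versa), and use the Poisson-extension bounds together with the fact that $\mathcal{D}_{2N}$ controls $|\partial_t^{2N}\eta|_{5/2}$ and $|\partial_t^{2N+1}\eta|_{1/2}$. That part is fine.

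There is, however, a genuine gap in your argument for the ``moreover'' estimate \eqref{e2.10}. For the extreme summand $k=2N$ you propose the triple H\"older bound $\|\partial_t^{2N-1}p\|_{L^6}\,\|\partial_t^{2N}\bar\eta\|_{L^2}\,\|\nabla v\|_{L^3}$ and claim the middle factor is controlled by $|\partial_t^{2N}\eta|_0\lesssim\sqrt{\mathcal{E}_{2N}}$. But the factor actually appearing is $\partial_t^{2N}\mathcal{A}$, and since $\mathcal{A}$ is built from $A=b\partial_1\bar\eta$, $B=b\partial_2\bar\eta$, $K=1/J$ with $J=1+\bar\eta+b\partial_3\bar\eta$, its leading contribution is $\partial_t^{2N}\nabla\bar\eta$, a full spatial gradient. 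By Lemma \ref{lA.3}, $\|\nabla\partial_t^{2N}\bar\eta\|_0\lesssim|\partial_t^{2N}\eta|_{1/2}$, and $\mathcal{E}_{2N}$ controls only $|\partial_t^{2N}\eta|_0$ (the energy contains $|\partial_t^j\eta|_{4N-2j}$, which at $j=2N$ is the $H^0$ norm). So your $L^2$ placement is half a derivative short, and since \eqref{e2.10} must be closed with energy alone---no dissipation is available---this cannot be repaired by borrowing from $\mathcal{D}_{2N}$. The paper's proof avoids exactly this by a duality pairing: it estimates $\left|\int_\Omega \partial_t^{2N-1}p\,J\,\partial_t^{2N}\mathcal{A}\,\nabla v\right|\lesssim\|J\nabla v\,\partial_t^{2N-1}p\|_{1/2}\,\|\partial_t^{2N}\mathcal{A}\|_{-1/2}$, where the $H^{-1/2}(\Omega)$ norm absorbs the spatial gradient so that $\|\partial_t^{2N}\mathcal{A}\|_{-1/2}\lesssim\|\partial_t^{2N}\bar\eta\|_{1/2}\lesssim|\partial_t^{2N}\eta|_0\lesssim\sqrt{\mathcal{E}_{2N}}$, while the companion factor is bounded by $\mathcal{E}_{2N}^{1/2}\|\partial_t^{2N-1}p\|_1\lesssim\mathcal{E}_{2N}$. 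You need this (or an equivalent integration by parts moving the spatial derivative off $\partial_t^{2N}\bar\eta$) to close \eqref{e2.10}; the remaining summands $k<2N$ are handled as you describe.
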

\begin{proof}
The core idea of the proof is consistent with Theorem \ref{tt2.1}.
  Each term in $F^{\beta,\alpha}(1\leqslant\beta\leqslant5)$ is at least quadratic, which can be written in the form $XY$ , where $X$ involves fewer temporal derivatives than $Y$. Then we may use the usual Sobolev embeddings, trace theorem and H\"older's inequality.
 We first consider the $F^{1,\alpha}$ estimate in \eqref{2.2.1}. Based on the structure of $F^{1,\alpha}$, we can find the higher order derivatives:
\begin{align}\label{ee2.2.43}
  &bK\partial_t^{2N+1}\bar{\eta}\nabla v,\ \mathcal{A}\partial_t^{2N}\nabla\mathcal{A}\nabla v,\ \mathcal{A}\partial_t\mathcal{A}\partial_t^{2N-1}\nabla^2v,\ \partial^{2N}_t\mathcal{A}\nabla w, \nonumber\\
 &\partial_t\mathcal{A}\partial^{2N-1}_t\nabla w, \partial^{2N}_tv\mathcal{A}\nabla v, \partial_t^{2N}\mathcal{A}\nabla p, \partial_t\mathcal{A}\partial_t^{2N-1}\nabla p.
\end{align}
By H\"older inequality, trace theorem and Sobolev embedding theorem, we can get
\begin{align}\label{e2.13}
  &\left\|\left(bK\partial_t^{2N+1}\bar{\eta}\nabla v, \mathcal{A}\partial_t^{2N}\nabla\mathcal{A}\nabla v, \mathcal{A}\partial_t\mathcal{A}\partial_t^{2N-1}\nabla^2v,\partial^{2N}_tv\mathcal{A}\nabla v\right)\right\|_0\nonumber\\
   \lesssim &\|bK\nabla v\|_{L^{\infty}}\|\partial_t^{2N+1}\bar{\eta}\|_0+\|\mathcal{A}\nabla v\|_{L^{\infty}}\|\partial_t^{2N}\nabla\mathcal{A}\|_0+\|\mathcal{A}\partial_t\mathcal{A}\|_{L^{\infty}}\|\partial_t^{2N-1}\nabla^2v\|_0 \nonumber\\
   &+\|\mathcal{A}\nabla v\|_{L^{\infty}}\|\partial_t^{2N}v\|_0\nonumber\\
  \lesssim &\mathcal{E}_{2N}\left(|\partial_t^{2N+1}\eta|_{-\frac{1}{2}}+|\partial_t^{2N}\eta|_{\frac{3}{2}}+\|\partial_t^{2N-1}v\|_2+\|\partial_t^{2N}v\|_0\right)\nonumber\\
   \lesssim&\mathcal{E}_{2N}\mathcal{D}^{\frac{1}{2}}_{2N}.
\end{align}
Similarly,
\begin{align}\label{e2.14}
  &\|\left(\partial^{2N}_t\mathcal{A}\nabla w,\ \partial_t\mathcal{A}\partial^{2N-1}_t\nabla w,\partial_t^{2N}\mathcal{A}\nabla p, \partial_t\mathcal{A}\partial_t^{2N-1}\nabla p\right)\|_0\nonumber\\
  \lesssim& \|\nabla w\|_{L^{\infty}}\|\partial^{2N}_t\mathcal{A}\|_0+\|\partial_t\mathcal{A}\|_{L^{\infty}}\|\partial^{2N-1}_tw\|_1
  +\|\nabla p\|_{L^{\infty}}\|\partial_t^{2N}\mathcal{A}\|_0\nonumber\\
  &+\|\partial_t\mathcal{A}\|_{L^{\infty}}\|\partial^{2N-1}_t\nabla p\|_0\nonumber\\
  \lesssim& \mathcal{E}^{\frac{1}{2}}_{2N}\mathcal{D}^{\frac{1}{2}}_{2N}.
\end{align}
Combining \eqref{e2.13} and \eqref{e2.14} yields
\begin{equation}\label{ee2.14}
  \|F^{1,\alpha}\|^2_0\lesssim\mathcal{E}_{2N}\mathcal{D}_{2N}.
\end{equation}
Due to the similar structure of $F^{1,\alpha}$ and $F^{3,\alpha}$, a similar proof allows us to obtain
\begin{equation}\label{e2.17}
  \|F^{3,\alpha}\|_0\lesssim\mathcal{E}^{\frac{1}{2}}_{2N}\mathcal{D}^{\frac{1}{2}}_{2N}.
\end{equation}
 For $\partial_t(JF^{2,\alpha})$, we may deduce the higher order derivatives:
 \begin{equation}\label{e2.15}
 J\partial_t^{2N+1}\mathcal{A}\nabla v,\quad J\partial_t\mathcal{A}\partial^{2N}_t\nabla v.
 \end{equation}
A direct calculation implies
\begin{align}\label{e2.16}
  \|\partial_t(JF^{2,\alpha})\|_0\lesssim&\mathcal{E}_{2N}\|\partial^{2N+1}_t\nabla\bar{\eta}\|_0+\mathcal{E}_{2N}\|\partial_t^{2N}\nabla v\|_0\nonumber+\mbox{good\ terms}\\
  \lesssim&\mathcal{E}_{2N}|\partial^{2N+1}_t\eta|_{\frac{1}{2}}+\mathcal{E}_{2N}\|\partial_t^{2N}v\|_1+\mbox{good\ terms}\nonumber\\
  \lesssim&\mathcal{E}_{2N}\mathcal{D}^{\frac{1}{2}}_{2N}\lesssim\mathcal{E}^{\frac{1}{2}}_{2N}\mathcal{D}^{\frac{1}{2}}_{2N},
\end{align}
where "good terms" is the lower order derivative terms of $\partial_t(JF^{2,\alpha})$. Now we turn our attention to estimate $F^{2,\alpha}$. From the structure of $F^{2,\alpha}$, one has
\begin{align}\label{e22.2.20}
  \|F^{2,\alpha}\|_0\lesssim&\|\partial_t^{2N}\nabla\bar{\eta}\nabla v\|_0+\|\partial_t\nabla\bar{\eta}\partial_t^{2N-1}\nabla v\|_0+\mbox{good terms}\nonumber\\
  \lesssim&\mathcal{E}^{\frac{1}{2}}_{2N}\left(|\partial_t^{2N}\eta|_{1/2}+\|\partial_t^{2N-1}v\|_1\right)+\mathcal{E}_{2N}^{\frac{1}{2}}\mathcal{D}_{2N}^{\frac{1}{2}}\nonumber\\
  \lesssim&\mathcal{E}_{2N}^{\frac{1}{2}}\mathcal{D}_{2N}^{\frac{1}{2}}.
\end{align}
where "good terms" is the lower order derivative terms of $F^{2,\alpha}$. Moreover, the highest order derivatives in $F^{4,\alpha}$ are
\begin{align}\label{ee2.2.50}
\nabla\eta\nabla v\partial_t^{2N}\nabla^2\eta,\ \partial_t\nabla\eta\partial^{2N-1}_t\nabla p, \ \nabla\eta\partial_t\nabla\eta\partial_t^{2N-1}\nabla^2v.
\end{align}
Then we estimate these terms as following.
\begin{align}\label{e2.18}
  &|F^{4,\alpha}|_0\nonumber\\
  \lesssim&|\nabla\eta\nabla v\partial_t^{2N}\nabla^2\eta|_0+|\partial_t\nabla\eta\partial^{2N-1}_t\nabla p|_0+|\nabla\eta\partial_t\nabla\eta\partial_t^{2N-1}\nabla^2v|_0+\mbox{good\ terms}\nonumber\\
  \lesssim& |\partial^{2N}_t\nabla^2\eta|_0|\nabla\eta\nabla v|_{L^{\infty}}+|\partial_t\nabla\eta|_{L^{\infty}}|\partial^{2N-1}_t\nabla p|_0\nonumber\\
  &+|\nabla\eta\partial_t\nabla\eta|_{L^{\infty}}|\partial^{2N-1}_t\nabla^2 v|_0 +\mbox{good\ terms}\nonumber\\
  \lesssim & \mathcal{E}^{\frac{1}{2}}_{2N}\mathcal{D}^{\frac{1}{2}}_{2N},
\end{align}
where "good terms" is the lower order derivative terms of $F^{4,\alpha}$. Similarly,  $F^{5,\alpha}$ can be estimated as
\begin{align}\label{e2.19}
  |F^{5,\alpha}|_0\lesssim &|\partial^{2N}_t\mathcal{N}\ v|_0+|\partial_t\mathcal{N}\partial^{2N-1}_tv|_0+\mbox{good\ terms} \nonumber\\
  \lesssim & |v|_{L^{\infty}}|\partial_t^{2N}\eta|_1+|\partial_t\mathcal{N}|_{L^{\infty}}|\partial^{2N-1}_tv|_0+\mbox{good\ terms}\nonumber\\
  \lesssim & \mathcal{E}^{\frac{1}{2}}_{2N}\mathcal{D}^{\frac{1}{2}}_{2N},
\end{align}
where "good terms" is the lower order derivative terms of $F^{5,\alpha}$.
By \eqref{ee2.14}, \eqref{e2.17}, \eqref{e2.16}, \eqref{e2.18} and \eqref{e2.19}, we deduce that \eqref{2.2.1} holds true.

Finally we are in the position of proving \eqref{e2.10}. Using H\"older's inequality, one has
\begin{align}\label{e2.20}
  &\left|\int_{\Omega}\partial^{2N-1}_tpJF^{2,2N}\right|\nonumber\\
  \lesssim&\int_{\Omega}\left|J\partial^{2N-1}_tp(\partial_t^{2N}\mathcal{A}\nabla v+\partial_t{\mathcal{A}}\partial^{2N-1}_t\nabla v)\right|+\mbox{good terms}\nonumber\\
  \lesssim& \|J\nabla v\partial^{2N-1}_tp\|_{1/2}\|\partial_t^{2N}\mathcal{A}\|_{-1/2}+\|J\partial_t\mathcal{A}\partial^{2N-1}_tp\|_0\|\partial_t^{2N-1}\nabla v\|_{0}+\mbox{good terms} \nonumber\\
  \lesssim& \mathcal{E}^{\frac{1}{2}}_{2N}\|\partial^{2N-1}_tp\|_{1}|\partial_t^{2N}\eta|_{0}+\mathcal{E}^{\frac{3}{2}}_{2N}\nonumber\\
  \lesssim&\mathcal{E}^{\frac{3}{2}}_{2N}.
\end{align}
where "good terms" is the lower order derivative terms. Then we complete the proofs.
\end{proof}

We now present estimates for these perturbations when $0\leqslant\alpha\leqslant N+2$. The proof may be carried out as in Theorem \ref{t2.1}, and is thus omitted.

\begin{thm}\label{t2.2}
Suppose that $\sigma=0$ and $\mathcal{E}_{2N}\leqslant1$ is small enough. Let $0\leqslant\alpha\leqslant N+2$ and $F^{1,\alpha},F^{2,\alpha},F^{3,\alpha},F^{4,\alpha},F^{5,\alpha}$ be defined by (2.2)-(2.6). Then
\begin{equation}\label{e2.11}
\|F^{1,\alpha}\|^2_0+\|\partial_t(JF^{2,\alpha})\|^2_0+\|F^{3,\alpha}\|^2_0+|F^{4,\alpha}|^2_{0}+|F^{5,\alpha}|^2_0\lesssim\mathcal{E}_{2N}\mathcal{D}_{N+2}.
\end{equation}
Moreover, it holds that
\begin{equation}\label{e2.12}
  \left|\int_{\Omega}\partial_t^{N+1}pF^{2,N+2}J\right|\lesssim\mathcal{E}_{2N}^{\frac{1}{2}}\mathcal{E}_{N+2}.
\end{equation}
\end{thm}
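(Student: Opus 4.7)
The plan is to follow the blueprint of the proof of Theorem \ref{t2.1}, adapted to the smaller range $0 \leq \alpha \leq N+2$; the authors make this explicit, and indeed the argument is essentially structural Sobolev bookkeeping. The key organizing principle remains that every summand appearing in $F^{\beta,\alpha}$ (for $\beta=1,\ldots,5$) is at least quadratic, and after expanding the commutators by Leibniz can be written schematically as $X\cdot Y$ where $X$ carries strictly fewer temporal derivatives than $Y$.

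First I would identify, as in the proof of Theorem \ref{t2.1}, the list of highest-order contributions to each $F^{\beta,\alpha}$ and to $\partial_t(JF^{2,\alpha})$, replacing the $2N$-derivative expressions with their $(N+2)$-derivative analogues; all other (``good'') summands have strictly fewer temporal derivatives and are easier. For each highest-order term I apply H\"older's inequality to split it into an $L^\infty$ factor and an $L^2$ factor on $\Omega$, or into $L^\infty$ and $L^2$ on $\Sigma_f$ for the boundary pieces $F^{4,\alpha}$ and $F^{5,\alpha}$ (invoking trace theory where needed).

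Second comes the decisive asymmetric placement: the factor $X$ with fewer time derivatives has at most $\lfloor(N+2)/2\rfloor$ of them, so since $\mathcal{E}_{2N}$ controls $\|\partial_t^j(v,w,p)\|$ and $|\partial_t^j\eta|$ in spatial Sobolev norms of order $4N-2j$ (respectively $4N-2j+1/2$), which is at least $2N\geq 6$ on the low-derivative factor, Sobolev embedding places $X$ in $L^\infty$ with norm bounded by $\mathcal{E}_{2N}^{1/2}$. The companion factor $Y$ has up to $\alpha\leq N+2$ time derivatives, and by the definition of $\mathcal{D}_{N+2}$ it sits in the appropriate $L^2$ or $H^{1/2}$ norm with bound $\mathcal{D}_{N+2}^{1/2}$. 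Multiplying yields $\mathcal{E}_{2N}^{1/2}\mathcal{D}_{N+2}^{1/2}$ per summand, and summing produces \eqref{e2.11}.

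Third, for the pressure-commutator estimate \eqref{e2.12} I would mirror the proof of \eqref{e2.10}: isolate the worst contributions, namely schematic terms of the form $J\nabla v\cdot\partial_t^{N+1}p\cdot\partial_t^{N+2}\mathcal{A}$ and $J\partial_t\mathcal{A}\cdot\partial_t^{N+1}p\cdot\partial_t^{N+1}\nabla v$. In each, place the truly low-derivative factor ($J\nabla v$ or $J\partial_t\mathcal{A}$) in $L^\infty$, controlled by $\mathcal{E}_{2N}^{1/2}$ via the same high-regularity argument as above, and pair the remaining two factors via an $H^{1/2}\times H^{-1/2}$ duality on $\Sigma_f$ (respectively an $L^2\times L^2$ pairing on $\Omega$); both of these factors are then bounded by $\mathcal{E}_{N+2}^{1/2}$, delivering the advertised bound $\mathcal{E}_{2N}^{1/2}\mathcal{E}_{N+2}$. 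The main obstacle is the bookkeeping of Sobolev indices: for each quadratic term one must verify that the $L^\infty$ factor is assigned enough spatial regularity, and this only works because $\mathcal{E}_{2N}$ rather than $\mathcal{E}_{N+2}$ sits on the low-derivative factor --- using $\mathcal{E}_{N+2}$ alone would fail in the critical cases where the two factors carry comparable numbers of time derivatives. Once the assignment is fixed, the remaining verifications are a routine application of H\"older's inequality, trace theory and Sobolev embedding, entirely analogous to the proof of Theorem \ref{t2.1}.
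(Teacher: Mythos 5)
Your proposal is correct and is essentially the paper's intended argument: the paper omits the proof of this theorem precisely because it is the proof of Theorem \ref{t2.1} with $2N$ replaced by $N+2$ in the high-derivative factor, which is exactly the adaptation you carry out. Your explicit remark that the $L^\infty$ (low-time-derivative) factor must be measured by $\mathcal{E}_{2N}$ rather than $\mathcal{E}_{N+2}$ is the one point of substance in the adaptation, and you have it right.
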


\subsection{Global Energy Evolution with Only Temporal Derivatives}
With Theorem \ref{tt2.1}, \ref{t2.1} and \ref{t2.2} in hand, we may investigate the energy structure of \eqref{1.9} with only temporal derivatives. To this end, we first introduce the following notation of energy and dissipation for sums of temporal derivatives:
\begin{align}\label{ee2.21}
 &\hat{\mathcal{E}}_{n}:=\sum_{j=0}^{n}\left(\|\partial^j_tv\sqrt{J}\|^2_0+\|\partial^j_tw\sqrt{J}\|^2_0+|\partial^j_t\eta|^2_0\right),\\
 &\hat{\mathcal{E}}:=\sum_{j=0}^{1}\left(\|\partial^j_tv\sqrt{J}\|^2_0+\|\partial^j_tw\sqrt{J}\|^2_0+|\partial^j_t\eta|^2_0+\sigma|\partial_t^j\nabla_h\eta|^2_0\right),\\ &\hat{\mathcal{D}}_n:=\sum_{j=0}^{n}\left(\|\partial_t^jv\|^2_1+\|\partial^j_tw\|^2_1\right), \hat{\mathcal{D}}:=\sum_{j=0}^{1}\left(\|\partial_t^jv\|^2_1+\|\partial^j_tw\|^2_1\right).
\end{align}
With the above preparations, we now state the applications of Theorem \ref{tt2.1}, \ref{t2.1} and \ref{t2.2}.
\begin{prop}\label{pp2.1}
  Suppose that $\sigma>0$ and $\mathcal{E}\leqslant1$ is small enough, then we have
\begin{align}\label{ee2.2.21}
\frac{\ud}{\ud t}\left(\hat{\mathcal{E}}-\int_{\Omega}pF^{2,1}J\right)+\hat{\mathcal{D}}\lesssim\sqrt{\mathcal{E}}\mathcal{D}.
\end{align}
\end{prop}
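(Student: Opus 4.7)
The plan is to apply Lemma \ref{l2.1} with $\alpha=0$ and $\alpha=1$ and add the two identities. When $\alpha=0$ every commutator in $(2.2)$--$(2.6)$ vanishes identically, so the $\alpha=0$ copy of \eqref{2.3} is the basic energy--dissipation balance with zero right-hand side. Since $\|\partial_t^{j}v\sqrt{J}\|_{0}^{2}=\int_{\Omega}J|\partial_t^{j}v|^{2}$ (and similarly for $w$), the sum of the left-hand sides over $\alpha\in\{0,1\}$ equals $\tfrac{1}{2}\tfrac{\ud}{\ud t}\hat{\mathcal{E}}$ plus a quadratic form in $\mathbb{D}_{\mathcal{A}}\partial_t^{\alpha}v$, $\partial_t^{\alpha}w$, $\nabla_{\mathcal{A}}\partial_t^{\alpha}w$, and $\nabla_{\mathcal{A}}\!\cdot\!\partial_t^{\alpha}w$. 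Using Lemma \ref{ll2.2} to replace $\mathcal{A}$ by $\mathbb{I}$ and $J$ by $1$ up to an $O(\sqrt{\mathcal{E}})$ error, together with Korn's inequality (from $v|_{\Sigma_b}=0$) and Poincar\'e's inequality (from $w|_{\partial\Omega}=0$), this quadratic form is bounded below by $\hat{\mathcal{D}}$ modulo a perturbation of order $\sqrt{\mathcal{E}}\mathcal{D}$ which can be moved to the right-hand side.

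Next I estimate the five source terms on the right-hand side of \eqref{2.3} at $\alpha=1$. Four of them are routine: Cauchy--Schwarz applied to $\int_{\Omega}JF^{1,1}\!\cdot v^{1}$ and $\int_{\Omega}JF^{3,1}\!\cdot w^{1}$ gives $\|F^{\cdot,1}\|_{0}\|\partial_{t}(v,w)\|_{0}\lesssim\sqrt{\mathcal{E}\mathcal{D}}\sqrt{\hat{\mathcal{D}}}\lesssim\sqrt{\mathcal{E}}\mathcal{D}$ via Theorem \ref{tt2.1}; the boundary integral $\int_{\Gamma_f}F^{4,1}\!\cdot v^{1}$ is handled the same way once the trace inequality $|v^{1}|_{0}\lesssim\|v^{1}\|_{1}$ is applied; and for $\int_{\Gamma_f}(\eta^{1}-\sigma\Delta_{h}\eta^{1})F^{5,1}$ I use $|\eta^{1}|_{0}\lesssim|\partial_{t}\eta|_{5/2}\lesssim\sqrt{\mathcal{D}}$ for the first piece and a horizontal integration by parts followed by a direct $L^{2}$ estimate on $\nabla_{h}F^{5,1}$ of the same quadratic character as Theorem \ref{tt2.1} for the second.

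The one delicate term, and the reason the corrector $-\int_{\Omega}pF^{2,1}J$ appears in the statement, is $\int_{\Omega}JF^{2,1}p^{1}=\int_{\Omega}JF^{2,1}\partial_{t}p$, because no norm of $\partial_{t}p$ is controlled by $\mathcal{E}$ or $\mathcal{D}$. I integrate by parts in time,
\[
\int_{\Omega}JF^{2,1}\partial_{t}p=\tfrac{\ud}{\ud t}\!\int_{\Omega}pF^{2,1}J-\int_{\Omega}p\,\partial_{t}(JF^{2,1}),
\]
and transfer the total-derivative piece to the left-hand side, which produces exactly the corrected energy $\hat{\mathcal{E}}-\int_{\Omega}pF^{2,1}J$ of the proposition. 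The remainder $\int_{\Omega}p\,\partial_{t}(JF^{2,1})$ is precisely the quantity for which Theorem \ref{tt2.1} furnishes the bound $\lesssim\sqrt{\mathcal{E}}\mathcal{D}$. Combining everything and using the smallness of $\mathcal{E}$ to absorb the $O(\sqrt{\mathcal{E}})$ geometric errors into the dissipation yields \eqref{ee2.2.21}.

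The main obstacle is exactly this pressure term: a direct estimate would require an $L^{2}$-type bound on $\partial_{t}p$, which is simply not present in the functional framework at this regularity, so the only viable route is the time integration by parts above. This is why Theorem \ref{tt2.1} is designed to supply simultaneously a pointwise-in-time control of $\int_{\Omega}pF^{2,1}J$ (to form the corrected energy) and a $\sqrt{\mathcal{E}}\mathcal{D}$ bound on $\int_{\Omega}p\,\partial_{t}(F^{2,1}J)$; without the latter, the $\alpha=1$ energy identity could not be closed.
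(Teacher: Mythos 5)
Your proposal is correct and follows essentially the same route as the paper: apply Lemma \ref{l2.1} for $\alpha=0,1$, estimate the forcing terms via Theorem \ref{tt2.1}, and handle the uncontrolled $\int_{\Omega}JF^{2,1}\partial_t p$ term by the time integration by parts that produces the corrector $-\int_{\Omega}pF^{2,1}J$, with the remainder bounded by \eqref{ee2.10}. Your explicit use of Korn/Poincar\'e and Lemma \ref{ll2.2} to pass from the $\mathcal{A}$-weighted quadratic form to $\hat{\mathcal{D}}$ is the same step the paper carries out in detail only in the $\sigma=0$ analogue (Proposition \ref{p2.1}), so no substantive difference.
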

\begin{proof}
  Suppose that $0\leqslant\alpha\leqslant1$ is an integer, then according to Lemma \ref{l2.1}, we deduce
  \begin{align}\label{ee2.23}
     &\frac{1}{2}\frac{\ud}{\ud t}\left(\|\sqrt{J}(v^{\alpha},w^{\alpha})\|^2_0+|\eta^{\alpha}|^2_0+\sigma|\nabla_h\eta^{\alpha}|^2_0\right)\nonumber\\
     &+\frac{\mu+\kappa}{2}\|\sqrt{J}\mathbb{D}_{\mathcal{A}}v^{\alpha}\|^2_0+4\kappa\|\sqrt{J}w\|_0^2+\gamma\|\sqrt{J}\nabla_{\mathcal{A}}w^{\alpha}\|^2_0+\mu\|\sqrt{J}\nabla_{\mathcal{A}}\cdot w^{\alpha}\|_0^2\nonumber\\
    =& \int_{\Omega}J\left(F^{1,\alpha}\cdot v^{\alpha}+F^{2,\alpha}p^{\alpha}+F^{3,\alpha}\cdot w^{\alpha}\right)+\int_{\Gamma_{f}}\left[(\eta^{\alpha}-\sigma\Delta_h\eta^{\alpha})F^{5,\alpha}-F^{4,\alpha}\cdot v^{\alpha}\right].
  \end{align}
   By Theorem \ref{tt2.1}, the nonlinear terms in the RHS of \eqref{ee2.23} can estimated as
   \begin{align}\label{ee2.2.2.23}
    &\int_{\Omega}J\left(F^{1,\alpha}\cdot v^{\alpha}+F^{2,\alpha}p^{\alpha}+F^{3,\alpha}\cdot w^{\alpha}\right)+\int_{\Gamma_{f}}\left[(\eta^{\alpha}-\sigma\Delta_h\eta^{\alpha})F^{5,\alpha}-F^{4,\alpha}\cdot v^{\alpha}\right]\nonumber\\
    \lesssim&\left(\|v^{\alpha}\|_0\|F^{1,\alpha}\|_0+\|w^{\alpha}\|_0\|F^{3,\alpha}\|_0\right)\|J\|_{L^{\infty}}+|v^{\alpha}|_0|F^{4,\alpha}|_0\nonumber\\
    &+\left(|\eta^{\alpha}|_0+\sigma|\eta^{\alpha}|_2\right)|F^{5,\alpha}|_0+\int_{\Omega}JF^{2,\alpha}p^{\alpha}\nonumber\\
    \lesssim&\sqrt{\mathcal{E}}\mathcal{D}+\int_{\Omega}JF^{2,1}p^{\alpha}.
   \end{align}
In the case $\alpha=0$, a directly estimate yields $\int_{\Omega}JF^{2,1}p\leqslant\sqrt{\mathcal{E}}\mathcal{D}$. However, since there is no temporal derivative on $p$ in $\mathcal{D}$, for the term involving $\partial_tp(\mbox{i.e.}\ \alpha=1)$, we does not obtain the corresponding result through direct calculation. We have to handle it as follows.
  \begin{align}\label{ee2.24}
    \int_{\Omega}p^1F^{2,1}J=\frac{\ud}{\ud t}\int_{\Omega}pF^{2,1}J-\int_{\Omega}p\partial_t(F^{2,1}J).
  \end{align}
Then through \eqref{ee2.10}, combining \eqref{ee2.23}, \eqref{ee2.2.2.23} and \eqref{ee2.24} implies that
\begin{align}
\frac{\ud}{\ud t}\left(\hat{\mathcal{E}}-\int_{\Omega}pF^{2,1}J\right)+\hat{\mathcal{D}}\lesssim\sqrt{\mathcal{E}}\mathcal{D}+\left|\int_{\Omega}p\partial_t(F^{2,1}J)\right|\lesssim\sqrt{\mathcal{E}}\mathcal{D}.
\end{align}
Then we completes the proof of this propostion.
\end{proof}

\begin{prop}\label{p2.1}
  Suppose that $\sigma=0$ and $\mathcal{E}_{2N}\leqslant1$ is small enough, then we have
  \begin{equation}\label{e2.21}
    \frac{\ud}{\ud t}\left(\hat{\mathcal{E}}_{2N}-\int_{\Omega}\partial^{2N-1}_tpF^{2,2N}J\right)+\hat{\mathcal{D}}_{2N}\lesssim\mathcal{E}_{2N}^{\frac{1}{2}}\mathcal{D}_{2N}
  \end{equation}
  and
  \begin{equation}\label{e2.22}
    \frac{\ud}{\ud t}\left(\hat{\mathcal{E}}_{N+2}-\int_{\Omega}\partial^{N+1}_tpF^{2,N+2}J\right)+\hat{\mathcal{D}}_{N+2}\lesssim\mathcal{E}_{2N}^{\frac{1}{2}}\mathcal{D}_{N+2}.
  \end{equation}
\end{prop}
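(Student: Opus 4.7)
The plan is to apply Lemma \ref{l2.1} with $\sigma=0$ at each level $\alpha=0,1,\ldots,n$ (taking $n=2N$ for \eqref{e2.21} and $n=N+2$ for \eqref{e2.22}) and sum the resulting identities. Since $\sigma=0$, the surface-tension boundary terms vanish, and Lemma \ref{ll2.2} together with Korn's inequality converts the coercive block on the left into a constant multiple of $\sum_{\alpha=0}^{n}(\|\partial_t^\alpha v\|_1^2+\|\partial_t^\alpha w\|_1^2)=\hat{\mathcal{D}}_n$. The remaining task is then to bound the sum of nonlinear interactions
\begin{equation*}
\sum_{\alpha=0}^{n}\left[\int_{\Omega}J\bigl(F^{1,\alpha}\cdot v^{\alpha}+F^{2,\alpha}p^{\alpha}+F^{3,\alpha}\cdot w^{\alpha}\bigr)+\int_{\Gamma_{f}}\bigl(\eta^{\alpha}F^{5,\alpha}-F^{4,\alpha}\cdot v^{\alpha}\bigr)\right]
\end{equation*}
by $\mathcal{E}_{2N}^{1/2}\mathcal{D}_n$ modulo a pure time derivative that can be absorbed to the left.

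For the four groups $F^{1,\alpha}\cdot v^\alpha$, $F^{3,\alpha}\cdot w^\alpha$, $F^{4,\alpha}\cdot v^\alpha$, $\eta^\alpha F^{5,\alpha}$, I would pair the force estimates from Theorem \ref{t2.1} (or Theorem \ref{t2.2}) with the dissipation-controlled partners: $\|v^\alpha\|_1,\|w^\alpha\|_1\lesssim\hat{\mathcal{D}}_n^{1/2}$ in the bulk, $|v^\alpha|_0$ via the trace theorem, and $|\eta^\alpha|_0\lesssim\mathcal{D}_n^{1/2}$ thanks to the inclusion of every $|\partial_t^j\eta|$-term in $\mathcal{D}_n$ for the relevant range of $j$. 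Cauchy--Schwarz then produces the desired $\mathcal{E}_{2N}^{1/2}\mathcal{D}_n$ bound for each of these summands.

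The delicate interaction, and the reason the statement carries the $\int_{\Omega}\partial_t^{n-1}pF^{2,n}J$ correction inside $\ud/\ud t$, is the pressure-interaction $\int_\Omega JF^{2,\alpha}p^\alpha$. For every sub-top level $\alpha<n$, the dissipation definition places $\partial_t^\alpha p$ in $L^2$ with norm $\lesssim\mathcal{D}_n^{1/2}$, so the $F^{2,\alpha}$ bound from Theorem \ref{t2.1} (resp.\ the analogous bound recorded in the proof of Theorem \ref{t2.2}) handles this case directly. At the top level $\alpha=n$, however, $\partial_t^n p$ has no dissipation control; the trick is to shift the offending temporal derivative by integrating by parts in time,
\begin{equation*}
\int_\Omega JF^{2,n}\partial_t^{n}p=\frac{\ud}{\ud t}\int_\Omega \partial_t^{n-1}p\,F^{2,n}J-\int_\Omega \partial_t^{n-1}p\,\partial_t(JF^{2,n}).
\end{equation*}
The first term on the right is moved to the left-hand side, producing exactly the correction in the statement. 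The remainder is bounded using $\|\partial_t(JF^{2,n})\|_0\lesssim\mathcal{E}_{2N}^{1/2}\mathcal{D}_n^{1/2}$ from Theorem \ref{t2.1} (resp.\ Theorem \ref{t2.2}) paired with $\|\partial_t^{n-1}p\|_0\lesssim\mathcal{D}_n^{1/2}$, giving $\mathcal{E}_{2N}^{1/2}\mathcal{D}_n$.

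The second inequality \eqref{e2.22} proceeds identically, with $n=N+2$ and Theorem \ref{t2.2} replacing Theorem \ref{t2.1}; the asymmetric scaling $\mathcal{E}_{2N}\mathcal{D}_{N+2}$ produced by Theorem \ref{t2.2} is precisely what is needed to close the two-tier decay argument later. The main obstacle throughout is isolating the top-level pressure term, which is why the time-integration-by-parts step is the single non-routine ingredient; every other contribution is a direct product-estimate pairing a force bound with a dissipation factor.
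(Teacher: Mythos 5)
Your proposal matches the paper's proof essentially step for step: summing Lemma \ref{l2.1} over $\alpha$, isolating the top-level pressure interaction by integrating by parts in time to produce exactly the correction $\int_{\Omega}\partial_t^{n-1}p\,F^{2,n}J$ and bounding the remainder via $\|\partial_t(JF^{2,n})\|_0$, pairing all other forcing terms with dissipation factors through Theorems \ref{t2.1} and \ref{t2.2}, and recovering coercivity of the quadratic block via Lemma \ref{ll2.2} and Korn's inequality (modulo $\mathcal{E}_{2N}^{1/2}\mathcal{D}_n$ errors absorbed into the right-hand side). This is the same argument as the paper's; no substantive differences.
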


\begin{proof}
Taking $\alpha=0,1,\dots,2N$ in Lemma \ref{l2.1} and summing over $\alpha$ yield
\begin{align}\label{e2.23}
     &\frac{1}{2}\sum_{\alpha=0}^{2N}\frac{\ud}{\ud t}\left(\|\sqrt{J}(v^{\alpha},w^{\alpha})\|^2_0+|\eta^{\alpha}|^2_0\right)\nonumber\\
     &+\sum_{\alpha=0}^{2N}\left(\frac{\mu+\kappa}{2}\|\sqrt{J}\mathbb{D}_{\mathcal{A}}v^{\alpha}\|^2_0+4\kappa\|\sqrt{J}w\|_0^2+\gamma\|\sqrt{J}\nabla_{\mathcal{A}}w^{\alpha}\|^2_0+\mu\|\sqrt{J}\nabla_{\mathcal{A}}\cdot w^{\alpha}\|_0^2\right)\nonumber\\
    =& \sum_{\alpha=0}^{2N}\int_{\Omega}J\left(F^{1,\alpha}\cdot v^{\alpha}+F^{2,\alpha}p^{\alpha}+F^{3,\alpha}\cdot w^{\alpha}\right)+\sum_{\alpha=0}^{2N}\int_{\Gamma_{f}}\left[\eta^{\alpha}F^{5,\alpha}-F^{4,\alpha}\cdot v^{\alpha}\right].
  \end{align}
Through H\"older's inequality, \eqref{2.2.1}, we have
 \begin{align}
   \left|\int_{\Omega}\left(F^{1,\alpha}\cdot Jv^{\alpha}+F^{3,\alpha}\cdot Jw^{\alpha}\right)\right|\lesssim&\|J\|_{L^{\infty}}\|(v^{\alpha},w^{\alpha})\|_0\left\|\left(F^{1,\alpha},F^{3,\alpha}\right)\right\|_0\nonumber\\
   \lesssim&\mathcal{E}^{\frac{1}{2}}_{2N}\mathcal{D}_{2N}.
 \end{align}

 For $\int_{\Omega}JF^{2,\alpha}p^{\alpha}$, the most difficult case is $\alpha=2N$. We must consider $\alpha<2N$ and $\alpha=2N$ separately. In the case $\alpha=2N$, the term involving $p^{2N}$ does not appear in the definition of $\mathcal{D}_{2N}$. To overcome this difficulty, we can rewrite $\int_{\Omega}JF^{2,2N}p^{2N}$ as
  \begin{equation}
    \int_{\Omega}JF^{2,2N}p^{2N}=\frac{\ud}{\ud t}\int_{\Omega}JF^{2,2N}p^{2N-1}-\int_{\Omega}p^{2N-1}\partial_t(JF^{2,2N}).
  \end{equation}
 H\"older's inequality together with \eqref{2.2.1} yields that
 \begin{equation}
   \left|\int_{\Omega}p^{2N-1}\partial_t(JF^{2,2N})\right|\leqslant\|p^{2N-1}\|_0\|\partial_t(JF^{2,2N})\|_0\lesssim \mathcal{E}^{\frac{1}{2}}_{2N}\mathcal{D}_{2N}.
 \end{equation}
On the other hand, in the case $0\leqslant\alpha<2N$, we may control $p^{\alpha}$ directly by the H\"older inequality and \eqref{2.2.1}:
 \begin{equation}
   \left|\int_{\Omega}JF^{2,\alpha}p^{\alpha}\right|\lesssim \|J\|_{L^{\infty}} \|p^{\alpha}\|_0\|F^{2,\alpha}\|_0\lesssim\mathcal{E}^{\frac{1}{2}}_{2N}\mathcal{D}_{2N}.
 \end{equation}
For the two boundary integrals on the RHS of \eqref{e2.23}, utilizing the H\"older inequality, trace theory and \eqref{2.2.1} again yields that
\begin{align}\label{e2.24}
\left|\int_{\Gamma_f}F^{4,\alpha}\cdot v^{\alpha}-F^{5,\alpha}\eta^{\alpha}\right|\lesssim& \left|F^{4,\alpha}\right|_0\left|v^{\alpha}\right|_0+\left|F^{5,\alpha}\right|_0|\eta^{\alpha}|_0\nonumber\\
 \lesssim&\mathcal{E}^{\frac{1}{2}}_{2N}\mathcal{D}_{2N}.
\end{align}
 By \eqref{e2.23}-\eqref{e2.24}, we know
 \begin{align}\label{e2.29}
&\frac{\ud}{\ud t}\left(\hat{\mathcal{E}}_{2N}-\int_{\Omega}JF^{2,2N}p^{2N-1}\right)\nonumber\\
&+\sum_{\alpha_0=0}^{2N}\int_{\Omega}J\left(\frac{\mu+\kappa}{2}|\mathbb{D}_{\mathcal{A}}v^{\alpha}|^2+4\kappa|w^{\alpha}|^2+\gamma|\nabla_{\mathcal{A}}w^{\alpha}|^2+\mu|\nabla_{\mathcal{A}}\cdot w^{\alpha}|^2\right)\nonumber\\
\lesssim&\ \mathcal{E}^{\frac{1}{2}}_{2N}\mathcal{D}_{2N}.
 \end{align}

 We now seek to replace $J|\mathbb{D}_{\mathcal{A}}v^{\alpha}|^2$ with $|\mathbb{D}v^{\alpha}|^2$. To this end, we write
 \begin{align}\label{e2.26}
   J|\mathbb{D}_{\mathcal{A}}v^{\alpha}|^2=&|\mathbb{D}v^{\alpha}|^2+(J-1)|\mathbb{D}v^{\alpha}|^2+J(\mathbb{D}_{\mathcal{A}}v^{\alpha}+\mathbb{D}v^{\alpha}):(\mathbb{D}_{\mathcal{A}}v^{\alpha}-\mathbb{D}v^{\alpha}).
 \end{align}
Our aim is to control  the last term on the RHS of \eqref{e2.26} by $\mathcal{E}^{\frac{1}{2}}_{2N}\mathcal{D}_{2N}$. For the second term, using Lemma \ref{ll2.2}, we can get
\begin{align}\label{e22.26}
(J-1)|\mathbb{D}v^{\alpha}|^2\geqslant-\frac{1}{2}|\mathbb{D}v^{\alpha}|^2.
\end{align}
For the last term, Sobolev embeddings provides the bounds
\begin{align}
&\int_{\Omega}J|(\mathbb{D}_{\mathcal{A}}v^{\alpha}+\mathbb{D}v^{\alpha_0}):(\mathbb{D}_{\mathcal{A}}v^{\alpha_0}-\mathbb{D}v^{\alpha_0})|\nonumber\\
\lesssim &\|J\|_{L^{\infty}}\|\mathcal{A}-\mathbb{I}\|_{L^{\infty}}\|\mathcal{A}+\mathbb{I}\|_{L^{\infty}}\int_{\Omega}|\nabla v^{\alpha}|^2 \nonumber\\
 \lesssim & \mathcal{E}^{\frac{1}{2}}_{2N}(1+\mathcal{E}^{\frac{1}{2}}_{2N})\mathcal{D}_{2N}\lesssim\mathcal{E}^{\frac{1}{2}}_{2N}\mathcal{D}_{2N}.
 \end{align}
 Then, by Kron's inequality, we have
\begin{equation}\label{e2.27}
  \|\sqrt{J}\ \mathbb{D}_{\mathcal{A}}v^{\alpha}\|^2_0\gtrsim\|\mathbb{D}v^{\alpha}\|^2_0-\mathcal{E}^{\frac{1}{2}}_{2N}\mathcal{D}_{2N}
  \gtrsim\|v^{\alpha}\|^2_1-\mathcal{E}^{\frac{1}{2}}_{2N}\mathcal{D}_{2N}.
\end{equation}
 The same way yields
 \begin{equation}\label{e2.28}
 \left\|\sqrt{J}\left(\frac{\mu+\kappa}{2}\mathbb{D}_{\mathcal{A}}v^{\alpha},4\kappa w^{\alpha},\gamma\mathbb{D}_{\mathcal{A}}w^{\alpha},\mu|\nabla_{\mathcal{A}}\cdot w|^{2\alpha}\right)\right\|^2_0\gtrsim\|w^{\alpha}\|^2_1-\mathcal{E}^{\frac{1}{2}}_{2N}\mathcal{D}_{2N}.
 \end{equation}
 We may then use \eqref{e2.27} and \eqref{e2.28} to replace in \eqref{e2.29}. The derive of \eqref{e2.22} is similar to that of \eqref{e2.21}, except for replacing $2N$ by $N+2$. This completes the proof of the proposition.
\end{proof}

\section{Energy Evolution for the Perturbed Linear Form}

\subsection{Perturbed Linear Form}
The geometric structure in \eqref{1.9} indeed provides a benefit to the energy estimates for the temporal derivatives because of the improvement in the regularity of $\partial_t\bar{\eta}$. However, it complicates some of our other a priori estimates. This is because when we want to consider the coefficients of the equations for $(v, w, p)$ to remain invariant for a fixed free boundary $\eta$, the principal part of the corresponding equations remains nonlinear. This complicates the application of differential operators to the equations.

To address this issue, we will use a different form of the micropolar fluid equations to establish a  global priori estimates. Unlike the geometric form in Section 2, in this section we will rewrite the equations as a perturbation of a linear system, and thereby establish energy estimates for the horizontal spatial derivatives. The utility of this form of the equations lies in the fact that the linear operators have constant coefficients. The equations in this form are
\begin{equation}\label{e3.1}
\begin{cases}
  \partial_t v-(\mu+\kappa)\Delta v+\nabla p-2\kappa\nabla\times w=G^1,  &\mbox{in}\ \Omega, \\
 \nabla\cdot v=G^2, &\mbox{in}\ \Omega \\
  \partial_t w-\gamma\Delta w+4\kappa w-\mu\nabla\nabla\cdot w-2\kappa\nabla\times v=G^3, &\mbox{in}\ \Omega, \\
  \left[p\mathbb{I}-(\mu+\kappa)\mathbb{D}v\right]e_3=\left(\eta-\sigma\Delta_h\eta\right)e_3+G^4, & \mbox{on}\ \Gamma_f, \\
  \partial_t\eta=v_3+G^5, & \mbox{on}\ \Gamma_f,\\
  v|_{\Gamma_b}=0,\quad w|_{\partial\Omega}=0.
\end{cases}
\end{equation}
Here we have written $G^1=G^{1,1}+G^{1,2}+G^{1,3}+G^{1,4}+G^{1,5}+G^{1,6}$ for
\begin{align}
  G_i^{1,1}=&(\delta_{ij}-\mathcal{A}_{ij})\partial_jp,\\
   G^{1,2}=&v_j\mathcal{A}_{jk}\partial_kv, \\
  G^{1,3}=&[K^2(A^2+B^2+1)-1]\partial^2_3v-2AK\partial_1\partial_3v-2BK\partial_2\partial_3v, \\
  G^{1,4}=&[-K^3\partial_3J(1+A^2+B^2)+AK^2(\partial_1J+\partial_3A)]\partial_3v \\
  &+[BK^2(\partial_2J+\partial_3B)-K(\partial_1A+\partial_2B)]\partial_3v,\\
  G^{1,5}=&\partial_t\bar{\eta}bK\partial_3v,\\
   G^{1,6}=&\begin{pmatrix}
             -(K-1)\partial_3w_2-BK\partial_3w_3 \\
             (K-1)\partial_3w_1+AK\partial_3w_3 \\
             -AK\partial_3w_2+BK\partial_3w_1
           \end{pmatrix}.
\end{align}
$G^2$ is the function
\begin{equation}\label{ee3.1.9}
  G^2=AK\partial_3v_1+BK\partial_3v_2+(1-K)\partial_3v_3,
\end{equation}
and $G^3=G^{3,1}+G^{3,2}+G^{3,3}+G^{3,4}+G^{3,5}+G^{3,6}$ for
\begin{align}
  G^{3,1}=&v_j\mathcal{A}_{jk}\partial_kw,\\ G^{3,2}=&[K^2(A^2+B^2+1)-1]\partial^2_3w-2AK\partial_1\partial_3w-2BK\partial_2\partial_3w,  \\
  G^{3,3}=&[-K^3\partial_3J(1+A^2+B^2)+AK^2(\partial_1J+\partial_3A)]\partial_3w \\
  &+[BK^2(\partial_2J+\partial_3B)-K(\partial_1A+\partial_2B)]\partial_3w, \\
  G^{3,4}_i=&(K-1)\partial_i\partial_3w_3+\partial_i\mathcal{A}_{13}\partial_3w_1+\mathcal{A}_{13}\partial_i\partial_3w_1+\partial_i\mathcal{A}_{23}\partial_3w_2\\
  &+\mathcal{A}_{23}\partial_i\partial_3w_2+\mathcal{A}_{i3}\partial_3\mathcal{A}_{ms}\partial_sw_m+\mathcal{A}_{i3}\mathcal{A}_{ms}\partial_3\partial_sw_m,\nonumber\\ G^{3,5}=&\partial_t\bar{\eta}bK\partial_3w,\\
   G^{3,6}=&\begin{pmatrix}
             (1-K)\partial_3v_2-BK\partial_3v_3 \\
             (K-1)\partial_3v_1+AK\partial_3v_3\\
             -AK\partial_3v_2+Bk\partial_3v_1
           \end{pmatrix}.
\end{align}
Moreover, $G^4$ is the vector
\begin{align}\label{}
  G^4=& \partial_1\eta\begin{pmatrix}
          p-\eta-2(\mu+\kappa)(\partial_1v_1-AK\partial_3v_1) \\
          (\mu+\kappa)(-\partial_2v_1-\partial_1v_2+BK\partial_3v_1+AK\partial_3v_2) \\
           (\mu+\kappa)(-\partial_1v_3-K\partial_3v_1+AK\partial_3v_3)
        \end{pmatrix} \nonumber\\
  &+\partial_2\eta\begin{pmatrix}
                    (\mu+\kappa)(-\partial_2v_1-\partial_1v_2+AK\partial_3v_2+BK\partial_3v_1) \\
                    p-\eta-2(\mu+\kappa)(\partial_2v_2-BK\partial_3v_2) \\
                    (\mu+\kappa)(-\partial_2v_3-K\partial_3v_2+BK\partial_3v_3)
                  \end{pmatrix} \nonumber\\
  &+(\mu+\kappa)\begin{pmatrix}
                   (K-1)\partial_3v_1-AK\partial_3v_3 \\
                   (K-1)\partial_3v_2-BK\partial_3v_3\\
                   2(K-1)\partial_3v_3
                 \end{pmatrix}-\sigma(H-\Delta_h \eta)\mathcal{N}+\sigma\Delta_h\eta(e_3-\mathcal{N}).
\end{align}
Finally,
\begin{equation}\label{}
  G^5=-v_h\partial_h\eta.
\end{equation}

Now we calculate the natural evolution equation for the solution to the system \eqref{e3.1}.
\begin{lemma}\label{l3.1}
  Suppose $(v,w,p,\eta)$ is the solutions to the system \eqref{e3.1}, then
  \begin{align}\label{e3.2}
    &\frac{1}{2}\frac{\ud}{\ud t}\left[\int_{\Omega}(|v|^2+|w|^2)+\int_{\Gamma_f}(|\eta|^2+\sigma|D\eta|^2)\right]\nonumber\\
    &\quad+\int_{\Omega}\left[\frac{\mu+\kappa}{2}|\mathbb{D}v|^2+\gamma|\nabla w|^2+4\kappa|w|^2+\mu|\nabla\cdot w|^2\right] \nonumber\\
    =&\int_{\Omega}v\cdot \left[G^1-(\mu+\kappa)\nabla G^2\right]+\int_{\Omega}pG^2+\int_{\Omega}w\cdot G^3+\int_{\Gamma_f}\left[(\eta-\sigma\Delta_h\eta) G^5-G^4\cdot v\right].
  \end{align}
\end{lemma}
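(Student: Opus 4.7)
The plan is to test the $v$-equation in \eqref{e3.1} against $v$ and the $w$-equation against $w$, integrate over $\Omega$, add, and carry out the appropriate integrations by parts. The key opening move is to recombine the Laplacian and the pressure gradient in the momentum equation into the divergence of the stress tensor $S := p\mathbb{I}-(\mu+\kappa)\mathbb{D}v$. Using the identity $\nabla\cdot\mathbb{D}v = \Delta v + \nabla(\nabla\cdot v)$ together with the divergence constraint $\nabla\cdot v = G^2$, the $v$-equation can be rewritten as
\[
\partial_t v + \nabla\cdot S - 2\kappa\nabla\times w = G^1 - (\mu+\kappa)\nabla G^2,
\]
which is precisely the form that produces the displayed $v\cdot[G^1-(\mu+\kappa)\nabla G^2]$ on the right-hand side of \eqref{e3.2}.

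After testing with $v$ and integrating by parts, the term $\int_{\Omega}(\nabla\cdot S)\cdot v$ splits into three contributions: the pressure--divergence pairing $-\int_{\Omega}p(\nabla\cdot v) = -\int_{\Omega}pG^2$ (which, when moved to the right, yields the $\int_{\Omega}pG^2$ term), the quadratic dissipation $\frac{\mu+\kappa}{2}\int_{\Omega}|\mathbb{D}v|^2$ (using $\mathbb{D}v:\nabla v = \frac{1}{2}|\mathbb{D}v|^2$), and the boundary integral $\int_{\Gamma_f}(Se_3)\cdot v$; the $\Gamma_b$ contribution vanishes because $v|_{\Gamma_b}=0$. On $\Gamma_f$ I substitute the dynamic boundary condition $Se_3 = (\eta-\sigma\Delta_h\eta)e_3 + G^4$ and then use the kinematic boundary condition $v_3 = \partial_t\eta - G^5$ to convert $\int_{\Gamma_f}(\eta-\sigma\Delta_h\eta)v_3$ into $\int_{\Gamma_f}(\eta-\sigma\Delta_h\eta)\partial_t\eta - \int_{\Gamma_f}(\eta-\sigma\Delta_h\eta)G^5$. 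An integration by parts on the torus $\Gamma_f$ (which has no boundary) transfers $\Delta_h$ from $\eta$ onto $\partial_t\eta$, producing the clean time derivative $\frac{1}{2}\frac{\ud}{\ud t}\int_{\Gamma_f}(|\eta|^2+\sigma|D\eta|^2)$.

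For the angular momentum equation, testing against $w$ and integrating by parts yields the dissipations $\gamma\|\nabla w\|_0^2$, $4\kappa\|w\|_0^2$, and $\mu\|\nabla\cdot w\|_0^2$, with no boundary contribution thanks to $w|_{\partial\Omega}=0$. The micropolar coupling contributions $-2\kappa\int_{\Omega}(\nabla\times w)\cdot v - 2\kappa\int_{\Omega}(\nabla\times v)\cdot w$ are then handled exactly as in the treatment of the term $IV$ in the proof of Lemma \ref{l2.1}, via integration by parts together with $w|_{\partial\Omega}=0$. Summing and rearranging all the pieces delivers \eqref{e3.2}.

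The main obstacle will be the boundary bookkeeping on $\Gamma_f$: the dynamic and kinematic boundary conditions must be applied in exactly the right order so that the $\sigma\Delta_h\eta$ piece pairs with $\partial_t\eta$ to generate a clean time derivative of the capillary energy $|D\eta|^2$, while the lower-order $G^4$ and $G^5$ perturbations land on the right-hand side as displayed, rather than being absorbed into the dissipation or misplaced. A secondary bookkeeping point is to correctly account for the $-(\mu+\kappa)\nabla G^2$ correction produced by converting $-(\mu+\kappa)\Delta v + \nabla p$ into $\nabla \cdot S$, since the divergence constraint is nonzero.
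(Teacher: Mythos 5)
Your proposal follows the paper's proof of Lemma \ref{l3.1} essentially step for step: rewriting the momentum equation in stress form via $\nabla\cdot\mathbb{D}v=\Delta v+\nabla(\nabla\cdot v)$ and $\nabla\cdot v=G^2$ (this is \eqref{ee3.2}), testing against $v$ and $w$, splitting $\int_{\Omega}(\nabla\cdot S)\cdot v$ into the pressure pairing $-\int_{\Omega}pG^2$, the dissipation $\tfrac{\mu+\kappa}{2}\int_{\Omega}|\mathbb{D}v|^2$ and the $\Gamma_f$ boundary integral, and then substituting the dynamic and kinematic boundary conditions so that $(\eta-\sigma\Delta_h\eta)\partial_t\eta$ integrates to the time derivative of $\tfrac12\int_{\Gamma_f}(|\eta|^2+\sigma|D\eta|^2)$. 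All of that matches \eqref{e3.3}--\eqref{e3.6} exactly, including the sign bookkeeping of the $G^4$ and $G^5$ terms.

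The one step you dispatch by reference --- disposing of the coupling terms ``exactly as in the treatment of $IV$ in Lemma \ref{l2.1}'' --- is the step that does not go through as an exact cancellation. The curl is formally \emph{self}-adjoint here: since $\nabla\cdot(w\times v)=v\cdot(\nabla\times w)-w\cdot(\nabla\times v)$ and $w|_{\partial\Omega}=0$ annihilates the flux $\int_{\partial\Omega}(w\times v)\cdot n$, one has
\begin{equation*}
\int_{\Omega}(\nabla\times w)\cdot v=\int_{\Omega}w\cdot(\nabla\times v),
\end{equation*}
so the two contributions $-2\kappa\int_{\Omega}(\nabla\times w)\cdot v$ and $-2\kappa\int_{\Omega}(\nabla\times v)\cdot w$ reinforce rather than cancel, summing to $-4\kappa\int_{\Omega}w\cdot(\nabla\times v)$; antisymmetry would require $\int_{\Omega}(\nabla\times w)\cdot v=-\int_{\Omega}w\cdot(\nabla\times v)$, which is false. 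The identity you arrive at is therefore missing the term $4\kappa\int_{\Omega}w\cdot(\nabla\times v)$ on the right-hand side of \eqref{e3.2}. This gap is shared with the paper (the claim $IV=0$ in the proof of Lemma \ref{l2.1}, and the sign of $2\kappa\int_{\Omega}w\cdot\nabla\times v$ in \eqref{e3.3}), and it is not fatal to the energy method: the standard micropolar device is to estimate $4\kappa\bigl|\int_{\Omega}w\cdot(\nabla\times v)\bigr|\leqslant 4\kappa\|w\|_0^2+\kappa\|\nabla\times v\|_0^2$ and absorb the result using the $4\kappa\|w\|_0^2$ term and the $\kappa$ portion of the viscous dissipation, at the price of turning the identity into an inequality with weakened dissipation coefficients. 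But in your write-up you must either carry the cross term explicitly or justify its absorption --- you cannot simply assert cancellation.
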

\begin{proof}
  We may rewrite the first equation in \eqref{e3.1} as
  \begin{equation}\label{ee3.1}
    \partial_tv+\mbox{div}(p\mathbb{I}-\nabla v)-2\kappa\nabla\times w=G^1.
  \end{equation}
  Combining \eqref{ee3.1} with the second equation of \eqref{e3.1} yields
  \begin{equation}\label{ee3.2}
    \partial_tv+\mbox{div}(p\mathbb{I}-\mathbb{D} v)-2\kappa\nabla\times w=G^1-(\mu+\kappa)\nabla G^2.
  \end{equation}
   We then take the inner-product of \eqref{ee3.2} with $v$ and integrate over $\Omega$ to find
  \begin{align}\label{e3.3}
    &\frac{1}{2}\frac{\ud}{\ud t}\int_{\Omega}|v|^2+\int_{\Gamma_f}(p\mathbb{I}-(\mu+\kappa)\mathbb{D}v)e_3\cdot v-\int_{\Omega}(p\mathbb{I}-(\mu+\kappa)\mathbb{D}v):\nabla v+2\kappa\int_{\Omega}w\cdot\nabla\times v\nonumber\\
    =&\int_{\Omega}v\cdot(G^1-(\mu+\kappa)\nabla G^2).
  \end{align}
Though the second equation in \eqref{e3.1} and the symmetry of $\mathbb{D}v$, we can compute
\begin{align}\label{e3.4}
  &-\int_{\Omega}(p\mathbb{I}-(\mu+\kappa)\mathbb{D}v):\nabla v\nonumber\\
  =&\int_{\Omega}\left(-p\ \mbox{div}v+\frac{\mu+\kappa}{2}|\mathbb{D}v|^2\right)=\int_{\Omega}\left(-p\ G^2+\frac{\mu+\kappa}{2}|\mathbb{D}v|^2\right).
  \end{align}
The boundary conditions in \eqref{e3.1} provide the equality
\begin{align}\label{e3.5}
  &\int_{\Gamma_f}(p\mathbb{I}-(\mu+\kappa)\mathbb{D}v)e_3\cdot v\nonumber\\
  =&\int_{\Gamma_f}((\eta-\sigma\Delta_h\eta) v_3+G^4\cdot v)=\int_{\Gamma_f}[(\eta-\sigma\Delta_h\eta)(\partial_t\eta-G^5)+G^4\cdot v]\nonumber\\
  =&\frac{1}{2}\frac{\ud}{\ud t}\int_{\Gamma_f}\left(|\eta|^2+\sigma|D\eta|^2\right)+\int_{\Gamma_f}\left[-(\eta-\sigma\Delta_h\eta) G^5+G^4\cdot v\right].
\end{align}
Combining \eqref{e3.3}-\eqref{e3.5} yields
\begin{align}\label{e3.7}
   &\frac{1}{2}\frac{\ud}{\ud t}\left(\int_{\Omega}|v|^2+\int_{\Gamma}(|\eta|^2+\sigma|D\eta|^2)\right)+\frac{\mu+\kappa}{2}\int_{\Omega}|\mathbb{D}v|^2+2\kappa\int_{\Omega}w\cdot\nabla\times v\nonumber\\
   =&\int_{\Omega}v\cdot(G^1-(\mu+\kappa)\nabla G^2)+\int_{\Omega}p\ G^2+\int_{\Gamma_f}\left[(\eta-\sigma\Delta_h\eta) G^5-G^4\cdot v\right].
\end{align}

On the other hand, due to $w|_{\partial\Omega}=0$, we take the inner-product of the third equation in \eqref{e3.1} with $w$ and integrate over $\Omega$ to find
\begin{align}\label{e3.6}
  &\frac{1}{2}\frac{\ud}{\ud t}\int_{\Omega}|w|^2+\int_{\Omega}(\gamma|\nabla w|^2+4\kappa|w|^2+\mu|\nabla\cdot w|^2)-2\kappa\int_{\Omega}w\cdot\nabla\times v=\int_{\Omega}G^3\cdot w.
\end{align}
By \eqref{e3.7}-\eqref{e3.6}, Lemma \ref{l3.1} is concluded.
\end{proof}

\subsection{Nonlinear Estimates}
To better illustrate the results of this section, we first define the specialized energy term as follows:
\begin{equation}\label{e.1}
  \mathcal{M}:=\|v\|_{C^1(\bar{\Omega})}+\|\nabla w\|_{L^{\infty}}+|D\nabla v|_{C(\Sigma)}.
\end{equation}
In the subsequent proof in this paper, based on the two-tier energy method, we will find that $\mathcal{M}$ provide the crucial decay to balance the growth of $\|\eta\|_{4N+1/2}$. Next, we will provide estimates for $G^{\alpha}(\alpha=1,2,3,4,5)$ separately in the cases with and without surface tension.
\begin{thm}\label{tt3.1}
  Suppose that $\sigma>0$ and $\mathcal{E}\leqslant 1$ is small enough, then the external forcing terms satisfy
\begin{align}\label{ee3.25}
&\|G^1\|_1+\|G^2\|_2+\|G^3\|_1+|G^4|_{3/2}+|G^5|_{5/2}+|\partial_tG^5|_{1/2}\lesssim\sqrt{\mathcal{ED}},\\
&\|G^1\|_0+\|G^2\|_1+\|\partial_tG^2\|_{-1}+\|G^3\|_0+|G^4|_{1/2}+|G^5|_{3/2}+|\partial_tG^5|_{-1/2}\lesssim\mathcal{E}.
\end{align}
\end{thm}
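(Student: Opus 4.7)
The plan is to adapt to Theorem \ref{tt3.1} the same blueprint already used in the proofs of Theorems \ref{tt2.1} and \ref{t2.1}. Every summand appearing in the explicit formulas for $G^1,\dots,G^5$ is at least quadratic, and can be written schematically as a product $X\cdot Y$ in which $X$ is built from $\eta$ (or its harmonic extension $\bar\eta$) through one of the quantities $\bar\eta$, $A$, $B$, $K-1$, $J-1$, $\mathcal{A}-\mathbb{I}$, all of which vanish at $\eta=0$, while $Y$ is a derivative of $v$, $w$, or $p$. I would then apply standard Sobolev product estimates ($\|XY\|_s\lesssim\|X\|_{L^\infty}\|Y\|_s$, and analogous variants on $\Sigma_f$), Lemma \ref{ll2.2} to bound $K$, $J$, $\mathcal{A}$ in $L^\infty$, the harmonic-extension bound $\|\bar\eta\|_{H^{s+1/2}(\Omega)}\lesssim|\eta|_s$, and the trace theorem.

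For the first collection of bounds, the $\sqrt{\mathcal{E}\mathcal{D}}$ estimates, I place the low-regularity factor in $X$ (paying $\sqrt{\mathcal{E}}$) and the high-regularity factor in $Y$ (paying $\sqrt{\mathcal{D}}$). For example, in $G^{1,3}$ the coefficient $K^2(A^2+B^2+1)-1$ is $O(\bar\eta)$ with $L^\infty$-norm controlled by $|\eta|_{5/2}\lesssim\sqrt{\mathcal{E}}$, while the hitter $\partial_3^2 v$ satisfies $\|\partial_3^2 v\|_1\lesssim\|v\|_3\lesssim\sqrt{\mathcal{D}}$; the convective piece $G^{1,2}$, the geometric-time piece $G^{1,5}=\partial_t\bar\eta\,bK\partial_3 v$, the curl correction $G^{1,6}$, the bulk-divergence $G^2$, and their analogues in $G^3$ are treated identically. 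The boundary term $G^5=-v_h\partial_h\eta$ is controlled via $|G^5|_{5/2}\lesssim |v|_2|\eta|_{7/2}+|v|_3|\eta|_{5/2}\lesssim\sqrt{\mathcal{E}\mathcal{D}}$, and $|\partial_tG^5|_{1/2}$ follows after splitting the time derivative, using $\partial_tv\in H^1(\Omega)$ and $\partial_t\eta\in H^{5/2}(\Sigma_f)$ under $\mathcal{D}$ together with trace theory.

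The main technical point, which I expect to be the principal obstacle, is the surface-tension contribution to $G^4$, namely $-\sigma(H-\Delta_h\eta)\mathcal{N}+\sigma\Delta_h\eta(e_3-\mathcal{N})$. Here I would invoke the identity
\begin{equation*}
H(\eta)-\Delta_h\eta=\Delta_h\eta\Bigl(\tfrac{1}{\sqrt{1+|D\eta|^2}}-1\Bigr)-\frac{\partial_i\partial_j\eta\,\partial_i\eta\,\partial_j\eta}{(1+|D\eta|^2)^{3/2}}
\end{equation*}
already used in the proof of Theorem \ref{tt2.1}, so that $H-\Delta_h\eta$ is at worst cubic in $D\eta$ and $D^2\eta$. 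Placing two $D\eta$-factors in $L^\infty(\Sigma_f)$ against $|\eta|_{5/2}\lesssim\sqrt{\mathcal{E}}$ and the remaining factor in $H^{3/2}(\Sigma_f)$ against $|\eta|_{7/2}\lesssim\sqrt{\mathcal{D}}$ delivers the desired $\sqrt{\mathcal{E}\mathcal{D}}$ bound on $|G^4|_{3/2}$; the term $\sigma\Delta_h\eta(e_3-\mathcal{N})$ is analogous because $e_3-\mathcal{N}=O(D\eta)$.

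For the second collection, the $\mathcal{E}$-level bounds, every factor in each product is controlled directly by $\sqrt{\mathcal{E}}$, so the products are quadratic in $\sqrt{\mathcal{E}}$ and hence $\lesssim\mathcal{E}$. The only delicate piece is $\|\partial_tG^2\|_{-1}$: when $\partial_t$ hits the $\mathcal{A}$-coefficient one obtains $\partial_t\bar\eta\cdot\partial_3 v$, which by duality against $_0H^1(\Omega)$ is bounded by $|\partial_t\eta|_{3/2}\|v\|_2\lesssim\mathcal{E}$; when $\partial_t$ hits $v$ one pairs $\partial_tv\in L^2$ against an $H^1$ test function, which is exactly what the $H^{-1}$ norm permits. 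Beyond the mean-curvature expansion mentioned above, the bulk of the work is careful derivative-count bookkeeping so that in every product the total regularity demanded does not exceed what $\mathcal{E}$ and $\mathcal{D}$ actually provide; no single individual estimate is difficult.
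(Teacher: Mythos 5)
Your proposal follows essentially the same route as the paper: identify the quadratic (or higher) structure of each $G^i$, split every summand into a low-regularity factor measured in $L^\infty$ (paying $\sqrt{\mathcal{E}}$) against a high-regularity factor (paying $\sqrt{\mathcal{D}}$), handle the mean-curvature term via the same expansion of $H-\Delta_h\eta$ used in Theorem \ref{tt2.1}, and treat $|G^5|_{5/2}$ by a tame product/commutator estimate. The only caveat is notational: in bounds like $|v|_2|\eta|_{7/2}$ you should be careful that trace norms of $v$ on $\Sigma_f$ are converted to bulk norms actually controlled by $\mathcal{E}$ and $\mathcal{D}$ (the paper does this via the $\Lambda_h^{5/2}$ commutator decomposition), but this is bookkeeping rather than a gap.
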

\begin{proof}
	The core proof of these nonlinear term estimates is similar. We note that all terms are quadratic or of higher order. Then we apply the differential operator and expand using the Leibniz rule. By Sobolev embeddings. trace theory, Lemma \ref{lA.1}, Lemma \ref{lA.3} and Lemma \ref{lA.4}, we can show the estimates of the nonlinearities. We first turn to the estimate  of $G^1$. From the corresponding structure, we can deduce that $G^1$ is equivalent to the following form:
\begin{equation}\label{e3.2.25}
 \nabla\bar{\eta}\nabla p+v\nabla\bar{\eta}\nabla v+\nabla\bar{\eta}\nabla^2\bar{\eta}\nabla v+(\nabla\bar{\eta})^2\nabla^2v+\nabla\bar{\eta}\nabla w+\partial_t\bar{\eta}(\nabla\bar{\eta})^2\nabla v.
\end{equation}
Then Sobolev embedding, trace theorem and Lemma \ref{lA.1} provide the bounds
\begin{align}\label{ee3.2.26}
  &\|\nabla\bar{\eta}\nabla p\|_1\lesssim|\eta|_{5/2}\|p\|_2+|\eta|_{7/2}\|p\|_1\lesssim
  \mathcal{E}^{\frac{1}{2}}\mathcal{D}^{\frac{1}{2}},\nonumber\\
  &\|v\nabla\bar{\eta}\nabla v\|_1\lesssim\|v\|_2\|\nabla\bar{\eta}\|_2\|\nabla v\|_2\lesssim\mathcal{E}\mathcal{D}^{\frac{1}{2}}\lesssim\mathcal{E}^{\frac{1}{2}}\mathcal{D}^{\frac{1}{2}},\nonumber\\
&\|\nabla\bar{\eta}\nabla^2\bar{\eta}\nabla v\|_1\lesssim|\eta|_{5/2}\|v\|_1|\eta|_{7/2}+|\eta|^2_{5/2}\|v\|_3\lesssim\mathcal{E}^{\frac{1}{2}}\mathcal{D}^{\frac{1}{2}},\nonumber\\
&\|(\nabla\bar{\eta})^2\nabla^2v\|_1\lesssim\|v\|_2|\eta|_{5/2}|\eta|_{7/2}+|\eta|^2_{5/2}\|v\|_3
\lesssim\mathcal{E}^{\frac{1}{2}}\mathcal{D}^{\frac{1}{2}},\nonumber\\
&\|\nabla\bar{\eta}\nabla w\|_1\lesssim\|\nabla\bar{\eta}\|_2\|\nabla w\|_2\lesssim\|w\|_3|\eta|_{5/2}\lesssim\mathcal{E}^{\frac{1}{2}}\mathcal{D}^{\frac{1}{2}},\nonumber
\end{align}
and
\begin{align}
\|\partial_t\bar{\eta}(\nabla\bar{\eta})^2\nabla v\|_1\lesssim&|\eta|^2_{5/2}|\partial_t\eta|_{3/2}\|v\|_2+|\eta|_{5/2}|\eta|_{7/2}|\partial_t\eta|_{1/2}\|v\|_2\nonumber\\
&+|\eta|^2_{5/2}|\partial_t\eta|_{1/2}\|v\|_3\nonumber\\
\lesssim&\mathcal{E}^{\frac{1}{2}}\mathcal{D}^{\frac{1}{2}}.
\end{align}
With the above estimates in hand, we may get
\begin{align}\label{ee3.26}
		\|G^1\|_1\lesssim\sqrt{\mathcal{ED}}.
	\end{align}
To control $G^2$, we first  show that the structure of $G^2$ is equivalent to
\begin{equation}\label{ee33.2.26}
\nabla\bar{\eta}\nabla v+(\nabla\bar{\eta})^2\nabla v.
\end{equation}
Then we have
\begin{align}\label{ee33.27}
  \|G^2\|_2\lesssim&\|\nabla\bar{\eta}\nabla v\|_2+\|(\nabla\bar{\eta})^2\nabla v\|_2\nonumber\\
  \lesssim&\|\nabla\bar{\eta}\|_2\|\nabla v\|_2+\|\nabla\bar{\eta}\|^2_2\|\nabla v\|_2\nonumber\\
  \lesssim&\mathcal{E}^{1/2}\mathcal{D}^{1/2}.
\end{align}
The estimates of $G^3$ is similar to that of $G^1$, we omit the proof and give
\begin{equation}\label{e33.28}
  \|G^3\|_1\lesssim\sqrt{\mathcal{ED}}.
\end{equation}
Based on the corresponding nonlinear structure, we identify the principal terms in $G^4$ as
\begin{align*}
G^4\sim&D\eta(p-\eta)+D\eta\nabla v+D\eta\nabla\eta\nabla v+D\eta(\nabla\eta)^2\nabla v\\
&+\nabla\eta\nabla v+(H-\Delta_h\eta)\mathcal{N}+\Delta_h\eta(\mathcal{N}-e_3).
\end{align*}
For the convenience of the following discussion, we give two definition:
\begin{align*}\label{}
  G^4_1:=&D\eta(p-\eta)+D\eta\nabla v+D\eta\nabla\eta\nabla v+D\eta(\nabla\eta)^2\nabla v+\nabla\eta\nabla v, \\
  G^4_2:=&(H-\Delta_h\eta)\mathcal{N}+\Delta_h\eta(\mathcal{N}-e_3).
\end{align*}
For $G^4_1$, we first apply Sobolev embedding, trace theorem and Lemma \ref{lA.1} to get
\begin{align*}\label{}
  &|D\eta(p-\eta)|_{3/2}\lesssim|\eta|_{5/2}|\eta|_{3/2}+|\eta|_{5/2}\|p\|_2\lesssim\sqrt{\mathcal{ED}},\\
  &|D\eta\nabla v|_{3/2}\lesssim|\eta|_{5/2}\|v\|_{5/2}\lesssim\sqrt{\mathcal{ED}},\\
  &|D\eta\nabla\eta\nabla v|_{3/2}\lesssim|\eta|^2_{5/2}\|v\|_3\lesssim\sqrt{\mathcal{ED}},\\
  &|D\eta(\nabla\eta)^2\nabla v|_{3/2}\lesssim|\eta|^3_{5/2}\|v\|_3\lesssim\sqrt{\mathcal{ED}},\\
  &|\nabla\eta\nabla v|_{3/2}\lesssim|\eta|_{5/2}\|v\|_3\lesssim\sqrt{\mathcal{ED}}.
\end{align*}
With the above estimates in hand, then one has
\begin{equation}\label{e33.29}
 |G^4_1|_{3/2}\lesssim\sqrt{\mathcal{ED}}.
\end{equation}
We now turn our attention to the estimates of $G^4_2$. Using \eqref{ee2.2.23}, we know
\begin{align}\label{ee3.29}
H-\Delta_h\eta=\Delta_h\eta\left(\frac{1}{\sqrt{1+|D\eta|^2}}-1\right)-\frac{(D\eta\cdot D)D\eta\cdot D\eta}{(1+|D\eta|^2)^{3/2}}.
\end{align}
Considering the derivative relationships, we know $(H-\Delta_h\eta)\mathcal{N}$ is equivalent to the following form:
$$D^2\eta D\eta(1+D\eta).$$
Then we use Lemma \ref{lA.1} to bound
\begin{align}\label{ee3.30}
&|(H-\Delta_h\eta)\mathcal{N}|_{3/2}\lesssim|D^2\eta|_{3/2}|D\eta|_{3/2}\left(1+|D\eta|_{3/2}\right)\lesssim\sqrt{\mathcal{ED}},\\
&|\Delta_h\eta(\mathcal{N}-e_3)|_{3/2}\lesssim|\Delta_h\eta|_{3/2}|D\eta|_{3/2}\lesssim\sqrt{\mathcal{ED}},
\end{align}
which indicates
\begin{equation}\label{e33.30}
|G^4_2|_{3/2}\lesssim\sqrt{\mathcal{ED}}.
\end{equation}
Combining \eqref{e33.29} with \eqref{e33.30} yields
\begin{equation}\label{e33.31}
|G^4|_{3/2}\lesssim\sqrt{\mathcal{ED}}.
\end{equation}
Finally, we investigate the estimates about $G^5$.
\begin{align}\label{ee3.31}
&|G^5|_{5/2}+|\partial_tG^5|_{1/2}\nonumber\\
\lesssim&|D\eta v_h|_{5/2}+|D\partial_t\eta v_h|_{1/2}+|D\eta\partial_tv_h|_{1/2}\nonumber\\
\lesssim&\left|\Lambda_h^{5/2}D\eta\cdot v_h\right|_0+\left|\Lambda_h^{5/2}v_h \partial_h\eta\right|_0+\left|[\Lambda_h^{5/2}, D\eta, v_h]\right|_0\nonumber\\
&+\left|\Lambda_h^{\frac{1}{2}}D\eta\partial_tv_h\right|_0+\left|D\eta\Lambda_h^{\frac{1}{2}}\partial_tv_h\right|_0
+\left|[\Lambda_h^{1/2}, D\eta, \partial_tv_h]\right|_0\nonumber\\
&+\left|\Lambda_h^{\frac{1}{2}}D\partial_t\eta v_h\right|_0+\left|D\partial_t\eta\Lambda_h^{\frac{1}{2}}v_h\right|_0
+\left|[\Lambda_h^{1/2}, D\partial_t\eta, v_h]\right|_0\nonumber\\
\lesssim&\sqrt{\mathcal{ED}}.
\end{align}
By \eqref{ee3.26}, \eqref{ee33.27}, \eqref{e33.28}, \eqref{e33.31} and \eqref{ee3.31}, we deduce \eqref{ee3.25}. The proof of (3.29) is similar to that of \eqref{ee3.25}, we omit it here.

\end{proof}

In the case without surface tension, to apply two-tier energy method, we now estimate the $G^i$ terms defined in (3.2)-(3.18) at the $2N$ level and the $N+2$ level, respectively.
\begin{thm}\label{t3.1}
  Suppose that $\sigma=0$ and $\mathcal{E}_{2N}\leqslant 1$ is small enough, then the external forcing terms satisfy
  \begin{align}\label{e3.2.1}
   \|\bar{\nabla}^{4N-2}_0(G^1, G^3)\|_0^2+\|\bar{\nabla}^{4N-2}_0G^2\|_1^2+|\bar{D}^{4N-2}_0G^4|^2_{1/2}+|\bar{D}^{4N-2}_0G^5|^2_{1/2}\lesssim\mathcal{E}_{2N}^2,
  \end{align}
and
\begin{align}\label{e3.2.2}
\|\bar{\nabla}^{4N-1}_0(G^1, G^3)\|_0^2+\|\bar{\nabla}^{4N-1}_0G^2\|_1^2+|\bar{D}^{4N-1}_0G^4|^2_{1/2}+|\bar{D}^{4N-1}_0G^5|^2_{1/2}\lesssim\mathcal{E}_{2N}\mathcal{D}_{2N}+\mathfrak{F}\mathcal{M}.
\end{align}
\end{thm}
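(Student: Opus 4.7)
The plan is to extend the strategy used in Theorem \ref{t2.1} to the perturbed linear formulation \eqref{e3.1}. Every term of $G^1,\dots,G^5$ is at least quadratic, and each factor is built from $\eta$ (or its harmonic extension $\bar\eta$ with the induced quantities $A,B,J,K$), the velocity $v$, the micro-rotation $w$, and the pressure $p$. After applying $\bar\nabla^{4N-2}$ or $\bar\nabla^{4N-1}$ and expanding via the Leibniz rule, each summand has the schematic form $\partial^\beta X\cdot\partial^\gamma Y$ with $|\beta|+|\gamma|\leqslant 4N-2$ or $4N-1$ (in the parabolic count), respectively. I would then estimate such products using H\"older's inequality, the Sobolev embedding $H^2(\Omega)\hookrightarrow L^\infty(\Omega)$, the trace theorem, the harmonic extension bound $\|\nabla\bar\eta\|_k\lesssim|\eta|_{k+1/2}$ of Lemma \ref{lA.1}, and the fractional product/commutator estimates of Lemmas \ref{lA.3}--\ref{lA.4} for the boundary norms entering $G^4$ and $G^5$.

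For the low-level estimate \eqref{e3.2.1}, the parabolic count forces $\min(|\beta|,|\gamma|)\leqslant 2N-1$ in every Leibniz summand. This allows one of the two factors to be placed in $L^\infty(\Omega)$, in $L^\infty(\Gamma_f)$, or in an appropriate trace space, and to be controlled by $\mathcal E_{2N}^{1/2}$; the remaining factor sits in an $L^2$-based norm also bounded by $\mathcal E_{2N}^{1/2}$. Summing and squaring yields the claimed $\mathcal E_{2N}^2$. The only subtlety is bookkeeping: when a derivative lands on $\bar\eta$, one converts $\|\nabla^{j}\bar\eta\|_0$ to $|\eta|_{j-1/2}$, which remains safely below the ceiling $|\eta|_{4N}$ encoded in $\mathcal E_{2N}$.

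For the high-level estimate \eqref{e3.2.2}, the Leibniz expansion again drives the analysis. In the generic case, the factor carrying the larger index is $v$, $w$, or $p$ and lands in a dissipation-level norm, such as $4N+1$ spatial derivatives on $v,w$ controlled by $\mathcal D_{2N}^{1/2}$, up to $4N$ on $p$, and up to $2N+5/2$ on temporal derivatives of $\eta$; combined with the conjugate factor bounded by $\mathcal E_{2N}^{1/2}$, this produces the $\mathcal E_{2N}\mathcal D_{2N}$ contribution. The delicate and genuinely new case, which forces the $\mathfrak F\mathcal M$ term to appear, is when $\bar\nabla^{4N-1}$ effectively collapses to $4N$ spatial derivatives that land on a factor $\nabla\bar\eta$: the bound $\|\nabla^{4N+1}\bar\eta\|_0\lesssim|\eta|_{4N+1/2}=\mathfrak F^{1/2}$ is then required, and the conjugate factor must be absorbed in a sup-type norm, which is precisely the role of $\mathcal M=\|v\|_{C^1(\bar\Omega)}+\|\nabla w\|_{L^\infty}+|D\nabla v|_{C(\Sigma)}$.

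The main obstacle will be the mean curvature contribution $(H-\Delta_h\eta)\mathcal N$ inside $G^4$. Starting from the expansion \eqref{ee3.29}, I would track all high-order allocations of $\bar D^{4N-1}$ across a rational expression in $D\eta$ and $D^2\eta$, measure the resulting $H^{1/2}(\Gamma_f)$ norm via the fractional Leibniz and commutator estimates of Lemmas \ref{lA.3}--\ref{lA.4}, and verify in each sub-case that the highest-regularity boundary factor is either dominated by $\mathcal D_{2N}^{1/2}$ (giving a $\mathcal E_{2N}\mathcal D_{2N}$ term) or by $\mathfrak F^{1/2}$ (giving a $\mathfrak F\mathcal M$ term whose conjugate factor fits into $\mathcal M^{1/2}$ after a careful low-regularity trace). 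The bulk nonlinearities $G^1,G^2,G^3$ admit the same split with considerably less fuss, so the decisive step is carrying out this systematic $G^4$ case analysis while keeping the pressure allocation compatible with the fact that $\mathcal E_{2N}$ contains $p$ only up to $\|p\|_{4N-1}$.
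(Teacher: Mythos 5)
Your proposal follows essentially the same route as the paper: Leibniz expansion of each quadratic term, the low-derivative factor in $L^\infty$ (or $C^1$ on $\Gamma_f$) controlled by $\mathcal E_{2N}^{1/2}$ or $\mathcal M^{1/2}$, the high-derivative factor in a dissipation norm, and the $\mathfrak F\mathcal M$ contribution arising precisely when all $4N$ horizontal derivatives fall on $\eta$, with the ceiling $|\eta|_{4N}\leqslant\mathcal E_{2N}^{1/2}$ explaining why \eqref{e3.2.1} closes with $\mathcal E_{2N}^2$. One correction: since this theorem assumes $\sigma=0$, the mean curvature terms $-\sigma(H-\Delta_h\eta)\mathcal N+\sigma\Delta_h\eta(e_3-\mathcal N)$ are absent from $G^4$, so the "main obstacle" you single out does not arise; the genuinely delicate piece of $G^4$ is instead $\nabla_h\eta\,Q(A,B,K)\nabla v$, which the paper handles with the $C^1$--$H^{1/2}$ product estimate of Lemma \ref{lA.2} (this is why $\mathcal M$ contains $|D\nabla v|_{C(\Sigma)}$), exactly along the lines of your generic $\mathfrak F\mathcal M$ mechanism.
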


\begin{proof}
By examining the structure of $G^i$, we know that all terms in $G^i$ are quadratic or of higher order. Then we will use Leibniz formula, Sobolev embedding, trace theorem and Lemma \ref{lA.1} to estimate $\partial^{\alpha}G^i$. The most difficult terms appear in the highest-order derivatives. Therefore, in the following proof, we will mainly focus on the highest-order derivative terms. The lower-order derivative terms are considered as good terms and easier to estimate. To this end, we will use "good terms" to denote the estimates of the lower-order derivative terms.

 We first turn our attention to $G^1$. From (3.2)-(3.8), we know that the highest derivative terms in $G^1$ are equivalent to
 $$\nabla\bar{\eta}\nabla p+(\nabla\bar{\eta})^2\nabla^2v+\nabla\bar{\eta}\nabla^2\bar{\eta}\nabla v+\partial_t\bar{\eta}\nabla\bar{\eta}\nabla v+(\nabla\bar{\eta})^2\nabla w.$$

 Then the estiamtes of the highest derivative terms obtained by applying $\partial_t^{\alpha_0}\nabla^{4N-2\alpha_0-1}$ to $\nabla\bar{\eta}\nabla p+(\nabla\bar{\eta})^2
 \nabla^2v$ are
  \begin{align}\label{e3.2.3}
    &\left\|\partial_t^{\alpha_0}\nabla^{4N-2\alpha_0-1}(\nabla\bar{\eta}\nabla p+(\nabla\bar{\eta})^2
 \nabla^2v)\right\|_0\nonumber\\
    \lesssim&\|\partial_t^{\alpha_0}\nabla^{4N-2\alpha_0}\bar{\eta}\nabla p\|_0+\|\nabla\bar{\eta}\partial_t^{\alpha_0}\nabla^{4N-2\alpha_0}p\|_0+\|\nabla\bar{\eta}\partial_t^{\alpha_0}\nabla^{4N-2\alpha_0}\bar{\eta}\nabla^2v\|_0\nonumber\\
    &+\|(\nabla\bar{\eta})^2\partial_t^{\alpha_0}\nabla^{4N-2\alpha_0+1}v\|_0+\mbox{good terms}\nonumber\\
    \lesssim& \|\nabla p\|_{L^{\infty}}\|\partial_t^{\alpha_0}\bar{\eta}\|_{4N-2\alpha_0}+\|\nabla\bar{\eta}\|_{L^{\infty}}\|\partial_t^{\alpha_0}p\|_{4N-2\alpha_0}
    +\|\nabla\bar{\eta}\nabla^2v\|_{L^{\infty}}\|\partial_t^{\alpha_0}\bar{\eta}\|_{4N-2\alpha_0}\nonumber\\
    &+\|\nabla\bar{\eta}\|^2_{L^{\infty}}\|\partial_t^{\alpha_0}v\|_{4N-2\alpha_0+1}+\mbox{good terms}\nonumber\\
  \lesssim&\mathcal{E}_{2N}\mathcal{D}^{\frac{1}{2}}_{2N}+\sqrt{\mathcal{E}_{2N}\mathcal{D}_{2N}}\lesssim\sqrt{\mathcal{E}_{2N}\mathcal{D}_{2N}},
  \end{align}
  Similarly, the estimates of the highest derivative terms obtained by applying $\partial_t^{\alpha_0}\nabla^{4N-2\alpha_0-1}$ to $\nabla\bar{\eta}\nabla^2\bar{\eta}\nabla v$ are
  \begin{align}\label{e3.2.4}
   &\|\partial_t^{\alpha_0}\nabla^{4N-2\alpha_0-1}(\nabla\bar{\eta}\nabla^2\bar{\eta}\nabla v)\|_0\nonumber\\
   \lesssim&\|\nabla\bar{\eta}\nabla v\partial_t^{\alpha_0}\nabla^{4N-2\alpha_0+1}\bar{\eta}\|_0+\|\nabla\bar{\eta}\nabla^2\bar{\eta}\partial_t^{\alpha_0}\nabla^{4N-2\alpha_0}v\|_0\nonumber\\
   &+\mbox{good terms}\nonumber\\
 \lesssim&\|\nabla\bar{\eta}\nabla v\|_{L^{\infty}}|\partial_t^{\alpha_0}\eta|_{4N-2\alpha_0+1/2}+\|\nabla\bar{\eta}\nabla^2\bar{\eta}\|_{L^{\infty}}\|\partial_t^{\alpha_0}v\|_{4N-2\alpha_0}
 +\mbox{good terms}\nonumber\\
 \lesssim&\|\nabla v\|_{L^{\infty}}|\eta|_{4N+1/2}+\mathcal{E}_{2N}\mathcal{D}^{1/2}_{2N}\nonumber\\
 \lesssim&\sqrt{\mathfrak{F}\mathcal{M}}+\mathcal{E}_{2N}^{\frac{1}{2}}\mathcal{D}^{\frac{1}{2}}_{2N}.
  \end{align}
For $\partial_t\bar{\eta}\nabla\bar{\eta}\nabla v+(\nabla\bar{\eta})^2\nabla w$, we can apply $\partial_t^{\alpha_0}\nabla^{4N-2\alpha_0-1}$ to get the estimates of the highest derivative term:
\begin{align}\label{e3.2.5}
  & \|\partial_t^{\alpha_0}\nabla^{4N-2\alpha_0-1}\left[\partial_t\bar{\eta}\nabla\bar{\eta}\nabla v+(\nabla\bar{\eta})^2\nabla w\right]\|_0\nonumber \\
  \lesssim&\|\partial_t^{\alpha_0+1}\nabla^{4N-2\alpha_0-1}\bar{\eta}\nabla\bar{\eta}\nabla v\|_0+\|\partial_t\bar{\eta}\partial^{\alpha_0}_t\nabla^{4N-2\alpha_0}\bar{\eta}\nabla v\|_0+\|\partial_t\bar{\eta}\nabla\bar{\eta}\partial^{\alpha_0}_t\nabla^{4N-2\alpha_0}v\|_0 \nonumber\\
  &+\|\nabla\bar{\eta}\partial^{\alpha_0}_t\nabla^{4N-2\alpha_0}\bar{\eta}\nabla w\|_0+\|(\nabla \bar{\eta})^2\partial^{\alpha_0}_t\nabla^{4N-2\alpha_0}w\|_0\nonumber\\
\lesssim&\|\nabla\bar{\eta}\nabla v\|_{L^{\infty}}|\partial_t^{\alpha_0+1}\eta|_{4N-2(\alpha_0+1)+1/2}+\|\partial_t\bar{\eta}\nabla v\|_{L^{\infty}}\|\partial_t^{\alpha_0}\bar{\eta}\|_{4N-2\alpha_0}\nonumber\\
&+\|\partial_t\bar{\eta}\nabla\bar{\eta}\|_{L^{\infty}}\|\partial_t^{\alpha_0}v\|_{4N-2\alpha_0}
+\|\nabla\bar{\eta}\nabla w\|_{L^{\infty}}\|\partial_t^{\alpha_0}\bar{\eta}\|_{4N-2\alpha_0}\nonumber\\
&+\|\nabla\bar{\eta}\|^2_{L^{\infty}}\|\partial_t^{\alpha_0}w\|_{4N-2\alpha_0}+\mbox{good terms} \nonumber\\
\lesssim&\mathcal{E}_{2N}\mathcal{D}^{\frac{1}{2}}_{2N}\lesssim\sqrt{\mathcal{E}_{2N}\mathcal{D}_{2N}}.
\end{align}
Combining \eqref{e3.2.3}, \eqref{e3.2.4} and  \eqref{e3.2.5} yields
\begin{equation}\label{ee3.2.5}
  \|\bar{\nabla}_0^{4N-1}G^1\|_0\lesssim\sqrt{\mathfrak{F}\mathcal{M}}+\mathcal{E}_{2N}^{\frac{1}{2}}\mathcal{D}^{\frac{1}{2}}_{2N}.
\end{equation}
Due to the structural similarity between $G^3$ and $G^1$, we omit the proof for $G^3$ and directly provide the following estimate:
\begin{equation}\label{ee3.2.6}
\|\bar{\nabla}_0^{4N-1}G^3\|_0\lesssim\sqrt{\mathfrak{F}\mathcal{M}}+\mathcal{E}_{2N}^{\frac{1}{2}}\mathcal{D}^{\frac{1}{2}}_{2N}.
\end{equation}

By \eqref{ee3.1.9}, we can deduce that the higher order derivative terms in $G^2$ is $(\nabla\bar{\eta})^2\nabla v$. Then applying a similar method, we can get
\begin{equation}\label{ee3.2.7}
\|\bar{\nabla}_0^{4N-1}G^2\|_1\lesssim\mathcal{E}_{2N}^{\frac{1}{2}}\mathcal{D}^{\frac{1}{2}}_{2N}.
\end{equation}

We now turn to $G^4$, there are terms of the form $\nabla_h\eta\ Q\nabla v$ with $Q=Q(A,B,K)$ a polynomial. Then the highest derivative term obtained by applying $\partial_t^{\alpha_0}\Lambda_h^{4N-2\alpha_0-1}$ to $G^4$  is
\begin{align}\label{e3.2.7}
  &|\partial_t^{\alpha_0}\Lambda_h^{4N-2\alpha_0-1}G^4|_{\frac{1}{2}}\approx|\partial_t^{\alpha_0}\Lambda_h^{4N-2\alpha_0-1}(\nabla_h\eta\ Q\nabla v)|_{\frac{1}{2}}\nonumber \\
  \lesssim& |\partial_t^{\alpha_0}\Lambda_h^{4N-2\alpha_0}\eta\ Q\nabla v|_{\frac{1}{2}}+ |Q\Lambda_h\eta\partial_t^{\alpha_0}\Lambda_h^{4N-2\alpha_0-1}\nabla v|_{\frac{1}{2}}+\mbox{good terms}.
\end{align}
By Lemma \ref{lA.2}, we know that
\begin{align}\label{e3.2.8}
  &|\partial_t^{\alpha_0}\Lambda_h^{4N-2\alpha_0-1}G^4|_{\frac{1}{2}}\nonumber \\
  \lesssim&|Q\nabla v|_{C^1}|\partial_t^{\alpha_0}\Lambda_h^{4N-2\alpha_0}\eta|_{\frac{1}{2}}+|Q\Lambda_h\eta|_{C^1}|\partial_t^{\alpha_0}\Lambda_h^{4N-2\alpha_0-1}\nabla v|_{\frac{1}{2}}\nonumber\\
  \lesssim&\sqrt{\mathfrak{F}\mathcal{M}}+\mathcal{E}_{2N}^{\frac{1}{2}}\mathcal{D}^{\frac{1}{2}}_{2N}.
\end{align}
In $G^5$, the terms are of the form $v_h\cdot\nabla_h \eta$.  The highest derivative terms obtained by applying $\partial_t^{\alpha_0}\Lambda_h^{4N-2\alpha_0-1}$ to $G^5$  is
\begin{align}\label{e3.2.9}
  |\partial_t^{\alpha_0}\Lambda_h^{4N-2\alpha_0-1}G^5|_{\frac{1}{2}}\lesssim& |\partial_t^{\alpha_0}\Lambda_h^{4N-2\alpha_0}\eta v_h|_{\frac{1}{2}}+\mbox{good terms}\nonumber\\
  \lesssim&|v|_{C^1}|\partial_t^{\alpha_0}\eta|_{4N-2\alpha_0+\frac{1}{2}}+\mathcal{E}_{2N}^{\frac{1}{2}}\mathcal{D}^{\frac{1}{2}}_{2N}\nonumber\\
  \lesssim& \sqrt{\mathfrak{F}\mathcal{M}}+\mathcal{E}_{2N}^{\frac{1}{2}}\mathcal{D}^{\frac{1}{2}}_{2N}.
\end{align}
By \eqref{ee3.2.5}, \eqref{ee3.2.6}, \eqref{ee3.2.7}, \eqref{e3.2.8} and \eqref{e3.2.9}, we know that \eqref{e3.2.2} holds true.

Finally, we discuss the validity of \eqref{e3.2.1}. Note that the spatial derivative involved in the left-hand side (LHS) of inequality \eqref{e3.2.1} is low by the first order than the LHS of \eqref{e3.2.2}, hence the order of the highest derivative of $\eta$ appearing at the right-hand side (RHS) of \eqref{e3.2.1} should also be reduced by $1$ order and become $4N-\frac{1}{2}$. The proof of \eqref{e3.2.1} is similar with that of \eqref{e3.2.2}, and the extra term $|\eta|^2_{4N-\frac{1}{2}} \mathcal{M}\lesssim \mathcal{E}_{2N}^2$ does appear in the RHS of \eqref{e3.2.1}. Thus, Theorem \ref{t3.1} is concluded.
\end{proof}

Now we estimate $G^i(i=1,\dots, 5)$ at the $N+2$ level.
\begin{thm}\label{t3.2}
  Suppose that $\sigma=0$ and $\mathcal{E}_{2N}\leqslant1$ is small enough, then the external forcing terms satisfy
 \begin{align}\label{e3.2.10}
   \|\bar{\nabla}^{2N+2}_0(G^1, G^3)\|_0^2+\|\bar{\nabla}^{2N+2}_0G^2\|_1^2+|\bar{D}^{2N+2}_0G^4|^2_{1/2}+|\bar{D}^{2N+2}_0G^5|^2_{1/2}\lesssim\mathcal{E}_{N+2}^2,
  \end{align}
and
\begin{align}\label{e3.2.11}
\|\bar{\nabla}^{2N+2}_0(G^1, G^3)\|_0^2+\|\bar{\nabla}^{2N+2}_0G^2\|_1^2+|\bar{D}^{2N+2}_0G^4|^2_{1/2}+|\bar{D}^{2N+2}_0G^5|^2_{1/2}\lesssim\mathcal{E}_{2N}\mathcal{D}_{N+2}.
\end{align}
\end{thm}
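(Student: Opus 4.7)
The plan is to mirror the strategy used in the proof of Theorem \ref{t3.1}, adjusting the derivative bookkeeping to the $N+2$ level instead of the $2N$ level. Since every term appearing in $G^1,G^2,G^3,G^4,G^5$ is quadratic or of higher order, one can always write each summand in the schematic form $X\cdot Y$, where $X$ collects finite products of $\bar\eta$-related coefficients (i.e.\ $A$, $B$, $K$, $J$ and their derivatives, together with factors of $v$ or $\partial_t\bar\eta$) and $Y$ is the last remaining factor carrying the highest-order derivative. I will apply $\partial_t^{\alpha_0}\nabla^{2N-2\alpha_0+2}$ (or $\partial_t^{\alpha_0}\Lambda_h^{2N-2\alpha_0+2}$ on $\Gamma_f$) with $0\leqslant 2\alpha_0+k\leqslant 2N+2$, expand via the Leibniz rule, and split the derivatives between $X$ and $Y$ so that the high-derivative factor is measured in an $L^2$-based Sobolev norm controlled by $\mathcal{E}_{N+2}^{1/2}$ or $\mathcal{D}_{N+2}^{1/2}$, while the low-derivative factor is placed in $L^\infty$ and controlled by $\mathcal{E}_{N+2}^{1/2}$ or $\mathcal{E}_{2N}^{1/2}$ via Sobolev embedding and Lemma \ref{lA.1}.

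For the bound \eqref{e3.2.10}, both factors are estimated using $\mathcal{E}_{N+2}$. Writing the leading terms as in the proof of Theorem \ref{t3.1}, for instance $\nabla\bar\eta\,\nabla p$, $(\nabla\bar\eta)^2\nabla^2 v$, $\nabla\bar\eta\,\nabla^2\bar\eta\,\nabla v$, $\partial_t\bar\eta\,\nabla\bar\eta\,\nabla v$, $(\nabla\bar\eta)^2\nabla w$ for $G^1$; the term $(\nabla\bar\eta)^2\nabla v$ for $G^2$; the boundary products $\nabla_h\eta\,Q(A,B,K)\nabla v$ for $G^4$; and $v_h\cdot\nabla_h\eta$ for $G^5$, the key point is that when at most $2N+2$ total parabolic derivatives fall on the product, at least one factor retains room to be placed in $L^\infty$ (via the embedding $H^2(\Omega)\hookrightarrow L^\infty(\Omega)$ or the trace/horizontal analogue on $\Sigma_f$), and that factor is then dominated by $\mathcal{E}_{N+2}^{1/2}$. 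The other factor is bounded by the matching $L^2$-based piece of $\mathcal{E}_{N+2}^{1/2}$, producing the quadratic $\mathcal{E}_{N+2}^{2}$ bound.

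For \eqref{e3.2.11}, the same splitting is used, but now I will place the factor carrying the highest-order derivatives into $\mathcal{D}_{N+2}^{1/2}$ (e.g.\ for $G^1$, the critical pieces estimate as $\|\nabla\bar\eta\|_{L^\infty}\|\partial_t^{\alpha_0}\nabla^{2N-2\alpha_0+3}p\|_0$ and $\|\partial_t^{\alpha_0}\nabla^{2N-2\alpha_0+4}\bar\eta\|_0\|\nabla p\|_{L^\infty}$, etc.), while the remaining $\bar\eta$-type coefficients in $L^\infty$ are controlled by $\mathcal{E}_{2N}^{1/2}$. Crucially, this is where the asymmetry $\mathcal{E}_{2N}\mathcal{D}_{N+2}$ on the RHS arises: the low-regularity dissipation $\mathcal{D}_{N+2}$ only controls derivatives up to the $N+2$ level, so whenever an $\eta$-factor would need a derivative count above what $\mathcal{E}_{N+2}$ affords, we use $\mathcal{E}_{2N}^{1/2}$ for it. Because the total derivative count is $2N+3$, which is $\leqslant 4N+\tfrac12$ for $N\geqslant 3$, the $\mathfrak{F}\mathcal{M}$ correction that appeared in Theorem \ref{t3.1} is unnecessary here; everything can be absorbed into $\mathcal{E}_{2N}\mathcal{D}_{N+2}$.

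The main obstacle is the combinatorial bookkeeping of the Leibniz splits, especially for $G^4$ and $G^5$ on the boundary, where trace theory and the horizontal product estimate of Lemma \ref{lA.2} must be used to convert $|\partial_t^{\alpha_0}\Lambda_h^{2N-2\alpha_0+2}(XY)|_{1/2}$ into a sum of products each of which has the required $\mathcal{E}_{N+2}^{1/2}\mathcal{E}_{N+2}^{1/2}$ or $\mathcal{E}_{2N}^{1/2}\mathcal{D}_{N+2}^{1/2}$ form. Once this is set up, the commutator form of the Leibniz rule (so as to isolate genuine commutator gains from Lemma \ref{lA.3} when the derivative is borderline) yields both \eqref{e3.2.10} and \eqref{e3.2.11} by the same argument already used at the $2N$ level, so we omit the detailed calculations.
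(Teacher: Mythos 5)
Your proposal follows exactly the route the paper takes: the paper's proof of Theorem \ref{t3.2} is simply the observation that the argument of Theorem \ref{t3.1} carries over verbatim with $2N$ replaced by $N+2$, which is precisely the Leibniz-splitting scheme you describe. Your added justification for why the $\mathfrak{F}\mathcal{M}$ correction disappears at the $N+2$ level (the highest $\eta$-derivative now fits inside $\mathcal{E}_{2N}$, indeed even inside $\mathcal{D}_{N+2}$, for $N\geqslant3$) is correct and makes explicit the one point the paper leaves implicit.
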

\begin{proof}
  The proof of Theorem \eqref{e3.2.11} is similar to that of Theorem \eqref{e3.2.10} only by changing $2N$ by $N+2$, we omit the proofs here.
\end{proof}

\subsection{Tangential Derivative Estimates}
To better illustrate the results of this section, we first introduce the following definitions:
\begin{align}\label{e3.3.1}
  &\bar{\mathcal{E}}_n:=\sum_{\alpha_0=0}^{n-1}\sum_{|\alpha_h|=1}^{2n-\alpha_0}\left(\|\partial_t^{\alpha_0}\Lambda_h^{\alpha_h}v\|^2_0+\|\partial_t^{\alpha_0}\Lambda_h^{\alpha_h}w\|^2_0+|\partial_t\Lambda_h^{\alpha}\eta|^2_0\right),\\
  &\bar{\mathcal{E}}:=\sum_{|\alpha_h|=0}^{2}\left(\|\Lambda_h^{\alpha_h}v\|^2_0+\|\Lambda_h^{\alpha_h}w\|^2_0+|\Lambda_h^{\alpha_h}\eta|^2_0+\sigma|\Lambda_h^{\alpha_h}\nabla_h\eta|^2_0\right),\\
    &\bar{\mathcal{D}}_{n}:=\sum_{\alpha_0=0}^{n-1}\sum_{|\alpha_h|=1}^{2n-\alpha_0}\left(\|\partial_t^{\alpha_0}\Lambda_h^{\alpha_h}v\|^2_1+\|\partial_t^{\alpha_0}\Lambda_h^{\alpha_h}w\|^2_1\right),\\ &\bar{\mathcal{D}}:=\sum_{|\alpha_h|=0}^{2}\left(\|\Lambda_h^{\alpha_h}v\|^2_1+\|\Lambda_h^{\alpha_h}w\|^2_1\right).
\end{align}

After completing the above preparations, we next calculate the energy inequalities satisfied by $\bar{\mathcal{E}}$, $\bar{\mathcal{D}}$, $\bar{\mathcal{E}}_n$ and $\bar{\mathcal{D}}_{n}$ for $n=N+2, 2N$. We first present the result for the case with surface tension.
\begin{prop}\label{p3.1}
 Suppose that $\sigma>0$ and $\mathcal{E}\leqslant1$ is small enough, then we have
 \begin{align}\label{ee3.51}
 \frac{\ud}{\ud t}\bar{\mathcal{E}}+\bar{\mathcal{D}}\lesssim\sqrt{\mathcal{E}}\mathcal{D}
 \end{align}
 for all $t\in[0,T]$.
\end{prop}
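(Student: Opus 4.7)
The plan is to differentiate the perturbed linear system (3.1) in the horizontal directions by applying the Fourier multiplier $\Lambda_h^{\alpha_h}$ for $\alpha_h = 0, 1, 2$. Since $\Lambda_h^{\alpha_h}$ depends only on the horizontal frequencies $\xi_h$, it commutes with $\partial_t$, with every full spatial derivative, with $\mathbb{D}$ and $\nabla\cdot$, and with the traces on $\Gamma_f$ and $\Gamma_b$. Hence $(\Lambda_h^{\alpha_h}v, \Lambda_h^{\alpha_h}w, \Lambda_h^{\alpha_h}p, \Lambda_h^{\alpha_h}\eta)$ solves a problem of exactly the same shape as (3.1), with source terms $\Lambda_h^{\alpha_h}G^i$ and the same homogeneous boundary conditions, so Lemma \ref{l3.1} applies verbatim to each differentiated problem.

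Summing the resulting energy identities over $\alpha_h \in \{0,1,2\}$ produces $\tfrac{1}{2}\tfrac{\ud}{\ud t}\bar{\mathcal{E}}$ on the left, together with the quadratic dissipation $\sum_{\alpha_h}\bigl[\tfrac{\mu+\kappa}{2}\|\mathbb{D}\Lambda_h^{\alpha_h}v\|_0^2 + \gamma\|\nabla\Lambda_h^{\alpha_h}w\|_0^2 + 4\kappa\|\Lambda_h^{\alpha_h}w\|_0^2 + \mu\|\nabla\cdot\Lambda_h^{\alpha_h}w\|_0^2\bigr]$. Because $\Lambda_h^{\alpha_h}v|_{\Gamma_b}=0$ and $\Lambda_h^{\alpha_h}w|_{\partial\Omega}=0$, Korn's inequality and standard Dirichlet--Poincar\'e estimates promote this quadratic form to a lower bound on $\sum_{\alpha_h}\bigl(\|\Lambda_h^{\alpha_h}v\|_1^2 + \|\Lambda_h^{\alpha_h}w\|_1^2\bigr) = \bar{\mathcal{D}}$.

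On the right-hand side one must estimate volume integrals built from $\Lambda_h^{\alpha_h}v\cdot\Lambda_h^{\alpha_h}G^1$, $\Lambda_h^{\alpha_h}v\cdot\nabla\Lambda_h^{\alpha_h}G^2$, $\Lambda_h^{\alpha_h}p\,\Lambda_h^{\alpha_h}G^2$, $\Lambda_h^{\alpha_h}w\cdot\Lambda_h^{\alpha_h}G^3$, plus boundary integrals built from $(\Lambda_h^{\alpha_h}\eta - \sigma\Delta_h\Lambda_h^{\alpha_h}\eta)\Lambda_h^{\alpha_h}G^5$ and $\Lambda_h^{\alpha_h}G^4\cdot\Lambda_h^{\alpha_h}v$. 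The key device for the highest-order case $\alpha_h=2$, where Theorem \ref{tt3.1} provides only $\|G^1\|_1,\|G^3\|_1\lesssim\sqrt{\mathcal{E}\mathcal{D}}$ and $\|G^2\|_2\lesssim\sqrt{\mathcal{E}\mathcal{D}}$ (not two derivatives on $G^1,G^3$), is the self-adjointness of $\Lambda_h$ on $L^2$: a pairing such as $\int_\Omega\Lambda_h^2v\cdot\Lambda_h^2G^1$ is rewritten as $\int_\Omega\Lambda_h^3v\cdot\Lambda_hG^1$ and then controlled by $\|v\|_3\|G^1\|_1\lesssim\mathcal{D}^{1/2}\sqrt{\mathcal{E}\mathcal{D}}=\sqrt{\mathcal{E}}\mathcal{D}$, using $\|v\|_3^2\leq\mathcal{D}$. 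The $G^3$ and $\nabla G^2$ volume terms are treated analogously, and the pressure pairing is controlled by $\|p\|_2\|G^2\|_2\lesssim\mathcal{D}^{1/2}\sqrt{\mathcal{E}\mathcal{D}}$. The boundary integrals involving $G^4$ and $G^5$ are handled by dual $H^s(\Gamma_f)$ pairings, interpolating between the top-order and low-order estimates of Theorem \ref{tt3.1}; the surface-tension contribution $-\sigma\int_{\Gamma_f}\Delta_h\Lambda_h^{\alpha_h}\eta\cdot\Lambda_h^{\alpha_h}G^5$ is integrated by parts to shift a horizontal derivative onto $G^5$, after which $|D\eta|_{5/2}\lesssim\mathcal{D}^{1/2}$ and $|G^5|_{5/2}\lesssim\sqrt{\mathcal{E}\mathcal{D}}$ close it.

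The principal obstacle is precisely this top-order $\alpha_h=2$ case: a naive Cauchy--Schwarz would demand two derivatives on the $G^i$ that Theorem \ref{tt3.1} does not provide, and the redistribution of $\Lambda_h$ across each pairing via its self-adjointness is the essential new input beyond the temporal argument of Proposition \ref{pp2.1}. A secondary, bookkeeping issue is that some boundary pairings yield residuals of the form $\mathcal{E}^{3/2}$; these are absorbed into $\sqrt{\mathcal{E}}\mathcal{D}$ via the Poincar\'e-type comparison $\mathcal{E}\lesssim\mathcal{D}$, which follows term-by-term from $v|_{\Gamma_b}=0$, $w|_{\partial\Omega}=0$, and the direct ordering of Sobolev indices in the definitions of $\mathcal{E}$ and $\mathcal{D}$.
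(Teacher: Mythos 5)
Your proposal is correct and follows essentially the same route as the paper: apply $\Lambda_h^{\alpha_h}$ ($\alpha_h=0,1,2$) to \eqref{e3.1}, invoke Lemma \ref{l3.1} for each differentiated system, redistribute $\Lambda_h$ across the pairings (your ``self-adjointness of $\Lambda_h$'' is exactly the paper's integration by parts in \eqref{e33.3.2}, \eqref{e33.3.7}, \eqref{e33.3.9}) so that only $\|G^1\|_1,\|G^2\|_2,\|G^3\|_1,|G^4|_{3/2},|G^5|_{5/2}$ from Theorem \ref{tt3.1} are needed, and finish with Korn's inequality \eqref{eA.5}.
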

\begin{proof}
  Applying $\Lambda_h^{\alpha_h}(0\leqslant\alpha_h\leqslant2)$ to \eqref{e3.1} and Lemma \ref{l3.1}, one has
  \begin{align}\label{ee3.3.2}
    &\frac{1}{2}\frac{\ud}{\ud t}\left[\int_{\Omega}(|\Lambda_h^{\alpha_h}v|^2+|\Lambda_h^{\alpha_h}w|^2)+\int_{\Gamma_f}\left(|\Lambda_h^{\alpha_h}\eta|^2+\sigma|\Lambda_h^{\alpha_h}D\eta|^2\right)\right]\nonumber\\
    &+\int_{\Omega}\left(\frac{\mu+\kappa}{2}|\mathbb{D}\Lambda_h^{\alpha_h}v|^2+\gamma|\nabla \Lambda_h^{\alpha_h}w |^2+4\kappa|\Lambda_h^{\alpha_h}w|^2+\mu|\nabla\cdot \Lambda_h^{\alpha_h}w|^2\right) \nonumber\\
    =&\int_{\Omega}\Lambda_h^{\alpha_h}v\cdot \left[\Lambda_h^{\alpha_h}G^1-(\mu+\kappa)\nabla\Lambda_h^{\alpha_h}G^2\right]+\int_{\Omega}\Lambda_h^{\alpha_h}p\ \Lambda_h^{\alpha_h}G^2\nonumber\\
    &+\int_{\Omega}\Lambda_h^{\alpha_h}w\cdot\Lambda_h^{\alpha_h}G^3-\int_{\Gamma_f}\Lambda_h^{\alpha_h}G^4\cdot\Lambda_h^{\alpha_h}v+\int_{\Gamma_f}\left(\Lambda_h^{\alpha_h}\eta-\sigma\Lambda_h^{\alpha_h}\Delta_h\eta\right)\Lambda_h^{\alpha_h}G^5\nonumber\\
    \triangleq&F_1+F_2+F_3+F_4+F_5.
  \end{align}
  Then we turn our attention to the nonlinear terms involving $G^1,G^2, G^3, G^4, G^5$. For $F_1$, we first use integrate by parts and the H\"older inequality to obtain
 \begin{align}\label{e33.3.2}
 F_1=-\int_{\Omega}\Lambda_h^{\alpha_h+1}v\cdot\left[\Lambda_h^{\alpha_h-1}G^1-(\mu+\kappa)\nabla\Lambda_h^{\alpha_h-1}G^2\right]\lesssim\|v\|_3\left(\|G^1\|_1+\|G^2\|_2\right).
 \end{align}
 Combining Theorem \ref{tt3.1} with \eqref{e33.3.2} yield
  \begin{equation}\label{e33.3.4}
 F_1\lesssim\sqrt{\mathcal{E}}\mathcal{D}.
  \end{equation}
 To estimate $F^2$, we apply the H\"older inequality and Theorem \ref{tt3.1} to deduce
 \begin{align}\label{e33.3.5}
 F_2\leqslant\|p\|_2\|G^2\|_2\lesssim\sqrt{\mathcal{E}}\mathcal{D}.
 \end{align}
 The proof of $F_3$ is similar with that of $F^1$, hence we omit its proof and give directly the following estimate:
 \begin{equation}\label{e33.33.6}
 F_3\lesssim\sqrt{\mathcal{E}}\mathcal{D}.
  \end{equation}

 Now we investigate the estimates of the boundary integrals. Because of the high regularity of $v$ and the low regularity of $G^4$, we must use integration by parts to get
 \begin{align}\label{e33.3.7}
   F_4\leqslant\int_{\Gamma_f}\left|\Lambda_h^{\alpha_h-1/2}G^4\cdot\Lambda_h^{\alpha_h+1/2}v\right|\lesssim\|v\|_{3}|G^4|_{3/2}.
 \end{align}
 Then Theorem \ref{tt3.1} and the definition of $\mathcal{D}$ imply that
 \begin{align}\label{e33.3.8}
 F_4\lesssim\sqrt{\mathcal{E}}\mathcal{D}.
 \end{align}
 Finally, we turn to the estimates of $F^5$. Due to the inconsistency in regularity, we must first use integration by parts to provide
 \begin{align}\label{e33.3.9}
  F_5=-\int_{\Gamma_f}\left(\Lambda_h^{\alpha_h-1}\eta-\Lambda_h^{\alpha_h-1}\Delta_h\eta\right)\Lambda_h^{\alpha_h+1}G^5.
 \end{align}
 Then utilizing H\"older's inequality, Theorem \ref{tt3.1} and the definition of $\mathcal{D}$ yield
 \begin{align}\label{e33.3.10}
  F_5\lesssim|G^5|_{5/2}\left(|\eta|_{3/2}+|\eta|_{7/2}\right)\lesssim\sqrt{\mathcal{E}}\mathcal{D}.
 \end{align}
Combining \eqref{e33.3.4}, \eqref{e33.3.5}, \eqref{e33.33.6}, \eqref{e33.3.7} with \eqref{e33.3.10}, we have
  \begin{equation}\label{ee3.3.4}
 \mbox{RHS of}\ \eqref{ee3.3.2}\lesssim\sqrt{\mathcal{E}}\mathcal{D}.
  \end{equation}
Hence by \eqref{ee3.3.2}, \eqref{ee3.3.4} and the Korn inequality\eqref{eA.5}, we have the inequality
\begin{equation}\label{ee3.3.5}
  \frac{\ud}{\ud t}\bar{\mathcal{E}}+\bar{\mathcal{D}}\lesssim\sqrt{\mathcal{E}}\mathcal{D}.
\end{equation}
\end{proof}

Next, we present the results for the case without surface tension. Based on the two-tier energy method, we need to provide the energy inequalities for both the low-order energy and high-order energy separately.
\begin{prop}\label{p3.2}
  Suppose that $\sigma=0$ and $\mathcal{E}_{2N}\leqslant1$ is small enough, then we have
  \begin{equation}\label{e3.2.12}
    \frac{\ud}{\ud t}\bar{\mathcal{E}}_{2N}+\bar{\mathcal{D}}_{2N}\lesssim\mathcal{E}^{\frac{1}{2}}_{2N}\mathcal{D}_{2N}+(\mathfrak{F}\mathcal{M}\mathcal{D}_{2N})^{\frac{1}{2}},
  \end{equation}
and
\begin{equation}\label{e3.2.13}
  \frac{\ud}{\ud t}\bar{\mathcal{E}}_{N+2}+\bar{\mathcal{D}}_{N+2}\lesssim\mathcal{E}^{\frac{1}{2}}_{2N}\mathcal{D}_{N+2}.
\end{equation}
\end{prop}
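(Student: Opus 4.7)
The plan is to adapt the proof strategy of Proposition \ref{p3.1} to the mixed time-tangential multi-indices that appear in $\bar{\mathcal{E}}_n$ and $\bar{\mathcal{D}}_n$. Concretely, for each pair $(\alpha_0,\alpha_h)$ with $0\leqslant\alpha_0\leqslant n-1$ and $1\leqslant|\alpha_h|\leqslant 2n-\alpha_0$, I would apply the operator $\partial_t^{\alpha_0}\Lambda_h^{\alpha_h}$ to the perturbed linear system \eqref{e3.1}. Since $\partial_t^{\alpha_0}\Lambda_h^{\alpha_h}$ commutes with constant-coefficient operators, the differentiated system has the same linear principal part, and I can repeat verbatim the computation of Lemma \ref{l3.1}: multiply the momentum equations by $\partial_t^{\alpha_0}\Lambda_h^{\alpha_h}v$ and $\partial_t^{\alpha_0}\Lambda_h^{\alpha_h}w$, integrate by parts, use the boundary conditions (noting that $\sigma=0$ eliminates the $|\Lambda_h^{\alpha_h}\nabla_h\eta|^2$ term), and exploit the antisymmetry of the $\nabla\times$ coupling. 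Summing the resulting identities over all admissible $(\alpha_0,\alpha_h)$ produces
\begin{equation*}
\frac{1}{2}\frac{\ud}{\ud t}\bar{\mathcal{E}}_{n}+\mbox{(Korn-type dissipation)}=\mathcal{R}_n,
\end{equation*}
where $\mathcal{R}_n$ is a sum of five families of nonlinear interaction integrals analogous to $F_1,\dots,F_5$ in the proof of Proposition \ref{p3.1}.

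The second step is to bound $\mathcal{R}_n$ by the nonlinear estimates of Theorems \ref{t3.1} and \ref{t3.2}. For each $F_i$-type term, I would use integration by parts in the horizontal variable to redistribute derivatives: for example, $\int_\Omega \partial_t^{\alpha_0}\Lambda_h^{\alpha_h}v\cdot \partial_t^{\alpha_0}\Lambda_h^{\alpha_h}G^1$ becomes, after shifting one $\Lambda_h$, a pairing controlled by $\|\bar{\nabla}^{4N-1}_0 v\|_0\cdot \|\bar{\nabla}^{4N-1}_0 G^1\|_0$ at the $2N$ level, which by \eqref{e3.2.2} is bounded by $\mathcal{D}_{2N}^{1/2}\bigl(\mathcal{E}_{2N}^{1/2}\mathcal{D}_{2N}^{1/2}+(\mathfrak{F}\mathcal{M})^{1/2}\bigr)$. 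The boundary pairings with $G^4$ and $G^5$ are handled similarly by shifting half-derivatives between sides using $\dot\Lambda_h^{1/2}$ and invoking the trace theorem. Applying Cauchy--Schwarz and collecting terms yields exactly the split $\mathcal{E}_{2N}^{1/2}\mathcal{D}_{2N}+(\mathfrak{F}\mathcal{M}\mathcal{D}_{2N})^{1/2}$ on the right, while at the $N+2$ level the absence of the $\mathfrak{F}\mathcal{M}$ contribution in \eqref{e3.2.11} gives the cleaner bound $\mathcal{E}_{2N}^{1/2}\mathcal{D}_{N+2}$. Finally, Korn's inequality \eqref{eA.5} replaces $\|\mathbb{D}\partial_t^{\alpha_0}\Lambda_h^{\alpha_h}v\|_0^2$ by $\|\partial_t^{\alpha_0}\Lambda_h^{\alpha_h}v\|_1^2$, converting the dissipation on the left into $\bar{\mathcal{D}}_n$.

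The main obstacle I expect is the pressure pairing $\int_\Omega \partial_t^{\alpha_0}\Lambda_h^{\alpha_h}p\,\partial_t^{\alpha_0}\Lambda_h^{\alpha_h}G^2$ at the highest admissible temporal order $\alpha_0=n-1$, because the dissipation $\mathcal{D}_n$ only controls $\|\partial_t^{j}p\|_{2n-2j}$ up to $j=n-1$. Unlike in Proposition \ref{p2.1}, where this difficulty forced the introduction of the auxiliary time-derivative trick with $\int pF^{2,2N}J$, here at least one horizontal derivative is present, so I can first integrate by parts in $x_h$ to lower $\alpha_h$ by one on $p$ and raise it by one on $G^2$; then $\|\partial_t^{n-1}\Lambda_h^{\alpha_h-1}p\|_0\lesssim \|\partial_t^{n-1}p\|_2\subset\mathcal{D}_n^{1/2}$ and $\|\bar\nabla^{4N-1}_0 G^2\|_1$ is controlled by Theorems \ref{t3.1}--\ref{t3.2}. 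A secondary technical point is the asymmetric regularity in the $G^5$ boundary term, requiring me to move $\Lambda_h$ operators off $\eta$ (whose highest norm in $\mathcal{D}_n$ is $|\partial_t\eta|_{2n-1/2}$) onto $G^5$; this is done exactly as in \eqref{e33.3.9}, replacing the straightforward H\"older bound by an integration by parts on $\Gamma_f$.
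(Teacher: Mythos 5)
Your overall skeleton matches the paper's proof: apply $\partial_t^{\alpha_0}\Lambda_h^{\alpha_h}$ to \eqref{e3.1}, invoke Lemma \ref{l3.1} to get the energy identity with five interaction integrals $Q_1,\dots,Q_5$, estimate $Q_1$--$Q_4$ by shifting (half-)derivatives and applying Theorems \ref{t3.1}--\ref{t3.2}, and finish with Korn's inequality. Your handling of the pressure pairing is also fine (in fact simpler than you fear: since $\alpha_0\leqslant 2N-1$ and $|\alpha_h|\leqslant 4N-2\alpha_0$, the quantity $\|\partial_t^{\alpha_0}\Lambda_h^{\alpha_h}p\|_0\leqslant\|\partial_t^{\alpha_0}p\|_{4N-2\alpha_0}$ is directly contained in $\mathcal{D}_{2N}$, which is exactly what the paper uses).

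However, there is a genuine gap in your treatment of the $G^5$ boundary term, and it occurs precisely where the $(\mathfrak{F}\mathcal{M}\mathcal{D}_{2N})^{1/2}$ term must be produced: the case $\alpha_0=0$, $|\alpha_h|=4N$. You propose to handle $\int_{\Gamma_f}\Lambda_h^{\alpha_h}\eta\,\Lambda_h^{\alpha_h}G^5$ ``exactly as in \eqref{e33.3.9}'' by redistributing derivatives between $\eta$ and $G^5$. This cannot close. The dissipation only controls $|\eta|_{4N-1/2}$, while Theorem \ref{t3.1} only controls $|\bar{D}_0^{4N-1}G^5|_{1/2}$, i.e.\ at most $4N-\tfrac12$ derivatives of $G^5$; the total order in the pairing is $8N$, so any split either places $4N+\tfrac12$ derivatives on $G^5$ (not available, since estimating $|G^5|_{4N+1/2}$ would require $|\eta|_{4N+3/2}$), or places $4N+\tfrac12$ on $\eta$, yielding $\mathfrak{F}^{1/2}\bigl((\mathcal{E}_{2N}\mathcal{D}_{2N})^{1/2}+(\mathfrak{F}\mathcal{M})^{1/2}\bigr)$. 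Neither $\mathfrak{F}\mathcal{M}^{1/2}$ nor $(\mathfrak{F}\mathcal{E}_{2N}\mathcal{D}_{2N})^{1/2}$ is admissible in the two-tier scheme ($\mathfrak{F}\mathcal{M}^{1/2}$ grows like $(1+t)^{2N-5}$ and is not time-integrable). The paper's resolution is structural, not a derivative count: write $\Lambda_h^{\alpha_h}G^5=-v_h\cdot\partial_h\Lambda_h^{\alpha_h}\eta-[\Lambda_h^{\alpha_h},v_h\partial_h]\eta$, and for the transport term integrate by parts on $\Gamma_f$ to exploit antisymmetry, turning $\int (v_h\cdot\partial_h)\Lambda_h^{\alpha_h}\eta\,\Lambda_h^{\alpha_h}\eta$ into $-\tfrac12\int \partial_h\cdot v_h\,|\Lambda_h^{\alpha_h}\eta|^2$, which is bounded by $\mathcal{M}^{1/2}|\eta|_{4N+1/2}|\eta|_{4N-1/2}\lesssim(\mathfrak{F}\mathcal{M}\mathcal{D}_{2N})^{1/2}$ by splitting the $L^2$ pairing between $H^{1/2}$ and $H^{-1/2}$; the commutator is then estimated separately (Lemma \ref{llA.5}). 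You also omit the split between $\alpha_0=0$ and $\alpha_0\geqslant1$ (the latter case is easy because $|\partial_t^{\alpha_0}\Lambda_h^{\alpha_h}\eta|_{1/2}\lesssim\mathcal{D}_{2N}^{1/2}$ once a time derivative is present). Without this transport/antisymmetry step the proposition's key term cannot be obtained.
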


\begin{proof}
  The most challenging situation arises in the highest-order derivative estimates; therefore, in the following proof, we will focus on the high-order energy estimates. Applying $\partial^{\alpha_0}_t$ with $0\leqslant\alpha_0\leqslant 2N-1$ and $1\leqslant|\alpha_h|=\alpha_1+\alpha_2\leqslant4N-2\alpha_0$ to the system \eqref{e3.1} and recaling Lemma \ref{l3.1}, we can derive
  \begin{align}\label{e3.3.2}
    &\frac{1}{2}\frac{\ud}{\ud t}\left[\int_{\Omega}(|\partial_t^{\alpha_0}\Lambda_h^{\alpha_h}v|^2+|\partial_t^{\alpha_0}\Lambda_h^{\alpha_h}w|^2)+\int_{\Gamma_f}|\partial_t^{\alpha_0}\Lambda_h^{\alpha_h}\eta|^2\right]\nonumber\\
    &+\int_{\Omega}\left(\frac{\mu+\kappa}{2}|\mathbb{D}\partial_t^{\alpha_0}\Lambda_h^{\alpha_h}v|^2+\gamma|\nabla \partial_t^{\alpha_0}\Lambda_h^{\alpha_h}w |^2+4\kappa|\partial_t^{\alpha_0}\Lambda_h^{\alpha_h}w|^2+\mu|\nabla\cdot \partial_t^{\alpha_0}\Lambda_h^{\alpha_h}w|^2\right) \nonumber\\
    =&\int_{\Omega}\partial_t^{\alpha_0}\Lambda_h^{\alpha_h}v\cdot \left[\partial_t^{\alpha_0}\Lambda_h^{\alpha_h}G^1-(\mu+\kappa)\nabla\partial_t^{\alpha_0}\Lambda_h^{\alpha_h}G^2\right]+\int_{\Omega}\partial_t^{\alpha_0}\Lambda_h^{\alpha_h}p\ \partial_t^{\alpha_0}\Lambda_h^{\alpha_h}G^2\nonumber\\
    &+\int_{\Omega}\partial_t^{\alpha_0}\Lambda_h^{\alpha_h}w\cdot\partial_t^{\alpha_0}\Lambda_h^{\alpha_h}G^3-\int_{\Gamma_f}\partial_t^{\alpha_0}\Lambda_h^{\alpha_h}G^4\cdot\partial_t^{\alpha_0}\Lambda_h^{\alpha_h}v+\int_{\Gamma_f}\partial_t^{\alpha_0}\Lambda_h^{\alpha_h}\eta\ \partial_t^{\alpha_0}\Lambda_h^{\alpha_h}G^5\nonumber\\
    \triangleq&Q_1+Q_2+Q_3+Q_4+Q_5.
  \end{align}
Using integration by parts and H\"older inequality, we derive that
\begin{align}\label{e3.3.3}
  |Q_1|=&\left|\int_{\Omega}\partial_t^{\alpha_0}\Lambda_h^{\alpha_h+1}v\cdot \left[\partial_t^{\alpha_0}\Lambda_h^{\alpha_h-1}G^1-(\mu+\kappa)\nabla\partial_t^{\alpha_0}\Lambda_h^{\alpha_h-1}G^2\right]\right| \nonumber\\
\lesssim&\|\partial_t^{\alpha_0}\Lambda_h^{\alpha_h+1}v\|_0\left(\|\partial_t^{\alpha_0}\Lambda_h^{\alpha_h-1}G^1\|_0+\|\nabla\partial_t^{\alpha_0}\Lambda_h^{\alpha_h-1}G^2\|_0\right)\nonumber\\
\lesssim&\|\partial_t^{\alpha_0}v\|_{4N+1-2\alpha_0}\left(\|\bar{\nabla}_0^{4N-1}G^1\|_0+\|\bar{\nabla}_0^{4N-1}G^2\|_1\right).
\end{align}
Then we apply Theorem \ref{t3.1} and the definition of $\mathcal{D}_{2N}$ to provide
\begin{equation}\label{ee33.3.3}
|Q_1|\lesssim \mathcal{E}^{\frac{1}{2}}_{2N}\mathcal{D}_{2N}+(\mathfrak{F}\mathcal{M}\mathcal{D}_{2N})^{\frac{1}{2}}.
\end{equation}
Since $Q^3$ and $Q^1$ have similar structures, applying the similar method used for $Q^1$, we obtain
\begin{align}\label{e3.77}
|Q_3|\lesssim \mathcal{E}^{\frac{1}{2}}_{2N}\mathcal{D}_{2N}+(\mathfrak{F}\mathcal{M}\mathcal{D}_{2N})^{\frac{1}{2}}.
\end{align}
The cauchy inequality together with \eqref{e3.2.2} implies that
\begin{equation}\label{e3.3.4}
  |Q_2|\lesssim\|\partial_t^{\alpha_0}\Lambda_h^{\alpha_h}p\|_0\|\partial_t^{\alpha_0}\Lambda_h^{\alpha_h-1}G^2\|_1\lesssim\mathcal{E}_{2N}^{\frac{1}{2}}\mathcal{D}_{2N}+(\mathfrak{F}\mathcal{M}\mathcal{D}_{2N})^{\frac{1}{2}}.
\end{equation}
We now turn our attention to the boundary integrals. Utilizing integration by parts, H\"older's inequality and trace theorem to provide
\begin{align}\label{e3.3.6}
  |Q_4|\lesssim&|\partial_t^{\alpha_0}\Lambda_h^{\alpha_h-\frac{1}{2}}G^4|_0|\partial_t^{\alpha_0}\Lambda_h^{\alpha_h+\frac{1}{2}}v|_0\lesssim|\bar{D}_0^{4N-1}G^4|_{\frac{1}{2}}\|\partial_t^{\alpha_0}\Lambda_h^{\alpha_h}v\|_1.
\end{align}
Then we apply Theorem \ref{t3.1} and the definition of $\mathcal{D}_{2N}$ to derive
\begin{equation}\label{ee3.78}
|Q_4|\lesssim\mathcal{E}_{2N}^{\frac{1}{2}}\mathcal{D}_{2N}+(\mathfrak{F}\mathcal{M}\mathcal{D}_{2N})^{\frac{1}{2}}.
\end{equation}
For $\partial_t^{\alpha_0}\Lambda_h^{\alpha_h}\eta\ \partial_t^{\alpha_0}\Lambda_h^{\alpha_h}G^5$ in $Q_5$, we must split into two cases: $\alpha_0=0$ and $\alpha_0\geqslant1$. For the former case, $\alpha_0=0$, there are only spatial derivatives. In this case, we does not use  the H\"older inequality directly to get the desired estimates because $|\eta|_{4N+\frac{1}{2}}$ can not be controlled by $\mathcal{D}_{2N}^{\frac{1}{2}}$. From the definition of $G^5$, we know
\begin{align}\label{e3.3.7}
  \Lambda_h^{\alpha_h}G^5=\Lambda_h^{\alpha_h}(-v_h\partial_h\eta)=-v_h\partial_h\Lambda_h^{\alpha_h}\eta -[\Lambda^{\alpha_h}_h, v_h\partial_h]\eta.
\end{align}
Note that
\begin{align}\label{e3.3.9}
  -\int_{\Gamma_f}(v_h\partial_h)\Lambda^{\alpha_h}_h\eta\ \Lambda^{\alpha_h}_h\eta \lesssim&\frac{1}{2}\int_{\Gamma_f}|\Lambda_h^{\alpha_h}\eta|^2|\Lambda_hv_h|\lesssim|\Lambda_hv_h|_{L^{\infty}}|\Lambda_h^{\alpha_h}\eta|_{-\frac{1}{2}}|\Lambda_h^{\alpha_h}\eta|_{\frac{1}{2}}\nonumber\\
  \lesssim &\|v\|_{C^1(\bar{\Omega})}|\eta|_{4N+\frac{1}{2}}|\eta|_{4N-\frac{1}{2}}\lesssim\left(\mathfrak{F}\mathcal{M}\right)^{\frac{1}{2}}\mathcal{D}^{\frac{1}{2}}_{2N}.
\end{align}
On the other hand,  we can derive that
\begin{align}\label{e3.3.8}
  &\left|\int_{\Gamma_f}\Lambda^{\alpha_h}_h\eta[\Lambda^{\alpha_h}_h, v_h\partial_h]\eta\right|\nonumber\\
  \lesssim&\int_{\Gamma_f}|\Lambda^{\alpha_h-1/2}_h\eta||\Lambda_h^{\alpha_h+1/2}v| |D\eta|+\int_{\Gamma_f}|\Lambda_h^{\alpha_h}\eta|^2|\Lambda_h v|+\mbox{good terms}\nonumber\\
  \lesssim&\left(\mathfrak{F}\mathcal{M}\right)^{\frac{1}{2}}\mathcal{D}^{\frac{1}{2}}_{2N}+\mathcal{E}^{\frac{1}{2}}_{2N}\mathcal{D}_{2N}.
\end{align}
For the latter case, there is at least one temporal derivative, then one has $$|\partial_t^{\alpha_0}\Lambda^{\alpha_h}_h\eta|_{\frac{1}{2}}\lesssim\mathcal{D}_{2N}^\frac{1}{2}.$$
Hence, integration by parts and H\"older's inequality tell us that
\begin{align}\label{e3.3.11}
  \left|\int_{\Gamma_f}\partial_t^{\alpha_0}\Lambda_h^{\alpha_h}\eta\partial_t^{\alpha_0}\Lambda_h^{\alpha_h}G^5\right|\lesssim&|\partial_t^{\alpha_0}\Lambda_h^{\alpha_h-\frac{1}{2}}G^5|_0|\partial_t^{\alpha_0}\Lambda_h^{\alpha_h+\frac{1}{2}}\eta|_0\lesssim|\bar{D}_0^{4N-1}G^5|_{1/2}|\partial_t^{\alpha_0}\Lambda^{\alpha_h}_h\eta|_{\frac{1}{2}}\nonumber\\
  \lesssim&\left(\mathfrak{F}\mathcal{M}\right)^{\frac{1}{2}}\mathcal{D}^{\frac{1}{2}}_{2N}+\mathcal{E}^{\frac{1}{2}}_{2N}\mathcal{D}_{2N}.
\end{align}
It follows from \eqref{e3.3.9}-\eqref{e3.3.11} that
\begin{align}\label{e3.3.10}
  |Q_5|\lesssim\left(\mathfrak{F}\mathcal{M}\right)^{\frac{1}{2}}\mathcal{D}^{\frac{1}{2}}_{2N}+\mathcal{E}^{\frac{1}{2}}_{2N}\mathcal{D}_{2N}.
\end{align}
Using \eqref{ee33.3.3}, \eqref{e3.77}, \eqref{e3.3.4}, \eqref{ee3.78} and \eqref{e3.3.10}, we know the estimate \eqref{e3.2.12} is concluded.

The proof of \eqref{e3.2.13} is similar to that of \eqref{e3.2.12} except for taking $0\leqslant\alpha_0\leqslant N+1$ and $1\leqslant|\alpha_h|\leqslant2N+4-2\alpha_0$ and using Theorem \ref{t3.2} in place of Theorem \ref{t3.1}.
\end{proof}

\section{Comparison Results}

To better illustrate the results, we first provide an explanation. We denote $\mathcal{E}$ and $\mathcal{E}_{n}$ as the total energy, $\hat{\mathcal{E}}+\bar{\mathcal{E}}$ and $\hat{\mathcal{E}}_{n}+\bar{\mathcal{E}}_{n}$  as the horizontal energy. Similarly, we denote $\mathcal{D}$ and $\mathcal{D}_{n}$ as the total dissipation, $\hat{\mathcal{D}}+\bar{\mathcal{D}}$ and $\hat{\mathcal{D}}_{n}+\bar{\mathcal{D}}_{n}$  as the horizontal dissipation.

In this section, we will compare the total energy with horizontal energy, and the total dissipation with horizontal dissipation. Under the presence of some errors, we will show that the total energy is comparable to the horizontal energy; similarly, the total dissipation is comparable to the horizontal dissipation.
Furthermore, under the assumption that the energy is sufficiently small, the error terms can be neglected. Thereby we obtain the equivalence between the total energy and horizontal energy, as well as the equivalence between the total dissipation and horizontal dissipation.

\subsection{The Equivalence in the presence of Surface Tension}
We begin with the results for the instantaneous energy $\mathcal{E}$ and the instantaneous dissipation $\mathcal{D}$.
\begin{thm}\label{tt4.1}
 Suppose that $\sigma>0$ and $\mathcal{E}\leqslant1$ is small enough, then we have
 \begin{equation}\label{ee4.1}
 \mathcal{E}\lesssim\bar{\mathcal{E}}+\hat{\mathcal{E}}+\mathcal{E}^2.
 \end{equation}
\end{thm}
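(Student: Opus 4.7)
The idea is to upgrade the horizontal and temporal control contained in $\bar{\mathcal{E}}+\hat{\mathcal{E}}$ to full Sobolev control of $(v,w,p,\eta)$ via elliptic (Stokes and Lamé) regularity applied to the perturbed linear form \eqref{e3.1}. The quadratic error $\mathcal{E}^2$ comes entirely from the nonlinear forcings $G^i$, whose low norms are controlled by $\mathcal{E}$ via the second inequality in Theorem \ref{tt3.1}.

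First I would read the first, second, and fourth equations of \eqref{e3.1} as a stationary Stokes problem for $(v,p)$ with bulk source $-\partial_t v + 2\kappa\nabla\times w + G^1$, prescribed divergence $G^2$, stress data $(\eta-\sigma\Delta_h\eta)e_3 + G^4$ on $\Gamma_f$, and Dirichlet condition on $\Gamma_b$. Standard Stokes regularity (in the spirit of Beale and Guo--Tice) then gives
\begin{equation*}
\|v\|_2^2 + \|p\|_1^2 \lesssim \|\partial_t v\|_0^2 + \|w\|_1^2 + \|G^1\|_0^2 + \|G^2\|_1^2 + |\eta-\sigma\Delta_h\eta|_{1/2}^2 + |G^4|_{1/2}^2.
\end{equation*}
The third equation of \eqref{e3.1} with homogeneous Dirichlet data is a strongly elliptic Lamé-type system, yielding $\|w\|_2^2 \lesssim \|\partial_t w\|_0^2 + \|v\|_1^2 + \|G^3\|_0^2$. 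I would couple these via the interpolation $\|v\|_1 \leq \delta\|v\|_2 + C_\delta\|v\|_0$ (and symmetrically for $w$), add, and absorb the $\delta$-terms for $\delta$ small to conclude
\begin{equation*}
\|v\|_2^2 + \|p\|_1^2 + \|w\|_2^2 \lesssim \bar{\mathcal{E}} + \hat{\mathcal{E}} + |\eta-\sigma\Delta_h\eta|_{1/2}^2 + \mathcal{E}^2,
\end{equation*}
since $\|v\|_0^2$, $\|w\|_0^2$, $\|\partial_t v\|_0^2$, $\|\partial_t w\|_0^2$ all lie in $\bar{\mathcal{E}}+\hat{\mathcal{E}}$ modulo a $\sqrt{J}$ factor controlled by Lemma \ref{ll2.2}.

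Next I would handle the $\eta$-norms. Because $\Sigma_f$ is a two-dimensional periodic surface, every Sobolev norm of $\eta$ there is purely tangential; combined with the Poincaré inequality for the zero-average $\eta$, the terms $\sigma|\Lambda_h^{\alpha_h}\nabla_h\eta|_0^2$ in $\bar{\mathcal{E}}$ for $|\alpha_h|\leq 2$ yield $\sigma|\eta|_3^2 \lesssim \bar{\mathcal{E}}$, and Plancherel on the multiplier $1+\sigma|\xi_h|^2$ gives
\begin{equation*}
|\eta-\sigma\Delta_h\eta|_{1/2}^2 \lesssim |\eta|_{1/2}^2 + \sigma^2|\eta|_{5/2}^2 \lesssim \bar{\mathcal{E}}+\hat{\mathcal{E}}, \qquad |\eta|_3^2 \lesssim \bar{\mathcal{E}}+\hat{\mathcal{E}},
\end{equation*}
with $\sigma$-dependent implicit constants. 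For the time derivatives, the kinematic condition $\partial_t\eta = v_3 + G^5$ and trace theory give $|\partial_t\eta|_{3/2}^2 \lesssim \|v\|_2^2 + |G^5|_{3/2}^2 \lesssim \|v\|_2^2+\mathcal{E}^2$, and differentiating in time yields $\partial_t^2\eta = \partial_t v_3 + \partial_t G^5$; an $H^{-1/2}$-trace bound of the form $|\partial_t v_3|_{-1/2} \lesssim \|\partial_t v\|_0 + \|\partial_t G^2\|_{-1}$, obtained via divergence-theorem duality against extensions of boundary test functions that vanish on $\Gamma_b$, together with $|\partial_t G^5|_{-1/2}\lesssim\mathcal{E}$ from Theorem \ref{tt3.1}, produces $|\partial_t^2\eta|_{-1/2}^2 \lesssim \hat{\mathcal{E}}+\mathcal{E}^2$.

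Summing these contributions gives $\mathcal{E} \lesssim \bar{\mathcal{E}}+\hat{\mathcal{E}}+\mathcal{E}^2$. The main obstacle I anticipate is the non-standard Stokes stress datum $(\eta-\sigma\Delta_h\eta)e_3+G^4$, which couples the $H^2$-regularity of $v$ to the $H^{5/2}$-regularity of $\eta$ and superficially threatens a circular estimate; the decisive point is that on the two-dimensional $\Sigma_f$ the high $\eta$-norms are purely tangential and are therefore captured directly by the $\sigma$-weighted terms of $\bar{\mathcal{E}}$, decoupling $\eta$ from the bulk Stokes estimate. A secondary technical point is the rigorous $H^{-1/2}$-trace bound for $\partial_t v_3$, for which the divergence constraint $\nabla\cdot v = G^2$ and the negative-norm control of $\partial_t G^2$ from Theorem \ref{tt3.1} are essential.
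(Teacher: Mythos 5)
Your proposal is correct and follows essentially the same route as the paper: elliptic regularity for the perturbed linear form \eqref{ee4.3} to recover $\|(v,w)\|_2^2+\|p\|_1^2$, control of the stress datum $(\eta-\sigma\Delta_h\eta)e_3$ through the $\sigma$-weighted tangential terms of $\bar{\mathcal{E}}$, the kinematic equation plus trace theory for $|\partial_t\eta|_{3/2}$, and the $H^{-1/2}$ duality argument via the divergence constraint and $\|\partial_tG^2\|_{-1}$ for $|\partial_t^2\eta|_{-1/2}$. The only cosmetic difference is that the paper invokes the coupled Stokes--Lam\'e estimate of Lemma \ref{lA.5} in one stroke, whereas you decouple the two subsystems and close them by interpolation and absorption, which is equally valid.
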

\begin{proof}
According to the definition of $\bar{\mathcal{E}}, \hat{\mathcal{E}}$ and $\mathcal{E}$, in order to prove \eqref{ee4.1} it suffices to prove that
\begin{equation}\label{ee4.2}
  \|(v,w)\|^2_2+\|p\|^2_1+|\partial_t\eta|^2_{3/2}+|\partial^2_t\eta|_{-1/2}^2\lesssim\hat{\mathcal{E}}+\bar{\mathcal{E}}+\mathcal{E}^2.
\end{equation}
To estimate $v, w$ and $p$, we will appply the standard Stokes estimates. To this end, we rewrite  \eqref{e3.1} as
\begin{equation}\label{ee4.3}
\begin{cases}
  -(\mu+\kappa)\Delta v+\nabla p-2\kappa\nabla\times w=-\partial_t v+G^1, & \mbox{in}\ \Omega, \\
  \nabla\cdot v=G^2, & \mbox{in}\ \Omega, \\
  -\gamma\Delta w+4\kappa w-\mu\nabla\nabla\cdot w-2\kappa\nabla\times v=G^3-\partial_tw, & \mbox{on}\ \Gamma_f,\\
  \left[(p\mathbb{I}-(\mu+\kappa)\mathbb{D}v)\right]e_3=(\eta-\sigma\Delta_h\eta)e_3+G^4, & \mbox{on}\ \Gamma_f,\\
  v|_{\Gamma_b}=0,\quad w|_{\partial\Omega}=0.
\end{cases}
\end{equation}
Then we may apply Lemma \ref{lA.4} and Theorem \ref{tt3.1} to see that
\begin{align}\label{ee4.4}
\|(v,w)\|_2+\|p\|_1\lesssim&\|v\|_0+\|\partial_tv\|_0+\|G^1\|_0+\|G^2\|_1+\|\partial_tw\|_0+\|G^3\|_0\nonumber\\
&+\left|\left(\eta-\sigma\Delta_h\eta\right)e_3\right|_{1/2}+|G^4|_{1/2}\nonumber\\
\lesssim&\bar{\mathcal{E}}^{\frac{1}{2}}+\hat{\mathcal{E}}^{\frac{1}{2}}+\mathcal{E}.
\end{align}
To estimate the $\partial_t\eta$ term in \eqref{ee4.2}, we must use the structure of \eqref{e3.1}. Applying the fifth equation of \eqref{e3.1}, Theorem \ref{tt3.1} and the trace estimates, we derive
\begin{align}\label{ee4.5}
|\partial_t\eta|_{3/2}\lesssim|v_3|_{3/2}+|G^5|_{3/2}\lesssim\bar{\mathcal{E}}^{\frac{1}{2}}+\hat{\mathcal{E}}^{\frac{1}{2}}+\mathcal{E}.
\end{align}
It remain only to estimate $\partial_t^2\eta$. We apply a temporal derivative to the fifth equation of \eqref{e3.1} and integrate against a function $\phi\in H^{\frac{1}{2}
}(\Gamma_f)$ to get
\begin{align}\label{ee4.6}
\int_{\Gamma_f}\partial^2_t\eta\phi=\int_{\Gamma_f}\partial_tv_3\phi+\int_{\Gamma_f}\partial_tG^5\phi.
\end{align}
Choose an extension $E\phi\in H^1(\Omega)$ with $E\phi|_{\Gamma_f}=\phi$, $E\phi|_{\Gamma_b}=0$, and $\|E\phi\|_1\lesssim|\phi|_{1/2}$. Then integration by parts and the second equation of \eqref{e3.1}
yield
\begin{align}\label{ee4.7}
 \int_{\Gamma_f}\partial_tv_3\phi=&\int_{\Omega}\partial_t \nabla\cdot v E\phi+\int_{\Omega}\partial_t v\cdot \nabla E\phi\lesssim\|\partial_t v\|_0\|E\phi\|_1+\|\partial_tG^2\|_{-1}|\phi|_{1/2}\nonumber\\
\lesssim&\left(\|\partial_tv\|_0+\|\partial_t G^2\|_{-1}\right)|\phi|_{1/2}.
\end{align}
Hence one has
\begin{equation}\label{ee4.8}
|\partial_t^2\eta|_{-1/2}\lesssim \|\partial_tv\|_0+\|\partial_t G^2\|_{-1}+|\partial_tG^5|_{-1/2}.
\end{equation}
Finally, applying Theorem \ref{tt3.1} to see that
\begin{equation}\label{ee4.9}
|\partial_t^2\eta|_{-1/2}\lesssim \bar{\mathcal{E}}^{\frac{1}{2}}+\hat{\mathcal{E}}^{\frac{1}{2}}+\mathcal{E}.
\end{equation}
Then we complete the estimates in \eqref{ee4.2}.
\end{proof}

We now show a corresponding result for the dissipation $\mathcal{D}$.
\begin{thm}\label{tt4.2}
 Suppose that $\sigma>0$ and $\mathcal{E}\leqslant1$ is small enough, then we have
\begin{align}\label{ee4.10}
\mathcal{D}\lesssim\hat{\mathcal{D}}+\bar{\mathcal{D}}+\mathcal{E}\mathcal{D}.
\end{align}
\end{thm}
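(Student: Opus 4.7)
The plan is to bound each constituent of $\mathcal{D}$ by $\hat{\mathcal{D}}+\bar{\mathcal{D}}+\mathcal{E}\mathcal{D}$, following the strategy of Theorem \ref{tt4.1} but shifted up by one derivative order. Since $\|\partial_tv\|_1^2+\|\partial_tw\|_1^2$ is already contained in $\hat{\mathcal{D}}$, only $\|(v,w)\|_3^2$, $\|p\|_2^2$, $|\eta|_{7/2}^2$, $|\partial_t\eta|_{5/2}^2$ and $|\partial_t^2\eta|_{1/2}^2$ remain to be controlled.

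First, for the time derivatives of $\eta$, I would use the kinematic boundary condition $\partial_t\eta=v_3+G^5$ from \eqref{e3.1}. Trace theory immediately yields $|\partial_t\eta|_{5/2}\lesssim\|v\|_3+|G^5|_{5/2}$, and a duality argument analogous to the one used in Theorem \ref{tt4.1} (differentiating once more in time, integrating against an extension $E\phi\in H^1(\Omega)$ of $\phi\in H^{-1/2}(\Gamma_f)$ and invoking the second equation of \eqref{e3.1}) gives $|\partial_t^2\eta|_{1/2}\lesssim\|\partial_tv\|_1+\|\partial_tG^2\|_0+|\partial_tG^5|_{1/2}$. All $G$-contributions are absorbed into $\sqrt{\mathcal{E}\mathcal{D}}$ via Theorem \ref{tt3.1}.

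Second, for the bulk unknowns $\|v\|_3+\|p\|_2$ and the surface quantity $|\eta|_{7/2}$, I would regard \eqref{ee4.3} as a stationary coupled Stokes--capillary system with bulk force $-\partial_tv+2\kappa\nabla\times w+G^1$, divergence source $G^2$, and boundary traction $(\eta-\sigma\Delta_h\eta)e_3+G^4$. The $e_3$-component of the traction condition is
\[
\eta-\sigma\Delta_h\eta=p\big|_{\Gamma_f}-2(\mu+\kappa)\partial_3v_3\big|_{\Gamma_f}-G^4_3,
\]
which, because $\sigma>0$, is an elliptic equation of order two for $\eta$ on $\Gamma_f$; thus $|\eta|_{7/2}\lesssim|\eta-\sigma\Delta_h\eta|_{3/2}$. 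Applying the Stokes estimate from Lemma \ref{lA.4} jointly with this elliptic boundary estimate produces a coupled bound of the form $\|v\|_3+\|p\|_2+|\eta|_{7/2}\lesssim\bar{\mathcal{D}}^{1/2}+\hat{\mathcal{D}}^{1/2}+\sqrt{\mathcal{E}\mathcal{D}}$. The remaining $\|w\|_3$ follows from standard Dirichlet elliptic regularity (Lemma \ref{lA.3}) applied to the third equation of \eqref{ee4.3}, giving $\|w\|_3\lesssim\|\partial_tw\|_1+\|v\|_2+\|G^3\|_1\lesssim\hat{\mathcal{D}}^{1/2}+\bar{\mathcal{D}}^{1/2}+\sqrt{\mathcal{E}\mathcal{D}}$.

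The main obstacle is the apparent circular dependency: the Stokes estimate naturally contains $|(\eta-\sigma\Delta_h\eta)e_3|_{3/2}$ on the right-hand side, and this norm in turn is bounded by $\|p\|_2+\|v\|_3$ via the boundary relation above, so a naive substitution collapses. The resolution is precisely what surface tension provides: by treating the bulk Stokes system and the capillary equation on $\Gamma_f$ simultaneously, Lemma \ref{lA.4} produces $|\eta|_{7/2}$ on the left-hand side without an $O(1)$ coefficient on the right. Theorem \ref{tt3.1} then controls every $G^i$ by $\sqrt{\mathcal{E}\mathcal{D}}$, and the smallness of $\mathcal{E}$ is used to absorb the resulting cross terms of the form $\sqrt{\mathcal{E}\mathcal{D}}\cdot\mathcal{D}^{1/2}$ into the $\mathcal{E}\mathcal{D}$-term on the right. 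Squaring and summing all the pieces finally gives $\mathcal{D}\lesssim\hat{\mathcal{D}}+\bar{\mathcal{D}}+\mathcal{E}\mathcal{D}$, which is the claim; without surface tension this identification would fail, and one is forced into the two-tier strategy of the subsequent sections.
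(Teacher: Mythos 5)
Your treatment of the time derivatives of $\eta$ is essentially fine (though note that for the dissipation you need $|\partial_t^2\eta|_{1/2}$, and the duality argument ``analogous to Theorem \ref{tt4.1}'' naturally produces the $H^{-1/2}$ norm, not the $H^{1/2}$ norm; the correct and simpler route, which the paper takes, is to apply $\partial_t$ to the kinematic condition and use the trace theorem directly, $|\partial_t^2\eta|_{1/2}\lesssim\|\partial_tv\|_1+|\partial_tG^5|_{1/2}$, with no duality needed). The genuine gap is in your handling of $\|v\|_3+\|p\|_2+|\eta|_{7/2}$. You correctly identify the circularity — the traction Stokes estimate of Lemma \ref{lA.5} carries $|(\eta-\sigma\Delta_h\eta)e_3|_{3/2}$ on its right-hand side, which is itself only known through $p$ and $\partial_3v_3$ on $\Gamma_f$ — but your proposed resolution, a ``coupled Stokes--capillary'' estimate that puts $|\eta|_{7/2}$ on the left-hand side ``without an $O(1)$ coefficient on the right,'' is not a result established anywhere in the paper, and asserting that surface tension supplies it is a hand-wave, not a proof. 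Indeed the decoupling mechanism has nothing to do with $\sigma>0$: the analogous Theorem \ref{t4.2} runs the same scheme with $\sigma=0$.

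The paper's actual resolution is the key idea you are missing: one first treats \eqref{e3.1} as a Stokes system with \emph{Dirichlet} data $v|_{\Gamma_f}$ (Lemma \ref{lA.6}, not the traction version), whose boundary norm $|v|_{5/2}$ is controlled by the tangential dissipation $\bar{\mathcal{D}}$ through the trace theorem, since $\bar{\mathcal{D}}$ contains $\|\Lambda_h^{\alpha_h}v\|_1^2$ up to $|\alpha_h|=2$. This yields $\|(v,w)\|_3+\|\nabla p\|_1\lesssim\hat{\mathcal{D}}^{1/2}+\bar{\mathcal{D}}^{1/2}+\sqrt{\mathcal{E}\mathcal{D}}$ with no $\eta$ on the right, breaking the circle. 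Only afterwards does one read $\eta-\sigma\Delta_h\eta$ off the normal component of the dynamic boundary condition to obtain $|\dot{\Lambda}_h^1\eta|_{5/2}$ (hence $|\eta|_{7/2}$ by the zero-average Poincar\'e inequality), and finally one returns to the traction form \eqref{ee4.20} with the now-known $|\eta|_{7/2}$ to upgrade $\|\nabla p\|_1$ to the full $\|p\|_2$. Without this two-stage argument your proposal does not close.
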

\begin{proof}
 According to the definition of $\bar{\mathcal{D}}, \hat{\mathcal{D}}$ and $\mathcal{D}$, in order to prove \eqref{ee4.1} we need to prove that
\begin{equation}\label{ee4.1.10}
  \|(v,w)\|^2_3+\|p\|^2_2+|\eta|_{7/2}+|\partial_t\eta|^2_{5/2}+|\partial^2_t\eta|_{1/2}^2\lesssim\hat{\mathcal{D}}+\bar{\mathcal{D}}+\mathcal{E}\mathcal{D}.
\end{equation}
In order to deal with the dissipation estimates of $v$, $w$ and $p$, we rewrite the system \eqref{e3.1} as
  \begin{equation}\label{ee4.11}
  \begin{cases}
  -(\mu+\kappa)\Delta v+\nabla p-2\kappa\nabla\times w=-\partial_t v+G^1, & \mbox{in}\ \Omega, \\
  \nabla\cdot v=G^2, & \mbox{in}\ \Omega, \\
  -\gamma\Delta w+4\kappa w-\mu\nabla\nabla\cdot w-2\kappa\nabla\times v=G^3-\partial_tw, & \mbox{on}\ \Gamma_f,\\
  \partial_tv|_{\Gamma_f}=\partial_tv, & \mbox{on}\ \Gamma_f,\\
  \partial_tv|_{\Gamma_b}=0,\quad w|_{\partial\Omega}=0.
\end{cases}
  \end{equation}
By \eqref{ee4.11}, Theorem \ref{tt3.1} and Lemma \ref{lA.6}, we can deduce that
\begin{align}\label{ee4.12}
\|(v,w)\|_3+\|\nabla p\|_1\lesssim&\|v\|_0+\|\partial_t v\|_1+\|G^1\|_1+\|G^2\|_2+\|\partial_tw\|_1+\|G^3\|_1+|\partial_tv|_{1/2}\nonumber\\
\lesssim&\hat{\mathcal{D}}^{\frac{1}{2}}+\bar{\mathcal{D}}^{\frac{1}{2}}+\sqrt{\mathcal{E}\mathcal{D}}.
\end{align}

We now turn to the estimates of $\eta$. Applying $\dot{\Lambda}_h^1$ to the fourth equation of \eqref{e3.1} to obtain
 \begin{equation}\label{ee4.14}
\left(\mathbb{I}-\sigma\Delta_h\right)\dot{\Lambda}_h^1\eta=\dot{\Lambda}_h^1\ p-2(\mu+\kappa)\partial_3\dot{\Lambda}_h^1v_3-\dot{\Lambda}_h^1G^4_3,
 \end{equation}
where $G^4_3$ is the third component of $G^4$. Then the elliptic estimates, the trace estimates and \eqref{ee4.12} imply that
\begin{align}\label{ee4.15}
|\dot{\Lambda}_h^1\eta|_{5/2}\lesssim& |\dot{\Lambda}_h^1\ p-2\partial_3\dot{\Lambda}_h^1v_3-\dot{\Lambda}_h^1G^4_3|_{1/2}\lesssim\|\nabla p\|_1+\|v\|_3+|G^4|_{3/2}\nonumber\\
\lesssim&\hat{\mathcal{D}}^{\frac{1}{2}}+\bar{\mathcal{D}}^{\frac{1}{2}}+\sqrt{\mathcal{E}\mathcal{D}}.
\end{align}
According to the zero average condition for $\eta$, we can use the Poincar\'e inequality to get
\begin{equation}\label{ee4.16}
|\eta|_0\lesssim|\Lambda_h^1\eta|_0.
\end{equation}
Then \eqref{ee4.15} and \eqref{ee4.16} reveal that
\begin{align}\label{ee4.17}
|\eta|_{7/2}\lesssim|\eta|_0+|\Lambda_h^1\eta|_{5/2}\lesssim|\Lambda_h^1\eta|_{5/2}\lesssim\hat{\mathcal{D}}^{\frac{1}{2}}+\bar{\mathcal{D}}^{\frac{1}{2}}+\sqrt{\mathcal{E}\mathcal{D}}.
\end{align}
For the $\partial_t\eta$ estimates, the fifth equation of \eqref{e3.1}, Theorem \ref{tt3.1} and \eqref{ee4.12} tell us that
\begin{align}\label{ee4.18}
|\partial_t\eta|_{5/2}\lesssim|v_3|_{5/2}+|G^5|_{5/2}\lesssim\|v\|_3+\sqrt{\mathcal{E}\mathcal{D}}\lesssim\hat{\mathcal{D}}^{\frac{1}{2}}+\bar{\mathcal{D}}^{\frac{1}{2}}+\sqrt{\mathcal{E}\mathcal{D}}.
\end{align}
To estimate $\partial_t^2\eta$, we apply $\partial_t$ to the fifth equation of \eqref{e3.1} to deduce
\begin{equation}\label{e44.18}
\partial^2_t\eta=\partial_tv_3+\partial_tG^5.
\end{equation}
Then a method similar to that in \eqref{ee4.18} shows
\begin{align}\label{ee4.19}
|\partial^2_t\eta|_{1/2}\lesssim|\partial_tv_3|_{1/2}+|\partial_tG^5|_{1/2}\lesssim\|\partial_tv\|_1+\sqrt{\mathcal{E}\mathcal{D}}\lesssim\hat{\mathcal{D}}^{\frac{1}{2}}+\bar{\mathcal{D}}^{\frac{1}{2}}+\sqrt{\mathcal{E}\mathcal{D}}.
\end{align}
Finally, we need to complete the estimate of the pressure by finding a bound for $\|p\|_0$. To this end, we again rewrite \eqref{e3.1} as
\begin{equation}\label{ee4.20}
\begin{cases}
  -(\mu+\kappa)\Delta v+\nabla p-2\kappa\nabla\times w=-\partial_t v+G^1, & \mbox{in}\ \Omega, \\
  \nabla\cdot v=G^2, & \mbox{in}\ \Omega, \\
  -\gamma\Delta w+4\kappa w-\mu\nabla\nabla\cdot w-2\kappa\nabla\times v=G^3-\partial_tw, & \mbox{on}\ \Gamma_f,\\
  \left[(p\mathbb{I}-(\mu+\kappa)\mathbb{D}v)\right]e_3=(\eta-\sigma\Delta_h\eta)e_3+G^4, & \mbox{on}\ \Gamma_f,\\
  v|_{\Gamma_b}=0,\quad w|_{\partial\Omega}=0.
\end{cases}
\end{equation}
With the Stokes estimate in Lemma \eqref{lA.5} and Theorem \ref{tt3.1} in hand, we can get
\begin{align}\label{ee4.21}
\|(v,w)\|_3+\|p\|_2\lesssim&\|v\|_0^2+\|-\partial_t v+G^1\|_1+\|G^2\|_2+\|G^3-\partial_tw\|_1\nonumber\\
&+|(\eta-\sigma\Delta_h\eta)e_3+G^4|_{3/2}\nonumber\\
\lesssim& \|v\|_0^2+\|(\partial_tv,\partial_tw)\|_1+\|(G^1,G^3)\|_1+\|G^2\|_2+|G^4|_{3/2}+|\eta|_{7/2}\nonumber\\
\lesssim&\hat{\mathcal{D}}^{\frac{1}{2}}+\bar{\mathcal{D}}^{\frac{1}{2}}+\sqrt{\mathcal{E}\mathcal{D}}.
\end{align}
Therefore, one has
\begin{equation}\label{ee4.22}
\|p\|_2\lesssim\hat{\mathcal{D}}^{\frac{1}{2}}+\bar{\mathcal{D}}^{\frac{1}{2}}+\sqrt{\mathcal{E}\mathcal{D}}.
\end{equation}
With \eqref{ee4.17}, \eqref{ee4.18}, \eqref{ee4.19} and \eqref{ee4.21} in hand, we know that \eqref{ee4.12} holds true. Then the proof is completed.
\end{proof}

\subsection{The Equivalence in the absence of Surface Tension}
We now turn our attention to the results for the instantaneous energy $\mathcal{E}_n$ and the instantaneous dissipation $\mathcal{D}_n$.
\begin{thm}\label{t4.1}
  Suppose that $\sigma=0$ and $\mathcal{E}_{2N}\leqslant1$ is small enough. For $n=2N$ or $n=N+2$, we have
  \begin{equation}\label{e4.1}
    \mathcal{E}_{2N}\lesssim \hat{\mathcal{E}}_{2N}+\bar{\mathcal{E}}_{2N}+\mathcal{E}_{2N}^2
  \end{equation}
and
\begin{equation}\label{e4.2}
  \mathcal{E}_{N+2}\lesssim \hat{\mathcal{E}}_{N+2}+\bar{\mathcal{E}}_{N+2}+\mathcal{E}_{2N}\mathcal{E}_{N+2}.
\end{equation}
\end{thm}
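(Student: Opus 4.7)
The strategy directly extends the proofs of Theorems \ref{tt4.1} and \ref{tt4.2} to the high-regularity levels $n=2N$ and $n=N+2$. The key observation is that the horizontal energies $\hat{\mathcal{E}}_n$ and $\bar{\mathcal{E}}_n$ already contain every pure time derivative and every purely horizontal spatial derivative of $(v,w,\eta)$ in $L^2$; what is missing from $\mathcal{E}_n$ is only the vertical spatial regularity of $(v,w,p)$, which will be recovered from the elliptic/Stokes structure of the perturbed linear system \eqref{e3.1} by treating time derivatives as forcing.

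I would first handle the $\eta$ portion of $\mathcal{E}_n$ essentially for free. Since $\eta=\eta(t,x_1,x_2)$ depends only on horizontal coordinates, the horizontal Sobolev norm on $\Gamma_f$ coincides with the full Sobolev norm, so the boundary terms $|\partial_t^{\alpha_0}\Lambda_h^{\alpha_h}\eta|_0$ in $\bar{\mathcal{E}}_n$ (for $0\le\alpha_0\le n-1$, $1\le|\alpha_h|\le 2n-\alpha_0$) together with $|\partial_t^n\eta|_0$ in $\hat{\mathcal{E}}_n$ directly control $\sum_{j=0}^n|\partial_t^j\eta|_{2n-2j}^2$; in fact they control the stronger quantity $|\partial_t^j\eta|_{2n-j}$.

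For the bulk unknowns I would run a downward induction on $j=n,n-1,\ldots,0$. The base $j=n$ is immediate from $\hat{\mathcal{E}}_n$. For the inductive step, apply $\partial_t^{j-1}$ to the first three equations of \eqref{e3.1} and view the outcome as a stationary Stokes-type problem for $(\partial_t^{j-1}v,\partial_t^{j-1}w,\partial_t^{j-1}p)$ with interior forcing $-\partial_t^j(v,w)+\partial_t^{j-1}(G^1,G^3)$, divergence source $\partial_t^{j-1}G^2$, and boundary traction $\partial_t^{j-1}\eta\,e_3+\partial_t^{j-1}G^4$; the $\kappa$ cross-coupling terms $\nabla\times w$ and $\nabla\times v$ are first order and are absorbed as lower-order forcing. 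The Stokes regularity of Lemma \ref{lA.5} (or Lemma \ref{lA.6}) applied to both the $v$ and $w$ equations yields
\begin{align*}
&\|\partial_t^{j-1}(v,w)\|_{2n-2j+2}^2+\|\partial_t^{j-1}p\|_{2n-2j+1}^2\\
&\qquad\lesssim \|\partial_t^j(v,w)\|_{2n-2j}^2+\|\partial_t^{j-1}(G^1,G^3)\|_{2n-2j}^2+\|\partial_t^{j-1}G^2\|_{2n-2j+1}^2\\
&\qquad\quad+|\partial_t^{j-1}\eta|_{2n-2j+1/2}^2+|\partial_t^{j-1}G^4|_{2n-2j+1/2}^2+\text{l.o.t.}
\end{align*}
The first term on the right is controlled by the previous inductive stage, the $\eta$ trace by the step above, and the $G^i$ terms by Theorem \ref{t3.1} (for $n=2N$, where the parabolic counts fall inside the window $4N-2$ of \eqref{e3.2.1}) or Theorem \ref{t3.2} (for $n=N+2$, where they fall inside $2N+2$ and \eqref{e3.2.10} applies).

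The main obstacle is the careful bookkeeping: each Stokes step must match the parabolic windows in Theorems \ref{t3.1} and \ref{t3.2}, and in the absence of the curvature boost $-\sigma\Delta_h\eta$ the boundary traction $\partial_t^{j-1}\eta$ must still carry enough regularity to drive the bootstrap, which is automatic because $\bar{\mathcal{E}}_n$ controls $|\partial_t^{\alpha_0}\eta|_{2n-\alpha_0}$, strictly above the threshold $|\cdot|_{2n-2j+1/2}$ encountered at each step. For \eqref{e4.1} the accumulated nonlinear error is of size $\mathcal{E}_{2N}^2$; for \eqref{e4.2} it is of size $\mathcal{E}_{N+2}^2$, which is dominated by $\mathcal{E}_{2N}\mathcal{E}_{N+2}$ using the trivial inclusion $\mathcal{E}_{N+2}\le\mathcal{E}_{2N}$. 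Summing the inductive stages yields both \eqref{e4.1} and \eqref{e4.2}.
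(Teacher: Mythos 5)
Your proposal is correct and follows essentially the same route as the paper: apply $\partial_t^{j}$ to the perturbed linear system \eqref{e3.1}, invoke the Stokes estimate of Lemma \ref{lA.5} with the time-differentiated velocity as forcing, iterate downward in the number of time derivatives starting from the $\hat{\mathcal{E}}_n$-controlled base case, and bound the $\eta$ traces by the horizontal energies and the $G^i$ terms by Theorems \ref{t3.1} and \ref{t3.2}. The only cosmetic difference is that you propose absorbing the $\kappa$ cross-coupling terms as lower-order forcing, which is unnecessary since Lemma \ref{lA.5} already treats the coupled $(v,w)$ system.
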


\begin{proof}
Let $j=0,\cdots,2N-1$, then we may apply $\partial_t^j$ to the system \eqref{e3.1}, one has
\begin{equation}\label{e4.3}
\begin{cases}
  -(\mu+\kappa)\Delta \partial_t^jv+\nabla \partial_t^jp-2\kappa\nabla\times \partial_t^jw=\partial_t^jG^1-\partial_t^{j+1} v,  &\mbox{in}\ \Omega, \\
 \nabla\cdot \partial_t^jv=\partial_t^jG^2, &\mbox{in}\ \Omega \\
 -\gamma\Delta \partial_t^jw+4\kappa \partial_t^jw-\mu\nabla\nabla\cdot \partial_t^jw-2\kappa\nabla\times \partial_t^jv=\partial_t^jG^3-\partial_t^{j+1}w, &\mbox{in}\ \Omega, \\
  \left[(\partial_t^jp-\partial_t^j\eta)\mathbb{I}-(\mu+\kappa)\mathbb{D}\partial_t^jv\right]e_3=\partial_t^jG^4, & \mbox{on}\ \Gamma_f, \\
  \partial_t^jv|_{\Gamma_b}=0,\quad \partial_t^jw|_{\partial\Omega}=0.
\end{cases}
\end{equation}
The key to proving the result is the Stokes estimate in Lemma \ref{lA.5}. Applying Lemma \ref{lA.5} to \eqref{e4.3} implies that
\begin{align}\label{e4.4}
  &\|\partial_t^jv\|_{4N-2j}+\|\partial_t^jp\|_{4N-2j-1}+\|\partial_t^jw\|_{4N-2j} \nonumber\\
  \lesssim & \|\partial_t^jv\|_0+\|(\partial_t^{j+1}v,\partial_t^{j+1}w)\|_{4N-2(j+1)}+|\partial_t^j\eta|_{4N-2j-\frac{3}{2}}+\|(\partial_t^jG^1,\partial_t^jG^3)\|_{4N-2j-2} \nonumber\\
   &+\|\partial_t^jG^2\|_{4N-2j-1}+|\partial_t^jG^4|_{4N-2j-\frac{3}{2}}.
\end{align}
We note that $\|(\partial_t^{j+1}v,\partial_t^{j+1}w)\|_{4N-2(j+1)}$ indeed cannot be controlled by $\bar{\mathcal{E}}_{2N}+\hat{\mathcal{E}}_{2N}+\mathcal{E}_{2N}^2$. To estimate the right-hand side of \eqref{e4.4}, we use an iterative method, offsetting the intermediate terms of $\|(\partial_t^{j+1}v,\partial_t^{j+1}w)\|_{4N-2(j+1)}$ with the terms on the left-hand side, leaving only the highest-order derivative terms. The same strategy is applied to $|\partial_t^j\eta|_{4N-2j-\frac{3}{2}}$. Then by the definition of $\mathcal{E}_{2N}$ and Theorem \ref{t3.1}, it means
\begin{align}\label{e4.5}
  \mathcal{E}_{2N}=&\sum_{j=0}^{2N}\left(\|\partial_t^jv\|^2_{4N-2j}+\|\partial_t^jp\|^2_{4N-2j-1}+\|\partial_t^jw\|^2_{4N-2j} +|\partial_t^j\eta|^2_{4N-2j}\right) \nonumber\\
  \lesssim& \|\partial_{t}^{2N}(v,w)\|^2_0+ \sum_{j=0}^{2N}\left(\|(\partial_t^jG^1,\partial_t^jG^3)\|^2_{4N-2j-2}+\|\partial_t^jG^2\|^2_{4N-2j-1}\right)\nonumber\\ &+\sum_{j=0}^{2N}|\partial_t^jG^4|^2_{4N-2j-\frac{3}{2}}\nonumber\\
  \lesssim& \bar{\mathcal{E}}_{2N}+\hat{\mathcal{E}}_{2N}+\mathcal{E}_{2N}^2.
\end{align}
The derivation of \eqref{e4.2} is similar to that of \eqref{e4.1} except for using Theorem \ref{t3.2} in place of Theorem \ref{t3.1}, we omit its proof here.
\end{proof}

Now we consider a corresponding result for the dissipation $\mathcal{D}_n$.
\begin{thm}\label{t4.2}
Suppose that $\sigma=0$ and $\mathcal{E}_{2N}\leqslant1$ is small enough. For $n=2N$ or $n=N+2$, we have
\begin{equation}\label{e4.6}
  \mathcal{D}_{2N}\lesssim\bar{\mathcal{D}}_{2N}+\hat{\mathcal{D}}_{2N}+\mathcal{E}_{2N}\mathcal{D}_{2N}+\mathfrak{F}\mathcal{M},
\end{equation}
and
\begin{equation}\label{e4.7}
 \mathcal{D}_{N+2}\lesssim\bar{\mathcal{D}}_{N+2}+\hat{\mathcal{D}}_{N+2}+\mathcal{E}_{2N}\mathcal{D}_{N+2}.
\end{equation}
\end{thm}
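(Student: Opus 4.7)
The plan is to mirror the proof of Theorem \ref{t4.1}, but now at one higher order of regularity, and to replace the harmonic-extension/Poincar\'e trick used in the surface-tension case by a direct recovery of $\eta$ from the normal component of the dynamic boundary condition. Applying $\partial_t^j$ to the perturbed linear system \eqref{e3.1} for $j=0,1,\dots,2N-1$ yields a Stokes-type system with forcing $\partial_t^j G^1 - \partial_t^{j+1}v$, divergence source $\partial_t^j G^2$, together with the micro-rotation equation whose forcing is $\partial_t^j G^3 - \partial_t^{j+1}w$, and the boundary traction $(\partial_t^j p - \partial_t^j \eta)e_3 - (\mu+\kappa)\mathbb{D}\partial_t^j v\, e_3 = \partial_t^j G^4$. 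I would then feed this system into the Stokes elliptic estimate of Lemma \ref{lA.5} at the regularity level $4N-2j+1$ (i.e.\ one higher than in Theorem \ref{t4.1}), which gives
\begin{align*}
\|\partial_t^j v\|_{4N-2j+1}+\|\partial_t^j p\|_{4N-2j}+\|\partial_t^j w\|_{4N-2j+1}
\lesssim{}& \|\partial_t^j v\|_0+\|(\partial_t^{j+1}v,\partial_t^{j+1}w)\|_{4N-2j-1}\\
&+|\partial_t^j\eta|_{4N-2j-\frac12}+\|(\partial_t^j G^1,\partial_t^j G^3)\|_{4N-2j-1}\\
&+\|\partial_t^j G^2\|_{4N-2j}+|\partial_t^j G^4|_{4N-2j-\frac12}.
\end{align*}

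Next I would close this recursion in $j$: the term $\|(\partial_t^{j+1}v,\partial_t^{j+1}w)\|_{4N-2j-1}$ is absorbed by the analogous left-hand side at the next level, and this cascade terminates at $j=2N$, where the remainder $\|\partial_t^{2N}(v,w)\|_1$ is already controlled by $\hat{\mathcal{D}}_{2N}$. The surface function $\eta$ is recovered from the third component of the traction condition, which without surface tension reads $\partial_t^j\eta = \partial_t^j p - 2(\mu+\kappa)\partial_3\partial_t^j v_3 - \partial_t^j G^4_3$ on $\Gamma_f$; taking $H^{2N-2j-1/2}$ trace norms and using the bulk bounds just obtained yields $|\partial_t^j\eta|_{4N-2j-1/2}$, and the fifth equation of \eqref{e3.1} combined with Theorem \ref{t3.1} supplies the remaining $|\partial_t^{j+1}\eta|$ terms needed to match the definition of $\mathcal{D}_{2N}$. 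Summing all estimates and invoking Theorem \ref{t3.1} at the $4N-1$ derivative level then produces
\[
\mathcal{D}_{2N}\lesssim \hat{\mathcal{D}}_{2N}+\bar{\mathcal{D}}_{2N}+\|\bar{\nabla}^{4N-1}_0(G^1,G^3)\|_0^2+\|\bar{\nabla}^{4N-1}_0 G^2\|_1^2+|\bar{D}^{4N-1}_0 G^4|^2_{1/2}+|\bar{D}^{4N-1}_0 G^5|^2_{1/2},
\]
which together with \eqref{e3.2.2} delivers precisely \eqref{e4.6}, since $\mathcal{E}_{2N}\mathcal{D}_{2N}$ and $\mathfrak{F}\mathcal{M}$ are exactly the two contributions of Theorem \ref{t3.1} at that level.

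The main obstacle, and the reason the $\mathfrak{F}\mathcal{M}$ term must appear on the right-hand side of \eqref{e4.6}, is the top-order interaction $\nabla\bar\eta\,\nabla v$ in $G^1$, the analogous $\nabla_h\eta\,\nabla v$ on the boundary in $G^4$, and the transport-type term $v_h\partial_h\eta$ in $G^5$: when all derivatives hit $\eta$, one produces $|\partial_t^{j}\eta|_{4N-2j+1/2}$, which exceeds the regularity encoded in $\mathcal{D}_{2N}$ and must instead be absorbed into $\sqrt{\mathfrak{F}}\,\sqrt{\mathcal{M}}$. This is precisely the loss already quantified in the $G^i$ estimates at the $4N-1$ level in Theorem \ref{t3.1}, so no extra work is required here beyond being careful that the iteration above terminates before the regularity of $\eta$ reaches $4N+1/2$. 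For the inequality \eqref{e4.7} at the $N+2$ level, the same scheme applies with $2N$ replaced by $N+2$; the only change is that one invokes Theorem \ref{t3.2} instead of Theorem \ref{t3.1}, and since the nonlinear bound \eqref{e3.2.11} does not contain a $\mathfrak{F}\mathcal{M}$ term, none appears on the right-hand side of \eqref{e4.7}. The smallness of $\mathcal{E}_{2N}$ is used exactly as in Theorem \ref{t4.1} so that all occurrences of $\mathcal{E}_{2N}^{1/2}\mathcal{D}_{2N}^{1/2}$ can be paired into $\mathcal{E}_{2N}\mathcal{D}_{2N}$, hidden on the left side if necessary.
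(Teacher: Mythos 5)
Your proposal contains a genuine circularity at its core step. For the dissipation comparison you invoke the traction-boundary Stokes estimate (Lemma \ref{lA.5}), whose right-hand side necessarily carries the boundary datum $\left|\partial_t^j\eta\right|_{4N-2j-\frac12}$ (the term $\partial_t^j\eta\, e_3$ sits linearly, with an $O(1)$ coefficient, in the traction). But neither $\bar{\mathcal{D}}_{2N}$ nor $\hat{\mathcal{D}}_{2N}$ contains \emph{any} norm of $\eta$ — unlike the energy case of Theorem \ref{t4.1}, where $\bar{\mathcal{E}}_{2N}$ does control the $\eta$ input to the elliptic estimate. So the only way to produce $|\eta|_{4N-\frac12}$ is the route you then propose: read it off from the third component of the traction condition, which requires exactly the top-order bulk norms $\|p\|_{4N}$ and $\|v\|_{4N+1}$ that the elliptic estimate was supposed to deliver. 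Substituting back gives an inequality of the form $X\lesssim X+(\text{good terms})$ with no smallness factor to absorb the self-referential term, so the argument does not close.

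The paper avoids this by switching elliptic lemmas for the dissipation: it poses the problem with the Dirichlet condition $\partial_t^j v|_{\Gamma_f}=\partial_t^j v$ and applies Lemma \ref{lA.6}, whose boundary input $|\partial_t^j v|_{4N-2j+\frac12}$ is controlled by trace theory through $\|\partial_t^jv\|_1+\|\Lambda_h^{4N-2j}\partial_t^j v\|_1\lesssim \bar{\mathcal{D}}_{2N}^{1/2}+\hat{\mathcal{D}}_{2N}^{1/2}$; this decouples the bulk estimates for $(v,w,\nabla p)$ from $\eta$ entirely (this is precisely the role of tangential derivatives plus Dirichlet data advertised in the paper's summary of methods). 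Only \emph{after} the bulk norms are in hand does one recover $|\eta|_{4N-\frac12}$ from $\dot\Lambda_h^1$ applied to the traction condition together with the zero-average Poincar\'e inequality, the $|\partial_t^j\eta|$ terms from the kinematic condition, and finally $\|\partial_t^jp\|_0$ (hence the full $p$ norm, since Lemma \ref{lA.6} only yields $\|\nabla p\|$) from the boundary condition and Poincar\'e again. Your treatment of the nonlinear forcings, the termination of the recursion at $\|\partial_t^{2N}(v,w)\|_1$, and the origin of the $\mathfrak{F}\mathcal{M}$ term are all consistent with the paper, but the proof needs to be restructured around Lemma \ref{lA.6} for the argument to be non-circular.
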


\begin{proof}
  In order to deal with the dissipation estimates of $\partial_t^jv$, $\partial_t^jw$ and $\partial_t^jp$, we apply $\partial_t^j$ with $j=0,1,\cdots,2N-1$ to \eqref{e3.1} and rewrite it as
 \begin{equation}\label{e4.8}
\begin{cases}
  -(\mu+\kappa)\Delta \partial_t^jv+\nabla \partial_t^jp-2\kappa\nabla\times \partial_t^jw=\partial_t^jG^1-\partial_t^{j+1}v,  &\mbox{in}\ \Omega, \\
 \nabla\cdot \partial_t^jv=\partial_t^jG^2, &\mbox{in}\ \Omega \\
 -\gamma\Delta \partial_t^jw+4\kappa \partial_t^jw-\mu\nabla\nabla\cdot \partial_t^jw-2\kappa\nabla\times \partial_t^jv=\partial_t^jG^3-\partial_t^{j+1}w, &\mbox{in}\ \Omega, \\
 \partial_t^jv=\partial_t^jv, & \mbox{on}\ \Gamma_f, \\
  \partial_t^jv|_{\Gamma_b}=0,\quad w|_{\partial\Omega}=0.
\end{cases}
\end{equation}
 Similar to the proof of Theorem \ref{tt4.2}, we first use Lemma \ref{lA.6} to obtain
 \begin{align}\label{e4.9}
   & \|\partial_t^j(v,w)\|_{4N-2j+1}+\|\nabla\partial_t^jp\|_{4N-2j}\nonumber \\
   \lesssim&\|\partial_t^jv\|_0+\|\partial_t^{j+1}(v,w)\|_{4N-2(j+1)+1}+|\partial_t^jv|_{4N-2j+\frac{1}{2}}+\|\partial_t^jG^1\|_{4N-2j-1} \nonumber\\
   &+\|\partial_t^jG^2\|_{4N-2j}+\|\partial_t^jG^3\|_{4N-2j-1}.
 \end{align}
We now turn to the estimates of the right side of \eqref{e4.9}. Based on trace theorem together with the smoothness of the boundary $\Gamma_f$, it holds that
 \begin{align}\label{e4.10}
   |\partial_t^jv|_{4N-2j+\frac{1}{2}}\lesssim&|\partial^j_tv|_0+|\Lambda_h^{4N-2j}|_{\frac{1}{2}}\lesssim \|\partial_t^jv\|_1+\|\Lambda_h^{4N-2j}\partial^j_tv\|_1\nonumber\\
   \lesssim&\bar{\mathcal{D}}_{2N}^{\frac{1}{2}}+\hat{\mathcal{D}}_{2N}^{\frac{1}{2}}
 \end{align}
for $j=1,\dots, 2N-1$. Moreover, using Theorem \ref{t3.1} to derive
\begin{equation}\label{e44.10}
\|\partial_t^jG^2\|_{4N-2j}+\|\partial_t^j(G^1,G^3)\|_{4N-2j-1}\lesssim\mathcal{E}_{2N}\mathcal{D}_{2N}+\mathfrak{F}\mathcal{M}
\end{equation}
for $j=1,\dots, 2N-1$. Then combining \eqref{e4.9}, \eqref{e4.10} with \eqref{e44.10} yields
\begin{align}\label{e44.11}
 & \|\partial_t^j(v,w)\|_{4N-2j+1}+\|\nabla\partial_t^jp\|_{4N-2j}\nonumber \\
   \lesssim&\|\partial_t^{j+1}(v,w)\|_{4N-2(j+1)+1}+\bar{\mathcal{D}}_{2N}+\hat{\mathcal{D}}_{2N}+\mathcal{E}_{2N}\mathcal{D}_{2N}+\mathfrak{F}\mathcal{M}.
\end{align}
Finally, we note that $\|\partial_t^{j+1}(v,w)\|_{4N-2(j+1)+1}$ cannot be controlled directly by $\bar{\mathcal{D}}_{2N}^{\frac{1}{2}}+\hat{\mathcal{D}}_{2N}^{\frac{1}{2}}+\mathcal{E}_{2N}\mathcal{D}_{2N}+\mathfrak{F}\mathcal{M}$.
 To this end, we employ a specific iteration such that the intermediate derivative terms of $\|\partial_t^{j+1}(v,w)\|_{4N-2(j+1)+1}$ can be eliminated by the left side of \eqref{e4.9}. Thus, only the highest-order time derivative terms $\|\partial_t^{2N}(v,w)\|_1$ remain on the right side of \eqref{e4.9}. That is to say, we have
 \begin{align}\label{e4.11}
  &\sum_{j=0}^{2N}\left(\|\partial_t^j(v,w)\|^2_{4N-2j+1}+\|\partial_t^jp\|^2_{4N-2j}\right)\nonumber \\
  \lesssim& \|\partial^{2N}_t(v,w)\|^2_{1}+\hat{\mathcal{D}}_{2N}+\bar{\mathcal{D}}_{2N}+\mathcal{E}_{2N}\mathcal{D}_{2N}+\mathfrak{F}\mathcal{M}\nonumber\\
  \lesssim&\hat{\mathcal{D}}_{2N}+\bar{\mathcal{D}}_{2N}+\mathcal{E}_{2N}\mathcal{D}_{2N}+\mathfrak{F}\mathcal{M}.
\end{align}

Now we investigate the estimates of the terms involving $\eta$. To this end, we need to take advantage of the structure of the equations. We first focus on the estimate of $|\eta|_{4N-\frac{1}{2}}$. Utilizing $\dot{\Lambda}^1_h$ to the third component of $\eqref{e3.1}_4$, one has
 \begin{equation}\label{e4.12}
   \dot{\Lambda}^1_hp-2(\mu+\kappa)\dot{\Lambda}^1_h\partial_3v_3=\dot{\Lambda}^1_h\eta+\dot{\Lambda}^1_h G^4_3,
 \end{equation}
 where $G^4_3$ is the third component of $G^4$. Then the trace theorem and Theorem \ref{t3.1} imply that
  \begin{align}\label{e4.13}
      |\dot{\Lambda}^1_h\eta|^2_{4N-\frac{3}{2}}\lesssim & |\dot{\Lambda}^1_h p|^2_{4N-\frac{3}{2}}+|\dot{\Lambda}^1_h\partial_3v_3|^2_{4N-\frac{3}{2}}+|\dot{\Lambda}^1_h G^4_3|^2_{4N-\frac{3}{2}} \nonumber\\
  \lesssim&\|\nabla p\|^2_{4N-1}+\|\partial_3v_3\|^2_{4N}+|\Lambda_h^{4N-2}G^4_3|^2_{\frac{1}{2}}\nonumber\\
  \lesssim&\hat{\mathcal{D}}_{2N}+\bar{\mathcal{D}}_{2N}+\mathcal{E}_{2N}\mathcal{D}_{2N}+\mathfrak{F}\mathcal{M}.
  \end{align}
Thanks to the "zero average" condition and \eqref{e4.13}, we can utilize the Poincar\'e inequality to reveal
\begin{align}\label{e4.14}	|\eta|^2_{4N-\frac{1}{2}}\lesssim|\dot{\Lambda}^1_h\eta|^2_{4N-\frac{3}{2}}\lesssim\hat{\mathcal{D}}_{2N}+\bar{\mathcal{D}}_{2N}+\mathcal{E}_{2N}\mathcal{D}_{2N}+\mathfrak{F}\mathcal{M}.
\end{align}
For the estimate of $|\partial_t\eta|_{4N-\frac{1}{2}}$, we use the fifth equation of \eqref{e3.1} and Theorem \ref{t3.1} to deduce
\begin{align}\label{e4.15}
	|\partial_t\eta|^2_{4N-\frac{1}{2}}\lesssim&|v|^2_{4N-\frac{1}{2}}+|G^5|^2_{4N-\frac{1}{2}}\lesssim\|v\|^2_{4N}+|\Lambda_h^{4N-1}G^5|^2_{\frac{1}{2}}\nonumber\\
	\lesssim&\hat{\mathcal{D}}_{2N}+\bar{\mathcal{D}}_{2N}+\mathcal{E}_{2N}\mathcal{D}_{2N}+\mathfrak{F}\mathcal{M}.
\end{align}
 For the terms involving $\eta$, now we are left with only the estimate of $\sum_{j=2}^{2N+1}|\partial^{j}_t\eta|^2_{4N-2j+\frac{5}{2}}$. To this end, we apply $\partial_t^{j-1}$ with $j=2,3,\dots, 2N+1$ to the fifth equation of \eqref{e3.1} to get
  \begin{align}\label{e4.16}
  	\partial_t^{j}\eta=\partial_t^{j-1}v_3+\partial_t^{j-1}G^5.
  \end{align}
  Then it follows from the trace theorem and Theorem \ref{t3.1} that
  \begin{align}\label{e4.17}
  	|\partial_t^j\eta|^2_{4N-2j+\frac{5}{2}}\lesssim&|\partial^{j-1}_tv|^2_{4N-2j+\frac{5}{2}}+|\partial_t^{j-1}G^5|^2_{4N-2j+\frac{5}{2}}\nonumber\\
  \lesssim&\|\partial_t^{j-1}v\|^2_{4N-2(j-1)+1}+|\partial_t^{j-1}G^5|^2_{4N-2(j-1)+\frac{1}{2}}\nonumber\\ \lesssim&\hat{\mathcal{D}}_{2N}+\bar{\mathcal{D}}_{2N}+\mathcal{E}_{2N}\mathcal{D}_{2N}+\mathfrak{F}\mathcal{M}.
  \end{align}
To complete the proof of \eqref{e4.6},  only the estimates of terms about $p$ remains to be done. Applying $\partial_t^j$ with $j=0,1,\dots, 2N-1$ to the fifth equation of \eqref{e3.1}, we may get
$$\partial_t^jp-2(\mu+\kappa)\partial^j_t\partial_3v_3=\partial^j_t\eta+\partial^j_tG^4_3.$$
Then the trace theorem yields
\begin{align}\label{e4.18}
  |\partial_t^jp|^2_0\lesssim& |\partial^j_t\eta|_0^2+\|\partial^j_tv_3\|^2_2+|\partial^j_tG^4_2|^2_0\nonumber \\
 \lesssim& \hat{\mathcal{D}}_{2N}+\bar{\mathcal{D}}_{2N}+\mathcal{E}_{2N}\mathcal{D}_{2N}+\mathfrak{F}\mathcal{M}.
\end{align}
Moreover, the Poincar\'e inequality tells us that
\begin{equation}\label{e4.19}
  \|\partial^j_tp\|^2_0\lesssim|\partial^j_tp|^2_{0}+\|\nabla\partial^j_tp\|^2_0\lesssim\hat{\mathcal{D}}_{2N}+\bar{\mathcal{D}}_{2N}+\mathcal{E}_{2N}\mathcal{D}_{2N}+\mathfrak{F}\mathcal{M}.
\end{equation}
It follows from \eqref{e4.11}, \eqref{e4.14}, \eqref{e4.15}, \eqref{e4.17} and \eqref{e4.19} that \eqref{e4.6} holds true. And the derivation of \eqref{e4.7} follows \eqref{e4.6} similarly except for using Theorem \ref{t3.2} in place of Theorem \ref{t3.1}, we omit its proof here. Then the proof of Theorem \ref{t4.2} is completed.
\end{proof}

\section{Proof of Main Results with Surface Tension}

\subsection{Boundedness and Decay}
Combining the content from the previous sections, we obtain a priori estimate for the solution. It indicates that, under the condition of sufficiently small energy, the solution decays exponentially. To facilitate the proof, we first give the following lemma.
\begin{lemma}\label{l5.1}
 Let
$$\mathcal{E}_h:=\bar{\mathcal{E}}+\hat{\mathcal{E}},\quad \mathcal{D}_h:=\bar{\mathcal{D}}+\hat{\mathcal{D}}.$$
Assume that
$$\sup_{0\leqslant t\leqslant T}\mathcal{E}(t)\leqslant \delta^*<1$$
is small enough, then we have
$$\mathcal{E}_h\lesssim\mathcal{E}\lesssim\mathcal{E}_h, \quad \mathcal{D}_h\lesssim\mathcal{D}\lesssim\mathcal{D}_h.$$
\end{lemma}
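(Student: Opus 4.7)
The statement to prove is a pair of two-sided equivalences, and conveniently the two forward directions have already been established: Theorem \ref{tt4.1} gives $\mathcal{E}\lesssim \bar{\mathcal{E}}+\hat{\mathcal{E}}+\mathcal{E}^2$ and Theorem \ref{tt4.2} gives $\mathcal{D}\lesssim \bar{\mathcal{D}}+\hat{\mathcal{D}}+\mathcal{E}\mathcal{D}$. The plan is therefore to treat each inequality in turn, with the hard work being an absorption argument powered by the smallness hypothesis $\mathcal{E}\leqslant \delta^\ast$.

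First I would dispatch the easy direction $\mathcal{E}_h\lesssim \mathcal{E}$ and $\mathcal{D}_h\lesssim \mathcal{D}$. Unpacking definitions, every summand in $\hat{\mathcal{E}}$ and $\bar{\mathcal{E}}$ is an $L^2$-type norm of $v$, $w$, $\eta$ or their temporal / horizontal derivatives of order no higher than $2$, and hence is dominated term-by-term by the Sobolev norms appearing in the definition of $\mathcal{E}$. The only cosmetic wrinkle is the Jacobian weight $\sqrt{J}$ in $\hat{\mathcal{E}}$ and $\hat{\mathcal{D}}$, which is harmless because Lemma \ref{ll2.2} (applicable since the smallness of $\mathcal{E}$ controls $|\eta|_{5/2}$) yields $\|J\|_{L^\infty}\lesssim 1$. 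The same comparison, applied to the dissipation norms, yields $\mathcal{D}_h\lesssim \mathcal{D}$.

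For the reverse direction $\mathcal{E}\lesssim \mathcal{E}_h$, I would invoke Theorem \ref{tt4.1} to get
\begin{equation*}
\mathcal{E}\leqslant C(\bar{\mathcal{E}}+\hat{\mathcal{E}})+C\mathcal{E}^2 = C\mathcal{E}_h+C\mathcal{E}\cdot\mathcal{E}.
\end{equation*}
Since $\mathcal{E}\leqslant \delta^\ast$ with $\delta^\ast$ small enough that $C\delta^\ast\leqslant 1/2$, the last term can be absorbed into the left-hand side, giving $\mathcal{E}\lesssim \mathcal{E}_h$. The analogous argument for dissipations uses Theorem \ref{tt4.2}: writing
\begin{equation*}
\mathcal{D}\leqslant C\mathcal{D}_h+C\mathcal{E}\mathcal{D}\leqslant C\mathcal{D}_h+C\delta^\ast\mathcal{D},
\end{equation*}
and choosing $\delta^\ast$ so that $C\delta^\ast\leqslant 1/2$, we absorb $C\delta^\ast\mathcal{D}$ and conclude $\mathcal{D}\lesssim \mathcal{D}_h$.

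There is no genuine obstacle here: the lemma is essentially a bookkeeping corollary of the comparison theorems of Section 4, and the only substantive step is the absorption, which relies only on choosing $\delta^\ast$ small relative to the (universal) constants supplied by Theorems \ref{tt4.1} and \ref{tt4.2}. The mild subtlety worth double-checking in the write-up is that every component of $\mathcal{E}$ and $\mathcal{D}$ listed in their definitions, including the pressure norm $\|p\|_1$ (resp.\ $\|p\|_2$) and the negative-order norm $|\partial_t^2\eta|_{-1/2}$ (resp.\ $|\partial_t^2\eta|_{1/2}$), is already accounted for on the right-hand sides of \eqref{ee4.2} and \eqref{ee4.1.10}, so no additional elliptic argument is needed beyond what Theorems \ref{tt4.1} and \ref{tt4.2} already supply.
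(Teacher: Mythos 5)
Your proposal is correct and follows essentially the same route as the paper: the forward bounds $\mathcal{E}_h\lesssim\mathcal{E}$, $\mathcal{D}_h\lesssim\mathcal{D}$ come directly from the definitions, and the reverse bounds come from Theorems \ref{tt4.1} and \ref{tt4.2} followed by absorbing $\mathcal{E}^2$ and $\mathcal{E}\mathcal{D}$ via the smallness of $\delta^*$. Your extra remark on the harmlessness of the $\sqrt{J}$ weight (via Lemma \ref{ll2.2}) is a sensible detail the paper leaves implicit.
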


\begin{proof}
  On one hand, the definitions of $\mathcal{E}$, $\mathcal{E}_h$, $\mathcal{D}$ and $\mathcal{D}_h$ tell us that
  \begin{equation}\label{ee5.1}
    E_h\lesssim\mathcal{E},\quad D_h\lesssim\mathcal{D}.
  \end{equation}
 On the other hand, using Theorem \ref{tt4.1} and \ref{tt4.2}, we may get
  \begin{equation}\label{ee5.2}
    \mathcal{E}\lesssim\mathcal{E}_h+\mathcal{E}^2,\quad  \mathcal{D}\lesssim\mathcal{D}_h+\mathcal{ED}.
  \end{equation}
Due to the smallness of $\mathcal{E}$, we may absorb $\mathcal{E}^2$ and $\mathcal{ED}$ onto the left to deduce that
\begin{equation}\label{ee5.3}
  \mathcal{E}\lesssim\mathcal{E}_h,\quad \mathcal{D}\lesssim\mathcal{D}_h.
\end{equation}
Combining \eqref{ee5.1} with \eqref{ee5.3} yields that the proof of Lemma \ref{l5.1} is completed.
\end{proof}

With the above lemma in hand, we may give the following theorem.
\begin{thm}\label{t5.1}
  Assume that $(v,w,p,\eta)$ is the solutions of \eqref{1.9} on the temporal interval $[0,T]$. Then there exist a universal constant $\delta^*<1$, such that if
  $$\sup_{t\in[0,T]}\mathcal{E}(t)\leqslant \delta^*\quad \mbox{and} \quad \int_{0}^{T}\mathcal{D}(t)<\infty,$$
we have
\begin{equation}\label{ee5.4}
  \sup_{t\in[0,T]}e^{\lambda t}\mathcal{E}(t)+\int_{0}^{T}\mathcal{D}(t)\lesssim\mathcal{E}(0).
\end{equation}
\end{thm}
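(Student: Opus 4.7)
The plan is to combine the two horizontal energy-dissipation relations of Propositions~\ref{pp2.1} and \ref{p3.1} into a single differential inequality and then convert it into an exponential decay estimate by exploiting (i) the equivalence between the horizontal and full energies/dissipations supplied by Lemma~\ref{l5.1}, and (ii) the elementary fact that, in the surface-tension case, the energy $\mathcal{E}$ is controlled termwise by the dissipation $\mathcal{D}$.

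First I would add the two estimates to obtain
\begin{equation*}
\frac{\ud}{\ud t}\left(\hat{\mathcal{E}}+\bar{\mathcal{E}}-\int_{\Omega}pF^{2,1}J\right)+\hat{\mathcal{D}}+\bar{\mathcal{D}}\lesssim\sqrt{\mathcal{E}}\,\mathcal{D}.
\end{equation*}
Set $\tilde{\mathcal{E}}:=\hat{\mathcal{E}}+\bar{\mathcal{E}}-\int_{\Omega}pF^{2,1}J$. By the second estimate in \eqref{ee2.10} one has $|\int_{\Omega}pF^{2,1}J|\lesssim\mathcal{E}^{3/2}$, so combining with Lemma~\ref{l5.1} we get, for sufficiently small $\delta^{*}$,
\begin{equation*}
\tilde{\mathcal{E}}=\mathcal{E}_h+O(\mathcal{E}^{3/2})\simeq\mathcal{E}.
\end{equation*}
Next, using $\sqrt{\mathcal{E}}\leqslant\sqrt{\delta^{*}}$ together with $\mathcal{D}\lesssim\mathcal{D}_h$ (again by Lemma~\ref{l5.1}), the right-hand side $\sqrt{\mathcal{E}}\,\mathcal{D}$ can be absorbed into $\hat{\mathcal{D}}+\bar{\mathcal{D}}$, yielding
\begin{equation*}
\frac{\ud}{\ud t}\tilde{\mathcal{E}}+\mathcal{D}_h\leqslant 0.
\end{equation*}

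The crucial structural observation, specific to the surface-tension regime, is that every term appearing in the definition of $\mathcal{E}$ is controlled by a term in $\mathcal{D}$ via a trivial Sobolev embedding (e.g.\ $\|(v,w)\|_2\leqslant\|(v,w)\|_3$, $|\eta|_3\leqslant|\eta|_{7/2}$, $|\partial_t\eta|_{3/2}\leqslant|\partial_t\eta|_{5/2}$, $|\partial_t^2\eta|_{-1/2}\leqslant|\partial_t^2\eta|_{1/2}$, and analogously for $\partial_tv,\partial_tw,p$); hence $\mathcal{E}\lesssim\mathcal{D}$. Combining with the equivalences $\tilde{\mathcal{E}}\simeq\mathcal{E}_h\simeq\mathcal{E}\lesssim\mathcal{D}\simeq\mathcal{D}_h$ produces a Poincar\'e-type inequality
\begin{equation*}
\tilde{\mathcal{E}}\lesssim\mathcal{D}_h,
\end{equation*}
so there exists a universal $\lambda>0$ with
\begin{equation*}
\frac{\ud}{\ud t}\tilde{\mathcal{E}}+\lambda\tilde{\mathcal{E}}\leqslant 0.
\end{equation*}
Gronwall's inequality then yields $\tilde{\mathcal{E}}(t)\leqslant e^{-\lambda t}\tilde{\mathcal{E}}(0)$, and re-invoking $\tilde{\mathcal{E}}\simeq\mathcal{E}$ delivers $\mathcal{E}(t)\lesssim e^{-\lambda t}\mathcal{E}(0)$. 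Finally, integrating the differential inequality $\tfrac{\ud}{\ud t}\tilde{\mathcal{E}}+\mathcal{D}_h\leqslant 0$ on $[0,T]$ and using $\mathcal{D}\simeq\mathcal{D}_h$ gives $\int_{0}^{T}\mathcal{D}(t)\,\ud t\lesssim\tilde{\mathcal{E}}(0)\lesssim\mathcal{E}(0)$, completing \eqref{ee5.4}.

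The main technical obstacle I anticipate is verifying the comparability $\tilde{\mathcal{E}}\simeq\mathcal{E}$ uniformly in $t$, since the definition of $\tilde{\mathcal{E}}$ contains the modification term $-\int_\Omega pF^{2,1}J$ whose sign is not controlled a priori. This is precisely where the smallness assumption $\sup_{[0,T]}\mathcal{E}\leqslant\delta^{*}$ and the cubic bound $|\int_\Omega pF^{2,1}J|\lesssim\mathcal{E}^{3/2}$ from Theorem~\ref{tt2.1} are essential; the remaining steps are then a clean Gronwall argument once the equivalences of Lemma~\ref{l5.1} are in hand.
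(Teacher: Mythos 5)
Your proposal is correct and follows essentially the same route as the paper: sum Propositions~\ref{pp2.1} and \ref{p3.1}, use the cubic bound \eqref{ee2.10} and smallness to show the modified energy $\mathcal{E}_h-\int_{\Omega}JpF^{2,1}$ is comparable to $\mathcal{E}_h$ (hence to $\mathcal{E}$ by Lemma~\ref{l5.1}), absorb $\sqrt{\mathcal{E}}\mathcal{D}\lesssim\sqrt{\mathcal{E}}\mathcal{D}_h$, and close with the bound $\mathcal{E}_h\lesssim\mathcal{D}_h$ and Gronwall. Your explicit termwise verification of $\mathcal{E}\lesssim\mathcal{D}$ is exactly the content of what the paper calls ``the obvious bound $\mathcal{E}_h\lesssim\mathcal{D}_h$,'' so there is no substantive difference.
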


\begin{proof}
 Combining Proposition \ref{pp2.1} with Proposition \ref{p3.1}, we can get
\begin{align}\label{e7.1}
\frac{\ud}{\ud t}\left(\mathcal{E}_h-\int_{\Omega}JpF^{2,1}\right)+\mathcal{D}_h\lesssim\sqrt{\mathcal{E}}\mathcal{D}.
\end{align}
Now we turn our attention to $\int_{\Omega}JpF^{2,1}$. Theorem \ref{tt2.1} and Lemma \ref{l5.1} tell us that
\begin{equation}\label{e7.2}
\left|\int_{\Omega}JpF^{2,1}\right|\lesssim\mathcal{E}^{3/2}\lesssim\sqrt{\mathcal{E}}\mathcal{E}_h.
\end{equation}
Hence we may conclude that
\begin{equation}\label{e7.3}
0\leqslant\frac{1}{2}\mathcal{E}_h\leqslant \mathcal{E}_h-\int_{\Omega}JpF^{2,1}\leqslant\frac{3}{2}\mathcal{E}_h\lesssim\mathcal{E}(t).
\end{equation}
On the other hand, by $\sqrt{\mathcal{E}}\mathcal{D}\lesssim\sqrt{\mathcal{E}}\mathcal{D}_h$, we may absorb $\sqrt{\mathcal{E}}\mathcal{D}$ onto the left to get
\begin{align}\label{ee5.6}
\frac{\ud}{\ud t}\left(\mathcal{E}_h-\int_{\Omega}JpF^{2,1}\right)+\mathcal{D}_h\lesssim0.
\end{align}
Then, directly integrating \eqref{ee5.6} with respect to time $t$ and using \eqref{e7.3}, we can obtain
\begin{equation}\label{ee5.5}
  \int_0^T\mathcal{D}(t)\ \ud t\lesssim\mathcal{E}_h-\int_{\Omega}JpF^{2,1}+\int_{0}^{T}\mathcal{D}_h(t)\ \ud t\lesssim\mathcal{E}(0).
\end{equation}
Furthermore, note that the obvious bound $\mathcal{E}_h\lesssim\mathcal{D}_h$, then \eqref{e7.3} implies that
\begin{equation}\label{e7.4}
0\leqslant\mathcal{E}_h-\int_{\Omega}JpF^{2,1}\lesssim\mathcal{E}_h\lesssim\mathcal{D}_h.
\end{equation}
Therefore, combining \eqref{ee5.6} with \eqref{e7.4} implies that there exists a constant $\lambda$ such that
\begin{equation}\label{e7.5}
\frac{\ud}{\ud t}\left(\mathcal{E}_h-\int_{\Omega}JpF^{2,1}\right)+\lambda\left(\mathcal{E}_h-\int_{\Omega}JpF^{2,1}\right)\leqslant0.
\end{equation}
By the Gronwall inequality, we deduce that
\begin{align}\label{e7.6}
\mathcal{E}(t)\lesssim&\mathcal{E}_h\lesssim\mathcal{E}_h-\int_{\Omega}JpF^{2,1}(t)\lesssim e^{-\lambda t}\left(\mathcal{E}_h(0)-\int_{\Omega}J_0p_0F_0^{2,1}\right)\nonumber\\
\lesssim& e^{-\lambda t}\mathcal{E}_h(0)\lesssim e^{-\lambda t}\mathcal{E}(0).
\end{align}
Using \eqref{ee5.5} and \eqref{e7.6} shows that \eqref{ee5.4} holds true. This completes the proof.
\end{proof}

\subsection{Global Well-Posedness}
With local well-posedness and Theorem \ref{t5.1} in hand, we now state the proof of Theorem \ref{t1.1}.

\noindent\textbf{Proof of Theorem \ref{t1.1}.}\ Based on \eqref{ee1.2} and \eqref{ee5.4}, we know that there are two universal constant $C_1,C_2>0$ such that under the conditions of Theorem \eqref{tt1.1}, we have
\begin{equation}\label{ee5.12}
   \sup_{0\leqslant t\leqslant T}\mathcal{E}(t)+\int_{0}^{T}\mathcal{D}(t)\ \ud t+\|\partial_t^2v\|_{L^2_t(_{0}H^1)^*}+\|\partial_t^2w\|_{L^2_t(H^1_0)^*}\leqslant C_1\mathcal{E}(0),
   \end{equation}
  and under the conditions of Theorem \eqref{t5.1}, we have
  \begin{equation}\label{ee5.13}
     \sup_{t\in[0,T]}e^{\lambda t}\mathcal{E}(t)+\int_{0}^{T}\mathcal{D}(t)\leqslant C_2\mathcal{E}(0).
  \end{equation}
  Let $\varepsilon$ in \eqref{e1.1} be
  $$\frac{1}{(1+C_1)(1+C_2)}\min\{\delta,\delta^*\}.$$
Then since
$$\|(v_0,w_0)\|_2^2+|\eta|_3^2\leqslant\mathcal{E}(0)\leqslant\varepsilon\leqslant\delta,$$
we know that Theorem \ref{tt1.1} holds true. Therefore, there exists a time $T_0$ such that the solutions exists on $[0,T_0]$. Moreover, \eqref{ee5.12} tells us that
\begin{equation}\label{ee5.14}
   \sup_{0\leqslant t\leqslant T_0}\mathcal{E}(t)+\int_{0}^{T_0}\mathcal{D}(t)\ \ud t\leqslant C_1\varepsilon\leqslant \delta^*.
   \end{equation}
By \eqref{ee5.14} and Theorem \ref{t5.1}, we may get
\begin{equation}\label{ee5.15}
  \sup_{t\in[0,T_0]}e^{\lambda t}\mathcal{E}(t)+\int_{0}^{T_0}\mathcal{D}(t)\ \ud t\leqslant C_2\mathcal{E}(0)\leqslant C_2\varepsilon.
\end{equation}
The above inequality implies that
\begin{equation}\label{ee5.16}
\|(v,w)(T_0,\cdot)\|_2^2+|\eta(T_0,\cdot)|^2_3\leqslant\mathcal{E}(T_0)\leqslant e^{-\lambda T_0}C_2\varepsilon\leqslant\delta.
\end{equation}
 \eqref{ee5.16} tells us that Theorem \ref{tt1.1} holds true again with the initial data $v(T_0,\cdot), w(T_0,\cdot),\eta(T_0,\cdot)$. Then we can deduce that
 \begin{equation}\label{ee5.17}
    \sup_{t\in[T_0,2T_0]}\mathcal{E}(t)+\int_{T_0}^{2T_0}\mathcal{D}(t)\ \ud t\leqslant C_1\mathcal{E}(T_0)\leqslant\delta^*.
 \end{equation}
Combining \eqref{ee5.14}, \eqref{ee5.17} with Theorem \ref{t5.1} yields
\begin{equation}\label{ee5.18}
  \sup_{t\in[0,2T_0]}e^{\lambda t}\mathcal{E}(t)+\int_{0}^{2T_0}\mathcal{D}(t)\ \ud t\leqslant C_2\mathcal{E}(0)\leqslant C_2\varepsilon.
\end{equation}
Hence we get
$$\|(v,w)(2T_0,\cdot)\|_2^2+|\eta(2T_0,\cdot)|^2_3\leqslant\mathcal{E}(2T_0)\leqslant e^{-2\lambda T_0}C_2\varepsilon\leqslant\delta.$$
By repeating the above steps, we obtain that the solution exists for the time interval $[0, \infty)$ and obeys the estimate \eqref{e1.2}.

\qquad \qquad \qquad \qquad \qquad \qquad \qquad \qquad \qquad \qquad \qquad \qquad \qquad \qquad \qquad \qquad  \qquad  \quad $\qedsymbol$

\section{Proof of Main Results without Surface Tension}
In this section, we will combine the content from the previous sections to obtain a priori estimate for the full energy. Then using a priori estimate and a continuity argument as in the section 9 of \cite{Re28}, we can obtain the global well-posedness. To facilitate the proof, we present the following lemma.
\begin{lemma}\label{l6.1}
 Let
$$\mathcal{E}^h_{n}:=\bar{\mathcal{E}}_n+\hat{\mathcal{E}}_n,\quad \mathcal{D}^h_n:=\bar{\mathcal{D}}_n+\hat{\mathcal{D}}_n,$$
where $n=N+2$ or $2N$.
Assume that
$$\sup_{0\leqslant t\leqslant T}\mathcal{E}(t)\leqslant 1$$
is small enough, then we have
$$\mathcal{E}^h_n\lesssim\mathcal{E}_n\lesssim\mathcal{E}^h_n, \quad \mathcal{D}^h_{N+2}\lesssim\mathcal{D}_{N+2}\lesssim\mathcal{D}^h_{N+2},$$
and
$$\mathcal{D}_{2N}\lesssim\mathcal{D}^h_{2N}+\mathfrak{F}\mathcal{M}.$$
\end{lemma}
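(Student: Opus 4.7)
The plan is to follow essentially the same two-step template as the proof of Lemma \ref{l5.1}. The easy direction, namely $\mathcal{E}^h_n \lesssim \mathcal{E}_n$ and $\mathcal{D}^h_n \lesssim \mathcal{D}_n$, is immediate from the definitions: every term appearing in $\hat{\mathcal{E}}_n + \bar{\mathcal{E}}_n$ (resp.\ $\hat{\mathcal{D}}_n + \bar{\mathcal{D}}_n$) is built only from temporal and horizontal-tangential derivatives, and so is controlled by the corresponding full-derivative sum in $\mathcal{E}_n$ (resp.\ $\mathcal{D}_n$). I would record this in one line without further comment.

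For the reverse inequalities one plugs the comparison estimates of Section 4 directly into the definitions of $\mathcal{E}^h_n$ and $\mathcal{D}^h_n$. Concretely, Theorem \ref{t4.1} gives
\begin{equation*}
\mathcal{E}_{2N} \lesssim \hat{\mathcal{E}}_{2N} + \bar{\mathcal{E}}_{2N} + \mathcal{E}_{2N}^2 = \mathcal{E}^h_{2N} + \mathcal{E}_{2N}\cdot\mathcal{E}_{2N},
\qquad
\mathcal{E}_{N+2} \lesssim \mathcal{E}^h_{N+2} + \mathcal{E}_{2N}\mathcal{E}_{N+2},
\end{equation*}
and Theorem \ref{t4.2} gives
\begin{equation*}
\mathcal{D}_{N+2} \lesssim \mathcal{D}^h_{N+2} + \mathcal{E}_{2N}\mathcal{D}_{N+2},
\qquad
\mathcal{D}_{2N} \lesssim \mathcal{D}^h_{2N} + \mathcal{E}_{2N}\mathcal{D}_{2N} + \mathfrak{F}\mathcal{M}.
\end{equation*}
Since by hypothesis $\mathcal{E}_{2N}$ is small (via the standing smallness of $\mathcal{E}$), the nonlinear pieces $\mathcal{E}_{2N}\cdot\mathcal{E}_{2N}$, $\mathcal{E}_{2N}\mathcal{E}_{N+2}$, $\mathcal{E}_{2N}\mathcal{D}_{N+2}$ and $\mathcal{E}_{2N}\mathcal{D}_{2N}$ may be absorbed into the left-hand sides. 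After absorption this yields $\mathcal{E}_n \lesssim \mathcal{E}^h_n$ and $\mathcal{D}_{N+2} \lesssim \mathcal{D}^h_{N+2}$, together with the one-sided bound $\mathcal{D}_{2N} \lesssim \mathcal{D}^h_{2N} + \mathfrak{F}\mathcal{M}$. Note that the missing reverse inequality for $\mathcal{D}_{2N}$ is exactly the reason the two-tier energy method is needed: the term $\mathfrak{F}\mathcal{M}$ is the unavoidable defect coming from the loss of regularity of $\eta$ when $\sigma = 0$.

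There is really no serious obstacle here, as all the heavy lifting has already been done in Theorems \ref{t4.1}--\ref{t4.2}. The only small point that must be verified is that the smallness of $\mathcal{E}$ (or equivalently of $\mathcal{E}_{2N}$, via $\mathcal{E} \lesssim \mathcal{E}_{2N}$ which follows from the definitions when $N \geq 3$) is indeed enough to make the absorption argument work uniformly in the relevant constants; this amounts to choosing the smallness constant so that the implicit constants multiplying $\mathcal{E}_{2N}$ in the above inequalities are, say, $\leq 1/2$. I would present the proof as a short paragraph: first state the trivial direction, then display the four inequalities from Section 4, and close by absorbing the small factors to obtain the four claimed bounds.
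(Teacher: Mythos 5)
Your proposal is correct and follows essentially the same route as the paper: the easy direction from the definitions, the reverse direction by quoting Theorems \ref{t4.1} and \ref{t4.2}, and absorption of the quadratic terms $\mathcal{E}_n^2$, $\mathcal{E}_{2N}\mathcal{E}_{N+2}$, $\mathcal{E}_{2N}\mathcal{D}_{N+2}$ and $\mathcal{E}_{2N}\mathcal{D}_{2N}$ using the smallness of $\mathcal{E}_{2N}$, leaving $\mathfrak{F}\mathcal{M}$ as the unabsorbable defect in the $\mathcal{D}_{2N}$ bound. No gaps.
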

\begin{proof}
  On one hand, the definitions of $\mathcal{E}_{n}$, $\mathcal{E}^h_{n}$, $\mathcal{D}_{N+2}$ and $\mathcal{D}^h_{N+2}$ tell us that
    \begin{equation}\label{e66.1}
    \mathcal{E}^h_{n}\lesssim\mathcal{E}_n,\quad D^h_{N+2}\lesssim\mathcal{D}_{N+2}.
    \end{equation}
 On the other hand, using Theorem \ref{t4.1} and \ref{t4.2}, we may get
  \begin{equation}\label{e66.2}
    \mathcal{E}_n\lesssim\mathcal{E}^h_n+\mathcal{E}_n^2,\quad  \mathcal{D}_{N+2}\lesssim\mathcal{D}^h_{N+2}+\mathcal{E}_{2N}\mathcal{D}_{N+2}.
  \end{equation}
Due to the smallness of $\mathcal{E}_{2N}$, we may absorb $\mathcal{E}^2_n$ and $\mathcal{E}_{2N}\mathcal{D}_{N+2}$ onto the left to deduce that
\begin{equation}\label{e66.3}
  \mathcal{E}_n\lesssim\mathcal{E}^h_n,\quad \mathcal{D}_{N+2}\lesssim\mathcal{D}^h_{N+2}.
\end{equation}
Combining \eqref{e4.7}, \eqref{e66.1} with \eqref{e66.3} yields that the proof of Lemma \ref{l5.1} is completed.
\end{proof}

\subsection{Bounded Estimate at the $2N$ Level}
We first turn our attention to the boundedness of $\mathcal{E}_{2N}$ and $\mathcal{D}_{2N}$.
\begin{prop}\label{p5.2}
  Assume that $\mathfrak{E}(T)\leqslant\epsilon<1$ is small enough, then
  \begin{equation}\label{e5.1.1}
   \mathcal{E}_{2N}(t)+\int_{0}^{t}\mathcal{D}_{2N}(r)\ud r\lesssim\mathcal{E}_{2N}(0)+\mathfrak{E}^2(t).
  \end{equation}
 \end{prop}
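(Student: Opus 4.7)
The plan is to combine the two evolution inequalities at level $2N$ derived earlier in the paper, upgrade from horizontal to full energy via the comparison results, and then integrate in time. First, adding the only-temporal-derivative inequality of Proposition~\ref{p2.1},
\begin{equation*}
\frac{\ud}{\ud t}\left(\hat{\mathcal{E}}_{2N}-\int_{\Omega}\partial_{t}^{2N-1}p\,F^{2,2N}J\right)+\hat{\mathcal{D}}_{2N}\lesssim\mathcal{E}_{2N}^{1/2}\mathcal{D}_{2N},
\end{equation*}
to the tangential-derivative inequality of Proposition~\ref{p3.2} gives, with $\mathcal{E}_{2N}^{h}:=\hat{\mathcal{E}}_{2N}+\bar{\mathcal{E}}_{2N}$ and $\mathcal{D}_{2N}^{h}:=\hat{\mathcal{D}}_{2N}+\bar{\mathcal{D}}_{2N}$,
\begin{equation*}
\frac{\ud}{\ud t}\left(\mathcal{E}_{2N}^{h}-\int_{\Omega}\partial_{t}^{2N-1}p\,F^{2,2N}J\right)+\mathcal{D}_{2N}^{h}\lesssim\mathcal{E}_{2N}^{1/2}\mathcal{D}_{2N}+\sqrt{\mathfrak{F}\mathcal{M}\mathcal{D}_{2N}}.
\end{equation*}

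Next, I would use Theorem~\ref{t2.1} to control the pressure correction $\bigl|\int_{\Omega}\partial_{t}^{2N-1}p\,F^{2,2N}J\bigr|\lesssim\mathcal{E}_{2N}^{3/2}$, so that under the smallness assumption $\mathfrak{E}(T)\leq\epsilon$ the quantity $\mathcal{I}(t):=\mathcal{E}_{2N}^{h}-\int_{\Omega}\partial_{t}^{2N-1}p\,F^{2,2N}J$ is comparable to $\mathcal{E}_{2N}^{h}$, and hence (by Lemma~\ref{l6.1}) to $\mathcal{E}_{2N}$. Young's inequality applied to $\sqrt{\mathfrak{F}\mathcal{M}\mathcal{D}_{2N}}\leq\varepsilon\mathcal{D}_{2N}+C_{\varepsilon}\mathfrak{F}\mathcal{M}$, together with the comparison $\mathcal{D}_{2N}\lesssim\mathcal{D}_{2N}^{h}+\mathfrak{F}\mathcal{M}$ from Lemma~\ref{l6.1}, lets me move a small multiple of $\mathcal{D}_{2N}$ to the left-hand side and absorb the $\mathcal{E}_{2N}^{1/2}\mathcal{D}_{2N}$ term using the smallness of $\mathfrak{E}$. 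Integrating in time from $0$ to $t$, I obtain
\begin{equation*}
\mathcal{E}_{2N}(t)+\int_{0}^{t}\mathcal{D}_{2N}(r)\,\ud r\lesssim\mathcal{E}_{2N}(0)+\int_{0}^{t}\mathfrak{F}(r)\mathcal{M}(r)\,\ud r.
\end{equation*}

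The remaining step, which is the main obstacle, is to show that $\int_{0}^{t}\mathfrak{F}(r)\mathcal{M}(r)\,\ud r\lesssim\mathfrak{E}^{2}(t)$. This is exactly where the two-tier structure encoded in the definition of $\mathfrak{E}$ becomes crucial. By Sobolev embedding one has $\mathcal{M}^{2}\lesssim\mathcal{E}_{N+2}$, while the definition of $\mathfrak{E}$ yields the pointwise bounds $\mathcal{E}_{N+2}(r)\lesssim(1+r)^{-(4N-8)}\mathfrak{E}$ and $\mathfrak{F}(r)\lesssim(1+r)^{4N-9}\mathfrak{E}$, together with the integrated control $\int_{0}^{t}\mathfrak{F}(r)(1+r)^{-(4N-8)}\,\ud r\lesssim\mathfrak{E}$. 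My plan is to split the product $\mathfrak{F}\mathcal{M}$ so that each factor matches a different contribution in the definition of $\mathfrak{E}$, and then apply a H\"older/Cauchy--Schwarz argument in time; the delicate point is that the polynomial growth in $\mathfrak{F}$ must be exactly offset by the decay of $\mathcal{E}_{N+2}$, and the final estimate must come out to order $\mathfrak{E}^{2}$ rather than $\mathfrak{E}^{3/2}$, which is what forces the precise exponents $4N-8$ and $4N-9$ in the definition of $\mathfrak{E}$.
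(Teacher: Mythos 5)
Your steps up to the inequality
$\mathcal{E}_{2N}(t)+\int_{0}^{t}\mathcal{D}_{2N}\lesssim\mathcal{E}_{2N}(0)+\int_{0}^{t}\mathfrak{F}\mathcal{M}$
coincide with the paper's proof: summing Propositions \ref{p2.1} and \ref{p3.2}, controlling the pressure corrector via \eqref{e2.10} so that the modified energy is comparable to $\mathcal{E}_{2N}$, and using Young's inequality together with Lemma \ref{l6.1} to absorb $\mathcal{E}_{2N}^{1/2}\mathcal{D}_{2N}$ and $\sqrt{\mathfrak{F}\mathcal{M}\mathcal{D}_{2N}}$. That part is fine.

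The gap is the last step, which you explicitly leave as a ``plan'': you never actually prove $\int_{0}^{t}\mathfrak{F}\mathcal{M}\lesssim\mathfrak{E}^{2}(t)$, and the ingredient you propose to use, $\mathcal{M}^{2}\lesssim\mathcal{E}_{N+2}$, has the wrong homogeneity to deliver it. The paper's argument is the weighted splitting
$\int_{0}^{t}\mathfrak{F}\mathcal{M}\,\ud r=\int_{0}^{t}\frac{\mathfrak{F}}{(1+r)^{4N-8}}\,(1+r)^{4N-8}\mathcal{M}\,\ud r\leqslant\sup_{r}\bigl[(1+r)^{4N-8}\mathcal{M}(r)\bigr]\int_{0}^{t}\frac{\mathfrak{F}(r)}{(1+r)^{4N-8}}\,\ud r$,
where the time integral is one of the terms in the definition \eqref{1.5.6} of $\mathfrak{E}$ and hence is $\lesssim\mathfrak{E}(t)$. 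To conclude $\lesssim\mathfrak{E}^{2}$ one then needs $\sup_{r}(1+r)^{4N-8}\mathcal{M}(r)\lesssim\mathfrak{E}(t)$, i.e.\ $\mathcal{M}\lesssim\mathcal{E}_{N+2}$ (so that the $(1+r)^{4N-8}$ weight is exactly the one carried by $\mathcal{E}_{N+2}$ in $\mathfrak{E}$). If instead you only have $\mathcal{M}\lesssim\mathcal{E}_{N+2}^{1/2}$, as your statement $\mathcal{M}^{2}\lesssim\mathcal{E}_{N+2}$ gives, then $(1+r)^{4N-8}\mathcal{M}\lesssim(1+r)^{2N-4}\mathfrak{E}^{1/2}$ is unbounded in $r$, and no rearrangement of the weights between the two factors ($\mathfrak{F}(r)\lesssim(1+r)^{4N-9}\mathfrak{E}$ versus the decay of $\mathcal{E}_{N+2}$) closes the estimate at order $\mathfrak{E}^{2}$; at best one reaches $\mathfrak{E}^{3/2}$ with a divergent time integral or must stop at $\mathfrak{E}^{3/2}$ after a Cauchy--Schwarz in time. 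The resolution is that $\mathcal{M}$ must be used as a quantity quadratic in the solution (as the analogous $\mathcal{K}$ is defined with squared norms in the Guo--Tice framework the paper follows), so that $\mathcal{M}\lesssim\mathcal{E}_{N+2}$ holds directly; you need to either establish that bound or state it as the hypothesis you invoke, rather than $\mathcal{M}^{2}\lesssim\mathcal{E}_{N+2}$.
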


  \begin{proof}
    Lemma \ref{l6.1} tells us that
     \begin{equation}\label{ee6.1}
     \mathcal{E}_{2N}+\int_{0}^{t}\mathcal{D}_{2N}\lesssim\left(\bar{\mathcal{E}}_{2N}+\int_{0}^{t}\bar{\mathcal{D}}_{2N}\right)
     +\left(\hat{\mathcal{E}}_{2N}+\int_{0}^{t}\hat{\mathcal{D}}_{2N}\right)+\int_{0}^{t}\mathfrak{F}\mathcal{M}.
     \end{equation}
 Then Combining \eqref{ee6.1}, \eqref{e2.21} with \eqref{e3.2.12} yields
   \begin{align}\label{e5.1.3}
     \mathcal{E}_{2N}+\int_{0}^{t}\mathcal{D}_{2N}
     \lesssim&\bar{\mathcal{E}}_{2N}(0)+\hat{\mathcal{E}}_{2N}(0)+\int_{\Omega}\partial_t^{2N-1}pF^{2,2N}J\ \ud x- \int_{\Omega}\partial_t^{2N-1}p_0F^{2,2N}_0J_0\ \ud x\nonumber\\
     &+\int_{0}^{t}\sqrt{\mathcal{E}_{2N}}\mathcal{D}_{2N}
     +\int_{0}^{t}\sqrt{\mathfrak{F}\mathcal{M}\mathcal{D}_{2N}}+\int_{0}^{t}\mathfrak{F}\mathcal{M}.
    \end{align}
    We now apply \eqref{e2.10} and Lemma \ref{l6.1} to the above inequality to get
    \begin{align}\label{e55.1.3}
    \mathcal{E}_{2N}+\int_{0}^{t}\mathcal{D}_{2N}\lesssim\mathcal{E}_{2N}(0)+\mathcal{E}_{2N}^{\frac{3}{2}}(t)+\int_{0}^{t}\mathcal{E}_{2N}^{\frac{1}{2}}\mathcal{D}_{2N}+\int_{0}^{t}\sqrt{\mathfrak{F}\mathcal{M}\mathcal{D}_{2N}}+\int_{0}^{t}\mathfrak{F}\mathcal{M}.
   \end{align}
    Note that $\mathcal{E}_{2N}$ is small enough, $\mathcal{E}_{2N}^{\frac{3}{2}}$ and $\int_{0}^{t}\mathcal{E}_{2N}^{\frac{1}{2}}\mathcal{D}_{2N}$ can be absorbed by $\mathcal{E}_{2N}$ and $\int_{0}^{t}\mathcal{D}_{2N}$ on the left side, respectively. Then by the $\varepsilon$ -Young inequality, we know
    \begin{align}\label{e5.1.4}
     \mathcal{E}_{2N}+\int_{0}^{t}\mathcal{D}_{2N}\lesssim&\mathcal{E}_{2N}(0)+\int_{0}^{t} \mathfrak{F}\mathcal{M}\ \ud r\nonumber\\
     \lesssim&\mathcal{E}_{2N}(0)+\int_{0}^{t} \frac{\mathfrak{F}}{(1+r)^{4N-8}}(1+t)^{4N-8}\mathcal{M}\ \ud r\nonumber\\
     \lesssim&\mathcal{E}_{2N}(0)+\sup_{t}\left[(1+t)^{4N-8}\mathcal{M}\right]\int_{0}^{t} \frac{\mathfrak{F}}{(1+r)^{4N-8}}\ \ud r\nonumber\\
     \lesssim&\mathcal{E}_{2N}(0)+\mathfrak{E}^2(t).
    \end{align}
    This completes the proof.
  \end{proof}

\subsection{Growth Estimate of $\mathfrak{F}$}
To estimate $\mathfrak{F}$, we will refer to the argument from \cite{Re30}, which is different from the method used in \cite{Re28}. Applying $\Lambda_h^{4N+\frac{1}{2}}$ to the fifth equation in \eqref{1.9}, we have
\begin{equation}\label{e5.1}
  \partial_t \Lambda_h^{4N+\frac{1}{2}}\eta+v_h\partial_h\Lambda_h^{4N+\frac{1}{2}}\eta=\Lambda_h^{4N+\frac{1}{2}}v_3-[\Lambda_h^{4N+\frac{1}{2}}, v_h]\partial_h\eta.
\end{equation}
Based on \eqref{e5.1}, we can deduce the following time-weighted estimate of $\mathfrak{F}$.
\begin{prop}\label{p5.1}
  Assume that $\mathfrak{E}(T)\leqslant\epsilon<1$ is small enough, then
 \begin{equation}\label{e5.2}
  \frac{\mathfrak{F}(t)}{(1+t)^{4N-9}}+\int_{0}^{t}\frac{\mathfrak{F}(r)}{(1+r)^{4N-8}}\ud r\lesssim \mathfrak{E}(0)+\mathfrak{E}(t)^2.
\end{equation}
\end{prop}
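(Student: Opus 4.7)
The plan is to perform a high-regularity energy estimate directly on the transport equation \eqref{e5.1} governing $\Lambda_h^{4N+\frac{1}{2}}\eta$ on the free surface, viewed as a weighted-in-time damped transport problem. The key external input will be the integrability bound $\int_0^t\mathcal{D}_{2N}(r)\,\ud r\lesssim\mathfrak{E}(0)+\mathfrak{E}(t)^2$ already furnished by Proposition \ref{p5.2}. First I would multiply \eqref{e5.1} by $\Lambda_h^{4N+\frac{1}{2}}\eta$ and integrate over $\Gamma_f$; since the horizontal boundary is a torus, integrating by parts on the transport contribution produces
$$\frac{1}{2}\frac{\ud}{\ud t}\mathfrak{F}=\frac{1}{2}\int_{\Gamma_f}(\partial_h v_h)|\Lambda_h^{4N+\frac{1}{2}}\eta|^2+\int_{\Gamma_f}\Lambda_h^{4N+\frac{1}{2}}v_3\,\Lambda_h^{4N+\frac{1}{2}}\eta-\int_{\Gamma_f}[\Lambda_h^{4N+\frac{1}{2}},v_h]\partial_h\eta\,\Lambda_h^{4N+\frac{1}{2}}\eta.$$
The first piece is immediately bounded by $|Dv|_{L^\infty}\mathfrak{F}\lesssim\mathcal{M}\mathfrak{F}$; the $v_3$-source, via Cauchy--Schwarz and the trace bound $|v_3|_{4N+\frac{1}{2}}\lesssim\|v\|_{4N+1}\leqslant\mathcal{D}_{2N}^{1/2}$, by $\mathcal{D}_{2N}^{1/2}\mathfrak{F}^{1/2}$; and the commutator, via the Kato--Ponce inequality $|[\Lambda_h^s,f]g|_0\lesssim|\nabla f|_{L^\infty}|g|_{H^{s-1}}+|f|_{H^s}|g|_{L^\infty}$ together with the Sobolev embedding $|D\eta|_{L^\infty}\lesssim\mathcal{E}_{N+2}^{1/2}$, by $(\mathcal{M}\mathfrak{F}^{1/2}+\mathcal{D}_{2N}^{1/2}\mathcal{E}_{N+2}^{1/2})\mathfrak{F}^{1/2}$.

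Next I would weight the resulting differential inequality by $(1+t)^{-\beta}$ with $\beta:=4N-9$. The identity $\frac{\ud}{\ud t}[\mathfrak{F}/(1+t)^\beta]=\dot{\mathfrak{F}}/(1+t)^\beta-\beta\mathfrak{F}/(1+t)^{\beta+1}$ introduces on the left-hand side a dissipative $\beta\mathfrak{F}/(1+t)^{4N-8}$, precisely the quantity whose time integral appears in the target estimate. The transport contribution $\mathcal{M}\mathfrak{F}/(1+t)^\beta$ is rewritten as $[\mathcal{M}(1+t)]\cdot\mathfrak{F}/(1+t)^{\beta+1}$; since $\mathcal{M}\lesssim\mathcal{E}_{N+2}^{1/2}\lesssim\mathfrak{E}^{1/2}(1+t)^{-(2N-4)}$, the bracketed prefactor is uniformly $\lesssim\mathfrak{E}^{1/2}(1+t)^{-(2N-5)}$, hence arbitrarily small, and can be absorbed into the dissipative term for small $\mathfrak{E}$. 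For the $v_3$-source I would use the weighted split
$$\frac{\mathcal{D}_{2N}^{1/2}\mathfrak{F}^{1/2}}{(1+t)^\beta}\leqslant\epsilon\frac{\mathfrak{F}}{(1+t)^{\beta+1}}+C_\epsilon\frac{\mathcal{D}_{2N}}{(1+t)^{\beta-1}},$$
absorb the first term, and, since $(1+t)^{-(\beta-1)}\leqslant 1$ for $N\geqslant 3$, bound $\int_0^t\mathcal{D}_{2N}/(1+r)^{\beta-1}\ud r\leqslant\int_0^t\mathcal{D}_{2N}\ud r$, which Proposition \ref{p5.2} controls by $\mathfrak{E}(0)+\mathfrak{E}(t)^2$. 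The mixed commutator term $\mathcal{D}_{2N}^{1/2}\mathcal{E}_{N+2}^{1/2}\mathfrak{F}^{1/2}/(1+t)^\beta$ carries the extra small factor $\mathcal{E}_{N+2}^{1/2}\lesssim\mathfrak{E}^{1/2}(1+r)^{-(2N-4)}$ and is handled analogously, yielding a contribution of order $\mathfrak{E}^{3/2}$ or better.

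Finally, integrating on $[0,t]$ and recalling the trivial bound $\mathfrak{F}(0)\leqslant\mathfrak{E}(0)$ deliver the claimed inequality. The main obstacle is control of the linear source $\Lambda_h^{4N+\frac{1}{2}}v_3$: since $\mathfrak{F}$ lies strictly above the regularity scale controlled by $\mathcal{D}_{2N}^{1/2}$, one cannot absorb this term back into $\mathfrak{F}$ directly, and the argument relies crucially on feeding back the a priori estimate on $\int_0^t\mathcal{D}_{2N}$ from Proposition \ref{p5.2}. This is precisely what converts what would otherwise be an $O(\mathfrak{E})$ nonlinear error into the quadratic $O(\mathfrak{E}^2)$ required on the right-hand side. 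The rapid polynomial decay of $\mathcal{M}$ built into $\mathfrak{E}$ through the $(1+r)^{4N-8}\mathcal{E}_{N+2}$ factor is equally essential, as it is what makes the transport contribution absorbable back into the dissipation rather than growing uncontrollably.
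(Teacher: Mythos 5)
Your proposal is correct and follows essentially the same route as the paper: test \eqref{e5.1} against $\Lambda_h^{4N+1/2}\eta$, bound the transport, source, and commutator terms by $\mathcal{M}\mathfrak{F}$, $\sqrt{\mathcal{D}_{2N}\mathfrak{F}}$, and the Kato--Ponce estimate (Lemma \ref{llA.5}), weight by $(1+t)^{9-4N}$, absorb via the decay of $\mathcal{M}$ and Young's inequality, and close with the bound on $\int_0^t\mathcal{D}_{2N}$ from Proposition \ref{p5.2}. The only cosmetic differences (using $\mathcal{M}$ rather than $\sqrt{\mathcal{M}}$, and $\mathcal{E}_{N+2}$ rather than $\mathcal{E}_{2N}$ in the commutator) do not affect the argument.
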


\begin{proof}
We first take the $L^2$ inner product of \eqref{e5.1} with $\Lambda^{4N+1/2}_h\eta$ and apply integration by parts to obtain
\begin{align}\label{ee6.11}
\frac{1}{2}\frac{\ud}{\ud t}\int_{\Gamma_f}|\Lambda^{4N+1/2}_h\eta|^2=&\frac{1}{2}\int_{\Gamma_f}\partial_h v_h|\Lambda^{4N+1/2}_h\eta|^2+\int_{\Gamma_f}\Lambda^{4N+1/2}_hv_3\ \Lambda^{4N+1/2}_h\eta\nonumber\\
&-\int_{\Gamma_f}[\Lambda^{4N+1/2},v_h]\partial_h\eta\Lambda^{4N+1/2}_h\eta.
\end{align}
By trace theorem and the H\"older inequality, we have
\begin{align}\label{ee6.12}
  &\frac{1}{2}\int_{\Gamma_f}\partial_hv_h|\Lambda^{4N+1/2}_h\eta|^2+\int_{\Gamma_f}\Lambda^{4N+1/2}_hv_3\ \Lambda^{4N+1/2}_h\eta\nonumber\\
  \lesssim&\sqrt{\mathcal{M}}|\eta|^2_{4N+1/2}
  +\|v\|_{4N+1}|\eta|_{4N+1/2}\nonumber\\
  \lesssim&\sqrt{\mathcal{M}}\mathfrak{F}+\sqrt{\mathcal{D}_{2N}\mathfrak{F}}.
\end{align}
 Then Lemma \ref{llA.5} tells us that
 \begin{align}\label{ee6.13}
  &\int_{\Gamma_f}[\Lambda^{4N+1/2},v_h]\partial_h\eta\Lambda^{4N+1/2}_h\eta\nonumber\\
  \leqslant&\left|[\Lambda^{4N+1/2},v_h]\partial_h\eta\right|_0\sqrt{\mathfrak{F}}
  \nonumber\\
  \lesssim&\left(|\nabla v_h|_{L^{\infty}}|\Lambda^{4N+1/2}_h\eta|_0+|\Lambda^{4N+1/2}_hv_h|_0|D\eta|_{L^{\infty}}\right)\sqrt{\mathfrak{F}}\nonumber\\
  \lesssim&\sqrt{\mathcal{M}}\mathfrak{F}+\sqrt{\mathcal{D}_{2N}\mathcal{E}_{2N}\mathfrak{F}}.
 \end{align}
Combining \eqref{ee6.12} with \eqref{ee6.13}, we can derive
 \begin{align}\label{ee6.14}
\frac{\ud}{\ud t}\mathfrak{F}\lesssim\sqrt{\mathcal{M}}\mathfrak{F}+\sqrt{\mathcal{D}_{2N}\mathfrak{F}}.
\end{align}
 Multiplying \eqref{ee6.14} by $(1+t)^{9-4N}$, we have
  \begin{align}\label{e5.4}
     &\frac{1}{2}\frac{\ud}{\ud t}\left(\frac{\mathfrak{F}}{(1+t)^{4N-9}}\right)+(4N-9)\frac{\mathfrak{F}}{(1+t)^{4N-8}}
     \lesssim\frac{\sqrt{\mathcal{M}}\mathfrak{F}}{(1+t)^{4N-9}}+\frac{\sqrt{\mathfrak{F}\mathcal{D}_{2N}}}{(1+t)^{4N-9}}.
  \end{align}
  Note that
  \begin{align}\label{ee6.15}
 \frac{\sqrt{\mathcal{M}}\mathfrak{F}}{(1+t)^{4N-9}}=(1+t)\sqrt{\mathcal{M}}\frac{\mathfrak{F}}{(1+t)^{4N-8}}\lesssim
 \sqrt{\mathfrak{E}}\frac{\mathfrak{F}}{(1+t)^{4N-8}}\lesssim\epsilon^{1/2}\frac{\mathfrak{F}}{(1+t)^{4N-8}},
  \end{align}
  and
  \begin{align}\label{ee6.16}
\frac{\sqrt{\mathfrak{F}\mathcal{D}_{2N}}}{(1+t)^{4N-9}}=\frac{\sqrt{\mathfrak{F}}}{(1+t)^{2N-4}}\frac{\sqrt{\mathcal{D}_{2N}}}{(1+t)^{2N-5}}\leqslant\epsilon^{1/2}\frac{\mathfrak{F}}{(1+t)^{4N-8}}+C(\epsilon)\frac{\mathcal{D}_{2N}}{(1+t)^{4N-10}}.
  \end{align}
  Applying \eqref{ee6.15} and \eqref{ee6.16} to \eqref{e5.4} yields
  \begin{align}\label{ee6.17}
     &\frac{\ud}{\ud t}\left(\frac{\mathfrak{F}}{(1+t)^{4N-9}}\right)+\frac{\mathfrak{F}}{(1+t)^{4N-8}}
     \lesssim\frac{\mathcal{D}_{2N}}{(1+t)^{4N-10}}\lesssim\mathcal{D}_{2N}.
  \end{align}
  Integrating \eqref{ee6.17} directly in time and using Proposition \ref{p5.2}, we can deduce that
  \begin{align}\label{e5.6}
   \frac{\mathfrak{F}(t)}{(1+t)^{4N-9}}+\int_{0}^{t}\frac{\mathfrak{F}(r)}{(1+r)^{4N-8}}\lesssim\mathfrak{E}(0)+\int_{0}^{t} \mathcal{D}_{2N}\lesssim\mathfrak{E}(0)+\mathfrak{E}(t)^2.
  \end{align}
  This completes the proof.
\end{proof}

\subsection{Decay Estimate at the $N+2$ Level}
In this subsection, we will demonstrate the decay property of the low-order energy $\mathcal{E}_{N+2}$.
\begin{prop}\label{p5.3}
 Assume that $\mathfrak{E}(T)\leqslant\epsilon<1$ is small enough, then
  \begin{equation}\label{e5.3.1}
    (1+t)^{4N-8}\mathcal{E}_{N+2}\lesssim\mathcal{E}_{2N}(0)+\mathfrak{E}^2(t).
  \end{equation}
\end{prop}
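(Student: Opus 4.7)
The plan is to derive an energy--dissipation inequality $\tfrac{\ud}{\ud t}\widetilde{\mathcal{E}}_{N+2}+\mathcal{D}_{N+2}\leq 0$ for an equivalent modified energy, invoke a Sobolev interpolation to convert this into a Bernoulli-type differential inequality $y'+Cy^{1+1/(4N-8)}\leq 0$, and integrate the resulting ODE to produce the polynomial weight $(1+t)^{4N-8}$.

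First, summing the $N+2$-level estimate \eqref{e2.22} from Proposition \ref{p2.1} and the $N+2$-level estimate \eqref{e3.2.13} from Proposition \ref{p3.2} gives
\[
\frac{\ud}{\ud t}\!\left(\mathcal{E}^h_{N+2}-\int_{\Omega}\partial_t^{N+1}p\,F^{2,N+2}J\right)+\mathcal{D}^h_{N+2}\lesssim\mathcal{E}_{2N}^{1/2}\mathcal{D}_{N+2},
\]
where $\mathcal{E}^h_{N+2}=\hat{\mathcal{E}}_{N+2}+\bar{\mathcal{E}}_{N+2}$ and $\mathcal{D}^h_{N+2}=\hat{\mathcal{D}}_{N+2}+\bar{\mathcal{D}}_{N+2}$. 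By Lemma \ref{l6.1}, $\mathcal{D}_{N+2}\sim\mathcal{D}^h_{N+2}$ and $\mathcal{E}_{N+2}\sim\mathcal{E}^h_{N+2}$, while Theorem \ref{t2.2} bounds the pressure cross-term by $\mathcal{E}_{2N}^{1/2}\mathcal{E}_{N+2}$; hence the modified energy $\widetilde{\mathcal{E}}_{N+2}:=\mathcal{E}^h_{N+2}-\int_\Omega\partial_t^{N+1}p\,F^{2,N+2}J$ is comparable to $\mathcal{E}_{N+2}$. Absorbing the right-hand side using smallness of $\mathfrak{E}$ yields $\tfrac{\ud}{\ud t}\widetilde{\mathcal{E}}_{N+2}+\mathcal{D}_{N+2}\leq 0$.

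Next I would establish $\mathcal{E}_{N+2}\lesssim\mathcal{D}_{N+2}^{\theta}\mathcal{E}_{2N}^{1-\theta}$ with $\theta=(4N-8)/(4N-7)$. A termwise comparison of the definitions of $\mathcal{E}_{N+2}$ and $\mathcal{D}_{N+2}$ shows every summand is already dominated by $\mathcal{D}_{N+2}$ (each Sobolev index in $\mathcal{D}_{N+2}$ exceeds the corresponding index in $\mathcal{E}_{N+2}$ by at least $1/2$) except for the boundary term $|\eta|^2_{2N+4}$, whose index $2N+4$ exceeds the $2N+7/2$ available in $\mathcal{D}_{N+2}$. For this exceptional term I would invoke the Sobolev interpolation against $|\eta|_{4N}$, which sits inside the bounded energy $\mathcal{E}_{2N}$: since $2N+4=\theta(2N+7/2)+(1-\theta)(4N)$,
\[
|\eta|^2_{2N+4}\leq|\eta|^{2\theta}_{2N+7/2}|\eta|^{2(1-\theta)}_{4N}\lesssim\mathcal{D}_{N+2}^{\theta}\mathcal{E}_{2N}^{1-\theta}.
\]
Rearranging yields $\mathcal{D}_{N+2}\gtrsim\mathcal{E}_{N+2}^{(4N-7)/(4N-8)}/\mathcal{E}_{2N}^{1/(4N-8)}$.

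Substituting into the energy inequality produces $\tfrac{\ud}{\ud t}\widetilde{\mathcal{E}}_{N+2}+C\,\widetilde{\mathcal{E}}_{N+2}^{1+1/(4N-8)}/\mathcal{E}_{2N}^{1/(4N-8)}\leq 0$. By Proposition \ref{p5.2}, $\mathcal{E}_{2N}(t)\leq K:=C(\mathcal{E}_{2N}(0)+\mathfrak{E}^2(t))$, so this is a Bernoulli-type ODE. Integration gives $\widetilde{\mathcal{E}}_{N+2}(t)^{-1/(4N-8)}\geq\widetilde{\mathcal{E}}_{N+2}(0)^{-1/(4N-8)}+(Ct)/K^{1/(4N-8)}$, and the elementary estimate $(1+t)/(a+bt)\leq 1/a+1/b$ (with $a=\widetilde{\mathcal{E}}_{N+2}(0)^{-1/(4N-8)}$, $b=C/K^{1/(4N-8)}$) converts this into $(1+t)^{4N-8}\widetilde{\mathcal{E}}_{N+2}(t)\lesssim\widetilde{\mathcal{E}}_{N+2}(0)+K\lesssim\mathcal{E}_{2N}(0)+\mathfrak{E}^2(t)$, and the equivalence $\widetilde{\mathcal{E}}_{N+2}\sim\mathcal{E}_{N+2}$ together with $\mathcal{E}_{N+2}(0)\leq\mathcal{E}_{2N}(0)$ gives the claim. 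The main obstacle is selecting the right pair of norms for the interpolation: the naive choice of $\mathfrak{F}=|\eta|^2_{4N+1/2}$ would only yield a $(1+t)^{-2}$ rate because $\mathfrak{F}$ can grow like $(1+t)^{4N-9}$, whereas interpolating instead against the uniformly bounded $\mathcal{E}_{2N}$ is exactly what produces the sharp $(4N-8)$ exponent.
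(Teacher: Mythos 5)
Your proposal is correct and follows essentially the same route as the paper: the combination of \eqref{e2.22} and \eqref{e3.2.13}, absorption via Lemma \ref{l6.1} and the smallness of $\mathcal{E}_{2N}$, the equivalence of the modified energy via \eqref{e2.12}, the interpolation $|\eta|_{2N+4}\lesssim|\eta|_{2N+7/2}^{(4N-8)/(4N-7)}|\eta|_{4N}^{1/(4N-7)}$ leading to $(\mathcal{E}^h_{N+2})^{1+1/(4N-8)}\mathcal{E}_{2N}^{-1/(4N-8)}\lesssim\mathcal{D}^h_{N+2}$, and the integration of the resulting Bernoulli-type inequality with $L=\sup_t\mathcal{E}_{2N}$ controlled by Proposition \ref{p5.2}. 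Your explicit termwise check that only the $|\eta|^2_{2N+4}$ summand requires interpolation, and your closing remark on why one must interpolate against the bounded $\mathcal{E}_{2N}$ rather than the growing $\mathfrak{F}$, are accurate elaborations of details the paper leaves implicit.
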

\begin{proof}
  Using \eqref{e2.22} and \eqref{e3.2.13}, we can obtain
  \begin{align}\label{ee6.21}
    \frac{\ud}{\ud t}\left(\mathcal{E}^h_{N+2}-\int_{\Omega}\partial_t^{N+1}pF^{2,N+2}J\right)+\mathcal{D}^h_{N+2}\lesssim\sqrt{\mathcal{E}_{2N}}\mathcal{D}_{N+2}.
  \end{align}
  Then Lemma \ref{l6.1} and the smallness of $\mathcal{E}_{2N}$ tell us that $\sqrt{\mathcal{E}_{2N}}\mathcal{D}_{N+2}$ can be absorbed onto the left. Therefore, one has
 \begin{align}\label{ee6.22}
    \frac{\ud}{\ud t}\left(\mathcal{E}^h_{N+2}-\int_{\Omega}\partial_t^{N+1}pF^{2,N+2}J\right)+\mathcal{D}^h_{N+2}\lesssim0.
  \end{align}
 Now we investigate the relationship between $\mathcal{E}^h_{N+2}-\int_{\Omega}\partial_t^{N+1}pF^{2,N+2}J$ and $\mathcal{D}^h_{N+2}$. On one hand, by \eqref{e2.12}, we can get
 \begin{equation}\label{ee6.23}
   \mathcal{E}^h_{N+2}\lesssim\mathcal{E}^h_{N+2}-\int_{\Omega}\partial_t^{N+1}pF^{2,N+2}J\lesssim\mathcal{E}^h_{N+2}.
 \end{equation}
  On the other hand, based on interpolation inequality, it holds that
  \begin{align}\label{ee6.24}
|\eta|_{2N+4}\lesssim|\eta|^{\frac{4N-8}{4N-7}}_{2N+\frac{7}{2}}|\eta|_{4N}^{\frac{1}{4N-7}}.
\end{align}
 This implies that
  $$\mathcal{E}_{N+2}\lesssim\mathcal{D}_{N+2}^{\frac{4N-8}{4N-7}}\mathcal{E}_{2N}^{\frac{1}{4N-7}}.$$
 Then due to the equivalence, we get
\begin{equation}\label{e5.3.7}
(\mathcal{E}^h_{N+2})^{1+\frac{1}{4N-8}}\mathcal{E}_{2N}^{-\frac{1}{4N-8}}\lesssim\mathcal{D}^h_{N+2}.
\end{equation}
 Let
  $$L:=\sup_{0\leqslant t\leqslant T}\mathcal{E}_{2N}(t), E(t):=\mathcal{E}^h_{N+2}(t)-\int_{\Omega}\partial^{N+1}_tp(t)F^{2,N+2}(t), \theta:=\frac{1}{4N-8}.$$
  Combining \eqref{ee6.23}, \eqref{ee6.23} with \eqref{e5.3.7}, we know
  \begin{equation}\label{e5.3.8}
 \frac{\ud}{\ud t}E(t)+\frac{C}{L^{\theta}}{E(t)}^{1+\theta}\leqslant0.
\end{equation}
Then the Gronwall inequality and Proposition \ref{p5.2} yield
\begin{align}\label{e5.3.9}
\mathcal{E}_{N+2}\lesssim& E(t)\leqslant\left(\frac{E^{\theta}_{N+2}(0)L^{\theta}}{L^{\theta}+C\theta tE^{\theta}_{N+2}(0)}\right)^{\frac{1}{\theta}}\lesssim L(1+t)^{-(4N-8)}\nonumber\\
\lesssim&\left(\mathcal{E}_{2N}(0)+\mathfrak{E}^2(t)\right)(1+t)^{-(4N-8)}.
\end{align}
This completes the proof.

\end{proof}

\subsection{Global Well-Posedness}
With Proposition \ref{p5.2}, \ref{p5.1} and \ref{p5.3} in hand, we can obtain a priori estimate for $\mathfrak{E}(t)$.
\begin{thm}\label{t6.1}
 Assume that $\mathfrak{E}(T)\leqslant\delta<1$ is small enough, then
 \begin{equation}\label{e6.1}
 \mathfrak{E}(t)\lesssim \mathfrak{E}(0)
 \end{equation}
 for all $0\leqslant t\leqslant T$.
\end{thm}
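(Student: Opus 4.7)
The plan is to simply add the three a priori inequalities established in Propositions~\ref{p5.2}, \ref{p5.1}, and \ref{p5.3}, and then exploit the smallness of $\mathfrak{E}(T)$ to absorb the quadratic remainder into the left-hand side. Since
\begin{align*}
\mathfrak{E}(t) =& \sup_{0\le r\le t}\mathcal{E}_{2N}(r) + \sup_{0\le r\le t}(1+r)^{4N-8}\mathcal{E}_{N+2}(r)\\
&+ \sup_{0\le r\le t}\frac{\mathfrak{F}(r)}{(1+r)^{4N-9}} + \int_0^t\!\!\left(\mathcal{D}_{2N}(r) + \frac{\mathfrak{F}(r)}{(1+r)^{4N-8}}\right)dr,
\end{align*}
it suffices to dominate each of the four pieces separately.

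First, I will apply Proposition~\ref{p5.2} at each $r\in[0,t]$. Because $\mathcal{E}_{2N}(0)+\mathfrak{E}^{2}(r)$ is non-decreasing in $r$, taking the supremum on the left yields
\[
\sup_{0\le r\le t}\mathcal{E}_{2N}(r) + \int_0^t \mathcal{D}_{2N}(r)\,dr \lesssim \mathcal{E}_{2N}(0) + \mathfrak{E}^{2}(t).
\]
The same monotonicity argument applied to Proposition~\ref{p5.3} gives $\sup_{0\le r\le t}(1+r)^{4N-8}\mathcal{E}_{N+2}(r) \lesssim \mathcal{E}_{2N}(0) + \mathfrak{E}^{2}(t)$, while Proposition~\ref{p5.1} directly controls the two $\mathfrak{F}$-terms by $\mathfrak{E}(0) + \mathfrak{E}^{2}(t)$. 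Summing and using $\mathcal{E}_{2N}(0)\le\mathfrak{E}(0)$, I obtain a universal constant $C>0$ such that
\[
\mathfrak{E}(t) \le C\,\mathfrak{E}(0) + C\,\mathfrak{E}^{2}(t)\qquad\text{for all } 0\le t\le T.
\]

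Finally, I will fix the smallness parameter $\delta$ so that $C\delta\le \tfrac12$. Then the hypothesis $\mathfrak{E}(t)\le\mathfrak{E}(T)\le\delta$ gives $C\mathfrak{E}^{2}(t)\le\tfrac12\mathfrak{E}(t)$, which can be absorbed into the left-hand side to conclude $\mathfrak{E}(t)\le 2C\,\mathfrak{E}(0)$, i.e.\ \eqref{e6.1}. Since all the analytic work (geometric/temporal estimates, horizontal-derivative estimates, Stokes comparison, and the interpolation-and-Gronwall argument behind polynomial decay at the $N+2$ level) is already packaged into the three propositions, there is no substantive obstacle at this stage; the only point that needs care is the monotonicity observation above, which legitimises replacing the pointwise bounds by bounds on the time-supremum built into the definition of $\mathfrak{E}$.
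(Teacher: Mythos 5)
Your proposal is correct and follows essentially the same route as the paper: sum the a priori bounds of Propositions~\ref{p5.2}, \ref{p5.1} and \ref{p5.3} to obtain $\mathfrak{E}(t)\le C\mathfrak{E}(0)+C\mathfrak{E}^{2}(t)$, then absorb the quadratic term using the smallness hypothesis. Your added remark on monotonicity of the right-hand sides, which justifies passing from pointwise bounds to the suprema appearing in the definition of $\mathfrak{E}$, is a detail the paper leaves implicit but does not change the argument.
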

\begin{proof}
  Combining Proposition \ref{p5.2}, \ref{p5.1} and \ref{p5.3} yields
  \begin{equation}\label{e6.2}
    \mathfrak{E}(t)\lesssim \mathfrak{E}(0)+ \mathfrak{E}^2(t).
  \end{equation}
  Based on the smallness of $\mathfrak{E}(t)$ and \eqref{e6.2}, we may get \eqref{e6.1}. The proof is completed.

\end{proof}

Using Theorem \ref{t6.1}, we now proceed to the proof of Theorem \ref{t1.2}.
\newline

\noindent\textbf{The proof of Theorem \ref{t1.2}.}\ We use the same method as in \cite{Re28}. Based on the continuity method, the a priori estimates \eqref{e6.1} indicate that the local solution in Theorem \ref{tt1.2} becomes a global solution on $[0,\infty)$ if $\mathcal{E}_{2N}(0)+\mathfrak{F}(0)<\varepsilon$. Then we may finish the proof.

\qquad \qquad \qquad \qquad \qquad \qquad \qquad \qquad \qquad \qquad \qquad \qquad \qquad \qquad \qquad \qquad  \qquad  \quad $\qedsymbol$

 \appendix
\section{Appendix}
\subsection{Some Inequalities}
\begin{lemma}\label{lA.1}
  The following hold on $\Gamma_f$ and on sufficiently smooth subsets of $\mathbb{R}^n$.
  \begin{enumerate}
    \item Let $0\leqslant r\leqslant s_1\leqslant s_2$ be such that $s_1>n/2$. Let $f\in H^{s_1}, g\in H^{s_2}$. Then $fg\in H^r$ and
    \begin{equation}\label{A.1}
    \|fg\|_{H^r}\lesssim\|f\|_{H^{s_1}}\|g\|_{H^{s_2}}.
    \end{equation}
    \item Let $0\leqslant r\leqslant s_1\leqslant s_2$ be such that $s_2>r+n/2$. Let $f\in H^{s_1}, g\in H^{s_2}$. Then $fg\in H^r$ and
    \begin{equation}\label{A.2}
    \|fg\|_{H^r}\lesssim\|f\|_{H^{s_1}}\|g\|_{H^{s_2}}.
    \end{equation}
    \item Let $0\leqslant r\leqslant s_1\leqslant s_2$ be such that $s_2>r+n/2$. Let $f\in H^{-r}(\Gamma_f), g\in H^{s_2}(\Gamma_f)$. Then $fg\in H^{-s_1}(\Gamma_f)$ and
    \begin{equation}\label{A.3}
    \|fg\|_{H^{-s_1}}\lesssim\|f\|_{H^{-r}}\|g\|_{H^{s_2}}.
    \end{equation}
  \end{enumerate}
\end{lemma}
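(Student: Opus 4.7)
The plan is to dispatch all three inequalities by the standard Bony paraproduct / Littlewood--Paley machinery, carried out either on the flat torus (which is exactly the structure of $\Gamma_f$) or on the smooth subset of $\mathbb{R}^n$ after a Calder\'on extension. Write
\begin{equation*}
fg = T_f g + T_g f + R(f,g),
\end{equation*}
with $T$ the paraproduct and $R$ the remainder, estimate the $H^r$-norm of each piece dyadically, and use Bernstein's inequality and the embedding $H^{n/2+\varepsilon}\hookrightarrow L^\infty$. This is the uniform tool; alternatively, one can invoke the Kato--Ponce commutator inequality. Both routes are classical, so the work is mostly bookkeeping.

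For the positive-order claims (1) and (2), the split naturally mirrors the hypotheses. In case (1) the assumption $s_1>n/2$ places $f$ in a Banach algebra embedded in $L^\infty$, so the pieces $T_f g$ and $R(f,g)$ are summed by pulling out $\|f\|_{L^\infty}\lesssim\|f\|_{H^{s_1}}$ and keeping $r$ derivatives on $g$, while $T_g f$ costs $r$ derivatives on $f$ and keeps $g$ in $L^2$. Since $r\leq s_1\leq s_2$, all remaining $g$-norms are dominated by $\|g\|_{H^{s_2}}$. In case (2) the assumption $s_2>r+n/2$ places $g$ (and indeed its derivatives up to order $r$) in $L^\infty$, so the roles are reversed: $T_g f$ and $R(f,g)$ are summed via $\|g\|_{L^\infty}\lesssim\|g\|_{H^{s_2}}$, while $T_f g$ is bounded by pulling $r$ derivatives onto $g$ and using $\|f\|_{L^2}\lesssim\|f\|_{H^{s_1}}$.

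For the negative-order claim (3), the standard route is duality. Pairing $fg$ against a test function $\varphi\in H^{s_1}(\Gamma_f)$ gives $\langle fg,\varphi\rangle=\langle f,g\varphi\rangle$, which is bounded by $\|f\|_{H^{-r}}\|g\varphi\|_{H^r}$. The product $g\varphi$ falls under case (2) with the roles of $f$ and $g$ there played by $\varphi$ and $g$ here (the hypothesis $s_2>r+n/2$ is exactly what is needed), yielding $\|g\varphi\|_{H^r}\lesssim\|g\|_{H^{s_2}}\|\varphi\|_{H^{s_1}}$; taking the supremum over unit test functions completes the proof.

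The only genuine subtlety is transferring the translation-invariant paraproduct calculus to ``sufficiently smooth subsets of $\mathbb{R}^n$'', but this is handled by a Stein/Calder\'on extension (and restriction) that preserves the Sobolev scale. I do not expect any real obstacle here: the lemma is classical, and the step-by-step program above is simply the standard playbook. I would allocate the bulk of the write-up to recording constants carefully enough to see that the same proof works uniformly in the two geometric settings stated in the lemma.
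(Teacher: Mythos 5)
Your proposal is correct, but note that the paper itself offers no argument here: its ``proof'' of Lemma \ref{lA.1} is a one-line citation to the appendix of \cite{Re28}, so there is nothing internal to compare against. What you supply is the standard self-contained route — Bony decomposition plus Bernstein/embedding bookkeeping for the positive-order cases, and duality reducing case (3) to case (2) via $\langle fg,\varphi\rangle=\langle f,g\varphi\rangle$ — which is exactly how these classical product estimates are established in the literature the paper leans on (some sources instead work directly on the Fourier side with $\langle\xi\rangle^{r}\lesssim\langle\xi-\zeta\rangle^{r}+\langle\zeta\rangle^{r}$ and Young's convolution inequality, an equivalent and slightly more elementary packaging). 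Two small points of care: in case (2) the bound for $T_f g$ with $f$ only in $L^2$ requires the embedding $H^{s_2}\hookrightarrow B^{r}_{\infty,2}$, which is precisely where the strict inequality $s_2>r+n/2$ is used (so your phrase ``pulling $r$ derivatives onto $g$'' should be read in the Besov sense, especially for non-integer $r$); and in case (1) the remainder $R(f,g)$ lands in $H^{s_1+s_2-n/2}$, which dominates $H^{r}$ exactly because $s_1>n/2$ and $r\leqslant s_2$. With those understood, your argument is complete.
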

\begin{proof}
  See, for instance, the appendix of \cite{Re28}.
\end{proof}
\begin{lemma}\label{lA.2}
  Suppose that $f\in C^1(\Gamma_f)$ and $g\in H^{1/2}(\Gamma_f)$ or $H^{-1/2}(\Gamma_f)$, then
  \begin{equation}\label{A.4}
    |fg|_{1/2}\lesssim |f|_{C^1}|g|_{1/2}, \qquad |fg|_{-1/2}\lesssim |f|_{C^1}|g|_{-1/2}.
  \end{equation}
\end{lemma}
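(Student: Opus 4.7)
The plan is to treat the two estimates by different mechanisms: the $H^{1/2}$ bound will be obtained directly from the Slobodeckij seminorm characterization, and the $H^{-1/2}$ bound will then be deduced by duality from the first one. Since $\Gamma_f$ is a compact $2$-torus, all relevant integrals in the Gagliardo seminorm will be over a bounded set, so the diagonal singularity is the only one I need to track.

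For the first inequality, I would write
\[
|fg|^2_{1/2}\simeq\|fg\|^2_{L^2(\Gamma_f)}+\int_{\Gamma_f}\int_{\Gamma_f}\frac{|f(x)g(x)-f(y)g(y)|^2}{|x-y|^{3}}\,\ud x\,\ud y.
\]
The $L^2$ part is trivial: $\|fg\|_{L^2}\leq|f|_{L^{\infty}}|g|_{L^2}\leq|f|_{C^1}|g|_{1/2}$.  For the double integral I would split
\[
f(x)g(x)-f(y)g(y)=f(x)\bigl(g(x)-g(y)\bigr)+g(y)\bigl(f(x)-f(y)\bigr).
\]
The first term contributes at most $|f|_{L^{\infty}}^2\int\!\!\int|x-y|^{-3}|g(x)-g(y)|^2\,\ud x\,\ud y\lesssim|f|_{C^1}^2|g|_{1/2}^2$.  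For the second, I use $|f(x)-f(y)|\leq|\nabla f|_{L^{\infty}}|x-y|$, reducing the relevant kernel to $|x-y|^{-1}$, which is integrable over the compact torus; this yields
\[
\int_{\Gamma_f}\!\!\int_{\Gamma_f}\frac{|g(y)|^2|f|_{C^1}^2|x-y|^2}{|x-y|^{3}}\,\ud x\,\ud y\lesssim|f|_{C^1}^2\|g\|_{L^2}^2\lesssim|f|_{C^1}^2|g|_{1/2}^2,
\]
and combining the two pieces finishes the $H^{1/2}$ estimate.

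For the second inequality I would use duality, identifying $H^{-1/2}(\Gamma_f)$ with the topological dual of $H^{1/2}(\Gamma_f)$.  For any $\phi\in H^{1/2}(\Gamma_f)$ with $|\phi|_{1/2}\leq1$, write $\langle fg,\phi\rangle_{H^{-1/2},H^{1/2}}=\langle g,f\phi\rangle$.  Applying the $H^{1/2}$ estimate already obtained gives $|f\phi|_{1/2}\lesssim|f|_{C^1}|\phi|_{1/2}\lesssim|f|_{C^1}$, hence
\[
|\langle fg,\phi\rangle|=|\langle g,f\phi\rangle|\leq|g|_{-1/2}|f\phi|_{1/2}\lesssim|f|_{C^1}|g|_{-1/2}.
\]
Taking the supremum over $\phi$ gives $|fg|_{-1/2}\lesssim|f|_{C^1}|g|_{-1/2}$.

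The only subtle point I anticipate is the rigorous passage through the compactness of the torus to tame the kernel $|x-y|^{-1}$ in the off-diagonal part of the Slobodeckij seminorm—on $\mathbb{R}^2$ this kernel is not integrable at infinity, but on $\Gamma_f=(L_1\mathbb{T})\times(L_2\mathbb{T})$ the domain of integration is bounded, which is what makes the argument close cleanly.  Everything else is routine.
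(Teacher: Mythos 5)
Your argument is correct. The paper does not prove this lemma itself; it only cites Lemma 10.2 of \cite{Re28}, and your proof is essentially the standard one found there: the Gagliardo--Slobodeckij seminorm with the splitting $f(x)g(x)-f(y)g(y)=f(x)(g(x)-g(y))+g(y)(f(x)-f(y))$, using the Lipschitz bound on $f$ to reduce the second kernel to the locally integrable $|x-y|^{-1}$ on the compact torus, followed by duality for the $H^{-1/2}$ case. The only point worth stating explicitly is that for $g\in H^{-1/2}(\Gamma_f)$ the product $fg$ is \emph{defined} as the functional $\phi\mapsto\langle g,f\phi\rangle$, which is legitimate precisely because the first estimate guarantees $f\phi\in H^{1/2}(\Gamma_f)$; with that remark your duality step closes cleanly.
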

\begin{proof}
  See Lemma 10.2 in \cite{Re28}.
\end{proof}
The following version of Korn's inequality is important in this paper.
\begin{lemma}
  For $u\in{_{0}H^1(\Omega)}$ we have
  \begin{equation}\label{eA.5}
    \|u\|^2_{H^1}\lesssim\|\mathbb{D}u\|^2_0.
  \end{equation}
\end{lemma}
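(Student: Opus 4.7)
The plan is to combine the classical Korn--Fichera (second Korn) inequality on the bounded Lipschitz domain $\Omega = \Sigma \times (-1,0)$, namely
$$\|u\|_1^2 \lesssim \|u\|_0^2 + \|\mathbb{D}u\|_0^2 \quad \text{for all } u \in H^1(\Omega),$$
with a compactness/rigidity argument that exploits the vanishing trace on $\Sigma_b$. Once this inequality is in hand, the entire task reduces to absorbing the $\|u\|_0^2$ term on the right-hand side when $u \in {_0H^1(\Omega)}$.

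I would argue by contradiction. If \eqref{eA.5} failed, then there would exist a sequence $u_n \in {_0H^1(\Omega)}$ with $\|u_n\|_1 = 1$ and $\|\mathbb{D}u_n\|_0 \to 0$. By the Rellich--Kondrachov theorem, passing to a subsequence, $u_n \to u$ strongly in $L^2(\Omega)$ and weakly in $H^1(\Omega)$. Continuity of the trace operator preserves $u|_{\Sigma_b}=0$, so $u \in {_0H^1(\Omega)}$, and weak lower semi-continuity of $\|\mathbb{D}\cdot\|_0$ forces $\mathbb{D}u \equiv 0$ in $L^2(\Omega)$.

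The next step is the classical rigidity statement: any $H^1$ vector field on a connected open set whose symmetric gradient vanishes must be affine, of the form $u(x) = a + Mx$ with $a \in \mathbb{R}^3$ and $M$ a skew-symmetric $3 \times 3$ matrix. This is a consequence of the distributional identity
$$2\,\partial_i\partial_j u_k = \partial_i (\mathbb{D}u)_{jk} + \partial_j (\mathbb{D}u)_{ik} - \partial_k (\mathbb{D}u)_{ij},$$
which makes all second derivatives of $u$ vanish once $\mathbb{D}u = 0$. Imposing $u|_{\Sigma_b}=0$, i.e.\ $a + M x = 0$ for every $x = (x_1,x_2,-1)$ with $(x_1,x_2) \in \Sigma$, and using that $\Sigma$ has positive two-dimensional measure, one concludes $a = 0$ and $M = 0$, hence $u \equiv 0$. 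Together with the strong $L^2$ convergence this gives $\|u_n\|_0 \to 0$, and Korn--Fichera applied to $u_n$ yields $\|u_n\|_1^2 \lesssim \|u_n\|_0^2 + \|\mathbb{D}u_n\|_0^2 \to 0$, contradicting $\|u_n\|_1 = 1$.

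The main difficulty is really that a self-contained proof by direct integration by parts is not available: expanding $\int_\Omega |\mathbb{D}u|^2 = 2\int_\Omega |\nabla u|^2 + 2 \int_\Omega \partial_i u_j \partial_j u_i$ and integrating by parts twice produces, after using $u|_{\Sigma_b}=0$ and the horizontal periodicity, a leftover boundary integral on the free surface $\Sigma_f$ of the schematic form $\int_{\Sigma_f} u\,\nabla u$, which cannot be controlled by $\|\mathbb{D}u\|_0$ alone (it would require an $H^{3/2}$ trace estimate). This is precisely why the argument must route through the abstract Korn--Fichera inequality together with the rigidity/compactness step above, rather than through a clean one-shot calculation.
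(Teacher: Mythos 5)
Your proof is correct. The paper itself gives no argument for this lemma: it simply cites Lemma~2.7 of Beale \cite{Re20}, and the standard proof of that result is precisely the route you take --- the second Korn (Korn--Fichera) inequality on the bounded domain, a contradiction/compactness argument via Rellich, weak lower semicontinuity of $\|\mathbb{D}\cdot\|_0$, the rigidity classification $u=a+Mx$ with $M$ skew, and the elimination of rigid motions by the vanishing trace on $\Sigma_b$. So you have supplied the proof the paper outsources. Two minor remarks. First, since $\Omega=\Sigma\times(-1,0)$ with $\Sigma$ a torus, the rigidity step should be read on the universal cover $\mathbb{R}^2\times(-1,0)$: the lifted field is affine there, horizontal periodicity already forces $Me_1=Me_2=0$, and then skew-symmetry plus the bottom trace give $M=0$, $a=0$; this is a one-line patch, not a gap. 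Second, your closing claim that no direct integration-by-parts proof exists is slightly overstated --- one can also argue directly by writing $\partial_3 u_i$ in terms of entries of $\mathbb{D}u$ and horizontal derivatives of $u_3$ and integrating from the bottom --- but this is commentary and does not affect the validity of your argument.
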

\begin{proof}
  We refer to Lemma 2.7 of \cite{Re20}.
\end{proof}

At the end of this subsection, we provide an estimate for the commutator.
\begin{lemma}\label{llA.5}
 Let $\Lambda_h^s$ be defined in \eqref{1.5.7}, then we have
  $$\|[\Lambda_h^s, f]g\|_0\leqslant\|\nabla f\|_{L^{\infty}}\|\Lambda_h^{s-1}g\|_0+\|\Lambda_h^sf\|_{0}\|g\|_{L^{\infty}}.$$
\end{lemma}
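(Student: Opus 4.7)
The plan is to prove this standard Kato--Ponce type commutator estimate by a frequency-space decomposition. Since $\Lambda_h^s$ acts only in the horizontal variables $x_h=(x_1,x_2)$, the vertical coordinate $x_3$ enters as a parameter, so by Fubini it suffices to establish the bound slicewise in $x_3$ (with constants uniform in $x_3$) and integrate in $x_3$ afterwards. I would therefore work at fixed $x_3$, take the horizontal Fourier transform, and observe that the commutator is the bilinear Fourier multiplier acting on $(\hat f,\hat g)$ with symbol $m(\xi_h,\eta_h)=\langle\xi_h\rangle^s-\langle\eta_h\rangle^s$, where $\xi_h-\eta_h$ is the input frequency of $f$ and $\eta_h$ that of $g$.

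The key step is to split the integration into the low--high regime $|\xi_h-\eta_h|\leq\tfrac{1}{2}|\eta_h|$ and its complement. In the low--high regime one has $\langle\xi_h\rangle\sim\langle\eta_h\rangle$, and the mean value theorem applied to $t\mapsto\langle\eta_h+t(\xi_h-\eta_h)\rangle^s$ yields $|m(\xi_h,\eta_h)|\lesssim|\xi_h-\eta_h|\,\langle\eta_h\rangle^{s-1}$. The prefactor $|\xi_h-\eta_h|$ becomes a horizontal derivative on $f$ and $\langle\eta_h\rangle^{s-1}$ becomes $\Lambda_h^{s-1}$ on $g$, so a Coifman--Meyer argument (equivalently, Bony's paraproduct $T_fg$ with Littlewood--Paley blocks) controls this piece by $\|\nabla f\|_{L^\infty}\|\Lambda_h^{s-1}g\|_0$. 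In the complementary regime one has $\langle\xi_h\rangle\lesssim\langle\xi_h-\eta_h\rangle$ by the triangle inequality, hence $|m|\lesssim\langle\xi_h-\eta_h\rangle^s+\langle\eta_h\rangle^s\lesssim\langle\xi_h-\eta_h\rangle^s$. The symbol's weight now sits entirely on $f$, and a symmetric paraproduct estimate (for $T_gf$ together with the resonant remainder $R(f,g)$) gives the bound $\|\Lambda_h^sf\|_0\|g\|_{L^\infty}$. Summing both pieces and integrating in $x_3$ yields the claim.

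The main obstacle is the presence of the endpoint $L^\infty$ norms on the right-hand side. One cannot reduce matters to the standard Sobolev product estimates of Lemma \ref{lA.1}, since those require Sobolev control of \emph{both} factors and $L^\infty$ data for $\nabla f$ or $g$ lies outside that framework. The paraproduct decomposition is tailored precisely to this issue: it distributes horizontal derivatives asymmetrically in frequency space, allowing $\|\nabla f\|_{L^\infty}$ or $\|g\|_{L^\infty}$ (via Bernstein's inequality on each Littlewood--Paley block) to appear in place of a fractional Sobolev norm, which is what the stated lemma requires. Since the argument is entirely classical, I expect the proof in the paper to be brief or to be cited from a standard reference on paradifferential calculus.
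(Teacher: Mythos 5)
Your sketch is sound, and you correctly anticipated the situation: the paper offers no proof at all, citing Lemma X1 of Kato--Ponce \cite{Re35} verbatim. Your low--high/high--low frequency splitting with the mean value theorem on the symbol $\langle\xi_h\rangle^s-\langle\eta_h\rangle^s$, followed by Coifman--Meyer or paraproduct bounds, is the standard modern proof of that cited lemma (the original Kato--Ponce argument is organized somewhat differently, via their bilinear commutator representation, but reaches the same two terms). The only points worth flagging are cosmetic: the horizontal variables live on the torus $\Sigma=(L_1\mathbb{T})\times(L_2\mathbb{T})$ rather than $\mathbb{R}^2$, so the Littlewood--Paley blocks are discrete-frequency projections (everything carries over), and the resonant remainder $R(f,g)$ needs $s>0$ for the block summation, which is always satisfied where the lemma is invoked (e.g.\ $s=4N+\tfrac12$).
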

\begin{proof}
  We refer to Lemma X1 of \cite{Re35}.
\end{proof}

\subsection{Poisson Integral}
Suppose that $\Sigma=(L_1\mathbb{T})\times(L_2\mathbb{T})$. We define the Poisson integral in $\Omega=\Sigma\times(-\infty, 0)$ by
\begin{equation}\label{A.5}
  \mathcal{P}f(x)=\sum_{n\in(L^{-1}_1\mathbb{Z})\times(L^{-1}_2\mathbb{Z})}e^{2\pi in\cdot x'}e^{2\pi|n|x_3}\hat{f}(n),
\end{equation}
where for $n\in (L^{-1}_1\mathbb{Z})\times(L^{-1}_2\mathbb{Z})$ we have written
\begin{equation}\label{A.6}
  \hat{f}(n)=\int_{\Sigma}f(x')\frac{e^{-2\pi in\cdot x'}}{L_1L_2}\ud x'.
\end{equation}
It is well known that $\mathcal{P}:H^s(\Sigma)\rightarrow H^{s+1/2}(\Omega)$ is a bounded linear operator for $s>0$. We now show how derivatives of $\mathcal{P}f$ can be estimated in the smaller domain $\Omega$.
\begin{lemma}\label{lA.3}
  Let $\mathcal{P}f$ be the Poisson integral of a function $f$ that is either in $\dot{H}^q(\Sigma)$ or $\dot{H}^{q-1/2}(\Sigma)$ for $q\in \mathbb{N}$. Then
  \begin{equation}\label{A.7}
  \|\nabla^q\mathcal{P}f\|_0^2\lesssim\|f\|^2_{\dot{H}^{q-1/2}(\Sigma)}\quad \mbox{and}\quad\ \|\nabla^q\mathcal{P}f\|^2_0\lesssim\|f\|^2_{\dot{H}^q(\Sigma)}.
    \end{equation}
\end{lemma}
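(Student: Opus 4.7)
}
The plan is to expand $\mathcal{P}f$ in its Fourier series and reduce both inequalities to a multiplier estimate on the horizontal frequency $n$, with the vertical variable $x_3\in(-1,0)$ entering only through an elementary one-dimensional integral. First, differentiating \eqref{A.5} term-by-term one obtains, for any multi-index $\alpha\in\mathbb{N}^3$ with $|\alpha|=q$,
\begin{equation*}
\partial^{\alpha}\mathcal{P}f(x)=\sum_{n}(2\pi in_1)^{\alpha_1}(2\pi in_2)^{\alpha_2}(2\pi|n|)^{\alpha_3}\,e^{2\pi in\cdot x'}e^{2\pi|n|x_3}\hat{f}(n),
\end{equation*}
so that $|\partial^\alpha\mathcal{P}f(x)|\lesssim\sum_n |n|^{q}|\hat f(n)|\,e^{2\pi|n|x_3}$. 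Then I would invoke Parseval's identity on the torus $\Sigma=(L_1\mathbb{T})\times(L_2\mathbb{T})$ fibre-by-fibre in $x_3$, which converts the $L^2(\Sigma)$ norm of each horizontal slice of $\partial^\alpha \mathcal{P}f(\cdot,x_3)$ into a weighted $\ell^2$ sum
\begin{equation*}
\|\partial^{\alpha}\mathcal{P}f(\cdot,x_3)\|^2_{L^2(\Sigma)}\lesssim L_1L_2\sum_{n}|n|^{2q}\,|\hat f(n)|^2\,e^{4\pi|n|x_3}.
\end{equation*}

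Second, I integrate in $x_3\in(-1,0)$ and swap the sum with the integral. For the first inequality I use the exponential gain
\begin{equation*}
\int_{-1}^{0}e^{4\pi|n|x_3}\,\ud x_3=\frac{1-e^{-4\pi|n|}}{4\pi|n|}\leq\frac{1}{4\pi|n|}\qquad (n\neq0),
\end{equation*}
which produces an extra factor $|n|^{-1}$ and therefore the sum $\sum_n |n|^{2q-1}|\hat f(n)|^2\approx\|f\|_{\dot H^{q-1/2}(\Sigma)}^2$. For the second inequality I simply bound $\int_{-1}^{0}e^{4\pi|n|x_3}\,\ud x_3\leq 1$, which keeps the full power $|n|^{2q}$ and gives $\|f\|_{\dot H^q(\Sigma)}^2$. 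Summing over the finitely many multi-indices $\alpha$ with $|\alpha|=q$ finishes both bounds.

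The only subtlety is the zero mode $n=0$ in the first inequality, but since $\nabla^q$ annihilates constants whenever $q\geq 1$, the factor $|n|^{2q}$ already kills that term; if $q=0$ one may either restrict to mean-zero data (which is our setting via \eqref{eee1.1}) or fall back on the trivial $\|\mathcal Pf\|_0\lesssim\|f\|_{L^2(\Sigma)}$ estimate. I do not foresee any serious obstacle: essentially everything reduces to the one-line calculation $\int_{-1}^{0}e^{4\pi|n|x_3}\ud x_3\lesssim\min\{1,|n|^{-1}\}$, and it is precisely the choice between these two bounds that determines whether the right-hand side sits in $\dot H^{q-1/2}$ or $\dot H^{q}$. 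The boundedness of the horizontal layer $(-1,0)$ is crucial here: on the infinite half-space $(-\infty,0)$ only the first (sharper) inequality would hold.
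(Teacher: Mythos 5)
Your proof is correct and is essentially the standard argument from the cited reference (Guo--Tice), which the paper simply points to rather than reproducing: Fourier expansion of $\mathcal{P}f$, Parseval on each horizontal slice, and the elementary bound $\int_{-1}^{0}e^{4\pi|n|x_3}\,\ud x_3\lesssim\min\{1,|n|^{-1}\}$, with the choice between the two bounds yielding the $\dot H^{q-1/2}$ and $\dot H^{q}$ estimates respectively. Your handling of the zero mode is also the right caveat.
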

\begin{proof}
  See Lemma 10.3 in \cite{Re28}.
\end{proof}

\begin{lemma}\label{lA.4}
  Let $\mathcal{P}f$ be the Poisson integral of a function $f$ that is in $\dot{H}^{q+s}(\Sigma)$ and for which $q\geqslant1$ an integer and $s>1$. Then
  \begin{equation}\label{A.8}
  \|\nabla^q\mathcal{P}f\|^2_{L^{\infty}}\lesssim\|f\|^2_{\dot{H}^{q+s}}.
  \end{equation}
  The same estimate holds for $q=0$ if $f$ satisfies $\int_{\Sigma}f=0$.
\end{lemma}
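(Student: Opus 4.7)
The plan is to prove the lemma by direct Fourier analysis on the horizontal torus, exploiting the explicit series representation \eqref{A.5} of the Poisson extension together with the decay factor $e^{2\pi |n| x_3}$ for $x_3 \leq 0$.

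First I would write $\nabla^q \mathcal{P}f(x)$ as a Fourier series: differentiating \eqref{A.5} term by term, each horizontal derivative $\partial_1, \partial_2$ contributes a factor of $2\pi i n_j$, while each vertical derivative $\partial_3$ contributes a factor of $2\pi|n|$. Hence every component of $\nabla^q\mathcal{P}f$ takes the form
\begin{equation*}
\sum_{n} c_n(q)\, e^{2\pi i n\cdot x'} e^{2\pi |n| x_3}\, \hat{f}(n),
\end{equation*}
where $|c_n(q)| \leq C|n|^q$ for a constant $C$ depending only on $q$. Since $x_3 \leq 0$, the factor $e^{2\pi|n|x_3}$ is bounded by $1$, so the triangle inequality yields the pointwise bound
\begin{equation*}
\|\nabla^q \mathcal{P}f\|_{L^\infty(\Omega)} \;\lesssim\; \sum_{n \neq 0} |n|^q\, |\hat{f}(n)|,
\end{equation*}
where we have dropped the $n=0$ term: for $q\geq 1$ the factor $c_n(q)$ vanishes at $n=0$, and for $q=0$ the zero-average assumption gives $\hat{f}(0)=0$, so in either case only nonzero frequencies contribute.

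Next I would apply Cauchy–Schwarz after splitting $|n|^q = |n|^{q+s} \cdot |n|^{-s}$:
\begin{equation*}
\sum_{n \neq 0} |n|^q |\hat{f}(n)|
 \;\leq\; \Bigl(\sum_{n \neq 0} |n|^{2(q+s)} |\hat{f}(n)|^2 \Bigr)^{1/2}
        \Bigl(\sum_{n \neq 0} |n|^{-2s}\Bigr)^{1/2}.
\end{equation*}
The first factor is comparable to $\|f\|_{\dot{H}^{q+s}(\Sigma)}$ by Parseval. The second factor is a lattice sum over $(L_1^{-1}\mathbb{Z})\times(L_2^{-1}\mathbb{Z})\setminus\{0\}$, which converges precisely when $2s > 2$, i.e., $s > 1$; this is exactly the hypothesis of the lemma. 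Squaring the resulting estimate gives \eqref{A.8}.

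There is no serious obstacle here — the main point to get right is the bookkeeping on the multi-indices so that $|c_n(q)|\lesssim |n|^q$ holds uniformly, and the observation that the $n=0$ term can be safely excluded in each of the two cases ($q\geq 1$ automatically, and $q=0$ under the zero-average assumption). The dimension-dependent threshold $s>1$ enters solely through the convergence of $\sum_{n\neq 0}|n|^{-2s}$ on the two-dimensional dual lattice.
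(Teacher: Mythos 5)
Your argument is correct and complete: the termwise bound $e^{2\pi|n|x_3}\leqslant 1$ for $x_3\leqslant 0$, the vanishing of the $n=0$ contribution (automatically for $q\geqslant 1$, via $\hat f(0)=0$ for $q=0$), and the Cauchy--Schwarz splitting $|n|^q=|n|^{q+s}\cdot|n|^{-s}$ with the lattice sum $\sum_{n\neq 0}|n|^{-2s}<\infty$ for $s>1$ is exactly the standard proof. The paper itself only cites Lemma 10.4 of \cite{Re28} for this statement, and your argument reproduces that reference's proof, so there is nothing to add.
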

\begin{proof}
  We refer to Lemma 10.4 in \cite{Re28}.
\end{proof}

\subsection{Elliptic Estimates}
We introduce the following two elliptic estimates.
\begin{lemma}\label{lA.5}
  Suppose that $(v,w,p)$ solve
  \begin{equation}\label{A.9}
  \begin{cases}
    -\Delta v+\nabla p-\nabla\times w=f, & \mbox{in } \Omega \\
    \nabla\cdot v=g, & \mbox{in } \Omega \\
    -\Delta w+w-\nabla\nabla\cdot w-\nabla\times v=h, & \mbox{in }\Omega\\
    (p\mathbb{I}-\mathbb{D}v)e_3=\phi, & \mbox{on } \Gamma_f \\
    v|_{\Gamma_b}=0,\quad w|_{\partial\Omega}=0,
  \end{cases}
  \end{equation}
  then
  \begin{equation}\label{A.10}
  \|v\|^2_{r+2}+\|w\|^2_{r+2}+\|p\|^2_{r+1}\lesssim\|v\|^2_0+\|f\|^2_r+\|g\|^2_{r+1}+\|h\|^2_r+\|\phi\|^2_{r+1/2}.
  \end{equation}
\end{lemma}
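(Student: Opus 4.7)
The plan is to decouple the system by treating the micropolar rotation terms $\nabla\times w$ and $\nabla\times v$ as inhomogeneous data, apply a known Stokes-type estimate to the $(v,p)$ block and a standard strongly-elliptic estimate to the $w$ block, and then absorb the cross-terms by interpolation together with a base $H^1$ energy estimate for $w$.

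First I would apply the classical Stokes estimate with the stress boundary condition on $\Gamma_f$ and the no-slip condition on $\Gamma_b$ (which the paper uses repeatedly and cites in its Stokes lemma) to the first two equations, writing the right-hand side as $f+\nabla\times w$. This yields
\begin{equation*}
\|v\|_{r+2}^2+\|p\|_{r+1}^2\lesssim \|v\|_0^2+\|f\|_r^2+\|\nabla\times w\|_r^2+\|g\|_{r+1}^2+|\phi|_{r+1/2}^2\lesssim \|v\|_0^2+\|f\|_r^2+\|w\|_{r+1}^2+\|g\|_{r+1}^2+|\phi|_{r+1/2}^2.
\end{equation*}
Next I would note that the operator $-\Delta+\mathbb{I}-\nabla\nabla\cdot$ acting on $w$ is strongly elliptic (its symbol is $|\xi|^2 I+I+\xi\otimes\xi$, which is positive-definite), so with the Dirichlet condition $w|_{\partial\Omega}=0$ the standard elliptic regularity gives
\begin{equation*}
\|w\|_{r+2}^2\lesssim \|h\|_r^2+\|\nabla\times v\|_r^2\lesssim \|h\|_r^2+\|v\|_{r+1}^2.
\end{equation*}

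Adding the two displays and using interpolation $\|w\|_{r+1}^2\leqslant \varepsilon\|w\|_{r+2}^2+C_\varepsilon\|w\|_0^2$ and $\|v\|_{r+1}^2\leqslant \varepsilon\|v\|_{r+2}^2+C_\varepsilon\|v\|_0^2$, I can absorb the $\varepsilon$-terms into the left-hand side to obtain
\begin{equation*}
\|v\|_{r+2}^2+\|w\|_{r+2}^2+\|p\|_{r+1}^2\lesssim \|v\|_0^2+\|w\|_0^2+\|f\|_r^2+\|g\|_{r+1}^2+\|h\|_r^2+|\phi|_{r+1/2}^2.
\end{equation*}
Finally, to dispose of $\|w\|_0$ (which does not appear on the right of \eqref{A.10}) I would test the third equation against $w$ itself. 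Using $w|_{\partial\Omega}=0$ and integration by parts (noting the antisymmetric cross-term, as in the proof of Lemma 3.1, makes $\int\nabla\times v\cdot w=\int v\cdot\nabla\times w$),
\begin{equation*}
\|\nabla w\|_0^2+\|w\|_0^2+\|\nabla\cdot w\|_0^2\leqslant \|h\|_0\|w\|_0+\|v\|_0\|\nabla\times w\|_0,
\end{equation*}
which by Cauchy--Schwarz yields $\|w\|_1\lesssim \|h\|_0+\|v\|_0$, hence $\|w\|_0\lesssim \|v\|_0+\|h\|_r$. Substituting into the previous estimate closes the proof.

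The only mildly delicate point is the absorption step: one must verify that the constants from the Stokes estimate and the elliptic estimate are uniform in $r$ so that a single small $\varepsilon$ suffices, and that the interpolation inequality used is compatible with the non-integer Sobolev indices (which is standard on domains of the type $\Omega=\Sigma\times(-1,0)$ with smooth enough boundary). Apart from this, the argument is purely a bootstrap on the lower-order coupling terms $\nabla\times w$ and $\nabla\times v$, each of which is exactly one derivative weaker than the corresponding top-order term in $\|w\|_{r+2}$ or $\|v\|_{r+2}$, which is what makes the absorption work.
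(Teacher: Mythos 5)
Your argument is correct. Note that the paper does not actually prove this lemma: its ``proof'' is a one-line citation to Proposition A.1 of the 2D micropolar surface-wave paper \cite{Re33}, so your write-up supplies a self-contained argument where the paper defers entirely to the literature. The decoupling you use --- classical Stokes theory with the mixed stress/no-slip boundary conditions for the $(v,p)$ block with $f+\nabla\times w$ as data, strong ellipticity of $-\Delta+\mathbb{I}-\nabla\nabla\cdot$ with Dirichlet data for the $w$ block with $h+\nabla\times v$ as data, interpolation to absorb the one-derivative-lower coupling terms, and a basic energy identity to eliminate $\|w\|_0$ --- is the standard way such coupled estimates are established, and each step checks out: the cross terms are genuinely of order $r+1$ versus the top order $r+2$, the boundary term in $\int(\nabla\times v)\cdot w=\int v\cdot\nabla\times w$ vanishes because $w|_{\partial\Omega}=0$, and coercivity of the $w$-operator justifies dropping any residual $\|w\|_0$ from the elliptic estimate (and even if one kept it, your final energy step controls it by $\|h\|_0+\|v\|_0$). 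Two small remarks: your concern about uniformity of constants in $r$ is moot, since the lemma is stated for a fixed $r$ and the implicit constant is allowed to depend on it; and the absorption only requires choosing $\varepsilon$ small relative to the fixed constants of the two elliptic estimates, which is unproblematic.
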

\begin{proof}
  See Proposition A.1 in \cite{Re33}.
\end{proof}

\begin{lemma}\label{lA.6}
  Suppose that $(v,w,p)$ solve
  \begin{equation}\label{A.11}
  \begin{cases}
    -\Delta v+\nabla p-\nabla\times w=f, & \mbox{in } \Omega \\
    \nabla\cdot v=g, & \mbox{in } \Omega \\
    -\Delta w+w-\nabla\nabla\cdot w-\nabla\times v=h, & \mbox{in }\Omega\\
    v=\phi, & \mbox{on } \Gamma_f \\
    v|_{\Gamma_b}=0,\quad w|_{\partial\Omega}=0, &\mbox{on }\partial\Omega.
  \end{cases}
  \end{equation}
then
\begin{equation}\label{A.12}
\|v\|^2_{r+2}+\|w\|^2_{r+2}+\|\nabla p\|^2_{r}\lesssim\|v\|^2_0+\|f\|^2_r+\|g\|^2_{r+1}+\|h\|^2_r+\|\phi\|^2_{r+3/2}.
\end{equation}
\end{lemma}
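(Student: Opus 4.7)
The plan is to establish Lemma \ref{lA.6} by decoupling the coupled micropolar Stokes system into two classical elliptic problems, applying standard estimates to each, and then absorbing the cross terms via interpolation. The two pieces are: a Stokes problem for $(v,p)$ with non-homogeneous Dirichlet data where $w$ acts as a forcing term, and a Lam\'e-type elliptic problem for $w$ with homogeneous Dirichlet data where $v$ acts as a forcing term. Specifically, I would rewrite the system as
\begin{equation*}
\begin{cases}
-\Delta v+\nabla p=f+\nabla\times w, & \text{in } \Omega,\\
\nabla\cdot v=g, & \text{in } \Omega,\\
v|_{\Gamma_f}=\phi,\ v|_{\Gamma_b}=0, &
\end{cases}
\qquad
\begin{cases}
-\Delta w+w-\nabla\nabla\cdot w=h+\nabla\times v, & \text{in }\Omega,\\
w|_{\partial\Omega}=0. &
\end{cases}
\end{equation*}

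Next, I would apply two well-known elliptic estimates. For the Stokes block with Dirichlet data, the classical Cattabriga-type estimate (obtained by lifting $\phi$ to a divergence-compatible field in $H^{r+2}(\Omega)$ and reducing to the homogeneous-boundary Stokes problem) yields
\begin{equation*}
\|v\|_{r+2}^{2}+\|\nabla p\|_{r}^{2}\lesssim \|v\|_0^{2}+\|f+\nabla\times w\|_{r}^{2}+\|g\|_{r+1}^{2}+\|\phi\|_{r+3/2}^{2}.
\end{equation*}
For the $w$ block, the operator $-\Delta+I-\nabla\nabla\cdot$ is strongly elliptic in the sense of Agmon--Douglis--Nirenberg (it is a zeroth-order perturbation of the Lam\'e operator), and the full Dirichlet condition gives
\begin{equation*}
\|w\|_{r+2}^{2}\lesssim \|h+\nabla\times v\|_{r}^{2}.
\end{equation*}
Using $\|\nabla\times w\|_{r}\lesssim \|w\|_{r+1}$ and $\|\nabla\times v\|_{r}\lesssim \|v\|_{r+1}$ and summing, I obtain
\begin{equation*}
\|v\|_{r+2}^{2}+\|w\|_{r+2}^{2}+\|\nabla p\|_{r}^{2}\lesssim \|v\|_0^{2}+\|f\|_{r}^{2}+\|h\|_{r}^{2}+\|g\|_{r+1}^{2}+\|\phi\|_{r+3/2}^{2}+\|v\|_{r+1}^{2}+\|w\|_{r+1}^{2}.
\end{equation*}

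The remaining step is to absorb the last two error terms. For $v$, the standard interpolation $\|v\|_{r+1}\leqslant \varepsilon\|v\|_{r+2}+C_{\varepsilon}\|v\|_{0}$ works directly. For $w$, I first establish a base $H^{1}$ bound: pairing the $w$-equation with $w$, integrating by parts using $w|_{\partial\Omega}=0$, and invoking Poincar\'e gives $\|w\|_{1}\lesssim \|h\|_{0}+\|v\|_{0}$; combined with interpolation this yields $\|w\|_{r+1}\leqslant \varepsilon\|w\|_{r+2}+C_{\varepsilon}(\|h\|_{r}+\|v\|_{0})$. Choosing $\varepsilon$ sufficiently small then produces \eqref{A.12}.

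The main obstacle will be the inhomogeneous Dirichlet data $\phi$ in the Stokes subsystem: I need to construct a lift $\tilde v\in H^{r+2}(\Omega)$ satisfying $\tilde v|_{\Gamma_f}=\phi$, $\tilde v|_{\Gamma_b}=0$, $\nabla\cdot\tilde v=g$ in $\Omega$, with $\|\tilde v\|_{r+2}\lesssim \|\phi\|_{r+3/2}+\|g\|_{r+1}$. This is routine via a harmonic extension of $\phi$ combined with a Bogovski\u{\i}-type solution of the divergence equation, subject to the compatibility condition $\int_{\Omega}g=\int_{\Gamma_f}\phi\cdot e_{3}$, which is automatic from the original system. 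Once this lift is in place, $v-\tilde v$ solves a homogeneous-boundary Stokes problem whose regularity is standard, and the argument outlined above closes the estimate. (The absence of a $\|p\|_{r+1}$ bound, and only a $\|\nabla p\|_{r}$ bound, is consistent with the pure Dirichlet setting, in which $p$ is determined only up to an additive constant.)
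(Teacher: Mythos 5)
Your argument is correct in substance, but it is worth noting that the paper does not prove Lemma \ref{lA.6} at all: it simply cites Proposition A.2 of \cite{Re33}, so your proposal supplies a self-contained proof where the paper offers only a reference. Your route --- splitting the system into a Cattabriga-type Stokes problem for $(v,p)$ with forcing $f+\nabla\times w$ and an Agmon--Douglis--Nirenberg estimate for the Lam\'e-type operator $-\Delta+I-\nabla\nabla\cdot$ acting on $w$ with forcing $h+\nabla\times v$, then absorbing the first-order cross terms by interpolation --- is the standard way such coupled micropolar elliptic estimates are obtained, and all the individual steps check out: the coupling is only through first-order terms while the elliptic gain is two orders, so there is one derivative of slack for the $\varepsilon$-absorption; the coercivity of $\int_{\Omega}\left(|\nabla w|^2+|w|^2+|\nabla\cdot w|^2\right)$ on $H^1_0$ justifies both the clean $H^{r+2}$ bound for $w$ (no spurious lower-order term) and the base estimate $\|w\|_1\lesssim\|h\|_0+\|v\|_0$; and the compatibility condition for the divergence-corrected lift of $\phi$ does follow from the divergence theorem applied to $\nabla\cdot v=g$. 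Two minor points you should make explicit if you write this up in full: (i) the absorption step presupposes that $\|v\|_{r+2}$, $\|w\|_{r+2}$, $\|\nabla p\|_r$ are a priori finite, so one should either run the argument on smooth approximations or invoke a qualitative regularity statement first; (ii) the domain here is a horizontally periodic slab with two disjoint flat boundary components, so the classical Stokes and ADN theory applies verbatim (no corner issues), which is worth a sentence since the boundary conditions differ on $\Gamma_f$ and $\Gamma_b$.
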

\begin{proof}
 We refer to Proposition A.2 in \cite{Re33}.
\end{proof}

    \smallskip
	\phantomsection
	\addcontentsline{toc}{section}{\refname}
	\bibliographystyle{abbrv} %plain ,%alpha, %abbrv

    \bibliography{1}

\end{document}